\newtheorem{theorem}[equation]{Theorem}
\newtheorem{lemma}[equation]{Lemma}
\newtheorem{prop}[equation]{Proposition}
\newtheorem{cor}[equation]{Corollary}
\newtheorem{corollary}[equation]{Corollary}
\newtheorem{definition}[equation]{Definition}
\theoremstyle{remark}
\newtheorem{remark}[equation]{Remark}
\newtheorem{notation}[equation]{Notation}
\numberwithin{equation}{section}
\newcommand{\R}{\mathbb{R}}
\newcommand{\Z}{\mathbb{Z}}
\newcommand{\N}{\mathbb{N}}
\newcommand{\Sph}{\mathbb{S}}
\newcommand{\ind}{\operatorname{Ind}}
\newcommand{\nul}{\operatorname{Null}}
\newcommand{\stab}{\operatorname{Stab}}
\newcommand{\Grp}{\mathscr{G}} 
\newcommand{\Gsym}{\mathscr{G}_{sym}}
\newcommand{\Fcal}{\mathcal{F}}
\newcommand{\Lcal}{\mathcal{L}}
\newcommand{\Tr}{{\mathsf{T}}}
\newcommand{\pp}{\mathsf{p}}
\newcommand{\pqq}{\mathsf{q}}
\newcommand{\QQ}{\mathsf{x}}
\newcommand{\pyy}{\mathsf{y}}
\newcommand{\rot}{\mathsf{R}}
\newcommand{\refl}{{\underline{\mathsf{R}}}}
\newcommand{\oplusu}{{\,{\oplus}_{\Lcal}\,}} 
\newcommand{\Om}{\Omega} 
\newcommand{\Omu}{{\underline{\Omega}}} 
\newcommand{\Uu}{{\underline{U}}} 
\newcommand{\gu}{{\underline{g}}} 
\newcommand{\gammau}{{\underline{\gamma}}} 
\newcommand{\Lcalu}{{\underline{\Lcal}}} 
\newcommand{\Uuu}{{\underline{U}'}} 
\newcommand{\Lcaluu}{{\underline{\Lcal}'}}
\newcommand{\norm}[1]{\left\|#1\right\|}
\newcommand{\cone}{\mbox{$\times \hspace*{-0.244cm} \times$}}
\newcommand{\Span}{\operatorname{Span}}
\newcommand{\Ktilde}{\widetilde{K}}  
\newcommand{\gammatilde}{\gamma'}  
\newcommand{\dd}{{\boldsymbol{\partial}\mspace{.8mu}\!\!\!\!\boldsymbol{/}\,}}
\newcommand{\Qv}{{{Q\mspace{.8mu}\!\!\!\!{/}\,}}}
\newcommand{\Dcal}{\dd}
\title{The index and nullity of the Lawson surfaces $\xi_{g,1}$}
\author{Nikolaos Kapouleas}
\address{Department of Mathematics, Brown University, Providence, RI 02912}
\email{nicos@math.brown.edu}
\author{David Wiygul}
\address{Department of Mathematics and Statistics, California State University, Long Beach, CA 90840} 
\email{david.wiygul@csulb.edu}
\date{\today}
\begin{document}

\begin{abstract}
We prove that the Lawson surface $\xi_{g,1}$ 
in Lawson's original notation, 
which has genus $g$ and can be viewed as a desingularization of two orthogonal great two-spheres in the round three-sphere $\Sph^3$,   
has index $2g+3$ and nullity $6$ for any genus $g\ge2$. 
In particular $\xi_{g,1}$ has no exceptional Jacobi fields, 
which means that it cannot ``flap its wings'' at the linearized level 
and is $C^1$-isolated.   
\end{abstract}

\maketitle

\section{Introduction}

\subsection*{The general framework and brief discussion of the results}
$\phantom{ab}$
\nopagebreak

Determining the index and nullity of complete  or closed minimal surfaces is a difficult problem 
which has been fully solved only in a few cases; 
see for example \cites{nayatani1992,nayatani1993, morabito}. 
The index plays an important role in min-max theory \cite{neves2014};  
this provides partial motivation for our result.  
In this article we prove Theorem \ref{Mtheorem}, 
which determines (for the first time) the index and the nullity of the Lawson surfaces $\xi_{g,1}$ \cite{Lawson} with $g\ge2$. 
These are the Lawson surfaces which have genus $g$ and can be viewed as desingularizations of two orthogonal great two-spheres in the round three-sphere $\Sph^3$ 
in the sense of \cite{alm20}*{Definition 1.3}. 
The index determined is consistent with (but larger than) a lower bound established by Choe \cite{choe1990}. 
We prove that the nullity is $6$ and so there are no exceptional Jacobi fields, 
which means by Corollary \ref{C:flapping} that these surfaces cannot ``flap their wings'' at the linearized level 
and are $C^1$-isolated.   
This provides a partial answer to questions asked in \cite{alm20}*{Section 4.2}.   

The ideas of our proof originate with work of NK on the approximate kernel for Scherk surfaces \cites{compact, alm20}.  
Our approach requires a detailed understanding of the elementary geometry of $\Sph^3$ and of the surfaces involved, 
especially their symmetries. 
The proof makes heavy use also of Alexandrov reflection in the style of Schoen's \cite{schoen1983}. 
The Courant nodal theorem \cite{courant} and an argument of Montiel-Ros \cite{Ros} play essential roles as well. 
In ongoing work we hope to extend this result to determine the index and nullity 
of all Lawson surfaces 
$\xi_{m-1,k-1}$ in Lawson's original notation, with $m\ge k\ge 3$. 

Another interesting problem, which could not be posed until the determination of the index of the Lawson surfaces,  
is motivated by the characterization of the Clifford torus by Fischer-Colbrie (unpublished) and (independently) by Urbano \cite{urbano}  
as the only closed minimal surface in $\Sph^3$, besides the great sphere, which has index $\le5$, 
and also by some recent results for minimal surfaces in $\R^3$ \cites{davi,davi2}: 
the problem is to classify all closed minimal surfaces in $\Sph^3$ which have index $\le7$ 
(the index of the Lawson surface of genus two), 
or more generally $\le2g+3$ for small $g$ 
(the index of the Lawson surface $\xi_{g,1}$).

\subsection*{Notation and conventions}
$\phantom{ab}$
\nopagebreak

We denote by $\Sph^3 \subset \R^{4}$ the unit $3$-dimensional sphere. 

\begin{notation} 
\label{span} 
For any 
$A\subset \Sph^3 \subset \R^{4}$ 
we denote by $\Span(A)$ the span of $A$ as a subspace of $\R^{4}$  
and by $\Sph(A):=\Span(A)\cap\Sph^3$. 
\qed 
\end{notation} 

Given now a vector subspace $V$ of the Euclidean space $\R^{4}$, 
we denote by $V^\perp$ its orthogonal complement in $\R^{4  }$,  
and we define the reflection in $\R^{4}$ with respect to $V$,   
$\refl_V: \R^{4} \to \R^{4} $, by 
\begin{equation} 
\label{reflV} 
\refl_V:= \Pi_V - \Pi_{V^\perp}, 
\end{equation}  
where $\Pi_V$ and $\Pi_{V^\perp}$ are the orthogonal projections of $\R^{4}$ onto $V$ and $V^\perp$ respectively. 
Alternatively 
$\refl_V: \R^{4  } \to \R^{4  } $ is the linear map which restricts to the identity on $V$ 
and minus the identity on $V^\perp$.  
Clearly the fixed point set of $\refl_V$ is $V$. 

\begin{definition}[Reflections $\refl_A$] 
\label{D:refl} 
Given any 
$A\subset  \Sph^3 \subset \R^{4  }$,  
we define $A^\perp:=\left(\,Span(A) \, \right)^\perp \cap \Sph^3 $  
and  
$\refl_A : \Sph^3 \to \Sph^3 $ to be the restriction to $\Sph^3$ of $\refl_{\Span(A)}$. 
Occasionally we will use simplified notation:  
for example for $A$ as before and $p\in\Sph^3$ we may write $\Sph(A,p)$ and $\refl_{A,p}$ instead of $\Sph(A\cup\{p\})$ and $\refl_{A\cup\{p\}}$ 
respectively. 
\qed 
\end{definition} 

Note that the set of fixed points of $\refl_A$ above is $\Sph(A)$ as in notation \ref{span},  
which is $\Sph^3$, 
or a great two-sphere,  
or a great circle,  
or the set of two antipodal points,  
or the empty set, 
depending on the dimension of $\Span(A)$. 
Following now the notation in \cite{choe:hoppe}, we have the following. 

\begin{definition}[The cone construction] 
\label{D:cone} 
For $p,q\in\Sph^3$ which are not antipodal we denote 
the minimizing geodesic segment joining them by $\overline{pq}$. 
For $A,B\subset\Sph^3$ such that no point of $A$ is antipodal to a point of $B$ 
we define the cone of $A$ and $B$ in $\Sph^3$ by 
$$ 
A\cone B := \bigcup_{p\in A, \, q\in B} \overline{pq}. 
$$ 
If $A$ or $B$ contains only one point we write the point instead of $A$ or $B$ respectively; 
we have then $p\cone q = \overline{pq} $ 
for any 
$p,q\in\Sph^3$ which are not antipodal. 
More generally, 
given linearly independent $p_1,\cdots,p_k\in \Sph^3$, 
we define inductively for $k\ge3$ 
$
\overline{p_1\cdots p_k} := p_k \cone \overline{p_1\cdots p_{k-1} }.  
$
\qed 
\end{definition}

If $\Grp$ is a group acting on a set $B$ and if $A$ is a subset of $B$,
then we refer to the subgroup 
  \begin{equation}
  \label{stab}
    \stab_{\Grp}(A):=\{ \mathbf{g} \in \Grp \; | \; \mathbf{g}A = A \}
  \end{equation}
as the \emph{stabilizer} of $A$ in $\Grp$.
When $A$ is a subset of the round $3$-sphere, we will set
  \begin{equation}
  \label{Gsym}
    \Gsym^A:=\stab_{{O(4)}} A. 
  \end{equation}

In the next definition we find it convenient to work with piecewise-smooth functions on a domain in a surface. 
By this we mean that each such function is continuous on the domain, 
the domain can be subdivided into domains by a finite union of piecewise-smooth embedded curves, 
and on the closure of each of these domains the function is smooth. 
We use $C_{pw}^\infty(\Uu)$ to denote the space of piecewise-smooth functions on a domain $\Uu$.

\begin{definition}[Eigenvalues] 
\label{D:eigen} 
We assume given a compact domain $\Uu$ in a smooth surface equipped with a Riemannian metric $\gu$,   
a smooth function $f$ on $\Uu$,  
and a linear space of piecewise-smooth functions $V'\subset C_{pw}^\infty(\Uu)$ which is invariant under the Schr\"{o}dinger operator 
$\Lcalu=\Delta_{\gu}+f$
defined on $\Uu$. 
We define 
$\lambda_i(V',\Lcalu)$ to be the $i^{th}$ eigenvalue,  
where we are counting in non-decreasing order and with multiplicity. 
(Note also that we follow the conventions which make the eigenvalues of the Laplacian on a closed surface nonnegative.) 
Moreover for $\lambda\in\R$ we denote by 
$\#_{<\lambda}(V',\Lcalu )$,   
$\#_{=\lambda}(V',\Lcalu )$,   
and 
$\#_{\le\lambda}(V',\Lcalu )$,  
the number of eigenvalues $\lambda_i(V',\Lcalu)$ which are $<\lambda$, or $=\lambda$, or $\le\lambda$, respectively. 
We also define the \emph{index of $\Lcalu$ on $V'$}, 
$\ind(V',\Lcalu ):= \#_{<0}(V',\Lcalu)$, 
and 
the \emph{nullity of $\Lcalu$ on $V'$}, 
$\nul(V',\Lcalu):= \#_{=0}(V',\Lcalu)$.  
Finally note that we may omit $\Lcalu$ from the notation when it can be inferred from the context. 
\qed 
\end{definition} 

\begin{definition}[Eigenvalue equivalence] 
\label{D:eigen-eq} 
Suppose $\Lcalu$, $\Uu$, and $V'$ are as in \ref{D:eigen} 
and $\Lcaluu$, $\Uuu$, and $V''$ satisfy correspondingly the same conditions.  
We define $V'\sim_{\Lcalu, \Lcaluu} V''$---or $V'\sim V''$ if the operators are understood from the context---to mean 
that there is a linear isomorphism $\Fcal:V'\to V''$ such that the following holds: 
$\forall f'\in V'$, $f'$ is an eigenfunction with respect to $\Lcalu$ if and only if 
$\Fcal(f')$ is an eigenfunction with respect to $\Lcaluu$ of the same eigenvalue as $f$. 
We say then that $\Lcalu$ on $V'$ and $\Lcaluu$ on $V''$ are \emph{eigenvalue equivalent}.  
\qed 
\end{definition} 

Note that clearly if \ref{D:eigen-eq} holds, 
then $\forall i \in \N$ we have $\lambda_i(V' , \Lcalu ) =  \lambda_i(V'' , \Lcaluu ) $.  
In this article we will say that a function satisfies the \emph{Dirichlet condition} on a curve if it vanishes there and the \emph{Neumann condition} if 
its derivative along the normal to the curve vanishes. 

\begin{definition}[Eigenvalues for mixed Dirichlet and Neumann boundary conditions] 
\label{D:mixed} 
Suppose $\Lcalu$ and $\Uu$ are as in \ref{D:eigen} and moreover 
the boundary $\partial\Uu$ is piecewise-smooth and can be decomposed as $\partial \Uu= \partial_D\Uu \cup \partial_N\Uu$---note that 
$\partial_D\Uu$, $\partial_N\Uu$ can be empty.  
We define then the following for $i\in\N$ and $\lambda\in\R$: 
\\  
(i) 
$C_{pw}^\infty[\Uu;\partial_D\Uu,\partial_N\Uu]$ to be the space of piecewise-smooth functions on $\Uu$ which satisfy 
the Dirichlet condition on $\partial_D\Uu$ and the Neumann condition on $\partial_N\Uu$;  
\\  
(ii) 
$\lambda_i[\Lcalu,\Uu;\partial_D\Uu,\partial_N\Uu] := \lambda_i( \, \Lcalu,  C_{pw}^\infty[\Uu;\partial_D\Uu,\partial_N\Uu] \,)$;  
\\  
(iii) 
$\#_{<\lambda}[\Lcalu,\Uu;\partial_D\Uu,\partial_N\Uu] := \#_{<\lambda}( \, \Lcalu,  C_{pw}^\infty[\Uu;\partial_D\Uu,\partial_N\Uu] \,)$  
and similarly for ``$=\lambda$'' and ``$\le\lambda$''. 
\qed 
\end{definition}

\subsection*{Acknowledgments}
$\phantom{ab}$
\nopagebreak

The authors would like to thank Richard Schoen for his continuous support and interest in the results of this article 
and Otis Chodosh for bringing this problem to their attention. 
NK was partially supported by NSF grant DMS-1405537.  
NK would like to thank also for their hospitality and support the Institute for Advanced Study at Princeton during the 2018 Fall term,  
and the University of California, Irvine, during Spring 2019. 
Finally we would like to thank Robert Kusner and Richard Schoen for pointing out that our theorem 
implies isolatedness for the surfaces involved. 

\section{Basic spherical geometry}

\subsection*{Rotations along or about great circles} 
$\phantom{ab}$
\nopagebreak

Note that by \ref{D:refl},  
$C^\perp$ is the great circle furthest from a given great circle $C$ in $\Sph^3$.  
(Note that the points of $C^\perp$ are at distance $\pi/2$ in $\Sph^3$ from $C$ and any point of $\Sph^3\setminus C^\perp$ 
is at distance $<\pi/2$ from $C$). 
Equivalently $C^\perp$ is the set
of poles of great hemispheres with equator $C$;
therefore $C$ and $C^\perp$ are linked. 
The group 
$\Gsym^{C \cup C^\perp } $
contains 
$\Gsym^{C } = \Gsym^{C^\perp } $ 
(which includes arbitrary rotation or reflection in the two circles)  
and includes also   
orthogonal transformations exchanging 
$C$ with $C^\perp$.  

\begin{definition}[Rotations $\rot_C^\phi$, $\rot^C_\phi$ and Killing fields $K_{C}$, $K^{C}$] 
\label{l:D:rot} 
Given a great circle $C\subset\Sph^3$, $\phi \in \R$,
and an orientation chosen on the totally orthogonal circle $C^\perp$,
we define the following: 
\newline
(i) 
the rotation about $C$ by angle $\phi$ 
is the element $\rot_C^\phi$ of $SO(4)$ preserving $C$ pointwise 
and rotating the totally orthogonal circle $C^\perp$ along itself by angle $\phi$ 
(in accordance with its chosen orientation);  
\newline
(ii) 
the Killing field $K_{C}$ on $\Sph^3$ 
and the normalized Killing field $\Ktilde_{C}$ on $\Sph^3 \setminus C $ 
are given by 
$\left.\phantom{\frac12}K_{C}\right|_p := \left. \frac{\partial}{\partial\phi} \right|_{\phi=0} \rot_{C}^\phi(p)$
$\forall p\in\Sph^3$ 
and 
$\left.\Ktilde_{C}\right|_p \, := \,  
\frac{ \left.K_{C}\right|_p }{ \left| \left.K_{C}\right|_p \right| } 
$ 
$\forall p\in\Sph^3\setminus C$.   

Assuming now an orientation chosen on $C$ we define the following: 
\newline
(iii) 
the rotation along $C$ by angle $\phi$ is $\rot^C_\phi := \rot_{C^\perp}^\phi$;  
\newline
(iv) 
the Killing field $K^{C}:=K_{C^\perp}$ on $\Sph^3$ 
and the normalized Killing field $\Ktilde^{C}:=\Ktilde_{C^\perp}$ on $\Sph^3\setminus C^\perp$.  
\qed 
\end{definition}

Note that $\rot^C_\phi = \rot_{C^\perp}^\phi$ resembles a translation along $C$,  
while in the vicinity of $C^\perp$ it is a rotation. 
Note also that $K_{C}$ is defined to be a rotational Killing field around $C$, 
vanishing on $C$ and equal to the unit velocity on ${C}^\perp$.  

\begin{lemma}[Orbits]  
\label{orbits} 
For $K^C$ as in \ref{l:D:rot}, the orbits of $K^C$ (that is its flowlines) are planar circles  
and $\forall\pp\in C $ each orbit intersects the closed hemisphere $C^\perp \cone \pp$ exactly once.  
Moreover the intersection (when nontrivial) is orthogonal. 
\end{lemma}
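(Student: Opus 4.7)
My plan is to reduce the statement to a straightforward coordinate computation in $\R^4$ via an adapted orthonormal frame. First I would fix an orthonormal basis $e_1, e_2, e_3, e_4$ of $\R^4$ with $e_1 = \pp$, $\Span(e_1, e_2) = \Span(C)$, and $\Span(e_3, e_4) = \Span(C^\perp)$; such a basis exists because the two orthogonal $2$-planes $\Span(C)$ and $\Span(C^\perp)$ admit orthonormal frames and $\pp$ is a unit vector in the first. In these coordinates $\rot_{C^\perp}^\phi$ fixes $\Span(e_3, e_4)$ pointwise and rotates $\Span(e_1, e_2)$ by angle $\phi$, so the orbit through $p = (a_1, a_2, a_3, a_4) \in \Sph^3$ is the circle
$$\{(r\cos(\phi+\theta_0),\, r\sin(\phi+\theta_0),\, a_3,\, a_4) : \phi \in \R\}, \qquad r := \sqrt{a_1^2 + a_2^2},$$
which lies in the affine $2$-plane $\{x_3 = a_3,\, x_4 = a_4\} \subset \R^4$. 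This establishes the planar-circle claim.

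Next I would identify $C^\perp \cone \pp$ explicitly. With $q_\theta := (0, 0, \cos\theta, \sin\theta) \in C^\perp$, the geodesic $t \mapsto \cos(t)\pp + \sin(t) q_\theta$ for $t \in [0, \pi/2]$ traces out, as $\theta$ varies, exactly
$$C^\perp \cone \pp = \{(x_1, 0, x_3, x_4) \in \Sph^3 : x_1 \geq 0\},$$
the closed hemisphere in the great $2$-sphere $\Sph^3 \cap \{x_2 = 0\}$ with equator $C^\perp$ and pole $\pp$.

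For the intersection count I would argue case by case. A degenerate orbit ($r = 0$) is a single point of $C^\perp \subset C^\perp \cone \pp$, meeting the hemisphere in one point trivially. When $r > 0$, lying in the hemisphere forces $\sin(\phi+\theta_0) = 0$ and $\cos(\phi+\theta_0) \geq 0$, which admits exactly one solution $\phi \in [0, 2\pi)$ and hence one intersection point, namely $(r, 0, a_3, a_4)$. At such a point, $K^C|_p = (0, r, 0, 0)$ points along $e_2$, whereas the tangent plane to the hemisphere lies in the hyperplane $\{x_2 = 0\}$ and therefore has no $e_2$-component; orthogonality follows. The degenerate case $r = 0$ is not covered by the orthogonality assertion, since $K^C$ vanishes on $C^\perp$, consistent with the ``when nontrivial'' proviso.

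No essential obstacle is anticipated: the lemma reduces to linear algebra in $\R^4$ once the adapted basis is in place. The only care required is the clean separation of the degenerate orbits on $C^\perp$ from the generic circular orbits and the verification, through the explicit geodesic parametrization, that $C^\perp \cone \pp$ really is the hyperplane slice described above.
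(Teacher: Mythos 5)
Your proof is correct and takes essentially the same route as the paper: the paper's proof simply observes that everything is straightforward to check in $\R^4$ with $C^\perp \cone \pp$ replaced by the half-three-plane containing $\pp$ with boundary $\Span(C^\perp)$ (your slice $\{x_2=0,\ x_1\ge 0\}$) and then restricted to $\Sph^3$, which is exactly your adapted-coordinate computation. Your explicit handling of the degenerate orbits on $C^\perp$ and the parametrization identifying $C^\perp \cone \pp$ with that hemisphere are just the details the paper leaves to the reader.
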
 

\begin{proof} 
This is straightforward to check already in $\R^4$ with the 
hemisphere $C^\perp \cone \pp$ replaced by the half-three-plane containing $\pp$ and with boundary $\Span(C^\perp)$. 
By restricting then to $\Sph^3$ the result follows. 
\end{proof} 

This lemma allows us to define a projection which effectively identifies the space of orbits in discussion 
with a closed hemisphere: 

\begin{definition}[Projections by rotations]  
\label{Pi} 
For $C$ and $\pp$ as in \ref{orbits} 
we define the smooth map 
$\Pi^C_\pp :\Sph^3\to C^\perp \cone \pp$ by requiring $\Pi^C_\pp x$ to be the intersection of $C^\perp \cone \pp$ with the orbit of $K^C$ containing $x$,    
for any $x \in\Sph^3$.  
\end{definition}

\begin{definition}[Graphical sets]  
\label{graphical} 
A set $A\subset \Sph^3$ is called \emph{graphical with respect to $K^C$} (with $C$ as above) 
if each orbit of $K^C$ intersects $A$ at most once. 
If moreover $A$ is a submanifold and there are no orbits of $K^C$ which are tangent to $A$, 
then $A$ is called 
\emph{strongly graphical with respect to $K^C$}.   
\end{definition}

\subsection*{The geometry of totally orthogonal circles}
$\phantom{ab}$
\nopagebreak

We fix now some $C$ and $C^\perp$ as above, and orientations on both. 
(Of course, after choosing an orientation on $C$, 
choosing an orientation on $C^\perp$ is equivalent to choosing an orientation on $\Sph^3$.)  
We define $\forall \phi\in\R$ the points 
\begin{equation}
\label{points} 
\pp_{\phi}=\pp_{\phi}[C]:= \rot_{C^\perp}^{\phi}\,\pp_0\,\in \, C,
\qquad
\pp^{\phi}=\pp^{\phi}[C]:= \rot_{C}^{\phi}\,\pp^0\,\in \, C^{\perp},
\end{equation}
where $\pp_0,\pp^0$ are arbitrarily fixed points on $C$ and $C^{\perp}$ respectively. 
Note that we will routinely omit $[C]$ when understood from the context.
Using \ref{span} we further define $\forall \phi\in\R$ the great spheres 
\begin{equation}
\label{hemispheres} 
\Sigma^\phi = \Sigma^\phi [C] :=  \,\Sph( C , \pp^\phi ) , 
\qquad  
\Sigma_\phi = \Sigma_\phi [C] := \, \Sph( C^\perp , \pp_\phi),   
\end{equation}
and $\forall\phi,\phi'\in\R$ 
the great circles 
\begin{equation}
\label{circles}  
C_\phi^{\phi'} = C_\phi^{\phi'} [C] := \Sph( \, \pp_\phi, \pp^{\phi'} \, ).  
\end{equation}

\begin{definition}[Coordinates on $\R^4$]  
\label{D:coordinates} 
Given $C$ as above and points as in \ref{points}, 
we define coordinates $(x^1,x^2,x^3,x^4)$ on $\R^4\supset\Sph^3$ by requiring that 
$$
\pp_0=(1,0,0,0), \qquad \pp_{\pi/2}=(0,1,0,0), \qquad \pp^0=(0,0,1,0), \qquad \pp^{\pi/2}=(0,0,0,1). 
$$
\end{definition} 

\begin{lemma}[Basic geometry related to $C$ and $C^\perp$]  
\label{L:obs} 
The following hold $\forall\phi,\phi', \phi_1,\phi'_1,\phi_2,\phi'_2  \in\R$.  
\\
(i) $ \pp_{\phi+\pi} = - \pp_\phi $  and 
$ \pp^{\phi+\pi} = - \pp^\phi $.   
Similarly 
$ \Sigma_{\phi+\pi} = \Sigma_\phi $  and 
$ \Sigma^{\phi+\pi} = \Sigma^\phi $.   
\\
(ii) 
$C_\phi^{\phi'} \cap C = \{ \pp_\phi , \pp_{\phi+\pi} \} $ 
and 
$C_\phi^{\phi'} \cap C^\perp  = \{ \pp^{\phi'} , \pp^{\phi'+\pi} \} $ 
with orthogonal intersections. 
Moreover 
$ 
C_\phi^{\phi'} 
=       
\overline{ \pp_\phi  \pp^{\phi'} } \, \cup \,      
\overline{ \pp^{\phi'} \pp_{\phi+\pi} } \, \cup \,      
\overline{ \pp_{\phi+\pi} \pp^{\phi'+\pi} } \, \cup \,      
\overline{ \pp^{\phi'+\pi} \pp_\phi  } 
. 
$ 
\\ 
(iii) 
$C \cone \pp^\phi$  and $C^\perp \cone \pp_\phi$  
are closed great hemispheres 
with boundary $C$ and $C^\perp$ and poles $\pp^\phi$ and $\pp_\phi$ respectively. 
\\ 
(iv) 
$\Sigma_\phi = ( C^\perp \cone \pp_\phi ) \cup ( C^\perp \cone \pp_{\phi+\pi} )$  
and 
$\Sigma^\phi = ( C \cone \pp^\phi ) \cup ( C \cone \pp^{\phi+\pi} )$.  
\\ 
(v) 
$\Sigma^\phi \cap C^\perp = \{ \pp^\phi , \pp^{\phi+\pi} \}$ 
and 
$\Sigma_\phi \cap C = \{ \pp_\phi, \pp_{\phi+\pi}\}$ 
with orthogonal intersections. 
\\ 
(vi) 
$C_\phi^{\phi'} = \Sigma_\phi \cap \Sigma^{\phi'}$  
with orthogonal intersection. 
\\
(vii) 
$\left( { C_\phi^{\phi'} } \right)^\perp = C_{\phi+\pi/2}^{\phi'+\pi/2}$. 
\\ 
(viii)  
$\Sigma^\phi \cap \Sigma^{\phi'} = C$ unless $\phi=\phi' \pmod \pi$ 
in which case 
$\Sigma^\phi = \Sigma^{\phi'}$.   
Similarly 
$\Sigma_\phi \cap \Sigma_{\phi'} = C^\perp$ unless $\phi=\phi' \pmod \pi$ 
in which case 
$\Sigma_\phi = \Sigma_{\phi'}$.   
In both cases the intersection angle is $\phi'-\phi \pmod \pi$. 
\\
(ix) 
$C_{\phi_1}^{\phi_1'} \cap C_{\phi_2}^{\phi_2'} = \emptyset$ 
unless 
$\phi_1=\phi_2 \pmod \pi$ 
or
$\phi'_1=\phi'_2 \pmod \pi$. 
If both conditions hold then  
$C_{\phi_1}^{\phi_1'} = C_{\phi_2}^{\phi_2'}$.  
If only the first condition holds then 
$C_{\phi_1}^{\phi_1'} \cap C_{\phi_2}^{\phi_2'} = 
\{ \pp_{\phi_1} , \pp_{\phi_1+\pi} \}$ 
with intersection angle equal to $\phi'_2-\phi'_1 \pmod \pi$. 
If only the second condition holds then 
$C_{\phi_1}^{\phi_1'} \cap C_{\phi_2}^{\phi_2'} = 
\{ \pp^{\phi_2} , \pp^{\phi_2+\pi} \}$ 
with intersection angle equal to $\phi_2-\phi_1 \pmod \pi$. 
\end{lemma}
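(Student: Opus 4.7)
The plan is to compute everything explicitly in the Euclidean coordinates of Definition \ref{D:coordinates}. Since $\rot_{C^\perp}^\phi = \rot^C_\phi$ fixes $\Span(C^\perp)$ pointwise and rotates the $(x^1,x^2)$-plane by angle $\phi$, we have $\pp_\phi = (\cos\phi, \sin\phi, 0, 0)$ and, symmetrically, $\pp^\phi = (0,0,\cos\phi,\sin\phi)$. Then $\Span(C) = \R^2 \times \{0\}$ and $\Span(C^\perp) = \{0\} \times \R^2$ are orthogonal complements in $\R^4$, and
\[
\Span(\Sigma_\phi) = \Span(C^\perp, \pp_\phi), \quad \Span(\Sigma^\phi) = \Span(C, \pp^\phi), \quad \Span(C_\phi^{\phi'}) = \Span(\pp_\phi, \pp^{\phi'}),
\]
are explicit three- and two-dimensional subspaces of $\R^4$.

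From this setup (i) is just $\cos(\phi+\pi) = -\cos\phi$ together with the observation that spans are insensitive to sign. For (iii), since $\pp^\phi \perp \Span(C)$, each point of $C$ has spherical distance $\pi/2$ from $\pp^\phi$, so $C \cone \pp^\phi$ is the closed hemisphere of $\Sph^3 \cap \Span(C,\pp^\phi)$ on the $\pp^\phi$ side of $C$; (iv) then splits $\Sigma_\phi$ into the hemispheres with poles $\pp_\phi$ and $\pp_{\phi+\pi}$. For (ii), (v), (vi), and (vii) the task is to identify linear-algebraic intersections: for instance in (vi) the two three-planes $\Span(C^\perp,\pp_\phi)$ and $\Span(C,\pp^{\phi'})$ meet exactly in the two-plane $\Span(\pp_\phi,\pp^{\phi'})$, giving $\Sigma_\phi \cap \Sigma^{\phi'} = C_\phi^{\phi'}$; and the orthogonality of the two great spheres along this circle follows because their $\Sph^3$-normals along $C_\phi^{\phi'}$ point in the mutually orthogonal directions $\pp_{\phi+\pi/2}$ and $\pp^{\phi'+\pi/2}$. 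The quarter-arc decomposition in (ii) is read off the parametrization $t \mapsto \cos t\,\pp_\phi + \sin t\,\pp^{\phi'}$ of $C_\phi^{\phi'}$.

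For (viii), the three-planes $\Span(C,\pp^\phi)$ and $\Span(C,\pp^{\phi'})$ both contain the two-plane $\Span(C)$, so they either coincide or meet in exactly $\Span(C)$; equality holds iff $\pp^{\phi'} \in \Span(\pp^\phi)$, i.e.\ iff $\phi' \equiv \phi \pmod \pi$, and when they differ the dihedral angle along $C$ is the $\R^4$-angle between the unit $\Sph^3$-normals $\pp^{\phi+\pi/2}$ and $\pp^{\phi'+\pi/2}$, namely $\phi'-\phi \pmod \pi$; the $\Sigma_\phi$ case is symmetric. For (ix) I will run a case analysis on whether the two-planes $\Span(\pp_{\phi_j},\pp^{\phi_j'})$ share a line in $\Span(C)$ (which happens iff $\phi_1 \equiv \phi_2 \pmod \pi$) and/or a line in $\Span(C^\perp)$ (iff $\phi'_1 \equiv \phi'_2 \pmod \pi$), reading off the point set and the intersection angle in each sub-case from the remaining parameter difference. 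No single item poses a genuine obstacle; the main challenge is just bookkeeping, in particular keeping subscripts and superscripts and the two angle parameters straight while cataloguing all the sub-cases.
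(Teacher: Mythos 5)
Your proposal is correct and is essentially the paper's own argument: the paper simply states that all items are verified straightforwardly in the coordinates of Definition \ref{D:coordinates}, which is exactly the explicit coordinate/linear-algebra computation you carry out. Your write-up merely fills in the routine details (explicit formulas for $\pp_\phi$, $\pp^\phi$ and the relevant spans and their intersections), so no further comparison is needed.
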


\begin{proof} 
It is straightforward to verify all these statements by using the coordinates defined in 
\ref{D:coordinates}.  
\end{proof} 

\begin{definition}[Symmetries of Killing fields]  
\label{DKsymm} 
We call a Killing field $K$ \emph{even (odd) under an isometry $\refl$} if it satisfies 
${\refl}_* \circ K = K \circ \refl$ 
(${\refl}_* \circ K = - K \circ \refl$).  
\qed 
\end{definition} 

\begin{lemma}[Some symmetries of Killing fields]  
\label{Ksymm} 
The following hold $\forall \phi,\phi'\in\R$. 
\\ 
(i) 
$K_C$ is 
odd under $\refl_{\Sigma^{\phi}}$ 
and 
$\refl_{ C_{ \phi }^{ \phi' } }$ 
and even under $\refl_{\Sigma_{\phi}}$.  
\\
(ii) 
$K_{C^\perp}$ is 
odd under $\refl_{\Sigma_{\phi}}$ 
and 
$\refl_{ C_{ \phi }^{ \phi' } }$  
and even under $\refl_{\Sigma^{\phi}}$.  
\\ 
(iii) 
$K_{ C_{ \phi }^{ \phi' } }$ is odd under 
$\refl_{\Sigma_\phi}$ and $\refl_{\Sigma^{\phi'}}$ 
and even under 
$\refl_{\Sigma_{\phi+\pi/2}}$ and $\refl_{\Sigma^{\phi'+\pi/2}}$.  
Moreover 
${\Sigma_{\phi+\pi/2}}$ and ${\Sigma^{\phi'+\pi/2}}$  
are preserved under the flow of 
$K_{ C_{ \phi }^{ \phi' } }$ and contain the fixed points $\pm\pp^{\phi'}\in {\Sigma_{\phi+\pi/2}}$ and $\pm\pp_{\phi}\in {\Sigma^{\phi'+\pi/2}}$ 
and the geodesic orbit 
${ C_{ \phi +\pi/2 }^{ \phi' +\pi/2 } } = {\Sigma_{\phi+\pi/2}}\cap {\Sigma^{\phi'+\pi/2}}$.   
\end{lemma}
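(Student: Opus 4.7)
The plan is to reduce each parity assertion to a linear-algebra check in the coordinates of Definition \ref{D:coordinates}. The underlying principle is that if $\rho_\phi\subset SO(4)$ is a one-parameter rotation subgroup generating a Killing field $K$ on $\Sph^3$, and if $\sigma\in O(4)$, then $\sigma_*\circ K$ generates $\sigma\rho_\phi\sigma^{-1}$; consequently $K$ is even (resp.\ odd) under $\sigma$ precisely when $\sigma\rho_\phi\sigma^{-1}=\rho_\phi$ (resp.\ $\rho_{-\phi}$) for every $\phi$. Each claim will then follow by checking whether the given reflection preserves the ``rotation plane'' (the span of the great circle orthogonal to the rotation axis) setwise, and if so whether its restriction to that plane lies in $SO(2)$ (giving evenness) or in $O(2)\setminus SO(2)$ (giving oddness).

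For (i) and (ii), $\rot_C^\phi$ is the identity on $\Span(C)$ and rotates $\Span(C^\perp)$. A direct inspection in coordinates shows that each of $\refl_{\Sigma^\phi}$, $\refl_{\Sigma_\phi}$, and $\refl_{C_\phi^{\phi'}}$ preserves both $\Span(C)$ and $\Span(C^\perp)$ setwise: this is visible from the fact that each of the basis vectors $\pp_0,\pp_{\pi/2},\pp^0,\pp^{\pi/2}$ is mapped to a vector of $\Span(C)$ or of $\Span(C^\perp)$, respectively, by each of these reflections. Restricted to $\Span(C^\perp)$, the reflection $\refl_{\Sigma_\phi}$ is the identity (yielding evenness), while $\refl_{\Sigma^\phi}$ and $\refl_{C_\phi^{\phi'}}$ are genuine reflections of $\Span(C^\perp)$ (yielding oddness). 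Part (ii) is obtained by swapping the roles of $C$ and $C^\perp$.

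For (iii), the same strategy applies to $\rot^\psi_{C_\phi^{\phi'}}$, which is the identity on $\Span(C_\phi^{\phi'})=\Span(\pp_\phi,\pp^{\phi'})$ and rotates $\Span(C_{\phi+\pi/2}^{\phi'+\pi/2})=\Span(\pp_{\phi+\pi/2},\pp^{\phi'+\pi/2})$ by Lemma \ref{L:obs}(vii). A quick check shows that each of the four reflections in the statement sends each of $\pp_\phi,\pp^{\phi'},\pp_{\phi+\pi/2},\pp^{\phi'+\pi/2}$ to $\pm$ itself, hence preserves both the fixed and the rotation plane of $\rot^\psi_{C_\phi^{\phi'}}$. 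Examining the restrictions: $\refl_{\Sigma_\phi}$ and $\refl_{\Sigma^{\phi'}}$ each negate exactly one of the two basis vectors of $\Span(C_{\phi+\pi/2}^{\phi'+\pi/2})$ (hence are reflections there, giving oddness), whereas $\refl_{\Sigma_{\phi+\pi/2}}$ and $\refl_{\Sigma^{\phi'+\pi/2}}$ act as the identity on $\Span(C_{\phi+\pi/2}^{\phi'+\pi/2})$ (giving evenness).

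For the ``moreover'' clause, the invariance of $\Sigma_{\phi+\pi/2}$ under the flow of $K_{C_\phi^{\phi'}}$ is immediate from the orthogonal decomposition $\Span(\Sigma_{\phi+\pi/2})=\Span(\pp^{\phi'})\oplus\Span(C_{\phi+\pi/2}^{\phi'+\pi/2})$, since the first summand lies in the fixed plane of $\rot^\psi_{C_\phi^{\phi'}}$ and the second is its rotation plane; the case of $\Sigma^{\phi'+\pi/2}$ is analogous. The indicated fixed points are then the intersections $\Sigma_{\phi+\pi/2}\cap C_\phi^{\phi'}=\{\pm\pp^{\phi'}\}$ and $\Sigma^{\phi'+\pi/2}\cap C_\phi^{\phi'}=\{\pm\pp_\phi\}$ read off from Lemma \ref{L:obs}(v),(vi), and the geodesic orbit is $\Sigma_{\phi+\pi/2}\cap\Sigma^{\phi'+\pi/2}=C_{\phi+\pi/2}^{\phi'+\pi/2}$ by Lemma \ref{L:obs}(vi). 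The whole argument is mechanical once coordinates are set up; I expect the only mildly awkward step to be the systematic bookkeeping of which basis vectors each reflection fixes versus negates, but this presents no real obstacle.
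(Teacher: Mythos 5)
Your proposal is correct and follows essentially the same route as the paper: the paper's proof also reduces everything to the criterion that $K_{C'}$ is even (odd) under a reflection $\refl$ exactly when $\refl$ preserves $C'^\perp$ as a set and respects (reverses) its orientation, which is precisely your conjugation/rotation-plane criterion, and then verifies each case by inspection in the coordinates of \ref{D:coordinates}. Your write-up merely spells out the coordinate bookkeeping that the paper declares straightforward.
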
 

\begin{proof} 
For any great circle $C'$ we have that $K_{C'}$ is even (odd) with respect to a reflection $\refl$ 
if and only if $\refl(C'^\perp) = C'^\perp$ and $\refl$ respects (reverses) the orientation of $C'^\perp$.  
Applying this it is straightforward to confirm the lemma. 
\end{proof} 

\section{Tessellations of $\Sph^3$}

\subsection*{Lawson tessellations}   
$\phantom{ab}$
\nopagebreak

Our purpose is to study the Lawson surfaces $\xi_{m-1,1}$ \cite{Lawson},  
which have genus $g=m-1$ and can be viewed as desingularizations of $\Sigma^{\pi/4}\cup \Sigma^{-\pi/4}$, 
where $m\ge3$, $m\in\N$.   
With this goal it is helpful to introduce the notation 
\begin{equation}
\label{qpoints} 
\begin{aligned}
t_{i'} :=& {(2i'-1)\frac\pi{2m}} \in \R , 
\qquad & 
t^{j'} :=& {(2j'-1)\frac\pi{4}} \in \R ,  
\\
\pqq_{i'} :=& \pp_{ t_{i'} } \in C , 
\qquad & 
\pqq^{j'} :=& \pp^{ t^{j'} } \in C^\perp ,  
\end{aligned} 
\qquad 
\forall i',j'\in\frac12\Z. 
\end{equation}
Note that we have then $2m$ points $\pqq_{i}$ for $i\in\Z$ subdividing $C$ into $2m$ equal arcs of length $\pi/m$ each, 
and $4$ points $\pqq^j$ for $j\in\Z$ subdividing $C^\perp$ into $4$ arcs of length $\pi/2$ each. 
$\pqq_{i+\frac12}$ is the midpoint of $\overline{\, \pqq_i \pqq_{i+1} \,}$ for each $i\in\Z$ and 
$\pqq^{j+\frac12}$ is the midpoint of $\overline{\, \pqq^j \pqq^{j+1} \,}$ for each $j\in\Z$. 

We define now $\forall i,j\in\Z$ compact domains 
$\Om_i,\Om^j,\Om_i^j$ by 
\begin{equation}
\label{Om} 
\begin{gathered}
\Om_i := 
C^\perp \cone \overline{\, \pqq_i \pqq_{i+1} \, } , 
\qquad 
\Om^j := 
C \cone \overline{\, \pqq^{j} \pqq^{j+1} \, },  
\qquad 
\Om_i^j := \Om_i \cap \Om^j = 
\overline{\, \pqq_i \pqq_{i+1} \pqq^{j} \pqq^{j+1} \, } \, . 
\end{gathered}
\end{equation} 
Clearly we have then the decompositions with disjoint interiors 
\begin{equation} 
\label{Omdec}
\Sph^3=\bigcup_{i=0}^{2m-1} \Om_i  
=\bigcup_{j=0}^3 \Om^j  
=\bigcup_{i=0}^{2m-1} \bigcup_{j=0}^3 \Om_i^j. 
\end{equation} 
Note that 
\begin{equation} 
\label{Omrot} 
\Om_i= \rot_{C^\perp}^{(i-i')\frac\pi{m} } \, \Om_{i'}, 
\qquad 
\Om^j= \rot_{C}^{(j-j')\frac\pi{2} } \, \Om^{j'}, 
\qquad 
\Om_i^j=\rot_{C^\perp}^{(i-i')\frac\pi{m} }  \, \rot_{C}^{(j-j')\frac\pi{2} } \, \Om_{i'}^{j'} . 
\end{equation} 
Moreover we have 
\begin{equation} 
\label{Omboundary} 
\begin{gathered} 
\partial \Om_i = 
C^\perp \cone \{ \pqq_i, \pqq_{i+1} \} ,  
\\    
\partial \Om^j 
=    C \cone \{ \pqq^j, \pqq^{j+1} \} .  
\end{gathered} 
\end{equation} 

\begin{lemma}[Properties of $\Om_i^j$] 
\label{Om:p} 
$\forall i,j\in\Z$, $\Om_i^j$ is a spherical tetrahedron and satisfies the following. 
\\
(i) 
Its faces are the spherical triangles 
$\overline{ \, \pqq_{ i } \pqq^{ j } \pqq^{ j+1 } \, } $, 
$\overline{ \, \pqq_{ i+1 } \pqq^{ j } \pqq^{ j+1 } \, } $, 
$\overline{ \, \pqq_{ i } \pqq_{ i+1 } \pqq^{ j } \, } $, 
and 
$\overline{ \, \pqq_{ i } \pqq_{ i+1 } \pqq^{ j+1 } \, } $. 
\\ 
(ii) 
Its dihedral angles are all $\pi/2$ except for the one along 
$\overline{ \, \pqq^{ j } \pqq^{ j+1 } \, } $ 
which is $\pi/m$. 
\\ 
(iii)
It is bisected by the spherical triangles 
$\overline{ \, \pqq_{ i+\frac12 } \pqq^{ j } \pqq^{ j+1 } \, } $ 
and 
$\overline{ \, \pqq_{ i } \pqq_{ i+1 } \pqq^{ j+\frac12 } \, } $   
and its symmetries are given by ($\refl_{\Sph^3}$ is the identity map on $\Sph^3$) 
\begin{equation}
\label{Omsym} 
\Gsym^{\Om_i^j}= \{ \, \refl_{\Sph^3}, 
\refl_{\Sigma_{i\pi/m}} , 
\refl_{\Sigma^{j\pi/2}} , 
\refl_{C_{i\pi/m}^{j\pi/2} } \, \} 
\simeq \Z_2\times \Z_2 
. 
\end{equation}  
(iv) It is convex in the sense that $\overline{xy}\subset\Om_i^j$ $\forall x,y\in \Om_i^j$. 
\end{lemma}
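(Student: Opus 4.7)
To prove the lemma, I would reduce by \eqref{Omrot} to the case $i=j=0$ and work in the coordinates of \ref{D:coordinates}; the four vertices $\pqq_i, \pqq_{i+1} \in C$ and $\pqq^j, \pqq^{j+1} \in C^\perp$ are then orthogonal in pairs across the totally orthogonal circles, span $\R^4$, and (using $m \ge 3$) no two are antipodal.

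For (i), I would decompose $\partial \Om_i^j$ via \eqref{Omboundary} into four pieces, each of the form $(C^\perp \cone \pqq_a) \cap \Om^j$ or $\Om_i \cap (C \cone \pqq^b)$ for $a \in \{i, i+1\}$ and $b \in \{j, j+1\}$. By \ref{L:obs}(iii)--(iv), $C^\perp \cone \pqq_a$ is a closed great hemisphere lying in $\Sigma_{t_a}$ that contains all of $C^\perp$, hence also $\pqq^j$ and $\pqq^{j+1}$; the corresponding face is then the spherical triangle $\overline{\pqq_a \pqq^j \pqq^{j+1}}$, and the other two faces arise symmetrically. For (ii), each face lies in one of the great $2$-spheres $\Sigma_{t_i}, \Sigma_{t_{i+1}}, \Sigma^{t^j}, \Sigma^{t^{j+1}}$, and each dihedral angle equals the intersection angle of two such $2$-spheres: \ref{L:obs}(vi) makes the four mixed edges orthogonal ($\Sigma_\bullet \cap \Sigma^\bullet$), while \ref{L:obs}(viii) gives angle $t^{j+1} - t^j = \pi/2$ for the edge in $C$ and $t_{i+1} - t_i = \pi/m$ for the edge in $C^\perp$.

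For (iii), the equality $\Sigma_{i\pi/m} = \Sph(C^\perp, \pqq_{i+\frac12})$ shows the bisecting triangle $\overline{\pqq_{i+\frac12} \pqq^j \pqq^{j+1}}$ lies in $\Sigma_{i\pi/m}$, so $\refl_{\Sigma_{i\pi/m}}$ fixes this triangle pointwise, fixes $C^\perp$, swaps $\pqq_i \leftrightarrow \pqq_{i+1}$, and therefore preserves $\Om_i^j$ while bisecting it along that triangle; analogously $\refl_{\Sigma^{j\pi/2}}$ bisects along the other triangle, and $\refl_{C_{i\pi/m}^{j\pi/2}}$ is their composition. Conversely, any element of $\Gsym^{\Om_i^j}$ permutes the four vertices, and since $\overline{\pqq^j \pqq^{j+1}}$ is the unique edge with dihedral angle $\pi/m \ne \pi/2$, it must be preserved as a set; thus $\{\pqq^j, \pqq^{j+1}\}$, and then $\{\pqq_i, \pqq_{i+1}\}$, are each preserved. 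Since the four vertices span $\R^4$, an orthogonal transformation is determined by its action on them, yielding at most $2 \cdot 2 = 4$ symmetries.

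For (iv), I would use linear convexity in $\R^4$: $\Om_i$ is the intersection of $\Sph^3$ with the wedge $W_i \subset \R^4$ bounded by the hyperplanes $\Span(\Sigma_{t_i})$ and $\Span(\Sigma_{t_{i+1}})$, and since its opening angle is $\pi/m \le \pi/3$, $W_i$ is a convex cone through the origin; similarly $\Om^j$ sits in a convex cone $W^j$. For non-antipodal $x, y \in \Om_i^j$ the short geodesic $\overline{xy}$ is the normalization of the Euclidean segment from $x$ to $y$, which lies in the convex cone $W_i \cap W^j$; normalization preserves membership in a cone through the origin, so $\overline{xy} \subset \Om_i^j$. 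The main subtle point throughout is the sharpness of the symmetry count in (iii): an ``exotic'' symmetry exchanging the $C$-edge with the $C^\perp$-edge is blocked precisely by the dihedral-angle asymmetry $\pi/m \ne \pi/2$ afforded by $m \ge 3$.
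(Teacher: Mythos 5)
Your proposal is correct and takes essentially the approach the paper intends: the paper's own proof is just ``straightforward to check using the definitions and, for (iii), that $m>2$,'' and your write-up (coordinates, the boundary decomposition via \eqref{Omboundary}, the intersection angles from \ref{L:obs}, the convex-cone/sector argument, and the vertex-permutation count) is exactly that verification, correctly isolating where $m\ge 3$ blocks extra symmetries. One cosmetic point: in (iv) you quantify over non-antipodal pairs, but your own sector description of $\Om_i^j$ already shows it contains no antipodal pairs, so the statement is covered in full.
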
 

\begin{proof}
It is straightforward to check all these statements by using the definitions and for (iii) that $m>2$. 
\end{proof} 

\ref{Om:p}.iii  
motivates us to define $\forall i,j\in\Z$, 
by modifying \ref{Om}, 
compact domains  
$\Om_{i\pm},\Om^{j\pm},\Om_{i\pm}^{j\pm}$  by 
\begin{equation}
\label{Ompm} 
\begin{gathered} 
\Om_{i\pm} :=  \, 
C^\perp \cone \overline{\, \pqq_{i+\frac12 } \pqq_{i+\frac12\pm\frac12} \, } , 
\qquad
\Om^{j\pm} :=  \, 
C \cone \overline{\, \pqq^{j+\frac12 } \pqq^{j+\frac12\pm\frac12} \, },  
\\
\Om_{i\pm}^{j\pm} := \,  \Om_{i\pm} \cap \Om^{j\pm}= 
\overline{\, \pqq_{i+\frac12} \pqq_{i+\frac12\pm\frac12} \pqq^{j+\frac12} \pqq^{j+\frac12\pm\frac12} \, } \, , 
\\
\Om_{i}^{j\pm} := \,  \Om_{i} \cap \Om^{j\pm}= 
\overline{\, \pqq_i \pqq_{i+1} \pqq^{j+\frac12} \pqq^{j+\frac12\pm\frac12} \, } \, , 
\\
\Om_{i\pm}^{j} := \,  \Om_{i\pm} \cap \Om^{j}= 
\overline{\, \pqq_{i+\frac12} \pqq_{i+\frac12\pm\frac12} \pqq^{j} \pqq^{j+1} \, } \, . 
\end{gathered} 
\end{equation} 
We have then various decompositions with disjoint interiors, for example 
\begin{equation} 
\label{Ompmdec}
\Om_i^j= \Om_{i-}^{j-} \cup \Om_{i+}^{j-} \cup \Om_{i-}^{j+} \cup \Om_{i+}^{j+}, 
\qquad 
\Om_{i+}^j=\Om_{i+}^{j-} \cup  \Om_{i+}^{j+}.  
\end{equation}  
Note also that 
\begin{equation} 
\label{Omrefl} 
\Om_{i+}^{j-} =  \refl_{\Sigma^{j\pi/2}}  \Om_{i+}^{j+} , 
\qquad 
\Om_{i-}^{j+} =  \refl_{\Sigma_{i\pi/m}}  \Om_{i+}^{j+} , 
\qquad 
\Om_{i-}^{j-} =  \refl_{C_{i\pi/m}^{j\pi/2} }  \Om_{i+}^{j+} .
\end{equation} 
Moreover 
all four tetrahedra $\Om_{i\pm}^{j\pm}$ have  
$\overline{ \pp_{ i\frac\pi m }  \pp^{ j\frac\pi 2 } } = \overline{ \pqq_{i+\frac12} \pqq^{ j+\frac12 } } $ 
as a common edge and adjacent ones have 
common faces given by 
$\Om_{i-}^{j+} \cap \Om_{i-}^{j-} = \overline{\, \pqq_{i+\frac12} \pqq_{i} \pqq^{j+\frac12} \, } $, 
$\Om_{i+}^{j+} \cap \Om_{i+}^{j-} = \overline{\, \pqq_{i+\frac12} \pqq_{i+1} \pqq^{j+\frac12} \, } $, 
$\Om_{i-}^{j-} \cap \Om_{i+}^{j-} = \overline{\, \pqq_{i+\frac12} \pqq^{j+\frac12} \pqq^{j} \, } $, 
and 
$\Om_{i-}^{j+} \cap \Om_{i+}^{j+} = \overline{\, \pqq_{i+\frac12} \pqq^{j+\frac12} \pqq^{j+1} \, } $. 

\subsection*{Subdividing $\Sph^3$ with mutually orthogonal two-spheres} 
$\phantom{ab}$
\nopagebreak

Note that by \ref{L:obs}.vi,viii   
$\Sigma^0$, $\Sigma^{\pi/2}$, $\Sigma_0$, and $\Sigma_{\pi/2}$ 
form a system of four mutually orthogonal two-spheres in $\Sph^3$. 
We will later study the subdivisions these two-spheres effect on $\Sph^3$ and the Lawson surfaces.    
To this end we define 
$\Omu_{**}^{\pm*}$, 
$\Omu_{**}^{*\pm}$, 
$\Omu^{**}_{\pm*}$, 
and 
$\Omu^{**}_{*\pm}$, 
to be the closures of the connected components into which $\Sph^3$ is subdivided by the removal of 
$\Sigma^0$, $\Sigma^{\pi/2}$, $\Sigma_0$, or $\Sigma_{\pi/2}$ respectively, 
chosen so that 
\begin{equation} 
\label{Omu+} 
\pp^{\pm\pi/2}\in \Omu_{**}^{\pm*}, \quad  
\pp^{\frac{\pi}2\mp\frac{\pi}2}\in \Omu_{**}^{*\pm}, \quad  
\pp_{\pm\pi/2}\in \Omu^{**}_{\pm*}, \quad  
\pp_{\frac{\pi}2\mp\frac{\pi}2}\in \Omu^{**}_{*\pm}.  
\end{equation} 
To further subdivide we replace $*$'s by $\pm$ signs to denote the corresponding intersections 
of these domains; for example we have  
\begin{equation} 
\label{Omu+-} 
\Omu_{+-}^{-*} := \Omu_{+*}^{**} \cap \Omu_{*-}^{**} \cap \Omu_{**}^{-*}. 
\end{equation}  

Clearly we have 
\begin{equation} 
\label{partialOmu+} 
\partial \Omu_{**}^{\pm*} = \Sigma^0 , \quad  
\partial \Omu_{**}^{*\pm} = \Sigma^{\pi/2} , \quad  
\partial \Omu^{**}_{\pm*} = \Sigma_0 , \quad  
\partial \Omu^{**}_{*\pm} = \Sigma_{\pi/2} .  
\end{equation} 

\begin{lemma}[Elementary geometry of $\Omu^{++}_{++}$]  
\label{Omu:p} 
$\Omu^{++}_{++}$ is the spherical tetrahedron 
$\overline{\pp^0 \pp^{\pi/2} \pp_0 \pp_{\pi/2} }$ 
and satisfies the following. 
\\
(i) 
Its faces are the spherical triangles 
$\overline{\pp_{\pi/2} \pp^{\pi/2} \pp^0} \subset \Sigma_{\pi/2}$, 
$\overline{\pp_{\pi/2} \pp^{\pi/2} \pp_0} \subset \Sigma^{\pi/2}$, 
$\overline{\pp_{0} \pp^{\pi/2} \pp^0} \subset \Sigma_0$, 
$\overline{\pp_{0} \pp_{\pi/2} \pp^0}  \subset \Sigma^0$. 
All angles of all faces are $\pi/2$. 
\\ 
(ii)  
All its edges have length $\pi/2$ and its dihedral angles are all $\pi/2$. 
\\ 
(iii) 
Its symmetry group is isomorphic to the symmetric group on its vertices. 
$\Omu^{++}_{++}$ is bisected by six spherical triangles including 
$\overline{ \, \pp_{ \pi/4 } \pp^{ 0 } \pp^{ \pi/2 } \, } $ 
and 
$\overline{ \, \pp_{ 0 } \pp_{ \pi/2 } \pp^{ \pi/4 } \, } $   
and its symmetries include 
$\refl_{\Sigma_{\pi/4}}$, 
$\refl_{\Sigma^{\pi/4}}$, 
and 
$\refl_{C_{\pi/4}^{\pi/4} }$. 
\end{lemma}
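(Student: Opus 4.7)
The plan is to realize $\Omu^{++}_{++}$ concretely in the coordinates of \ref{D:coordinates} and then read off every assertion from the geometry of the standard positive orthant in $\R^4$. Using \ref{span} together with \ref{points} and \ref{hemispheres}, direct computation gives $\Sigma^0=\{x^4=0\}\cap\Sph^3$, $\Sigma^{\pi/2}=\{x^3=0\}\cap\Sph^3$, $\Sigma_0=\{x^2=0\}\cap\Sph^3$, and $\Sigma_{\pi/2}=\{x^1=0\}\cap\Sph^3$. Combining this with \ref{partialOmu+} and the sign-fixing membership conditions of \ref{Omu+} (for instance $\pp^{\pi/2}=(0,0,0,1)\in\Omu_{**}^{+*}$ forces $\Omu_{**}^{+*}=\{x^4\ge0\}\cap\Sph^3$, and similarly for the other three walls), we identify
\begin{equation*}
\Omu^{++}_{++}=\{x\in\Sph^3:x^1,x^2,x^3,x^4\ge0\},
\end{equation*}
the positive orthant of $\Sph^3$, whose four extremal corners are precisely the standard basis vectors $\pp_0,\pp_{\pi/2},\pp^0,\pp^{\pi/2}$.

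Parts (i) and (ii) then reduce to elementary Euclidean geometry. Each codimension-one face is the intersection of $\Omu^{++}_{++}$ with a coordinate hyperplane; it is the spherical triangle spanned by the three basis vectors with the corresponding coordinate vanishing, yielding exactly the four faces listed, and its three vertex angles are all $\pi/2$ because any two distinct coordinate axes are orthogonal. Each edge joins two basis vectors at Euclidean distance $\sqrt2$, hence at spherical distance $\pi/2$, and each dihedral angle is the angle between two coordinate hyperplanes, which is $\pi/2$.

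For (iii), the symmetry group of $\Omu^{++}_{++}$ contains the subgroup of $O(4)$ permuting the four coordinate axes, giving a copy of $S_4$ that acts as the full symmetric group on the vertices. Conversely, any symmetry of $\Omu^{++}_{++}$ permutes the vertices (which are characterized as the four trivalent corners), and a linear isometry of $\R^4$ fixing four linearly independent points must be the identity, so the resulting map to $S_4$ is injective and $\Gsym^{\Omu^{++}_{++}}\simeq S_4$. The $\binom{4}{2}=6$ transpositions of basis vectors have fixed 3-planes $\{x^i=x^j\}$ in $\R^4$, which cut $\Omu^{++}_{++}$ along six bisecting spherical triangles, each spanned by the two vertices fixed under the transposition together with the midpoint of the arc joining the two swapped vertices. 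A direct coordinate check, based on \ref{hemispheres} and \ref{circles}, identifies $\refl_{\Sigma_{\pi/4}}$, $\refl_{\Sigma^{\pi/4}}$, and $\refl_{C_{\pi/4}^{\pi/4}}$ as the symmetries realizing the transposition $(\pp_0\,\pp_{\pi/2})$, the transposition $(\pp^0\,\pp^{\pi/2})$, and the double transposition $(\pp_0\,\pp_{\pi/2})(\pp^0\,\pp^{\pi/2})$ respectively. No step is conceptually difficult; the whole argument is bookkeeping in the fixed coordinate system.
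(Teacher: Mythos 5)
Your proposal is correct and follows the route the paper intends: the paper dismisses this lemma as a straightforward verification from the definitions (using the coordinates of \ref{D:coordinates}), and your argument simply carries that out in detail by identifying $\Omu^{++}_{++}$ with the positive orthant $\{x\in\Sph^3 : x^1,x^2,x^3,x^4\ge0\}$ and reading off (i)--(iii), including the correct identification of $\refl_{\Sigma_{\pi/4}}$, $\refl_{\Sigma^{\pi/4}}$, and $\refl_{C_{\pi/4}^{\pi/4}}$ with the coordinate transpositions $x^1\leftrightarrow x^2$, $x^3\leftrightarrow x^4$, and their composition.
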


\begin{proof}
It is straightforward to verify all these statements by using the definitions. 
\end{proof}

\begin{lemma}[Some decompositions] 
\label{OmuOm} 
We have the following. 
\\ 
(i)  
$\Omu^{++}_{**} = \cup_{i=0}^{2m-1} ( \Om^{0+}_{i} \cup \Om^{1-}_{i} )$.  
\\ 
(ii)  
$\Omu^{++}_{+*} = 
\Omu^{++}_{++} \cup \refl_{\Sigma_{\pi/2}} \Omu^{++}_{++} = 
\Om^{0+}_{0+} \cup \Om^{1-}_{0+}  \cup  
\left( \cup_{i=1}^{{m}-1} ( \Om^{0+}_{i} \cup \Om^{1-}_{i} )  \right) 
\cup \Om^{0+}_{{m} - } \cup \Om^{1-}_{{m} - }  
$.  
\\ 
(iii)  
$\Omu^{++}_{++} = \Om^{0+}_{0+} \cup \Om^{1-}_{0+}  \cup  
\left\{ 
\begin{aligned}
&  \left( \cup_{i=1}^{\frac{m}2-1} ( \Om^{0+}_{i} \cup \Om^{1-}_{i} )  \right) \cup \Om^{0+}_{\frac{m}2 - } \cup \Om^{1-}_{\frac{m}2 - },                
&& \text{ if } m\in 2\Z,  
\\
& \left( \cup_{i=1}^{\frac{m-1}2} ( \Om^{0+}_{i} \cup \Om^{1-}_{i} ) \right),   && \text{ if } m\in 2\Z+1.  
\end{aligned} 
\right. 
$ 
\end{lemma}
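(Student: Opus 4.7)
The plan is to reduce all three identities to interval arithmetic in a single explicit chart. Using the coordinates of Definition \ref{D:coordinates}, I would write any $x\in\Sph^3\setminus(C\cup C^\perp)$ as $x=(\cos\theta\cos\beta,\cos\theta\sin\beta,\sin\theta\cos\phi,\sin\theta\sin\phi)$ with $\theta\in(0,\pi/2)$ and $\beta,\phi\in\R/2\pi\Z$; the circles $C$ and $C^\perp$ appear as $\{\theta=0\}$ and $\{\theta=\pi/2\}$, parameterized by $\beta$ and $\phi$ respectively, with $\pqq_i$ at $\beta=t_i$ and $\pqq^j$ at $\phi=t^j$. By Definition \ref{D:refl} and \eqref{hemispheres}, $\Sigma^{\phi_0}$ corresponds to $\{\theta=0\}\cup\{\phi\in\{\phi_0,\phi_0+\pi\}\}$ and analogously for $\Sigma_{\beta_0}$. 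Consulting the sign convention \eqref{Omu+} then identifies the four closed half-spaces $\Omu^{+*}_{**}=\{\phi\in[0,\pi]\}$, $\Omu^{*+}_{**}=\{\phi\in[-\pi/2,\pi/2]\}$, $\Omu^{**}_{+*}=\{\beta\in[0,\pi]\}$, $\Omu^{**}_{*+}=\{\beta\in[-\pi/2,\pi/2]\}$, while \eqref{Om}, \eqref{Ompm}, and Definition \ref{D:cone} yield $\Om_i=\{\beta\in[t_i,t_{i+1}]\}$ with halves $\Om_{i\pm}$ about $\beta=t_{i+1/2}$, and analogously $\Om^j$, $\Om^{j\pm}$ as slices in $\phi$; in particular $\Om^{0+}\cup\Om^{1-}=\{\phi\in[0,\pi/2]\}$.

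Armed with this dictionary, part (i) is immediate: $\Omu^{++}_{**}=\Omu^{+*}_{**}\cap\Omu^{*+}_{**}=\{\phi\in[0,\pi/2]\}=\Om^{0+}\cup\Om^{1-}$, so intersecting with $\bigcup_{i=0}^{2m-1}\Om_i=\Sph^3$ (by \eqref{Omdec}) gives the stated union. For (ii), one computes $\Omu^{++}_{+*}=\{\phi\in[0,\pi/2],\ \beta\in[0,\pi]\}$ and $\Omu^{++}_{++}=\{\phi\in[0,\pi/2],\ \beta\in[0,\pi/2]\}$; since $\refl_{\Sigma_{\pi/2}}$ is the map $x^1\mapsto-x^1$, which acts as $(\beta,\phi)\mapsto(\pi-\beta,\phi)$, it carries $\Omu^{++}_{++}$ onto $\{\phi\in[0,\pi/2],\ \beta\in[\pi/2,\pi]\}$, establishing the first equality. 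The second arises by partitioning $\beta\in[0,\pi]$ as $[0,\pi/(2m)]\cup\bigcup_{i=1}^{m-1}[t_i,t_{i+1}]\cup[(2m-1)\pi/(2m),\pi]$, i.e.\ as $\Om_{0+}\cup\bigcup_{i=1}^{m-1}\Om_i\cup\Om_{m-}$, and intersecting each slice with $\Om^{0+}\cup\Om^{1-}$.

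Part (iii) repeats the bookkeeping with $\beta$ now restricted to $[0,\pi/2]$. The point is that $\pi/2=m\pi/(2m)$ equals the boundary $t_{(m+1)/2}$ of $\Om_{(m-1)/2}$ when $m$ is odd, giving $[0,\pi/2]=\Om_{0+}\cup\bigcup_{i=1}^{(m-1)/2}\Om_i$, whereas when $m$ is even $\pi/2=t_{m/2+1/2}$ is the midpoint of $\Om_{m/2}$, giving $[0,\pi/2]=\Om_{0+}\cup\bigcup_{i=1}^{m/2-1}\Om_i\cup\Om_{m/2-}$; intersecting with $\Om^{0+}\cup\Om^{1-}$ yields the two stated formulas. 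The only real obstacle is careful bookkeeping with the $\pm$ labels and half-integer subscripts dictated by \eqref{qpoints}, \eqref{Ompm}, and \eqref{Omu+}; once the coordinate dictionary is in place, the lemma reduces to reading off one-dimensional partitions.
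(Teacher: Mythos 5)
Your proposal is correct, and it follows the same route as the paper, whose proof of this lemma is simply a direct verification from the definitions (left to the reader); your toroidal-coordinate dictionary $(\theta,\beta,\phi)$ just makes that verification explicit, correctly identifying each $\Omu$, $\Om_i$, $\Om^{j\pm}$ with a $\beta$- or $\phi$-slab and reducing (i)--(iii) to partitions of intervals, with the parity of $m$ entering exactly as you describe through whether $\pi/2$ is an endpoint $t_{(m+1)/2}$ or a midpoint $t_{m/2+1/2}$.
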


\begin{proof}
It is straightforward to verify all these statements by using the definitions. 
\end{proof}

\subsection*{The coordinate Killing fields}
$\phantom{ab}$
\nopagebreak

Using the coordinates defined in \ref{D:coordinates}, 
we endow $\R^4$ with its standard orientation $dx^1 \wedge dx^2 \wedge dx^3 \wedge dx^4$,
and we endow the six coordinate $2$-planes with the orientations
  \begin{equation}
  \begin{aligned}
  	&dx^1 \wedge dx^2, \quad dx^3 \wedge dx^4, \quad dx^1 \wedge dx^4, \\
  	&dx^2 \wedge dx^3, \quad dx^1 \wedge dx^3, \quad \mbox{and} \quad dx^4 \wedge dx^2.
  \end{aligned}
  \end{equation}
Note that these orientations have been chosen so that one obtains the orientation
of $\R^4$ upon taking the wedge product (in either order)
of the orientation forms of a pair of orthogonally complementary $2$-planes.

In turn we orient each coordinate unit circle by taking the interior product
of its outward unit normal with the orientation form of the $2$-plane it spans.
These choices are consistent with the convention that for any oriented great circle $C'$
we orient ${C'}^\perp$ so that the wedge product of
the two corresponding $2$-plane orientations will yield the standard orientation on $\R^4$.
Thus
\begin{equation}
\label{E:killing} 
    \begin{aligned}
      K_{C^\perp}(x)=K^{C}(x)&=x^1 \, \pp_{\pi/2} - x^2 \, \pp_0 \\ 
      K_C(x)=K^{C^\perp}(x)&=x^3 \, \pp^{\pi/2} - x^4 \, \pp^0 , \\
      K_{C_{\pi/2}^{\pi/2}}(x)=K^{C_0^0}(x)&=x^1 \, \pp^0 - x^3 \, \pp_0 , \\
      K_{C_0^0}(x)=K^{C_{\pi/2}^{\pi/2}}(x)&=x^4 \, \pp_{\pi/2} - x^2 \, \pp^{\pi/2} , \\
      K_{C_0^{\pi/2}}(x)=K^{C_{\pi/2}^0}(x)&=x^2 \, \pp^0 - x^3 \, \pp_{\pi/2} , \mbox{ and} \\
      K_{C_{\pi/2}^0}(x)=K^{C_0^{\pi/2}}(x)&=x^1 \, \pp^{\pi/2} - x^4 \, \pp_0 .
    \end{aligned}
   \end{equation}

\begin{lemma}[$K_{C_{\phi}^{\phi'}}$ on $\Omu^{++}_{+\pm}$ for $\phi,\phi'\in\{0,\pi/2\}$]  
\label{Omu:K} 
We have the following (recall \ref{l:D:rot} and \ref{E:killing}).  
\\
(i)  
$\rot_{C_0^0} \pp^{\pi/2} = \pp_{\pi/2} $, 
$\Ktilde_{C_0^0} ( \Omu^{++}_{+*} ) = \overline{ \, \pp_{\pi/2} \pp^{-\pi/2} \,}$.  
\\
(ii)  
$\rot_{C_0^{\pi/2}} \pp_{\pi/2} = \pp^{0} $, 
$\Ktilde_{C_0^{\pi/2}} ( \Omu^{++}_{+*} ) = \overline{ \, \pp^{0} \pp_{-\pi/2} \,}$.  
\\
(iii)  
$\rot_{C^0_{\pi/2}} \pp_{0} = \pp^{\pi/2} $, 
$\rot_{C^0_{\pi/2}} \pp^{\pi/2} = \pp_{\pi} $, 
$\Ktilde_{C^0_{\pi/2}} ( \Omu^{++}_{++} ) = \overline{ \, \pp^{\pi/2} \pp_{\pi} \,}$,  
$\Ktilde_{C^0_{\pi/2}} ( \Omu^{++}_{+-} ) = \overline{ \, \pp_{\pi} \pp^{-\pi/2} \,}$.  
\\
(iv) 
$\rot_{C^{\pi/2}_{\pi/2}} \pp_{0} = \pp^{0} $, 
$\rot_{C^{\pi/2}_{\pi/2}} \pp^{0} = \pp_{\pi} $, 
$\Ktilde_{C^{\pi/2}_{\pi/2}} ( \Omu^{++}_{++} ) = \overline{ \, \pp^{0} \pp_{\pi} \,}$,  
$\Ktilde_{C^{\pi/2}_{\pi/2}} ( \Omu^{++}_{+-} ) = \overline{ \, \pp_{\pi} \pp^{\pi} \,}$.  
\end{lemma}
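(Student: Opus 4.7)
The plan is to verify each of the four items by a direct computation in the coordinates $(x^1,x^2,x^3,x^4)$ of \ref{D:coordinates}, using the explicit Killing-field formulas \ref{E:killing}. From \ref{hemispheres}, \ref{points}, \ref{Omu+}, and \ref{partialOmu+} one sees that the four bounding two-spheres are the coordinate hyperplane sections $\Sigma^0=\{x^4=0\}$, $\Sigma^{\pi/2}=\{x^3=0\}$, $\Sigma_0=\{x^2=0\}$, $\Sigma_{\pi/2}=\{x^1=0\}$, and the signs of $x^4,x^3,x^2,x^1$ distinguish the chambers; thus
\[
\Omu^{++}_{+*}=\{p\in\Sph^3: x^2,x^3,x^4\ge 0\},\qquad \Omu^{++}_{+\pm}=\Omu^{++}_{+*}\cap\{\pm x^1\ge 0\}.
\]

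Next, I interpret each claim $\rot_{C'}\,q=q'$ as applying the quarter-turn $\rot_{C'}^{\pi/2}$ of \ref{l:D:rot}. By \ref{E:killing} each Killing field $K_{C'}$ is supported on exactly two coordinate axes, so its quarter-turn cyclically exchanges the corresponding pair (fixing the other two axes). The eight point-to-point equalities in (i)--(iv) then follow by reading off the relevant pair: for example $K_{C_0^0}=x^4\,\pp_{\pi/2}-x^2\,\pp^{\pi/2}$ rotates the $(x^2,x^4)$-plane and so sends $\pp^{\pi/2}=(0,0,0,1)$ to $\pp_{\pi/2}=(0,1,0,0)$, confirming the first assertion of (i), and the analogous computation handles (ii), (iii), (iv).

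Finally, for each claim $\Ktilde_{C'}(\Omu)=\overline{q_1q_2}$, \ref{E:killing} shows that $K_{C'}(p)$ lies in the $2$-plane $\Span((C')^\perp)$ with the two coefficients equal, up to sign, to two specific coordinates of $p$. Normalizing identifies $\Ktilde_{C'}|_p$ with a unit vector in $(C')^\perp\subset\Sph^3$ whose sign pattern in the chosen basis matches that of the two relevant coordinates. The chamber constraints defining $\Omu$ fix those signs, so the image is a single quadrant-arc of the great circle $(C')^\perp$ with endpoints obtained by setting each of the two coordinates to zero in turn. For (i) this yields $\Ktilde_{C_0^0}|_p\propto x^4\,\pp_{\pi/2}-x^2\,\pp^{\pi/2}$ with $x^2,x^4\ge 0$, sweeping from $\pp_{\pi/2}$ (at $x^2=0$) to $\pp^{-\pi/2}=-\pp^{\pi/2}$ (at $x^4=0$), i.e.\ $\overline{\pp_{\pi/2}\pp^{-\pi/2}}$. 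Items (ii)--(iv) are handled identically; in (iii) and (iv) the finer chambers $\Omu^{++}_{++}$ and $\Omu^{++}_{+-}$ further restrict one of the two coordinates entering the Killing field, splitting the relevant half-circle of $(C')^\perp$ into its two quarter-arcs, which then match the two right-hand sides. The whole argument is purely mechanical verification via \ref{E:killing} and the coordinate descriptions of the $\Omu$'s, and I do not anticipate any substantive obstacle beyond careful bookkeeping of coordinate indices and signs.
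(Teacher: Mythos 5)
Your proposal is correct and follows essentially the same route as the paper, whose proof simply notes that all claims follow from the explicit formulas \eqref{E:killing} and Definition \ref{l:D:rot}; you have merely spelled out the coordinate identification of the chambers and the quarter-arc images in detail. The only (harmless) point left implicit, in the paper as well, is that $\Ktilde_{C'}$ is defined only off $C'$, so the image statements refer to the chamber with its intersection with $C'$ removed.
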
 

\begin{proof}
All claims follow easily from \eqref{E:killing} and Definition \ref{l:D:rot}.  
\end{proof}

\subsection*{Some quadrilaterals in $\Sph^3$} 
$\phantom{ab}$
\nopagebreak

We consider now $\forall i,j \in \Z$ 
the spherical quadrilateral $Q_i^j\subset \partial \Om_i^j$  
consisting of the four edges of the spherical tetrahedron $\Om_i^j$ 
not contained in $C$ or $C^\perp$; 
that is 
\begin{equation} 
\label{D:Q} 
Q_i^j :=
\overline{ \pqq_{ i } \pqq^{ j  } }  
\cup 
\overline{ \pqq^{ j } \pqq_{ i+1  } }  
\cup 
\overline{ \pqq_{ i+1 } \pqq^{ j+1  } }  
\cup 
\overline{ \pqq^{ j+1 } \pqq_{ i  } }  
. 
\end{equation} 
For ease of reference we define the set of vertices of $Q_i^j$ 
(the same as the set of vertices of $\Om_i^j$) 
\begin{equation} 
\label{D:Qv} 
\Qv_i^j :=
\{ 
\pqq_{ i } , 
\pqq_{ i+1  } ,  
\pqq^{ j  } ,  
\pqq^{ j+1 } 
\} 
. 
\end{equation}

Recall that by \ref{Omsym} $\forall i,j\in\Z$ the circle  
$\Sph( \pqq_{i+\frac12},\pqq^{j+\frac12} ) = {C_{i\pi/m}^{j\pi/2} } $ is an axis of symmetry of $\Om_i^j$. 
It is natural then to call this circle the ``axis'' of $\Om_i^j$ 
and study rotations along it as in the following lemma.  
We also define 
\begin{equation} 
\label{E:partialpm} 
\partial_+ \Om_i^j := \overline{ \, \pqq_{ i } \pqq^{ j } \pqq^{ j+1 } \, } \cup \overline{ \, \pqq_{ i+1 } \pqq^{ j } \pqq^{ j+1 } \, } 
\qquad \text{ and } \qquad 
\partial_- \Om_i^j := \overline{ \, \pqq_{ i } \pqq_{ i+1 } \pqq^{ j } \, } \cup \overline{ \, \pqq_{ i } \pqq_{ i+1 } \pqq^{ j+1 } \, },   
\end{equation} 
and by \ref{Om:p} we have then 
\begin{equation} 
\label{E:partialpm2} 
\partial\Om_i^j= \partial_+ \Om_i^j \cup \partial_- \Om_i^j 
\qquad \text{ and } \qquad 
Q_i^j= \partial_+ \Om_i^j \cap \partial_- \Om_i^j.  
\end{equation} 

\begin{lemma}[{$\Om_i^j$} and rotations along its axis] 
\label{L-alex} 
The following are true $\forall i,j\in\Z$ and any orbit $O$ of 
$K_{\widetilde{C}}$, 
where 
$\widetilde{C} := ({C_{i\pi/m}^{j\pi/2} })^\perp =  C_{i\pi/m+\pi/2}^{j\pi/2+\pi/2} $. 
\\ (i) $( \rot^t_{\widetilde{C}} \Om_i^j ) \cap \Om_i^j= \emptyset$ for $t\in (-3\pi/2, -\pi/2)\cup  (\pi/2,3\pi/2)$. 
Moreover either 
$( \rot^{\pm\pi/2}_{\widetilde{C}} \Om_i^j ) \cap \Om_i^j   =\{ \pqq_{i+\frac12} \}$ or  
$( \rot^{\pm\pi/2}_{\widetilde{C}} \Om_i^j ) \cap \Om_i^j   =\{ \pqq^{j+\frac12} \}$ 
(depending or the orientation of ${C_{i\pi/m}^{j\pi/2} } $ and the sign).  
\\ (ii) 
For each $\Om_{i\pm}^{j\pm}$ either $O \cap \Om_{i\pm}^{j\pm} = O \cap \Om_{i}^{j} $  or $O \cap \Om_{i\pm}^{j\pm} = \emptyset$.  
\\ (iii) 
If $O\cap \Om_i^j \ne \emptyset$,  
then (recall \ref{E:partialpm})   
$O\cap \partial_{\pm} \Om_i^j = \{x_{\pm}\} $ for some $x_{\pm} \in \partial_{\pm} \Omega_i^j$.  
Moreover $O\cap\Om_i^j$ is 
a connected arc (possibly a single point) whose endpoints are $x_+$ and $x_-$.  
\\ (iv) 
If $O\cap Q_i^j\ne \emptyset$, then $x_+=x_-\in Q_i^j$  
and 
$O\cap\Om_i^j=\{x_+\}$.   
\\ (v) 
If $O\cap ( \Om_i^j \setminus \Qv_i^j ) \ne \emptyset$,  
then $O$ intersects each face of $\Om_i^j$ containing $x_+$ ($x_-$) transversely. 
\\ (vi) 
$\Pi_i^j(\Om_i^j) \subset C_{i\pi/m+\pi/2}^{j\pi/2+\pi/2} \cone \pp_{i\pi/m} $ 
is homeomorphic to a closed disc with boundary 
$\Pi_i^j(Q_i^j) $, 
where 
$\Pi_i^j:= \Pi^{C_{i\pi/m}^{j\pi/2}}_{\pp_{i\pi/m} } $  
is defined as in \ref{Pi}  
(recall also $\pp_{i\pi/m}  = \pqq_{i+\frac12}$).  
\end{lemma}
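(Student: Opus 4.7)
My plan is to work in coordinates adapted to $\Om_i^j$ and reduce all six claims to explicit calculations involving wedges and orbits. Set $f_1 := \pp_{i\pi/m}$, $f_2 := \pp_{i\pi/m+\pi/2}$, $f_3 := \pp^{j\pi/2}$, $f_4 := \pp^{j\pi/2+\pi/2}$, so that the axis $C_{i\pi/m}^{j\pi/2}$ lies in $\operatorname{span}(f_1,f_3)$ and $\widetilde{C}$ in $\operatorname{span}(f_2,f_4)$. With $a := \cos(\pi/(2m))$ and $b := \sin(\pi/(2m))$, the vertices of $\Om_i^j$ are
\[
\pqq_i = (a,-b,0,0),\quad \pqq_{i+1} = (a,b,0,0),\quad \pqq^j = (0,0,\tfrac1{\sqrt 2},-\tfrac1{\sqrt 2}),\quad \pqq^{j+1} = (0,0,\tfrac1{\sqrt 2},\tfrac1{\sqrt 2}).
\]
Reading off the outward normals to the four spherical triangular faces and invoking the geodesic convexity from \ref{Om:p}.iv yields the explicit half-space description
\[
\Om_i^j = \{x \in \Sph^3 : b\,x^1 \ge a|x^2|,\ x^3 \ge |x^4|\}.
\]
In these coordinates $\rot^t_{\widetilde C}$ acts by rotation of the $(x^1,x^3)$-plane by $t$, and $K_{\widetilde C}|_x = (-x^3,0,x^1,0)$, whose orbits are circles at fixed $(x^2,x^4)$.

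For (i), the inequalities above force $\Om_i^j$ into the $\pi/2$-wedge $W := \{x^1 \ge 0,\, x^3 \ge 0\}$, whose dihedral edge is $\widetilde{C}$. A direct calculation in the $(x^1,x^3)$-plane shows $W \cap \rot^t_{\widetilde C} W = \widetilde{C}$ for all $t \in (\pi/2, 3\pi/2)$; and $\Om_i^j \cap \widetilde{C} = \emptyset$ since no vertex (hence, by convexity, no point) of $\Om_i^j$ satisfies $x^1 = x^3 = 0$. Thus $\Om_i^j \cap \rot^t_{\widetilde C}\Om_i^j = \emptyset$ throughout that range. At $t = \pm\pi/2$ one has $W \cap \rot^t_{\widetilde C} W = \{x^1 = 0, x^3 \ge 0\}$ or $\{x^3 = 0, x^1 \ge 0\}$; restricting to $\Om_i^j$ leaves a single edge ($\overline{\pqq^j \pqq^{j+1}}$ or $\overline{\pqq_i\pqq_{i+1}}$), restricting to $\rot^t_{\widetilde C}\Om_i^j$ leaves the image of the opposite edge, and their intersection computes to the one midpoint $\pqq^{j+1/2}$ or $\pqq_{i+1/2}$ (the sign depending on the orientation of $C_{i\pi/m}^{j\pi/2}$).

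For the remaining claims, parametrize each orbit of $K_{\widetilde C}$ by $\theta \mapsto (r\cos\theta, q^2, r\sin\theta, q^4)$ with $r = \sqrt{1-(q^2)^2-(q^4)^2}$. Substituting into the defining inequalities shows the orbit meets $\Om_i^j$ iff $(q^2)^2/b^2 + 2(q^4)^2 \le 1$, and when nonempty the admissible $\theta$ form a closed arc whose endpoints $x_+$ and $x_-$ lie respectively on $\partial_+\Om_i^j$ (where $bx^1 = a|x^2|$) and $\partial_-\Om_i^j$ (where $x^3 = |x^4|$), establishing (iii); the arc degenerates to a point precisely when $(q^2,q^4)$ lies on the bounding ellipse, which by the vertex calculation corresponds to a point of $Q_i^j$, giving (iv). For (v) one verifies that the inner product of $K_{\widetilde C}$ with the outward normal to a face of type $\{bx^1 = \pm ax^2\}$ equals $\mp b\,x^3$, and to a face of type $\{x^3 = \pm x^4\}$ equals $\pm x^1$; these vanish only where $x^3 = 0$ or $x^1 = 0$, which on the relevant face pins $x$ to a single vertex in $\Qv_i^j$, excluded by hypothesis. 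For (ii), the four sub-tetrahedra $\Om_{i\pm}^{j\pm}$ correspond to the four sign combinations of $(x^2, x^4)$, which are orbit invariants, so each orbit lies in a single sign sector (or on the boundary $\{x^2 = 0\}\cup\{x^4=0\}$, shared between sub-tetrahedra). Finally, (vi) is immediate: $\Pi_i^j$ collapses each orbit to the unique point $(r, q^2, 0, q^4)$, so $\Pi_i^j(\Om_i^j)$ is homeomorphic via $(q^2,q^4)$ to the closed ellipse above, with boundary $\Pi_i^j(Q_i^j)$ by (iv).

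The main obstacle is the initial step of extracting the explicit half-space description of $\Om_i^j$ from the vertex data (and checking that $\rot^t_{\widetilde C} W \cap W$ really collapses to the axis $\widetilde{C}$ for $|t|\in(\pi/2,3\pi/2)$); once that picture is in place, every remaining assertion reduces to inspecting the $\theta$-arc produced by substituting into the inequalities, plus bookkeeping for orientation conventions in the sign assignment at $t=\pm\pi/2$.
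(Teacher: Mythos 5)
Your proposal is correct, but it takes a genuinely different route from the paper. You linearize everything: the half-space description $\Om_i^j=\{b\,x^1\ge a|x^2|,\ x^3\ge|x^4|\}$ (which indeed follows immediately from \eqref{Om}, since each of the two cones $\Om_i$, $\Om^j$ is such a wedge), the explicit orbit parametrization at fixed $(x^2,x^4)$, the ellipse criterion $(q^2)^2/b^2+2(q^4)^2\le1$ for an orbit to meet $\Om_i^j$, and direct computation of the normal component of $K_{\widetilde{C}}$ on each face. This replaces all of the paper's qualitative machinery: for (i) the paper uses two orthogonal hemispheres and convex separation by orthogonal hyperplanes; for non-tangency it argues that $K_{\widetilde C}$ induces a first-harmonic Jacobi field on the bisecting great spheres and locates its nodal circle by symmetry; for the at-most-once intersection with each face it combines the planarity of orbits with a reflection argument; and for (vi) it runs a Jordan-curve plus degree-theory argument on the hemisphere. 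Your computation buys explicit quantitative control (the ellipse, the exact $\theta$-arc $[\theta_2,\theta_1]$ with endpoints on $\partial_-\Om_i^j$ and $\partial_+\Om_i^j$) and makes (ii)--(vi) essentially bookkeeping, while the paper's synthetic argument avoids coordinates and rehearses techniques (reflection, Jacobi fields, nodal sets) reused later in the article. Two small loose ends in your write-up, both repairable inside your own framework: the parenthetical ``no vertex, hence by convexity no point, satisfies $x^1=x^3=0$'' is not a valid inference as stated, but the fact is immediate from the inequalities ($x^1=x^3=0$ forces $x^2=x^4=0$); and in (v) the phrase ``excluded by hypothesis'' needs the observation that an orbit through a vertex lies on the boundary ellipse and therefore meets $\Om_i^j$ only at that vertex, so the hypothesis $O\cap(\Om_i^j\setminus\Qv_i^j)\ne\emptyset$ really does force $x_\pm\notin\Qv_i^j$.
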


\begin{proof} 
We can clearly assume without loss of generality that $i=j=0$. 
To prove (i) note that $H:=\pp^0\cone C^{\pi/2}_{\pi/2}$ and $H':=\pp_0\cone C^{\pi/2}_{\pi/2}$ are orthogonal 
closed hemispheres with common boundary $C^{\pi/2}_{\pi/2}$, 
intersecting $C^0_0$ orthogonally at $\pp^0$ and $\pp_0$ respectively, 
and satisfying  
$\rot^{\pi/2}_{\widetilde{C}} (H')=H$. 
Moreover $\overline{\pqq^0\pqq^1}\subset H$ and $\overline{\pqq_0\pqq_1}\subset H'$ with both geodesic segments avoiding the boundary $C^{\pi/2}_{\pi/2}$. 
Since two orthogonal hyperplanes separate $\R^4$ into four convex connected components,
(i) follows easily.
Because each of the bisecting spheres $\Sigma_0$ and $\Sigma^0$
is preserved by the family $\rot^{C_0^0}_t$,
the orbits of $K^{C_0^0}$ cannot cross either sphere, proving (ii).

Before turning to the remaining items
we first show that no orbit of $K^{C_0^0}$ intersects any face of $\Omega_0^0$
tangentially, except at a vertex.
By the symmetries it suffices to prove that orbits
intersect $\overline{\pp^0\pqq_1\pqq^1} \subset \Sigma_{\pi/2m}$ 
and $\overline{\pp_0\pqq_1\pqq^1} \subset \Sigma^{\pi/4}$ transversely (if at all)
except at $\pqq_1$ (the orbit through which is tangential to $\Sigma_{\pi/2m}$)
and $\pqq^1$ (the orbit through which is tangential to $\Sigma^{\pi/4}$).
Of course the spheres $\Sigma_{\pi/2m}$ and $\Sigma^{\pi/4}$ are minimal surfaces
and neither contains $C_0^0$,
so the Killing field $K^{C_0^0}$
induces a nontrivial Jacobi field on each of them.
A point where an orbit meets one of these spheres tangentially is a zero
of the corresponding Jacobi field,
but we know these nontrivial Jacobi fields are simply first harmonics,
each of whose nodal sets consists of a single great circle.
Clearly the reflection $\refl_{\Sigma^{\pi/2}}$ ($\refl_{\Sigma_{\pi/2}}$) preserves
the sides of $\Sigma_{\pi/2m}$ ($\Sigma^{\pi/4}$)
and reverses each orbit of $K^{C_0^0}$.
Thus orbits can meet $\Sigma_{\pi/2m}$ ($\Sigma^{\pi/4}$) tangentially
only along $C_{\pi/2m}^{\pi/2}$ ($C_{\pi/2}^{\pi/4}$),
which intersects $\overline{\pqq_1\pqq^1}$ only at $\pqq_1$ ($\pqq^1$),
establishing the asserted transversality.

Next we argue that no orbit of $K^{C_0^0}$ intersects
any face of $\Omega_0^0$ at more than one point.
Again (by the symmetries) it suffices to
show that every orbit intersects each of the faces
$\overline{\pp^0\pqq_1\pqq^1} \subset \Sigma_{\pi/2m}$ 
and $\overline{\pp_0\pqq_1\pqq^1} \subset \Sigma^{\pi/4}$
at most once.
To see this first note that the orbits of $K^{C_0^0}$ in $\R^4 \supset \Sph^3$
are planar circles, so if one intersects a great $2$-sphere at more than one point,
then the intersection must be either a great circle (the entire orbit) or a pair of points.
In the first case the $2$-sphere so intersected must contain $C_0^0$,
but neither the sphere $\Sigma^{\pi/4} \supset \overline{\pp_0\pqq_1\pqq^1}$
nor the sphere $\Sigma_{\pi/2m} \supset \overline{\pp^0\pqq_1\pqq^1}$
contains $C_0^0$, and so the orbits of $K^{C_0^0}$ must meet these spheres at most twice.
However, the reflection $\refl_{\Sigma^{\pi/2}}$ ($\refl_{\Sigma_{\pi/2}}$)
preserves both $\Sigma_0$ ($\Sigma^{\pi/4}$)
and each orbit (as a set) of $K^{C_0^0}$,
so that if an orbit intersects $\Sigma_0$ ($\Sigma^{\pi/4}$) in two points,
these points must lie on opposite sides of $\Sigma^{\pi/2}$ ($\Sigma_{\pi/2}$).
Since in fact $\Omega_0^0$ crosses neither sphere of symmetry,
we see that any orbit meets each face at most once, as claimed.

Now we are ready to prove (iii), (iv), and (v).
By the symmetries
it suffices to consider an orbit $O$ intersecting $\Omega_{0+}^{0+}$.
Of course by (i) $O$ is not contained in $\Omega_{0+}^{0+}$
and obviously by (ii) $O$ can enter (or exit) $\Omega_{0+}^{0+}$ only through
$\overline{\pp_0\pqq_1\pqq^1}$ or $\overline{\pp^0\pqq_1\pqq^1}$,
but by the preceding paragraph it intersects each at most once.
Since $\overline{\pqq_1 \pqq^1}$ lies on both these triangles,
it follows that any orbit $O$ meeting $\overline{\pqq_1 \pqq^1}$
intersects $\Omega_0^0$ at only one point.
If on the other hand $O$ misses $\overline{\pqq_1 \pqq^1}$,
then, by the transversality above,
it must intersect the interior of $\Omega_0^0$,
so in this case it must cross $\overline{\pp_0\pqq_1\pqq^1} \cup \overline{\pp^0\pqq_1\pqq^1}$ at least twice,
meaning, by the above, that in fact $O$ must intersect each of these triangles exactly once.
This completes the proof of (iii), (iv), and (v).
 
For (vi) set $\Pi:=\Pi_0^0$.
Since the quadrilateral $Q_0^0$ is itself a closed curve missing
$C_{\pi/2}^{\pi/2}=\Pi^{-1}\left(C_{\pi/2}^{\pi/2}\right)$,
its image $Q':=\Pi\left(Q_0^0\right)$ under $\Pi$
is likewise a closed curve missing $C_{\pi/2}^{\pi/2}$.
By item (iv) (and the embeddedness of $Q_0^0$)
it follows that $Q'$ is an embedded closed curve in the interior of $C_{\pi/2}^{\pi/2} \cone \pp_0$,
so that $\left(C_{\pi/2}^{\pi/2} \cone \pp_0\right) \backslash Q'$
has two connected components, one the disc bounded by $Q'$
and the other the annulus bounded by $Q'$ and $C_{\pi/2}^{\pi/2}$.
Call the closure of the disc $D'$.
Since the hemisphere
$\Pi^{-1}\left(\overline{\pp_0 \pp_{\pi/2}}\right)=C_0^0 \cone \pp_{\pi/2} \subset \Sigma^0$
intersects $Q_0^0$ only at $\pqq_1$,
we see that the geodesic arc $\overline{\pp_0 \pp_{\pi/2}}$
intersects $Q'$ exactly once (at $\pqq_1$),
and so we conclude that $\pp_0 \in D'$.
A second application of item (iv)
ensures that $\Pi\left(\Omega_0^0 \backslash Q_0^0\right)$
misses $Q'$,
but $\Omega_0^0 \backslash Q_0^0$ is connected and includes $\pp_0$,
so we have $\Pi\left(\Omega_0^0\right) \subset D'$.
Last, note that $D'':=\overline{\pqq_0\pqq_1\pqq^0} \cup \overline{\pqq_0\pqq_1\pqq^1}$
is a disc in $\Omega_0^0$ whose boundary is $Q_0^0$ and thereby mapped by $\Pi$
homeomorphically onto $Q'=\partial D'$.
It follows (by degree theory) that $\Pi(D'')=D'$,
and so of course $\Pi(\Omega_0^0)=D'$ as well.
\end{proof}

\section{The Lawson surfaces}

\subsection*{Definition, uniqueness, and symmetries} 
$\phantom{ab}$
\nopagebreak

Note that the surfaces we define below are the surfaces called $\xi_{m-1,1}$ in \cite{Lawson}. 
Recall that these surfaces can be viewed as desingularizations of two orthogonal great two-spheres. 
In this article we do not consider any other Lawson surfaces and when we refer to Lawson surfaces we mean these surfaces only. 
The surfaces defined in the next theorem are positioned so that they can be viewed as desingularizations of $\Sigma^{\pi/4}\cup \Sigma^{ - \pi/4}$ along $C$. 
Note also that we restrict our attention to the case $m\ge3$ because the surfaces produced 
otherwise are the great sphere ($m=1$) and the Clifford torus ($m=2$). 

\begin{theorem}[Lawson 1970 \cite{Lawson}] 
\label{T:lawson} 
Given an integer $m\ge3$ 
there is a unique compact connected 
minimal surface $D_i^j \subset \Om_i^j$ with 
$\partial D_i^j = Q_i^j$ (recall \eqref{Om} and \eqref{D:Q}).  
Moreover $D_i^j$ is a disc, minimizing area among such discs, 
and  
$$
M=M[C,m] :=\bigcup_{i+j\in2\Z} D_i^j
$$ 
is an embedded connected closed (so two-sided) smooth minimal surface of genus $m-1$. 
\end{theorem}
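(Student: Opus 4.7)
The plan is to combine a Plateau problem with an Alexandrov moving-plane argument to produce a unique minimal disc $D_i^j$, and then to build $M$ by iterated Schwarz reflection through the geodesic arcs of $Q_i^j$. For existence, $Q_i^j$ is a piecewise-geodesic Jordan curve and $\Om_i^j$ is geodesically convex by Lemma \ref{Om:p}.iv, so by Douglas--Rado together with the maximum principle (using the totally geodesic faces of $\Om_i^j$ as barriers) there is an area-minimizing immersed disc $D_i^j\subset\Om_i^j$ with $\partial D_i^j=Q_i^j$, smooth in the interior and along each smooth arc of the boundary.

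For uniqueness among compact connected minimal surfaces in $\Om_i^j$ with boundary $Q_i^j$, I would use Lemma \ref{L-alex}. Setting $\widetilde C := (C_{i\pi/m}^{j\pi/2})^\perp$, parts (iii)--(v) of that lemma say the orbits of $K_{\widetilde C}$ restricted to $\Om_i^j$ are arcs each crossing $\partial_\pm\Om_i^j$ exactly once, and part (vi) identifies $\Pi_i^j(\Om_i^j)$ with a closed disc $D'$ bounded by $\Pi_i^j(Q_i^j)$. A moving-plane argument along the one-parameter family of rotations about $\widetilde C$, using Lemma \ref{L-alex}.i and the symmetry of $Q_i^j$ under $\refl_{C_{i\pi/m}^{j\pi/2}}$, then forces any such surface to be $K_{\widetilde C}$-graphical over $D'$; graphicality plus the maximum principle applied to the difference of two height functions yields uniqueness, and in particular $D_i^j$ must be a disc.

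To assemble $M$, each of the four arcs of $Q_i^j$ lies on a great circle $\gamma$, and the $\pi$-rotation about $\gamma$ -- which coincides with the reflection $\refl_\gamma$ of Definition \ref{D:refl} -- is an orientation-preserving isometry of $\Sph^3$ sending $\Om_i^j$ onto the diagonally adjacent tetrahedron sharing that arc, namely one of $\Om_{i\pm 1}^{j\pm 1}$; in particular the parity of $i+j$ is preserved. By the Schwarz reflection principle for minimal surfaces with geodesic boundary, each such rotation smoothly extends $D_i^j$ across the corresponding arc into the neighboring disc, and iterating across all arcs and indices produces $M=\bigcup_{i+j\in 2\Z}D_i^j$ as a closed smooth minimal surface. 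Embeddedness is immediate from \eqref{Omdec} since the $\Om_i^j$ have pairwise disjoint interiors, while connectedness holds because the reflection graph on the parity-even indices is connected. For the genus, the induced CW decomposition of $M$ has $V=2m+4$ vertices (the $\pqq_i$ and $\pqq^j$), $E=8m$ edges (each arc $\overline{\pqq_i\pqq^j}$ borders exactly two chosen tetrahedra), and $F=4m$ faces, yielding $\chi(M)=4-2m$ and hence $g=m-1$.

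The main obstacle is the uniqueness/graphicality step: the moving-plane argument must be executed carefully near the axis $C_{i\pi/m}^{j\pi/2}$, where orbits of $K_{\widetilde C}$ degenerate to points, and at the four corners of $Q_i^j$, where the boundary is only piecewise smooth; Hopf-type boundary estimates at the axis together with a tangent-cone analysis at the corners are needed to rule out spurious first-touching as the rotated copy is brought back to the original surface.
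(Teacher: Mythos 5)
Your uniqueness argument is in essence the paper's own: the paper also rotates one candidate surface against the other along the one-parameter family $\rot^t_{\widetilde{C}}$ with $\widetilde{C}=(C_{i\pi/m}^{j\pi/2})^\perp$, uses Lemma \ref{L-alex} plus the interior maximum principle to exclude contact for $t\neq 0$, and then concludes agreement near $Q_i^j\setminus\Qv_i^j$ followed by analytic continuation, whereas you finish instead by upgrading to graphicality over $D'=\Pi_i^j(\Om_i^j)$ and comparing the two Killing-graph height functions; both endgames work, but yours tacitly needs that an arbitrary competitor projects \emph{onto} all of $D'$ (not just into it), which requires a small degree/openness argument as in the paper's Lemma \ref{Dsym}.i. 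Where you genuinely diverge is that the paper simply cites \cite{Lawson} for existence of the minimizing disc and for every global assertion about $M$ (disc type, embeddedness, connectedness, genus), while you re-derive these: Douglas--Rado with the faces of the convex tetrahedron as barriers, Schwarz reflection across the geodesic edges (correctly identifying $\refl_\gamma$ as the $\pi$-rotation carrying $\Om_i^j$ to the diagonal neighbor $\Om_{i\pm1}^{j\pm1}$, so parity is preserved), and an Euler-characteristic count $V-E+F=(2m+4)-8m+4m=4-2m$, which indeed gives genus $m-1$.

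Two points in your self-contained part need repair. First, Schwarz reflection gives smoothness of $M$ only across the \emph{open} edges; at the $2m+4$ vertices the argument is silent, and these are not innocuous points: at each $\pqq^j\in C^\perp$ exactly $2m$ of the chosen discs meet and the point is an umbilic of degree $m-2$ (Corollary \ref{umb}), so you need an additional regularity/removable-singularity argument (as in Lawson) to see that $M$ is a smooth surface there; relatedly, Douglas--Rado alone produces a possibly branched immersed disc, so embeddedness of $D_i^j$ and absence of interior and boundary branch points must come either from the standard branch-point theorems or from your own graphicality step, which you should invoke explicitly. Second, your stated ``main obstacle'' rests on a misreading of the geometry: $K_{\widetilde{C}}$ vanishes on $\widetilde{C}$, not on the axis $C_{i\pi/m}^{j\pi/2}$ (where it has unit length), and $\widetilde{C}$ is disjoint from $\Om_i^j$, so no orbit degenerates anywhere on the competitor surface; moreover Lemma \ref{L-alex}.iv already shows that for $t\in(-\pi,0)\cup(0,\pi)$ the rotated boundary cannot touch $\Om_i^j$ at all (corners included), so the first-contact point is automatically interior and the Hopf-type boundary estimates and tangent-cone analysis you anticipate are unnecessary.
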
 

\begin{proof} 
The theorem except for the uniqueness part but including 
the existence of a minimizing disc $D_i^j$ is proved in \cite{Lawson}. 
Although the uniqueness is also claimed in \cite{Lawson}, 
the subsequent literature 
(for example \cite{choe:soret:2009}) does not assume uniqueness known. 
We provide now a simple proof of uniqueness. 

Suppose ${D'}_i^j$ is another connected minimal surface in $\Om_i^j$ with boundary $Q_i^j$. 
By \ref{L-alex} $\rot^t_{C_{i\pi/m+\pi/2}^{j\pi/2+\pi/2} } D_i^j$ cannot intersect 
${D'}_i^j$ for any $t\in(-\pi,0)\cup(0,\pi)$ because 
otherwise we can consider the $\sup$ or $\inf$ of such $t$'s which we call $t'$. 
For $t'$ then we would have tangential contact on one side in the interior. 
By the maximum principle 
\cite{schoen1983}*{Lemma 1}  
this would imply 
equality of the surfaces and the boundaries, a contradiction. 

By \ref{L-alex} 
the orbits which are close enough to $Q_i^j\setminus\Qv_i^j$ and intersect $\Om_i^j$  
also intersect $D_i^j$ and ${D'}_i^j$. 
Since there are no intersections for $t\ne0$ above, we conclude that  
$D_i^j$ and ${D'}_i^j$ agree on a neighborhood of $Q_i^j\setminus\Qv_i^j$ and therefore by analytic continuation they are identical. 
\end{proof} 

\begin{corollary}[Symmetries of the Lawson discs] 
\label{Dsym0}
$\forall i,j\in\Z$ $D_i^j$ inherits the symmetries of $\Om_i^j$: it is preserved as a set by   
$\refl_{\Sigma_{i\pi/m}} = \refl_{\Sigma_{t_{i+\frac12}} } =\refl_{C^\perp,\pqq_{i+\frac12} }$, 
$\refl_{\Sigma^{j\pi/m}} = \refl_{\Sigma^{t^{j+\frac12}} } =\refl_{C,\pqq^{j+\frac12} }$, 
and 
the composition of those 
$\refl_{C_{i\pi/m}^{j\pi/2} }$.  
Moreover it has no more symmetries. 
\end{corollary}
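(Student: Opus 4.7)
The plan is to prove the two halves of the statement separately, using the uniqueness clause of Theorem \ref{T:lawson} for existence of the claimed symmetries and a vertex-counting argument for their exhaustiveness.

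For the first half, by \ref{Om:p}(iii) each of $\refl_{\Sigma_{i\pi/m}}$, $\refl_{\Sigma^{j\pi/2}}$, and $\refl_{C_{i\pi/m}^{j\pi/2}}$ lies in $\Gsym^{\Om_i^j}$ and therefore preserves $\Om_i^j$ and $\partial \Om_i^j$. A direct check using \ref{L:obs} confirms that each of these reflections also preserves the pair $\{C, C^\perp\}$ set-wise: for instance $\refl_{\Sigma_{i\pi/m}}$ fixes $C^\perp \subset \Sigma_{i\pi/m}$ pointwise and reflects $C$ about $\pqq_{i+\frac12}$, with the other two cases analogous. Thus each reflection preserves $Q_i^j$, which by \eqref{D:Q} is the part of $\partial \Om_i^j$ disjoint from $C \cup C^\perp$. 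Given any such reflection $\mathbf{g}$, the image $\mathbf{g}(D_i^j) \subset \Om_i^j$ is a compact connected minimal surface with boundary $Q_i^j$, and uniqueness then forces $\mathbf{g}(D_i^j) = D_i^j$.

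For the second half, any isometry $\mathbf{g} \in O(4)$ with $\mathbf{g}(D_i^j) = D_i^j$ preserves $\partial D_i^j = Q_i^j$ and hence permutes the vertex set $\Qv_i^j$. The two diagonals of the quadrilateral $Q_i^j$ are $\overline{\pqq_i \pqq_{i+1}} \subset C$ of length $\pi/m$ and $\overline{\pqq^j \pqq^{j+1}} \subset C^\perp$ of length $\pi/2$, which are distinct because $m \geq 3$. Hence $\mathbf{g}$ cannot interchange the two diagonals and must preserve each of the pairs $\{\pqq_i, \pqq_{i+1}\}$ and $\{\pqq^j, \pqq^{j+1}\}$ as a set, yielding at most four admissible permutations. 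Because $\Span(C)$ and $\Span(C^\perp)$ are orthogonally complementary $2$-planes in $\R^4$, the four vertices span $\R^4$, so each admissible permutation is realized by at most one element of $O(4)$. Since the four elements of $\Gsym^{\Om_i^j}$ already exhaust the admissible permutations, we obtain $\Gsym^{D_i^j} \subseteq \Gsym^{\Om_i^j}$, completing the argument.

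The only delicate point is the use of $m \geq 3$ to separate the two diagonal lengths in the second half; for $m=2$ the extra isometries exchanging $C$ and $C^\perp$ would produce further symmetries (of the Clifford torus). Otherwise the argument is essentially routine: uniqueness reduces existence of the listed symmetries to an elementary boundary check, while exhaustiveness reduces to counting orthogonal transformations implementing the finitely many admissible vertex permutations. No deeper obstacle is anticipated.
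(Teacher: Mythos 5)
Your proposal is correct and follows essentially the same route as the paper: the listed reflections are symmetries of $D_i^j$ by the uniqueness clause of Theorem \ref{T:lawson}, and any symmetry of $D_i^j$ must permute the vertices $\Qv_i^j$, which by Lemma \ref{Om:p}.iii forces it to lie in $\Gsym^{\Om_i^j}$. Your second half merely spells out in more detail (distinct diagonal lengths for $m\ge3$, the vertices spanning $\R^4$ so each admissible permutation has a unique realization in $O(4)$) what the paper compresses into the remark that a symmetry of the vertices is a symmetry of $\Om_i^j$.
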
 

\begin{proof} 
That the symmetries of $\Om_i^j$ are symmetries of $D_i^j$ follows from the uniqueness of $D_i^j$ discussed in \ref{T:lawson}. 
Any symmetry of $D_i^j$ has to be a symmetry of its boundary and then of its vertices, and hence of $\Om_i^j$ as well. 
By \ref{Om:p}.iii this completes the proof. 
\end{proof}

\begin{lemma}[Generating symmetries of the Lawson surfaces]
\label{Msym}
For $M=M[C,m]$ as in \ref{T:lawson} we have the following symmetries, 
which generate $\Gsym^M$. 
\\
(i)  
$\forall i,j\in\Z$ we have  
$\refl_{\Sigma^{j\pi/2}}, \refl_{\Sigma_{i\pi/m}} \in \Gsym^M$. 
Moreover  
the collection of the great two-spheres of symmetry of $M$ is 
$\{\Sigma^{ j\pi /2 }\}_{j\in\Z} 
\cup 
\{\Sigma_{ i\pi /m }\}_{i\in\Z} $  and contains $m+2$ spheres. 
\\
(ii)  
$\forall i,j\in\Z$ we have  
$\refl_{C_{ (2i- 1)\frac\pi{2m} }^{ (2j- 1)\frac\pi4 } } 
= \refl_{\pqq_{i}, \pqq^{j}  } 
\in \Gsym^M$. 
Moreover 
the collection of great circles contained in $M$ is  
$\left\{ C_{ (2i- 1)\frac\pi{2m} }^{ (2j- 1)\frac\pi4  \,} 
= \Sph (\pqq_{i}, \pqq^{j}  ) 
\right\}_{i,j\in\Z}$  
and contains $2m$ great circles.  

Furthermore if $\nu:M\to\Sph^3$ is a unit normal smoothly chosen on $M$, then 
$\nu$ is even under the symmetries in (i) 
(that is for such a symmetry $\refl$ we have $\nu\circ\refl = \refl_* \circ \nu$) 
and odd under the symmetries in (ii) 
(that is for such a symmetry $\refl$ we have $\nu\circ\refl = - \refl_* \circ \nu$). 
\end{lemma}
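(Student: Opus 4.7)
The proof splits into four tasks and I plan to address them in this order: (1) verify that the listed reflections lie in $\Gsym^M$; (2) show that the great circles $\Sph(\pqq_i,\pqq^j)$ are contained in $M$ and that the enumerations $m+2$ and $2m$ are complete; (3) pin down the parity of $\nu$ under each reflection; (4) conclude that the listed reflections generate $\Gsym^M$.

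For task (1), the key input is the uniqueness assertion of Theorem \ref{T:lawson}. Using \eqref{qpoints} together with \ref{L:obs}, I first compute the action of each candidate on the vertices $\pqq_{i'}, \pqq^{j'}$ of the tessellation: $\refl_{\Sigma_{i\pi/m}}$ fixes $C^\perp$ pointwise and sends $\pqq_{i'}\mapsto \pqq_{2i+1-i'}$, so $\refl_{\Sigma_{i\pi/m}}(\Om_{i'}^{j'})=\Om_{2i-i'}^{j'}$; analogously $\refl_{\Sigma^{j\pi/2}}(\Om_{i'}^{j'})=\Om_{i'}^{2j-j'}$ and $\refl_{\pqq_i,\pqq^j}(\Om_{i'}^{j'})=\Om_{2i-i'-1}^{2j-j'-1}$. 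In all three cases the parity of $i'+j'$ is preserved. Since each reflection is an isometry of $\Sph^3$ and by \ref{T:lawson} $D_{i'}^{j'}$ is the unique compact connected minimal surface in $\Om_{i'}^{j'}$ with boundary $Q_{i'}^{j'}$, the image of $D_{i'}^{j'}$ under any such reflection is the corresponding $D$-disc in the image tetrahedron, so $M=\bigcup_{i'+j'\in 2\Z}D_{i'}^{j'}$ is preserved. Moreover $\refl_{\pqq_i,\pqq^j}$ is the $\pi$-rotation about $\Sph(\pqq_i,\pqq^j)$, so it effects Schwarz reflection across the edge $\overline{\pqq_i\pqq^j}$; iterating the resulting analytic continuations shows that $M$ contains the full great circle $\Sph(\pqq_i,\pqq^j)=C_{(2i-1)\pi/(2m)}^{(2j-1)\pi/4}$. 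The counts $m+2$ and $2m$ then follow from the periodicities $\Sigma_\phi=\Sigma_{\phi+\pi}$, $\Sigma^\phi=\Sigma^{\phi+\pi}$, $\pqq_{i+m}=-\pqq_i$, and $\pqq^{j+2}=-\pqq^j$.

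The main obstacle is the completeness in task (2). For the great circles: any $\gamma\subset M$ is simultaneously a geodesic of $M$ and of $\Sph^3$, and I plan to argue that if $\gamma$ entered the interior of some $\Om_i^j$ then by \ref{L-alex} (the graphical structure of $\Om_i^j$ under rotation along its axis) together with the uniqueness in \ref{T:lawson} --- via an Alexandrov-style reflection comparing $D_i^j$ to its rotated images along $\gamma$'s tangent direction --- $\gamma$ would have to coincide with an edge of $\partial\Om_i^j$; hence $\gamma$ belongs to the listed family. For the great two-spheres: any such $\Sigma$ must permute this now-classified family of $2m$ circles among themselves and preserve the union $C\cup C^\perp$ (along which each $\Sph(\pqq_i,\pqq^j)$ crosses); a finite enumeration of elements of $O(4)$ with these properties then yields precisely the listed $m+2$ spheres.

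For task (3), at a fixed point $p\in M$ of a listed reflection $\refl$ the differential $\refl_*$ decomposes $T_p\Sph^3$ into a $+1$ and a $-1$ eigenspace. In case (i), with $\refl=\refl_\Sigma$ for a great two-sphere $\Sigma$, the $+1$-eigenspace is $T_p\Sigma$ (two-dimensional) and at a point where $M$ meets $\Sigma$ transversally --- which exists because $M\cap\Sigma$ is a nonempty $1$-dimensional curve --- the $\refl_*$-invariance of $T_pM$ forces $T_pM$ to contain the $-1$ eigendirection, hence $\nu\in T_p\Sigma$ and $\refl_*\nu=\nu$. In case (ii), $\refl$ is the $\pi$-rotation about a great circle $\gamma\subset M$, and at any $p\in\gamma$, $T_p\gamma$ is the $+1$-eigenline of $\refl_*$; since $\nu\perp T_p\gamma$ it lies in the $-1$ eigenplane, giving $\refl_*\nu=-\nu$. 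Connectedness of $M$ then extends each identity from the fixed set to all of $M$. Finally, for task (4), the subgroup $H\subset\Gsym^M$ generated by the listed reflections acts transitively on the $4m$ tetrahedra $\Om_{i'}^{j'}$ with $i'+j'$ even (via compositions of $\refl_{\Sigma_{i\pi/m}}$ and $\refl_{\Sigma^{j\pi/2}}$), and its stabilizer of any given $\Om_{i_0}^{j_0}$ contains the full group \eqref{Omsym}; combined with the completeness in task (2), this forces $H=\Gsym^M$.
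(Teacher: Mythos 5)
Your tasks (1), (3) and the transitivity-plus-stabilizer scheme in (4) are essentially the paper's argument (uniqueness of the discs $D_i^j$ makes any orthogonal map permuting the even-parity tetrahedra a symmetry; the circles $\Sph(\pqq_i,\pqq^j)$ lie on $M$ because their four edges are edges of boundary quadrilaterals $Q_{i'}^{j'}$ with $i'+j'$ even; the parity of $\nu$ is checked at a fixed point and propagated by connectedness). The genuine gap is in task (2), which you yourself identify as the main obstacle, and it is twofold. First, the circle-completeness step is only an announced plan: ``Alexandrov-style reflection comparing $D_i^j$ to its rotated images along $\gamma$'s tangent direction'' is not an argument --- the rotations adapted to an arbitrary great circle $\gamma\subset M$ have nothing to do with the axis $C_{i\pi/m}^{j\pi/2}$ of Lemma \ref{L-alex}, and it is not explained why such a comparison is legitimate or what contradiction it produces. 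The clean route (the paper's) is to note that a great circle contained in a minimal surface forces, via the reflection principle, $\refl_\gamma\in\Gsym^M$, and then to classify circles of symmetry by the fact that any symmetry must preserve the even-parity family $\{Q_i^j\}_{i+j\in2\Z}$ and hence (using $m>2$) the circle $C$.

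Second, and more seriously, the sphere-completeness criterion you propose is provably insufficient: ``permutes the $2m$ circles contained in $M$ and preserves $C\cup C^\perp$'' does not single out the $m+2$ listed spheres. For example $\refl_{\Sigma^{\pi/4}}=\refl_{C,\pqq^1}$ fixes $C$ pointwise and sends $\pqq^{j'}\mapsto\pqq^{2-j'}$, so it maps each circle $\Sph(\pqq_i,\pqq^{j})$ to $\Sph(\pqq_i,\pqq^{2-j})$ and preserves $C\cup C^\perp$; yet it sends $\Om_{i'}^{j'}$ to $\Om_{i'}^{1-j'}$, reversing the parity of $i'+j'$, so it carries $M$ onto the complementary surface $\bigcup_{i'+j'\in2\Z+1}D_{i'}^{j'}\neq M$ and is \emph{not} in $\Gsym^M$; the same is true of every $\refl_{\Sph(C^\perp,\pqq_i)}$. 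Thus the ``finite enumeration'' you invoke yields $2m+4$ candidate spheres, not $m+2$. What is missing is exactly the parity input the paper uses: a sphere of symmetry must permute the even-parity quadrilaterals (equivalently, not exchange $M$ with its odd-parity congruent copy), which rules out the spheres through the vertices $\pqq_i$, $\pqq^j$. Until the circle classification is actually proved and the sphere argument is repaired along these lines, the completeness assertions of (i)--(ii) --- and hence also your version of the generation step, which explicitly leans on task (2) --- are not established.
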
 

\begin{proof} 
Set $\mathbf{Q}:=\left\{Q_i^j\right\}_{i+j \in 2\Z}$
and $\mathbf{\Omega}:=\left\{\Omega_i^j\right\}_{i+j \in 2\Z}$.
It is easy to see (keeping in mind that $m>2$)
that an element of $O(4)$ permutes $\mathbf{Q}$
if and only if it permutes $\mathbf{\Omega}$.
By the uniqueness assertion of Theorem \ref{T:lawson}
any element of $O(4)$ permuting $\mathbf{Q}$
is then a symmetry of $M$.
Conversely,
since $M$ is disjoint from the interior of every $\Omega_i^j$ with $i+j \in 2\Z+1$,
every element of $\Grp_{sym}^M$
must permute $\mathbf{Q}$.
Now write $\Grp$ for the subgroup of $O(4)$
generated by all the orthogonal transformations named in the statement of the lemma.
It is immediately verified from definitions 
\ref{D:refl}, \ref{l:D:rot}, \ref{hemispheres}, \ref{circles}, \ref{Om}, and \ref{D:Q}
that every element of $\Grp$ indeed permutes
$\mathbf{Q}$,
confirming that $\Grp \subseteq \Grp_{sym}^M$.
In fact it is clear that $\Grp$ acts transitively on $\mathbf{\Omega}$,
so in order to show that $\Grp_{sym}^M \subseteq \Grp$
it suffices to show that any orthogonal transformation
preserving $\Omega_0^0$ as a set belongs to $\Grp$,
but this is evident from \ref{Omsym}.  
Thus $\Grp_{sym}^M=\Grp$.

The counts of the spheres and circles named in (i) and (ii)
are obvious from \eqref{hemispheres} and \eqref{circles} alone.
It is also obvious from the definitions that every circle
in item (ii) of the lemma
indeed lies on $M$,
and furthermore for this very reason
reflection through such a circle
must reverse $\nu$.
Note that for each $j \in \Z$ the symmetry $\refl_{\Sigma^{j\pi/2}}$
fixes $C$ pointwise.
In particular $\refl_{\Sigma^{j\pi/2}}$
fixes the point $\pqq_1 \in C \cap M$,
but $\pqq_1$
lies on each of the circles of symmetry
$C_{\pi/2m}^{\pi/4}$ and $C_{\pi/2m}^{-\pi/4}$
orthogonally intersecting $C$ there,
and so $\nu(\pqq_1)$ points along $C$
and is thereby preserved by $\refl_{\Sigma^{j\pi/2}}$.
A similar argument shows that $\nu$ is likewise preserved by every $\refl_{\Sigma_{i\pi/m}}$
with $i \in \Z$.

The only assertions left to prove are
that $M$ is invariant under no spheres of symmetry
other than those enumerated in (i)
and that $M$ contains no circles of symmetry other than those enumerated in (ii)
(since the reflection principle \cite{Lawson}*{Proposition 3.1}  
then ensures that $M$ contains no other great circles at all).
Accordingly suppose that $S$ is such a sphere or circle of symmetry.
so that $\refl_S \in \Grp_{sym}^M$.
As explained above,
$\refl_S$ therefore permutes $\mathbf{Q}$,
but because $m>2$, this requires in particular that $\refl_S$ preserve $C$ (and so $C^\perp$ too) as a set.
If $S$ is a great sphere, it must consequently intersect either $C$ or $C^\perp$ orthogonally
(containing the other), but to permute $\mathbf{Q}$ it can then be only one of the spheres listed in (i)
(a sphere bisecting some $\Omega_i^j$, since reflection through a sphere containing a face of an $\Omega_i^j$
takes the corresponding $Q_i^j$ to a quadrilateral outside of $\mathbf{Q}$).
If instead $S$ is a great circle, in order to preserve $C$ as a set it
must (a) coincide with $C$, (b) coincide with $C^\perp$, or (c) intersect $C$ (and so also $C^\perp$) orthogonally.
Clearly neither $C$ nor $C^\perp$ is contained in $M$, since, for example, neither
$\overline{\pqq_0 \pqq_1}$ nor $\overline{\pqq^0 \pqq^1}$ is contained in $\partial D_0^0$.
In case (c), in order to permute $\mathbf{Q}$, $S$ can be only one of the circles 
listed in (ii) (a circle containing an edge of a quadrilateral in $\mathbf{Q}$)
or one of the circles of intersection of a pair of spheres of symmetry
(a circle bisecting the edges on $C$ and $C^\perp$ of some $Q_i^j$, not necessarily having $i+j$ even),
but none of these latter circles is contained in $M$,
since, for example, for all $i,j \in \Z$ $\pqq_{i+\frac12} \not \in \partial D_i^j$.
\end{proof} 

\begin{cor}[Umbilics on the Lawson surfaces]
\label{umb}
For $M=M[C,m]$ as in \ref{T:lawson} we have only four umbilics, $\pqq^1$, $\pqq^2$, $\pqq^3$, and $\pqq^4$,   
of degree (as in \cite{Lawson}) $m-2$ each. 
\end{cor}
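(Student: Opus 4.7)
The plan is to use the $m$-fold rotational symmetry fixing each $\pqq^j$, together with the holomorphy of the Hopf differential, to force $\pqq^j$ to be an umbilic of order at least $m-2$, and then close the count using the degree of the canonical bundle squared.

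First I would identify the tangent plane and normal to $M$ at $\pqq^j=\pp^{(2j-1)\pi/4}$. By Lemma~\ref{Msym}(ii), for each $i\in\Z$ the great circle $\Sph(\pqq_i,\pqq^j)$ lies on $M$, and these yield $m$ distinct great circles through $\pqq^j$ (indexed by $i$ modulo $m$). Since $\pqq_i\perp\pqq^j$, the unit tangent vectors to these circles at $\pqq^j$ are $\pm\pqq_i$ as vectors in $\R^4$, and together they span $\Span(C)$. Hence $T_{\pqq^j}M=\Span(C)$ (as a $2$-plane in $(\pqq^j)^\perp$), and $\nu(\pqq^j)$ is tangent to $C^\perp$ at $\pqq^j$.

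Next I would exploit cyclic symmetry. For any $i\in\Z$ the spheres $\Sigma_0$ and $\Sigma_{i\pi/m}$ meet along $C^\perp$ at angle $i\pi/m$ by Lemma~\ref{L:obs}(viii), so $\refl_{\Sigma_{i\pi/m}}\circ\refl_{\Sigma_0}=\rot^C_{2i\pi/m}$ (rotation about $C^\perp$). By Lemma~\ref{Msym}(i) both reflections lie in $\Gsym^M$ and are even with respect to $\nu$, so $\rot^C_{2\pi/m}$ generates a subgroup $\Z_m\subset\Gsym^M$ which fixes $\pqq^j$ and preserves $\nu$. On $T_{\pqq^j}M=\Span(C)$ its generator acts as rotation by $2\pi/m$; in particular it preserves the orientations both of $M$ and of its normal bundle, and so induces a holomorphic automorphism of the Riemann surface $M$.

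Now let $Q$ denote the Hopf differential of $M$, a holomorphic section of $K_M^2$ whose zero set coincides with the umbilic locus, with order of vanishing equal to Lawson's degree ($M$ is orientable because two-sided in $\Sph^3$ by Theorem~\ref{T:lawson}). Choose an isothermal coordinate $z$ at $\pqq^j$ in which the generator $\phi$ of $\Z_m$ acts as $z\mapsto \zeta z$ with $\zeta:=e^{2\pi i/m}$, and write $Q=h(z)\,dz^2$ with $h(z)=\sum_{n\ge0}a_n z^n$. The invariance $\phi^{\ast}Q=Q$ reads $\zeta^{n+2}a_n=a_n$ termwise, forcing $a_n=0$ for $0\le n<m-2$. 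Hence $\pqq^j$ is an umbilic of order at least $m-2$. Finally, $Q\not\equiv0$ since otherwise $M$ would be totally umbilic and therefore a great two-sphere, contradicting $g=m-1\ge2$. The total number of zeros of $Q$ counted with multiplicity therefore equals $\deg K_M^2=4g-4=4(m-2)$, which is exactly saturated by $\pqq^1,\pqq^2,\pqq^3,\pqq^4$; thus these are the only umbilics and each has order exactly $m-2$. The main content is the rotational symmetry argument; the only point to watch is that $\rot^C_{2\pi/m}$ acts \emph{holomorphically} (rather than only conformally) at $\pqq^j$, which is immediate from the evenness of $\nu$ recorded in Lemma~\ref{Msym}(i).
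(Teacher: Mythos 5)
Your proof is correct and takes essentially the same route as the paper: a symmetry argument at each $\pqq^j$ forcing an umbilic of degree at least $m-2$, closed off by the total degree $4g-4$ of the Hopf differential (the paper cites Lawson's Proposition 1.5 for this count, which is the same fact as $\deg K_M^2=4g-4$). The only difference is that you spell out, via the $\Z_m$ rotation fixing $\pqq^j$ and the equivariance of the Hopf differential, the step the paper dismisses with ``by the symmetries it is clear.''
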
 

\begin{proof} 
By the symmetries it is clear that each of these point is an umbilic of degree $m-2$. 
By a result of Lawson \cite{Lawson}*{Proposition 1.5} 
the total degree of the umbilics is $4g-4=4m-8$ and so there can be no other.
\end{proof}

\begin{cor}[The unit normal on the geodesic segments $\overline{ \pqq_i\pqq^1 } $] 
\label{Lnui}
By appropriate choice of the unit normal 
$\nu:M\to\Sph^3$ smoothly defined on $M$ we have $\forall i\in\Z$ 
$$
\nu\left( \, \overline{\pqq_{i}\pqq^1} \, \right) =
\overline{ \, \pqq_{i - (-1)^i\frac{m}2} \, \pqq^0 \, } = 
\overline{ \, \pp_{\frac{i}{m}\pi     -(-1)^i\frac\pi2 - \frac{\pi}{2m}            } \, \pp^{-\pi/4} \, }. 
$$
\end{cor}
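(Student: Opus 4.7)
The plan is to reduce the general $i$ case to $i=0$ via the symmetries of $M$, use the reflection $\refl_{C'}$ (where $C':=\Sph(\pqq_0,\pqq^1)$) to confine $\nu|_{C'}$ to the orthogonal great circle $C'^\perp$, determine the endpoint values of $\nu$ by tangent space analysis, and finally identify the precise arc traversed.

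The reduction uses two $\nu$-preserving elements of $\Gsym^M$. The rotation $\rot_{C^\perp}^{2\pi/m}=\refl_{\Sigma_0}\refl_{\Sigma_{\pi/m}}$, a product of two sphere reflections from Lemma \ref{Msym}(i), fixes all of $C^\perp$ (hence $\pqq^1$) and sends $\pqq_i\mapsto\pqq_{i+2}$, so it shifts the statement between indices differing by $2$. The sphere reflection $\refl_{\Sigma_0}$ acts on $C$ by $\pp_\phi\mapsto\pp_{-\phi}$: it fixes $\pqq^0,\pqq^1$, swaps $\pqq_0\leftrightarrow\pqq_1$, and a direct check gives $\refl_{\Sigma_0}(\pqq_{-m/2})=\pqq_{1+m/2}$, converting the $i=0$ assertion into the $i=1$ assertion. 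Iterating these reduces the corollary to the single case $i=0$.

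For $i=0$, $C'\subset M$ by Lemma \ref{Msym}(ii), and $\refl_{C'}\in\Gsym^M$ fixes $C'$ pointwise while reversing $\nu$; so $\nu(p)=-\refl_{C',*}\nu(p)$ for $p\in C'$, forcing $\nu(p)\in\Span(C'^\perp)\cap p^\perp=\Span(C'^\perp)$. A direct coordinate computation via Definition \ref{D:coordinates} yields $C'^\perp=\Sph(\pqq_{m/2},\pqq^0)$, so $\nu(p)$ lies on this great circle. At $\pqq_0$, the two great circles $\Sph(\pqq_0,\pqq^j)\subset M$ for $j=0,1$ have tangents $\pqq^0,\pqq^1$ there, spanning $T_{\pqq_0}M$; hence $\nu(\pqq_0)\in\{\pm\pqq_{-m/2}\}$. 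At $\pqq^1$, all circles $\Sph(\pqq_i,\pqq^1)\subset M$ contribute tangents $\pqq_i$, yielding $T_{\pqq^1}M=\Span(C)$ and $\nu(\pqq^1)\in\{\pm\pqq^0\}$. Fix the orientation by $\nu(\pqq_0)=\pqq_{-m/2}$.

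Parameterize $p(s)=\cos s\,\pqq_0+\sin s\,\pqq^1$ and write $\nu(p(s))=\cos\alpha(s)\,\pqq_{-m/2}+\sin\alpha(s)\,\pqq^0$ with $\alpha$ continuous and $\alpha(0)=0$. The claim reduces to $\alpha(\pi/2)=\pi/2$. Since $C'$ is a geodesic of both $\Sph^3$ and $M$ (the latter from $\refl_{C'}$-invariance), $h(T,T)\equiv0$ along $C'$, so $\alpha'(s)=-h(T(s),B(s))$ where $B$ is the $M$-unit tangent perpendicular to $T$. On $C'$, the zeros of $\alpha'$ coincide with the umbilics $\pqq^1,\pqq^3\in C'$ identified in Corollary \ref{umb} (where $h\equiv0$); hence $\alpha$ is strictly monotone on the open interval $(0,\pi/2)$. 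The main obstacle is ruling out the alternative $\alpha(\pi/2)=\pi/2+2k\pi$ with positive $k$, corresponding to $\nu$ winding extra times around $C'^\perp$ before reaching $\pqq^0$. This is handled by exploiting the $(m-2)$-order vanishing of the Hopf differential at the umbilic $\pqq^1$ (which controls $\alpha'$ near $s=\pi/2$) together with a winding-number count of $\nu|_{C'}$ over the full period $[0,2\pi]$, using the antipodal relations $\pqq_m=-\pqq_0$, $\pqq^3=-\pqq^1$ and the symmetry-generated vertex values $\nu(\pqq_m),\nu(\pqq^3)$ to pin the total winding of $\nu|_{C'}$ around $C'^\perp$ to exactly $\pm 1$.
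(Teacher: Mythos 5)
Your symmetry reduction to a single $i$, the observation that $\nu$ restricted to the great circle $C'=\Sph(\pqq_0,\pqq^1)\subset M$ takes values in $C'^\perp$, and the endpoint computations $\nu(\pqq_0)=\pm\pqq_{m/2}$, $\nu(\pqq^1)=\pm\pqq^0$ are all correct and parallel the paper's argument. The gap is at the decisive last step. Writing $\nu(p(s))=\cos\alpha(s)\,\pqq_{-m/2}+\sin\alpha(s)\,\pqq^0$, what must be shown is both that $\alpha$ stays in $[0,\pi/2]$ and that $\alpha(\pi/2)=+\pi/2$ (not $-\pi/2$ modulo $\pi$, and not $\pi/2$ plus extra multiples of $\pi$): which quarter of $C'^\perp$ is traversed is exactly the content of the corollary, since the formula must hold for all $i$ with one global choice of $\nu$. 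Your proposed resolution --- monotonicity of $\alpha$ between the umbilics plus ``a winding-number count \ldots using the antipodal relations and the symmetry-generated vertex values to pin the total winding of $\nu|_{C'}$ to exactly $\pm1$'' --- is an assertion, not an argument. Knowing $\nu$ (even exactly) at the four points $\pqq_0,\pqq^1,\pqq_m,\pqq^3$ together with strict monotonicity on the two half-circles of $C'$ cut by the umbilics only constrains the rotation on each half to be a nonzero integer multiple of $\pi$; nothing you list bounds these integers or fixes their signs. The order $m-2$ of vanishing of the Hopf differential at $\pqq^1$ controls how fast $\alpha'$ degenerates near $s=\pi/2$ but says nothing about how much $\alpha$ has already turned. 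Moreover, even granting total winding $\pm1$, distributing the $2\pi$ among the four sub-arcs (and fixing the relative sign between $\nu(\pqq_0)$ and $\nu(\pqq^1)$) is not forced: for odd $m$ there is no symmetry of $M$ exchanging the two half-circles of $C'$, so the count cannot simply be halved by symmetry. In short, the winding claim is essentially equivalent in difficulty to the statement being proved.

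The paper closes this step with an elementary one-sidedness observation that your proposal does not use: the segment $\overline{\pqq_1\pqq^1}$ (after reducing to $i=1$) is an edge of the tetrahedra $\Omega_1^0$ and $\Omega_0^1$, whose interiors are disjoint from $M$ by Theorem \ref{T:lawson}; their faces through this edge lie on $\Sigma_{\pi/2m}$ and $\Sigma^{\pi/4}$. Hence along the whole segment the normal, already confined to the circle orthogonal to the edge, cannot cross either of these two spheres, and once one fixes $\nu(\pqq_1)=\pqq_{1+m/2}$ it must point into $\Omega_1^0$ at every point of the segment. This confines $\nu$ on the segment to the closed quarter-circle $\overline{\pqq_{1+m/2}\pqq^0}$, forces $\nu(\pqq^1)=\pqq^0$, and then continuity and connectedness give equality with the full quarter arc --- no monotonicity, umbilic, or winding analysis is needed. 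If you wish to keep your framework, you must replace the winding count by a barrier argument of this kind; as written, the proposal has a genuine gap at its crucial step.
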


\begin{proof}
A unit vector normal to a great circle $C' \subset \Sph^3$
lies on the circle $C'^\perp$.
By the symmetries (Lemma \ref{Msym})
the unit normal $\nu$ on $M \cap C$ must point along $C$,
while on $M \cap C^\perp$ it must point along $C^\perp$.
Thus $\nu(\pqq_1)=\pm \pqq_{1+m/2}$ and $\nu(\pqq^1)=\pm \pqq^0$.
Assume that
$\nu(\pqq_1)=\pqq_{1+m/2}$.
Using Lemma \ref{Msym} again,
it suffices to complete the proof for $i=1$.
Since $M$ is disjoint from the interior of $\Omega_1^0$ (and $\Omega_0^1$),
we conclude that along all of $\overline{\pqq_1\pqq^1}$
the normal $\nu$ cannot cross either $\Sigma_{\pi/2m}$ or $\Sigma^{\pi/4}$
and more specifically, by our choice of $\nu(\pqq_1)$, must point into $\Omega_1^0$.
It follows that $\nu(\pqq^1)=\pqq^0$
and $\nu(x) \cdot \nu(\pqq_1) \geq 0$ and $\nu(x) \cdot \nu(\pqq^1) \geq 0$
for all $x \in \overline{\pqq_1\pqq^1}$,
completing the proof.
\end{proof}

Although it is not needed in this article,
we include the following lemma to offer a fuller picture of the symmetry group.

\begin{lemma}[Further symmetries of the Lawson surfaces]
\label{Msym2}
For $M=M[C,m]$ as in \ref{T:lawson} we have the following symmetries.  
\\
(i) A great circle $C' \not \subset M$ is a circle of symmetry for $M$
    if and only if (a) $C'=C$, (b) $C'$ is one of the $2m$ circles
    $C_{i\pi/m}^{j\pi/2}$ having $i,j \in \Z$,
    or (c) $m$ is even and $C'=C^\perp$.
    Each such $\refl_{C'}$ preserves $\nu$.
\\
(ii) The antipodal map $\refl_{\emptyset}$ belongs to $\Gsym^M$ if and only if $m$ is even,
     in which case $\refl_{\emptyset}$ preserves $\nu$.
\\
(iii) A point $x \in \Sph^3$ is a point of symmetry for $M$
     ($\refl_x \in \Gsym^M$) if and only if
     (a) $x$ is one of the $2m$ points $\pp_{i\pi/m}$ with $i \in \Z$
     or (b) $x$ is one of the $4$ points $\pp^{j\pi/4}$ with $j \in 2\Z+m$.
     Each such $\refl_x$ reverses $\nu$.
\\
(iv) For every $i \in \Z$ the map
$\refl_{\pqq_i} \circ \rot_C^{\pi/2}  
\in \Gsym^M$ and reverses $\nu$. 
\end{lemma}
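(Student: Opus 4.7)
The plan is to reduce everything to Lemma \ref{Msym}, which exhibits $\Gsym^M$ as generated by the sphere reflections $\refl_{\Sigma^{j\pi/2}}$ and $\refl_{\Sigma_{i\pi/m}}$ (each of which preserves $\nu$) together with the $2m$ circle reflections $\refl_{\Sph(\pqq_i,\pqq^j)}$ on $M$ (each of which reverses $\nu$). These generator-level signs extend to a homomorphism $\chi:\Gsym^M\to\{\pm 1\}$ detecting the action on $\nu$, whose value on any $g$ is the mod-$2$ count of circle-on-$M$ factors in any factorization. The strategy is, for each claimed symmetry, to exhibit such a factorization (which establishes both membership in $\Gsym^M$ and the value of $\chi$), and for the uniqueness parts, to repeat the argument from the final paragraph of the proof of \ref{Msym}.

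For existence, direct factorizations using \ref{L:obs}(vi),(viii) give $\refl_C=\refl_{\Sigma^0}\refl_{\Sigma^{\pi/2}}$, $\refl_{C_{i\pi/m}^{j\pi/2}}=\refl_{\Sigma_{i\pi/m}}\refl_{\Sigma^{j\pi/2}}$, and (when $m$ is even) $\refl_{C^\perp}=\refl_{\Sigma_0}\refl_{\Sigma_{\pi/2}}$ and $\refl_\emptyset=\refl_C\circ\refl_{C^\perp}$; similarly $\refl_{\pp_{i\pi/m}}=\refl_{\Sigma^0}\refl_{\Sigma^{\pi/2}}\refl_{\Sigma_{i\pi/m}}$, where the three spheres are mutually orthogonal and meet at $\{\pm\pp_{i\pi/m}\}$, and analogously $\refl_{\pp^{j\pi/2}}$ when $m$ is even. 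In the remaining cases the sphere-generator span is insufficient and a circle-on-$M$ generator must enter: for $m$ odd, $\refl_{\pqq^j}=\refl_{\Sigma_{k\pi/m}}\circ\refl_{\Sph(\pqq_i,\pqq^j)}$ for any $i\in\Z$ with $k:=(2i-1-m)/2\in\Z$ (integrality capturing the parity $m$ odd); for (iv), $\refl_{\pqq_i}\circ\rot_C^{\pi/2}=\refl_{\Sigma^0}\circ\refl_{\Sph(\pqq_i,\pqq^1)}$, verified by inspection on the orthogonally complementary $2$-planes $\Span(C)$ and $\Span(C^\perp)$ (a reflection through $\pqq_i$ on the first, a $\pi/2$ rotation on the second). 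The stated signs of $\chi$ follow from these factorizations.

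For the uniqueness assertions, any $g\in\Gsym^M$ must permute the Lawson quadrilaterals $\{Q_i^j\}_{i+j\in 2\Z}$, and since $m>2$ this forces $g$ to preserve $C$ and $C^\perp$ setwise (the two circles being distinguished by $|M\cap C|=2m$ versus $|M\cap C^\perp|=4$). For a candidate $\refl_{C'}$ with $C'\not\subset M$, preservation of $\Span(C)$ by the linear map $\refl_{C'}$ restricts $\Span(C')$ to $\Span(C)$, $\Span(C^\perp)$, or a $2$-plane meeting each of these in a single line, yielding $C'\in\{C,C^\perp,C_\phi^{\phi'}\}$; imposing the parity $i+j\pmod 2$ on the induced permutation then isolates the listed circles (with $m$ even required in the $C^\perp$ case). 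For a candidate $\refl_x$ the same constraint forces $x\in C\cup C^\perp$, and parametrizing $x=\pp_\phi$ or $\pp^\phi$ and tracking the parity of the induced permutation of $\{\pqq_i\}$ or $\{\pqq^j\}$ pins down the listed points together with the condition $j\in 2\Z+m$. Item (ii) is recovered by the same parity bookkeeping applied to the antipodal map, which sends $\Om_i^j$ to $\Om_{i+m}^{j+2}$ and preserves $M$ precisely when $m$ is even.

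The main obstacle is the parity bookkeeping in (iii)(b): the factorization type, and hence the $\chi$-sign, depends on the parity of $m$, with $m$ odd forcing a circle-on-$M$ generator while $m$ even admits a purely sphere-generator decomposition; handling these uniformly requires care. Once this dichotomy is explicit, the remaining steps amount to routine linear algebra in $\R^4$ guided by the identities of \ref{L:obs} and the uniqueness clause of \ref{T:lawson}.
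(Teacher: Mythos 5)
Your route to membership and to the classification is essentially the paper's: you factor candidate symmetries into the generators of Lemma \ref{Msym}, use the criterion that elements of $\Gsym^M$ permute the tetrahedra $\Om_i^j$ with $i+j$ even, and rerun the final paragraph of the proof of \ref{Msym} for the ``only if'' directions, while the paper obtains (i)--(ii) by composing sphere reflections, proves (iv) by noting that $\refl_{\pqq_i}$ and $\rot_C^{\pi/2}$ each swap the families $\{\Om_i^j\}_{i+j\in2\Z}$ and $\{\Om_i^j\}_{i+j\in2\Z+1}$, and handles (iii) via $\refl_x=-\refl_{x^\perp}=\refl_{\Sigma_x}\circ\refl_{C_x}$ together with $-M=\bigcup_{i+j\equiv m\,(\mathrm{mod}\,2)}D_i^j$. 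Your factorizations themselves check out in coordinates, e.g.\ $\refl_{\pp_{i\pi/m}}=\refl_{\Sigma^0}\,\refl_{\Sigma^{\pi/2}}\,\refl_{\Sigma_{i\pi/m}}$ and $\refl_{\pqq_i}\circ\rot_C^{\pi/2}=\refl_{\Sigma^0}\circ\refl_{\Sph(\pqq_i,\pqq^1)}$, and your homomorphism $\chi$ is sound.

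The genuine gap is the normal-sign bookkeeping in (iii). Your factorization of $\refl_{\pp_{i\pi/m}}$ consists of three sphere reflections, each $\nu$-preserving by \ref{Msym}, so your own $\chi$ evaluates to $+1$ on it, i.e.\ it proves that $\refl_{\pp_{i\pi/m}}$ \emph{preserves} $\nu$; the same happens for $\refl_{\pp^{k\pi/2}}$ when $m$ is even, which you also factor into three sphere reflections. This is the opposite of the ``reverses $\nu$'' demanded by (iii), so the sentence ``the stated signs of $\chi$ follow from these factorizations'' is false as written, and the proposal contains no argument yielding the reversal claim for (iii)(a), nor for (iii)(b) with $m$ even; your closing remark that the $m$-even case ``admits a purely sphere-generator decomposition'' is precisely the discrepancy you needed to confront and did not. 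Note the tension is not an artifact of your particular factorization: with the normal of \ref{Lnui} one has $\nu(\pqq_1)=\pqq_{1+\frac m2}$ and $\nu(\pqq_0)=\pqq_{-\frac m2}$, and $\refl_{\pp_0}=\refl_C\circ\refl_{\Sigma_0}$ sends $\pqq_1\mapsto\pqq_0$ and $\nu(\pqq_1)\mapsto\nu(\pqq_0)$, so it preserves $\nu$, consistent with $\chi$ being a homomorphism and with (i); only for $m$ odd do the points $\pp^{j\pi/4}$ of (iii)(b) lie on $M$ (they are the $\pqq^j$) and give $\nu$-reversing point reflections, as your one-circle factorization correctly shows. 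So as it stands the sign assertions of (iii) are not proved by your argument---indeed your machinery establishes the contrary for part of them---and you must either resolve this discrepancy explicitly or re-examine the stated sign; for comparison, the paper's own proof of (iii) establishes only the classification of the points of symmetry and does not verify the action on $\nu$ there.
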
 

\begin{proof}
It is easy to see that for any $i \in \Z$
both $\refl_{\pqq_i}$ and $\rot_C^{\pi/2}$
exchange the sets $\left\{\Omega_i^j\right\}_{i+j \in 2\Z}$
and $\left\{\Omega_i^j\right\}_{i+j \in 2\Z+1}$,
so that the composite acts as a permutation on each of these sets
and therefore (as explained in the proof of Lemma \ref{Msym})
belongs to $\Gsym^M$;
it is also easy to see that the composite reverses the normal at $\pqq_i$,
completing the proof of (iv).
Item (ii) follows from (i),
since $\refl_{\emptyset}=\refl_C\refl_{C^\perp}$.
The fact that the circles listed in (i) 
exhaust all circles of symmetry not lying on $M$
follows from the final paragraph of the proof of Lemma \ref{Msym}.
The rest of (i) is easily proven using Lemma \ref{Msym} itself
(and the group structure of $O(4)$) as follows.
Clearly $\refl_{\Sigma'} \circ \refl_{\Sigma''}=\refl_{\Sigma' \cap \Sigma''}$
for any two great spheres $\Sigma'$ and $\Sigma''$ intersecting orthogonally.
On the other hand, according to Lemma \ref{Msym},
the great $2$-spheres of symmetry of $M$ are precisely the spheres $\Sigma_{i\pi/m}$ and $\Sigma^{j\pi/2}$
for $i,j \in \Z$, so in particular $\Sigma_{\pi/2}$ is a sphere of symmetry precisely when $m$ is even.
Together, the preceding two sentences complete the proof of (i).

To prove (iii)
first note that for any point $x \in \Sph^3$
the set $x^\perp$
is the round $2$-sphere centered at $\pm x$,
and
moreover
$\refl_x=\refl_{-x}=-\refl_{x^\perp}=\refl_{\Sigma_x} \circ \refl_{C_x}$,
where $\Sigma_x$ is any great sphere through $\pm x$ and $C_x$ is any
great circle orthogonally intersecting $\Sigma_x$ at $\pm x$.
In particular $\refl_x \in \Gsym^M$ precisely when $\refl_{x^\perp}$
takes $M$ to $-M$.
Since $-D_i^j=D_{i+m}^{j+2}$,
we have $-M=\bigcup_{i+j \equiv m \bmod 2} D_i^j$.
It is clear from Lemma \ref{Msym} that $\Grp_{sym}^M$ preserves $C$ as a set 
(since each generator obviously does so). Thus in order for $x$ to be a point
of symmetry of $M$ (whatever the parity of $m$) $x$ must lie on either $C$ or $C^\perp$.
Since $C$ is itself a great circle of symmetry,
any point of symmetry lying on $C$ must also lie on a sphere of symmetry intersecting $C$ orthogonally.
Thus the set of points of symmetry lying on $C$
is simply $\left\{\pp_{i\pi/m}\right\}_{i \in \Z}$.
To identify the points of symmetry on $C^\perp$
we observe by Lemma \ref{Msym}
that $\Grp_{sym}^M$ preserves the set $\{\pqq^j\}_{j \in \Z}$,
which means that a point of symmetry on $C^\perp$ must lie in
$\{\pqq^j\}_{j \in \frac12 \Z}=\{\pp^{j\pi/4}\}_{j \in \Z}$.
It is easy to see that $\refl_{\Sigma^{j \pi/4+\pi/2}}$
takes $M$ to $-M$ precisely when $j-m \in 2\Z$,
which completes the proof.
\end{proof}

\subsection*{Graphical properties} 
$\phantom{ab}$
\nopagebreak

\begin{lemma}[Graphical property and subdivisions of $D_i^j$] 
\label{Dsym}
$\forall i,j\in\Z$ the following hold. 
\\ 
(i)  
$D_i^j$ is graphical---with its interior strongly graphical---with respect to 
$K^{C_{i\pi/m}^{j\pi/2} } = K_{C_{i\pi/m+\pi/2}^{j\pi/2+\pi/2} }$ (recall \ref{graphical})  
and each orbit which intersects $\Om_i^j$ intersects $D_i^j$ as well. 
\\ 
(ii)  
Each of $D_{i\pm}^{j} := D_i^j \cap  \Om_{i\pm}^{j}$, $D_{i}^{j\pm} := D_i^j \cap  \Om_{i}^{j\pm}$, and $D_{i\pm}^{j\pm} := D_i^j \cap  \Om_{i\pm}^{j\pm}$, 
is homeomorphic to a closed disc.  
\end{lemma}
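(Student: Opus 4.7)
The plan is to prove (i) via an Alexandrov-style rotation argument combined with the fine orbit structure of Lemma \ref{L-alex}, then to deduce (ii) from the resulting graphical property.

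For part (i), set $\widetilde{C} := (C_{i\pi/m}^{j\pi/2})^\perp = C_{i\pi/m+\pi/2}^{j\pi/2+\pi/2}$ and consider the family of rotations $\{\rot^t_{\widetilde{C}}\}_{t \in [0,\pi/2]}$. By \ref{L-alex}.i, $\rot^{\pi/2}_{\widetilde{C}} \Om_i^j \cap \Om_i^j$ reduces to a single point lying on the axis $C_{i\pi/m}^{j\pi/2}$, so $\rot^{\pi/2}_{\widetilde{C}} D_i^j$ and $D_i^j$ meet (at most) at a fixed point of the rotation. I then slide $t$ back from $\pi/2$ toward $0$ and consider the largest $t^* \in [0,\pi/2)$ at which an off-axis intersection between $\rot^{t^*}_{\widetilde{C}} D_i^j$ and $D_i^j$ could first appear. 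By \ref{L-alex}.iv the boundary $Q_i^j$ is itself graphical with respect to $K_{\widetilde{C}}$, so any contact between $\rot^{t^*}_{\widetilde{C}} Q_i^j$ and $Q_i^j$ happens only at fixed points of $\rot^{t^*}_{\widetilde{C}}$, all lying on $\widetilde{C}$; hence the first off-axis contact must be an interior tangential one. The maximum principle as in \cite{schoen1983}*{Lemma 1} would then force $\rot^{t^*}_{\widetilde{C}} D_i^j = D_i^j$ by analytic continuation, contradicting $t^* > 0$ since their boundaries would have to coincide. Therefore $t^* = 0$ and $D_i^j$ is graphical. Interior strong graphicality follows from the same maximum principle: an interior orbit-tangency at $p \in \mathrm{int}\, D_i^j$ would make the Jacobi field $u = \langle K_{\widetilde{C}}, \nu\rangle$ have an interior zero of no sign change, hence by the Hopf lemma vanish identically, incompatible with $Q_i^j$ not being a union of orbits. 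For the surjectivity clause, \ref{L-alex}.vi identifies $\Pi(\Om_i^j)$ as a closed topological disc and \ref{L-alex}.iv shows $\Pi := \Pi_i^j$ restricts to a homeomorphism $Q_i^j \to \partial \Pi(\Om_i^j)$; since $\Pi|_{D_i^j}$ is now a continuous injection of a disc into $\Pi(\Om_i^j)$ sending $\partial D_i^j$ homeomorphically onto $\partial\Pi(\Om_i^j)$, a standard invariance-of-domain/degree argument yields $\Pi(D_i^j) = \Pi(\Om_i^j)$, which is the desired orbit-surjectivity.

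For part (ii), by (i) the map $\Pi|_{D_i^j} : D_i^j \to \Pi(\Om_i^j)$ is a homeomorphism. By \ref{Ksymm}.iii applied with $\phi = i\pi/m+\pi/2$ and $\phi' = j\pi/2+\pi/2$, both $\Sigma_{i\pi/m}$ and $\Sigma^{j\pi/2}$ are preserved by the flow of $K_{\widetilde{C}}$; combined with \ref{L-alex}.ii this makes each of $\Om_{i\pm}^j$, $\Om_i^{j\pm}$, and $\Om_{i\pm}^{j\pm}$ saturated, within $\Om_i^j$, under those orbits of $K_{\widetilde{C}}$ meeting $\Om_i^j$. Consequently $\Pi(D_{i\pm}^j) = \Pi(\Om_{i\pm}^j)$ and similarly for the other two families. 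It then suffices to observe that $\Pi(\Om_i^j)$ is split by the embedded arcs $\Pi(\Om_i^j \cap \Sigma_{i\pi/m})$ and $\Pi(\Om_i^j \cap \Sigma^{j\pi/2})$ into topological sub-discs: each such arc joins two boundary points of $\Pi(\Om_i^j)$ (the images of $\pqq^j,\pqq^{j+1}$ and of $\pqq_i,\pqq_{i+1}$ respectively, which constitute $Q_i^j \cap \Sigma_{i\pi/m}$ and $Q_i^j \cap \Sigma^{j\pi/2}$) and is embedded thanks to the graphicality from (i). By the Jordan--Sch\"onflies theorem each resulting region is a closed topological disc, and pulling back through $\Pi^{-1}$ yields the claim.

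The main obstacle will be carrying the Alexandrov slide in (i) all the way to $t=0$ without admitting spurious boundary contacts along the way. The decisive input is \ref{L-alex}.iv, which upgrades the bulk-disjointness of rotated tetrahedra in \ref{L-alex}.i to the statement that $Q_i^j$ itself is $K_{\widetilde{C}}$-graphical; this is what lets us rule out boundary contacts at every stage of the slide and confine the application of the maximum principle to genuine interior tangencies. The remaining points, including the subdivision claims of (ii), are essentially bookkeeping built on \ref{L-alex}, \ref{Ksymm}, \ref{Dsym0}, and the uniqueness half of \ref{T:lawson}.
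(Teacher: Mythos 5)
Your proposal is correct and follows essentially the same route as the paper: graphicality via the rotation/Alexandrov argument already used for uniqueness in Theorem \ref{T:lawson}, strong interior graphicality via the induced Jacobi field $K_{\widetilde C}\cdot\nu$ and the maximum principle, orbit-surjectivity via $\Pi_i^j$ and Lemma \ref{L-alex}.vi with a degree argument, and part (ii) by transferring the subdivision to the image disc through the homeomorphism $\Pi_i^j|_{D_i^j}$. Your Jordan--Sch\"onflies bookkeeping in (ii) is just a more explicit version of the paper's remark that $\Pi_i^j$ respects the symmetries of $\Om_i^j$ (the embeddedness of the splitting arcs is best justified by orbit-saturation of the bisecting spheres together with the structure of $\Pi_i^j(\Om_i^j)$ from \ref{L-alex}.vi, rather than by graphicality alone), so the two proofs coincide in substance.
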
 

\begin{proof} 
To prove (i) we first prove that $D_i^j$ is graphical.  
This follows by the same argument 
as in the second paragraph of the proof of \ref{T:lawson} but with ${D'}_i^j$ replaced by $D_i^j$.
Consider now the Jacobi field $\nu\cdot K_{C_{i\pi/m+\pi/2}^{j\pi/2+\pi/2} }$,  
which clearly by the graphical property and appropriate choice of $\nu$ is $\ge0$ on $D_i^j$ and hence by the maximum principle is $>0$ on the interior of $D_i^j$. 
This implies that the interior of $D_i^j$ is strongly graphical. 

Next we 
recall the projection map 
\begin{equation} 
\label{Piij} 
\Pi_i^j:= \Pi^{C_{i\pi/m}^{j\pi/2}}_{\pp_{i\pi/m} }  
: \Om_i^j \to C_{i\pi/m+\pi/2}^{j\pi/2+\pi/2} \cone \pp_{i\pi/m} 
\end{equation} 
defined in \ref{L-alex}.vi.  
Let $D':=\Pi_i^j(\Om_i^j)$, 
which by \ref{L-alex}.vi is homeomorphic to a closed disc with $\partial D'= \Pi_i^j(Q_i^j)$.  
Clearly then $\Pi_i^j( D_i^j )\subset D'$.  
Since $\partial D_i^j=Q_i^j$ we have also 
$\Pi_i^j( \partial D_i^j ) = \partial D'$,  
and therefore 
$\Pi_i^j( D_i^j ) = D'$, 
which completes the proof of (i).  

Furthermore, as shown above, $D_i^j$ is graphical with respect to $K^{C_{i\pi/m}^{j\pi/2}}$,
so the restriction $\Pi_i^j|_{D_i^j}$ is one-to-one.
We conclude that $\Pi_i^j$ takes $D_i^j$ homeomorphically onto 
$D'$. 
The proof of (ii) is then completed by the fact that $\Pi_i^j$ clearly respects the symmetries of $\Omega_i^j$.
\end{proof}

By the definitions when $i+j\in 2\Z$ we have  
$M \cap  \Om_{i\pm}^{j\pm} = D_{i\pm}^{j\pm}$; 
otherwise we have 
$M \cap  \Om_{i\pm}^{j\pm} = \emptyset$. 
By \ref{Dsym} 
each $D_{i\pm}^{j\pm}$ is an embedded minimal disc. 
To study $\partial D_{i\pm}^{j\pm}$ and the intersections with two-spheres of symmetry we define 
the intersections of $D_i^j$ and $D_{i\pm}^{j\pm}$ with the bisecting two-spheres as follows. 
\begin{equation} 
\label{ab} 
\begin{aligned} 
\alpha_i^{j\pm} :=& \, 
D_i^j \cap \overline{ \, \pqq_{ i+\frac12 } \pqq^{j+\frac12} \pqq^{j+\frac12\pm\frac12} \, } = 
D_i^{j\pm} \cap \Sigma_{ i \frac\pi{m}  } ,   
\\
\alpha_i^{j} :=& \, 
D_i^j \cap \overline{ \, \pqq_{ i+\frac12 } \pqq^{ j } \pqq^{ j+1 } \, } = 
D_i^{j} \cap \Sigma_{ i \frac\pi{m}  } = 
\alpha_i^{j - }  \cup \alpha_i^{j + } ,   
\\ 
\beta_{i\pm}^j :=& \, 
D_i^j \cap \overline{ \, \pqq_{i+\frac12} \pqq_{i+\frac12\pm\frac12} \pqq^{ j+\frac12 } \, } =  
D_{i\pm}^j \cap \Sigma^{ j \frac\pi{2}  }  ,  
\\ 
\beta_{i}^j :=& \, 
D_i^j \cap \overline{ \,  \pqq_{ i }  \pqq_{ i+1 } \pqq^{ j+\frac12 } \, } =  
D_{i}^j \cap \Sigma^{ j \frac\pi{2}  }  = 
\beta_{i - }^j \cup \beta_{i + }^j  .  
\end{aligned} 
\end{equation}

\begin{lemma}[The $\alpha$ and $\beta$ curves] 
\label{Dpm} 
$\forall i,j\in\Z$ the following hold. 
\\
(i)  
$D_i^j$ intersects $\overline{ \pp_{ i\frac\pi m }  \pp^{ j\frac\pi 2 } } = 
\overline{ \pqq_{ i+\frac12 }  \pqq^{ j+\frac12  } } 
$ 
at a single point which we will call $\QQ_i^j$. 
\\ 
(ii)  
The sets $\alpha_i^{j - }$, $\alpha_i^{j + }$, $\beta_{i - }^j$, $\beta_{i + }^j$, $\alpha_i^{j}$, and $\beta_{i}^j$  
are connected curves with 
$\partial \alpha_i^{j - } = \{  \pqq^{ j } , \QQ_i^j \}$, 
$\partial \alpha_i^{j + } =  \{  \pqq^{ j+ 1} , \QQ_i^j \}$, 
$\partial \beta_{i - }^j =  \{ \pqq_{ i } , \QQ_i^j \}$, 
$\partial \beta_{i + }^j =  \{ \pqq_{ i+ 1 } , \QQ_i^j \}$, 
$\partial \alpha_i^{j} =  \{ \pqq^j, \pqq^{j+1 } \}$, 
and 
$\partial \beta_{i}^j = \{ \pqq_i, \pqq_{i+1 } \}$. 
\\ 
(iii)  
$\partial D_{i\pm}^{j\pm} 
\, = \, \overline{ \pp_{ (2i \pm 1)\frac\pi{2m} } \, \pp^{ (2j \pm 1)\frac\pi4  } }  
\cup 
\alpha_i^{j\pm} 
\cup 
\beta_{i\pm}^j
\, = \, \overline{ \pqq_{ i + \frac12 \pm \frac12 } \, \pqq^{ j + \frac12 \pm \frac12 } }  
\cup 
\alpha_i^{j\pm} 
\cup 
\beta_{i\pm}^j.$ 
\end{lemma}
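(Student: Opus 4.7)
The plan is to combine the graphical description of $D_i^j$ from Lemma \ref{Dsym} with the reflective symmetries from Corollary \ref{Dsym0} and the explicit structure of $Q_i^j$ from \eqref{D:Q}.

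For (i), I first observe that $\overline{\pqq_{i+\frac12}\pqq^{j+\frac12}}$ is precisely the intersection of $\Om_i^j$ with the axis circle $C_{i\pi/m}^{j\pi/2}$ of symmetry of $\Om_i^j$ (the arc through the interior of the tetrahedron provided by Lemma \ref{L-alex}(iii) applied to this axis orbit). By Definition \ref{l:D:rot} and Lemma \ref{L:obs}(vii), $C_{i\pi/m}^{j\pi/2}=\widetilde C^\perp$ is itself one of the closed orbits of the Killing field $K^{C_{i\pi/m}^{j\pi/2}}=K_{\widetilde C}$ with $\widetilde C=C_{i\pi/m+\pi/2}^{j\pi/2+\pi/2}$. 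Lemma \ref{Dsym}(i) then produces the unique point of intersection of $D_i^j$ with this orbit inside $\Om_i^j$, which I label $\QQ_i^j$.

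For (ii), I focus on $\alpha_i^j=D_i^j\cap\Sigma_{i\pi/m}$ (the case of $\beta_i^j$ is identical after swapping the roles of $C$ and $C^\perp$). A short direct check using that $\Sigma_{i\pi/m}=\Sph(C^\perp,\pp_{i\pi/m})$ contains $C^\perp$ but meets $C$ only at $\pm\pqq_{i+\frac12}\notin Q_i^j$, together with the fact that each edge of $Q_i^j$ is a minimizing geodesic of length $<\pi$ from a vertex on $C$ to a vertex on $C^\perp$, shows $Q_i^j\cap\Sigma_{i\pi/m}=\{\pqq^j,\pqq^{j+1}\}$. By Lemma \ref{Ksymm}(iii) (applied with phases shifted by $\pi/2$) the Killing field $K^{C_{i\pi/m}^{j\pi/2}}$ preserves $\Sigma_{i\pi/m}$ and so is tangent to it, whereas by the strong graphicality in Lemma \ref{Dsym}(i) it is nowhere tangent to the interior of $D_i^j$; hence $D_i^j$ meets $\Sigma_{i\pi/m}$ transversely in its interior, so $\alpha_i^j$ is a smooth $1$-manifold with $\partial\alpha_i^j=\{\pqq^j,\pqq^{j+1}\}$. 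To rule out spurious closed components I invoke Lemma \ref{Dsym}(ii): $D_i^{j-}$ and $D_i^{j+}$ are closed discs whose union is the disc $D_i^j$ and whose intersection is exactly $\alpha_i^j$, so the Euler characteristic identity
\[\chi(\alpha_i^j)=\chi(D_i^{j-})+\chi(D_i^{j+})-\chi(D_i^j)=1+1-1=1\]
forces $\alpha_i^j$ to be a single arc. The subdivisions $\alpha_i^{j\pm}$ are then handled by the symmetry $\refl_{\Sigma^{j\pi/2}}$, which by Corollary \ref{Dsym0} preserves $D_i^j$ hence $\alpha_i^j$ while swapping the two halves; its unique fixed point on $\alpha_i^j$ lies in $\alpha_i^j\cap\Sigma^{j\pi/2}\subset D_i^j\cap C_{i\pi/m}^{j\pi/2}=\{\QQ_i^j\}$ by (i). The corresponding statements for $\beta_i^j$ and $\beta_{i\pm}^j$ follow by the dual argument, interchanging $C$ with $C^\perp$ and $\Sigma_{i\pi/m}$ with $\Sigma^{j\pi/2}$.

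For (iii), I analyse $\partial D_{i+}^{j+}$ directly, the other three quarters following by the reflections in \eqref{Omrefl}. The tetrahedron $\Om_{i+}^{j+}$ has four triangular faces: two are halves of faces of $\Om_i^j$ (lying in $\Sigma_{t_{i+1}}$ and $\Sigma^{t^{j+1}}$) and two are halves of the bisecting triangles (lying in $\Sigma_{i\pi/m}$ and $\Sigma^{j\pi/2}$). By the maximum principle, as used in the proof of Theorem \ref{T:lawson}, $D_i^j$ meets the great spheres $\Sigma_{t_{i+1}}$ and $\Sigma^{t^{j+1}}$ only along $\partial D_i^j=Q_i^j$. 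Direct inspection of \eqref{D:Q} then shows that of the four edges of $Q_i^j$ only $\overline{\pqq_{i+1}\pqq^{j+1}}$ lies in $\Om_{i+}^{j+}$, the other three touching $\Om_{i+}^{j+}$ only at the corner vertices $\pqq_{i+1}$ or $\pqq^{j+1}$. The contributions of the two bisecting-sphere faces are $\alpha_i^{j+}$ and $\beta_{i+}^j$ by the definitions in \eqref{ab}, yielding the asserted decomposition. The main obstacle I expect is the connectedness of $\alpha_i^j$ (and $\beta_i^j$), namely ruling out extra closed loops of $D_i^j\cap\Sigma_{i\pi/m}$ in the interior of $D_i^j$; the essential tool is Lemma \ref{Dsym}(ii), which feeds directly into the Euler characteristic computation once the transversality from the strong graphicality has been secured.
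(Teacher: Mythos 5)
Your part (i) is sound and close in spirit to what the paper does (the axis circle $C_{i\pi/m}^{j\pi/2}$ is a single orbit of $K_{\widetilde C}$, it meets $\Om_i^j$ exactly in $\overline{\pqq_{i+\frac12}\pqq^{j+\frac12}}$, and by Lemma \ref{Dsym}(i) it meets the graphical disc $D_i^j$ exactly once). The transversality observation in (ii), via the flow-invariance of the bisecting spheres from \ref{Ksymm}(iii) against the strong graphicality of the interior, is also correct.

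However, the step you yourself single out as the main obstacle --- ruling out closed loops of $D_i^j\cap\Sigma_{i\pi/m}$ in the interior --- is not actually settled by your argument. The inclusion--exclusion identity $\chi(A)+\chi(B)-\chi(A\cup B)=\chi(A\cap B)$ gives $\chi(\alpha_i^j)=1$, and since a circle has Euler characteristic $0$, this only forces the number of \emph{arc} components to be one; it is perfectly consistent with the presence of additional circle components, so connectedness of $\alpha_i^j$ (and hence the boundary identifications in (ii) and (iii), which rest on it) remains unproved. There is also an index slip: the two discs cut out by $\Sigma_{i\pi/m}$ are $D_{i-}^j$ and $D_{i+}^j$, whose common part is $\alpha_i^j$, whereas $D_i^{j-}\cap D_i^{j+}=\beta_i^j$; and the smoothness of $\alpha_i^j$ up to its endpoints $\pqq^j,\pqq^{j+1}$ (corner points of $D_i^j$) is asserted but not argued. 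The paper closes exactly this gap by a different and shorter route: it reuses the projection $\Pi_i^j$ from the proof of Lemma \ref{Dsym}, which is a homeomorphism from $D_i^j$ onto the disc $D'\subset C_{i\pi/m+\pi/2}^{j\pi/2+\pi/2}\cone\pp_{i\pi/m}$ and commutes with the symmetries of $\Om_i^j$; since $\Sigma_{i\pi/m}$ and $\Sigma^{j\pi/2}$ are invariant under the flow, $\Pi_i^j(\alpha_i^j)=D'\cap\Sigma_{i\pi/m}$ and $\Pi_i^j(\beta_i^j)=D'\cap\Sigma^{j\pi/2}$ are subsets of single geodesic arcs of the hemisphere, which excludes circle components outright and makes connectedness and the endpoint identifications immediate (the arc crosses $\partial D'=\Pi_i^j(Q_i^j)$ in exactly two points). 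You could salvage your approach by adding such an argument --- e.g.\ observing that each component of $\alpha_i^j$ injects under $\Pi_i^j$ into an arc, so no component can be a circle --- but as written the Euler characteristic computation does not deliver the claimed conclusion.
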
 
  
\begin{proof}
As in the previous proof we consider $\Pi_i^j$, 
which is a homeomorphism from $D_i^j$ onto $D'$ and moreover respects the symmetries of $\Om_i^j$. 
Using the various definitions it is then straightforward to complete the proof. 
\end{proof}

\begin{lemma}[Graphical properties of $D_{i\pm}^{j\pm}$] 
\label{Dsym2}
$\forall i,j\in\Z$ the following hold. 
\\
(i) 
The interior of $D_i^j$ is contained in the interior of $\Om_i^j$ and the conormal of $D_i^j$ at a point of $Q_i^j\setminus \Qv_i^j$ 
is transverse to each face of $\Om_i^j$ containing the point. 
\\
(ii)  
$D_{i\pm}^{j}$ (as in \ref{Dsym}(ii)) is graphical with respect to $K_{C^\perp}$ and strongly graphical in its interior.  
\\ 
(iii)  
$D_{i}^{j\pm}$ (as in \ref{Dsym}(ii)) is graphical with respect to $K_{C}$ and strongly graphical in its interior.  
\end{lemma}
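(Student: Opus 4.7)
For part (i) I would invoke the maximum principle. Each face of $\Omega_i^j$ is a geodesic triangle lying inside a great two-sphere of $\Sph^3$, hence a minimal surface. If the interior of $D_i^j$ touched such a face at an interior point, Hopf's strong maximum principle applied to the two tangent minimal surfaces (with $D_i^j$ on one side by construction) would force $D_i^j$ to coincide locally---and by analytic continuation globally---with the great two-sphere containing that face, contradicting $\partial D_i^j=Q_i^j$. A tangency of the conormal to such a face at a smooth boundary point (i.e., a point of $Q_i^j\setminus\Qv_i^j$) is excluded similarly by the Hopf boundary point lemma, or equivalently Schwarz reflection: the reflected disc would have matching first-order boundary data with the face, again forcing coincidence.

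For (ii) I would use an Alexandrov moving-sphere argument in the direction of $K_{C^\perp}$. The bisector $\Sigma_{i\pi/m}$, which contains $C^\perp$, is a symmetry sphere of $D_i^j$ swapping $D_{i+}^j$ with $D_{i-}^j$, and the composition $\refl_{\Sigma_\phi}\circ\refl_{\Sigma_{i\pi/m}}$ is precisely the rotation $\rot_{C^\perp}^{2(\phi-i\pi/m)}$. Two distinct points $p,q\in D_{i+}^j$ on a common $K_{C^\perp}$-orbit would therefore yield a nontrivial intersection $\refl_{\Sigma_\phi}(D_{i-}^j)\cap D_{i+}^j\not\subset\{\pqq^j,\pqq^{j+1}\}$ for some $\phi\in(i\pi/m,(2i+1)\pi/(2m))$ (the angular width of $\Omega_{i+}^j$ about $C^\perp$ being only $\pi/(2m)$ keeps $\phi$ in this range). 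I would slide $\phi$ downward from $(2i+1)\pi/(2m)$, at which endpoint $\refl_{\Sigma_\phi}(D_{i-}^j)=D_{i+1,+}^j$ lies in a non-adjacent tetrahedron and meets $D_{i+}^j$ only at the two shared $C^\perp$ vertices, and let $\phi_*$ be the infimal value at which a new intersection appears. An interior tangential contact at $\phi_*$ would force coincidence of the two minimal discs by Hopf's strong maximum principle, contradicting the distinctness of the underlying tetrahedra whenever $\phi_*\ne i\pi/m$. A boundary tangential contact is excluded using the conormal transversality from part (i) together with the fact that $K_{C^\perp}$ is perpendicular to $\Sigma_{i\pi/m}$ off $C^\perp$. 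The argument for (iii) is entirely analogous, with $\Sigma^{j\pi/2}$, $K_C$, and angular width $\pi/4$ replacing $\Sigma_{i\pi/m}$, $K_{C^\perp}$, and $\pi/(2m)$.

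Strong graphicality in the interior follows from the Jacobi function $u:=\nu\cdot K_{C^\perp}$ on $D_i^j$: once graphicality is known, a suitable choice of $\nu$ makes $u\ge 0$ on $D_{i+}^j$, and since $u\equiv 0$ would force $D_{i+}^j$ to be swept by $K_{C^\perp}$-orbits (contradicting graphicality), Hopf's strong maximum principle for the Jacobi operator gives $u>0$ throughout the interior. The main difficulty in the sliding step will be the boundary analysis near $\alpha_i^j\subset\Sigma_{i\pi/m}$, where $K_{C^\perp}$ is tangent to $D_i^j$ so that $u$ vanishes identically along $\alpha_i^j$: one must separate this unavoidable boundary tangency from a genuine graphicality failure, and sliding from the outer endpoint toward the bisector (rather than the reverse) is designed precisely to sidestep this issue.
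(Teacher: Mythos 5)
Your part (i) and your Jacobi-field argument for strong graphicality are essentially what the paper does (both are applications of the interior and boundary maximum principle of \cite{schoen1983}*{Lemma 1}), but the sliding step in (ii)--(iii) has a genuine gap at the two vertices $\pqq^{j},\pqq^{j+1}\in C^\perp$. These points lie on both competitors $\refl_{\Sigma_\phi}\bigl(D_{i-}^{j}\bigr)$ and $D_{i+}^{j}$ for \emph{every} value of $\phi$, and they are exactly the points excluded from the transversality statement of (i), which covers only $Q_i^j\setminus\Qv_i^j$. Worse, at such a vertex the tangent plane of $D_i^j$ is the $2$-plane of directions orthogonal to $C^\perp$ (spanned by the initial directions of the edges $\overline{\pqq^j\pqq_i}$ and $\overline{\pqq^j\pqq_{i+1}}$), the tangent cone of $D_{i+}^j$ there is the sector of angular positions $[i\pi/m,\,t_{i+1}]$, and the tangent cone of $\refl_{\Sigma_\phi}\bigl(D_{i-}^j\bigr)$ is the sector $[2\phi-i\pi/m,\,2\phi-t_i]$ in the \emph{same} plane; these sectors overlap for every $\phi<i\pi/m+\pi/(4m)$, which is precisely the range in which your reduction places the putative bad value of $\phi$ (two orbit points in $D_{i+}^j$ differ in angle by at most $\pi/(2m)$). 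Consequently your ``first new intersection'' at $\phi_*$ may nucleate at a corner where the two surfaces are already in first-order tangential contact for all $\phi$; neither the strong maximum principle nor the Hopf boundary point lemma applies at such a corner, and your stated exclusions do not reach it. Ruling this scenario out requires genuine information about $D_i^j$ near these vertices (they are umbilic-type points where the disc has contact of order $m$ with the tangent great sphere and meets it along the boundary geodesics), i.e.\ a Serrin-type corner or asymptotic analysis that your proposal does not supply.

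The paper's Alexandrov argument is arranged to avoid exactly this. It sweeps the great spheres $\Sph(C^\perp,\pp_{t_i+t})$, $t\in(0,\pi/m)$, from the face $\Sigma_{t_i}$ toward the bisector, reflects only the swept sliver of $D_i^j$, and tests whether the reflected sliver meets the \emph{open} component of $\Om_i^j\setminus D_i^j$ adjacent to that face; for small $t$ this is secured by the transversality of (i) at the smooth boundary points, and a critical $t'<\pi/(2m)$ would produce a one-sided interior or boundary tangential contact between the reflected sliver and the remaining part of the \emph{same} disc (they share the hinge curve, the standard setting for \cite{schoen1983}*{Lemma 1}), forcing coincidence and contradicting the asymmetry of $Q_i^j$. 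Once $t'\ge\pi/(2m)$ is known, graphicality of $D_{i\pm}^j$ follows from the midpoint observation: two points of $D_{i-}^j$ on one orbit give an interior one-sided tangential contact of the reflected sliver at the mean angle with $D_i^j$, so that reflection would be a symmetry of $Q_i^j$, a contradiction--- with no corner analysis needed. If you wish to keep your ``slide inward from the far face'' version, you must add the vertex analysis at $\pqq^j$ and $\pqq^{j+1}$; as written the argument is incomplete.
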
 

\begin{proof} 
(i) follows easily by the maximum principle 
\cite{schoen1983}*{Lemma 1}.    
The proofs of (ii) and (iii) are based on Alexandrov reflection in the style of \cite{schoen1983}. 
Clearly $ \Pi^C_{\pqq_i } \Om_i^j = \overline{ \, \pqq_{ i } \pqq^{ j } \pqq^{ j+1 } \, } $ 
by \ref{Pi} and \ref{Om:p}. 
For $t\in[0,\pi/m]$ we define (recall \ref{qpoints})  
$$ 
\pqq_{i,t} := \pp_{t_i+t},  
\qquad 
D_{i,t}^j :=D_i^j \cap \overline{ \, \pqq_{ i } \pqq_{i,t} \pqq^{ j } \pqq^{ j+1 } \, } ,  
\qquad \text{ and } \qquad 
D_{i:t}^j :=D_i^j \cap \overline{ \, \pqq_{ i+1 } \pqq_{i,t} \pqq^{ j } \pqq^{ j+1 } \, }.   
$$ 
We clearly have then $D_i^j = D_{i,t}^j \cup D_{i:t}^j$ and $D_{i,t}^j \cap D_{i:t}^j = D_i^j \cap \overline{ \, \pqq_{ i,t } \pqq^{ j } \pqq^{ j+1 } \, } $.  

Clearly 
$\Om_i^j\setminus D_i^j$ consists of two connected components, 
which in this proof we call $U_1$ and $U_2$, 
where 
$U_1$ is chosen to be the component which contains the interior of 
$ \overline{ \, \pqq_{ i } \pqq^{ j } \pqq^{ j+1 } \, } $.  
We define 
$$
T:= \{ t\in (0,\pi/m) : U_1 \cap \refl_{ C^\perp , \pqq_{i,t} }  D_{i,t}^j \ne  \emptyset \} . 
$$
(i) implies that $D_{i,t}^j$ is graphical for $t$ small enough, 
and therefore  
$t\notin T$ for $t$ small enough.    
We conclude that $t':=\inf T>0$.  
If $t'<\frac\pi{2m}$, then by the definition of $t'$, 
$\refl_{ C^\perp , \pqq_{i,t'} }  D_{i,t'}^j$ 
and 
$D_{i:t'}^j$ have a point of one-sided interior or boundary tangential contact. 
By the maximum principle 
\cite{schoen1983}*{Lemma 1} and analytic continuation this implies that 
$\refl_{ C^\perp , \pqq_{i,t'} }  D_{i,t'}^j$ 
and 
$D_{i:t'}^j$ are identical contradicting the symmetries of $Q_i^j$ (alternatively \ref{Msym}). 
We conclude that $t'\ge\frac\pi{2m}$ and hence 
$$
U_1 \cap \refl_{ C^\perp , \pqq_{i,t} }  D_{i,t}^j = \emptyset 
\qquad \text{ for } 
t\in (0, \pi/2m ). 
$$ 

Using this we prove now that 
$D_{i-}^{j}$ 
is graphical with respect to $K_{C^\perp}$:  
Otherwise there would be an orbit which would contain two points $\pyy_1\ne\pyy_2$ with 
$\pyy_i\in D_i^j \cap \overline{ \, \pqq_{ i,t_i } \pqq^{ j } \pqq^{ j+1 } \, } $  
for $i=1,2$, where $0< t_1<t_2\le \frac\pi{2m} $. 
$\pyy_2$ is then a point of interior one-sided tangential contact of 
$\refl_{ C^\perp , \pqq_{i,t_*} }  D_{i,t_*}^j$  
and $D_i^j$, where $t_*=\frac{t_1+t_2}2\in (0,\frac\pi{2m} )$. 
This implies that 
$\refl_{ C^\perp , \pqq_{i,t_*} }  $  
is a symmetry of $D_i^j$, and hence of $\partial D_i^j = Q_i^j$, which is a contradiction. 
To prove that it is strongly graphical in the interior we argue as in the proof of \ref{Dsym}. 
By symmetry we conclude the statement for 
$D_{i+}^{j}$. 
This completes the proof of (ii). 
The proof of (iii) is similar with the roles of $C$ and $C^\perp$ exchanged.  
\end{proof}

We define $[m:2]:= 0$ if $m\in2\Z$ and $[m:2]:= 1$ otherwise. 

\begin{lemma}[Some intersections of $M$ with great two-spheres]  
\label{SigmaM} 
We have the following.  
\newline
(i)  
$ 
M\cap \Sph (C^\perp,\pqq_i) 
= 
M\cap \Sigma_{(2i- 1)\frac\pi{2m}  \,} 
= 
\bigcup_{j\in\Z} 
\Sph(\pqq_i,\pqq^j) 
= 
\bigcup_{j\in\Z} 
C_{ (2i- 1)\frac\pi{2m} }^{ (2j- 1)\frac\pi4  \,}$ 
$\forall i \in \Z$. 
\\ 
(ii)  
$ 
M\cap \Sph (C^\perp,\pqq_{i+\frac12} ) 
= 
M\cap \Sigma_{i\frac\pi{m}  \,} = \bigcup_{j\in2\Z-i}  \alpha_i^j \cup \alpha_{i+m}^{j+[m:2]} $  
$\forall i \in \Z$.  
\newline
(iii)  
$ 
M\cap \Sph (C,\pqq^j) 
= 
M\cap \Sigma^{(2j- 1)\frac\pi{4}  \,} 
= 
\bigcup_{i\in\Z} 
\Sph(\pqq_i,\pqq^j) 
= 
\bigcup_{i\in\Z} 
C_{ (2i- 1)\frac\pi{2m} }^{ (2j- 1)\frac\pi4  \phantom{{\frac12}^A}}$  
$\forall j \in \Z$. 
\\ 
(iv)  
$ 
M\cap \Sph (C,\pqq^{j+\frac12}) 
= 
M\cap \Sigma^{j\frac\pi{2}  \,} = \bigcup_{i\in2\Z-j}  \beta_i^j \cup  \beta_{i}^{j+2}$ 
$\forall j \in \Z$. 
\end{lemma}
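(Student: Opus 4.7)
The plan is to reduce each of the four assertions to the decomposition
$M \cap \Sigma = \bigcup_{i'+j' \in 2\Z} (D_{i'}^{j'} \cap \Sigma)$
and then, for each of the four spheres $\Sigma$ appearing, identify which tetrahedra $\Om_{i'}^{j'}$ the sphere meets and in what manner.
The ``$\supset$'' inclusions are essentially free: in (i) and (iii) they follow from Lemma \ref{Msym}(ii), which places the great circles
$C_{(2i-1)\pi/(2m)}^{(2j-1)\pi/4} = \Sph(\pqq_i,\pqq^j)$ inside $M$, and in (ii) and (iv) they hold by the definitions \eqref{ab} of $\alpha_i^j$ and $\beta_i^j$. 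The leftmost equalities in each item are direct consequences of \eqref{hemispheres} and \eqref{qpoints}.

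For the harder ``$\subset$'' inclusion in (ii), I would argue as follows. Since $\pqq_{i+\frac12}$ is the midpoint of the $C$-edge
$\overline{\pqq_i \pqq_{i+1}}$, the sphere $\Sigma_{i\pi/m}$ crosses the interior of the $C$-edge of $\Om_{i'}^{j'}$ precisely when
$i' \in \{i, i+m\}$, and for such $i'$ it bisects $\Om_{i'}^{j'}$ along the triangle $\overline{\pqq_{i'+\frac12}\pqq^{j'}\pqq^{j'+1}}$
(Lemma \ref{Om:p}(iii)); the intersection with $D_{i'}^{j'}$ is then $\alpha_{i'}^{j'}$ by \eqref{ab}. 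For any other $i'$, both
$C$-vertices of $\Om_{i'}^{j'}$ lie on the same side of the sphere and $\Om_{i'}^{j'} \cap \Sigma_{i\pi/m} = \overline{\pqq^{j'}\pqq^{j'+1}} \subset C^\perp$; by Lemma \ref{Dsym2}(i)
together with $\partial D_{i'}^{j'} = Q_{i'}^{j'}$, this edge meets $D_{i'}^{j'}$ only at the two vertices
$\pqq^{j'}, \pqq^{j'+1}$, which already appear as endpoints of $\alpha$ curves from neighboring discs.
The constraint $i' + j' \in 2\Z$ then selects $j' \in 2\Z - i$ for $i' = i$ and $j' \in 2\Z - i + [m:2]$ for $i' = i + m$,
matching the indexing in the statement. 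Part (iv) follows by the symmetric argument after exchanging the roles of $C$ and $C^\perp$ (here the shift is simply $+2$ rather than $+[m:2]$ because $C^\perp$ has only $4$ subdivision points and the antipodal shift is always even).

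For ``$\subset$'' in (i) and (iii), I would use that $\Sigma_{(2i-1)\pi/(2m)} = \Sph(C^\perp, \pqq_i)$ contains the vertex $\pqq_i \in C$ itself,
so the entire triangular face $\overline{\pqq_i \pqq^{j'} \pqq^{j'+1}}$ is contained in the sphere; this triangle is a common face of
$\Om_{i-1}^{j'}$ and $\Om_i^{j'}$. By Lemma \ref{Dsym2}(i), $D_{i'}^{j'}$ touches this face only along its boundary, namely along the pair
of edges $\overline{\pqq_i \pqq^{j'}} \cup \overline{\pqq_i \pqq^{j'+1}}$, and for each $j'$ exactly one of $i' = i$ or $i' = i-1$ satisfies
$i' + j' \in 2\Z$ (both would contribute the same pair of edges in any case). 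For $i' \notin \{i-1, i, i+m-1, i+m\}$, both $C$-vertices of $\Om_{i'}^{j'}$
lie on the same side of the sphere, so $\Om_{i'}^{j'} \cap \Sigma$ reduces to the $C^\perp$-edge and yields no new points. Collecting the edges
for all $j' \in \Z$ together with the antipodal contributions from $\pqq_{i+m}$, and using the decomposition of $\Sph(\pqq_i, \pqq^j)$ into its four arcs
from Lemma \ref{L:obs}(ii), reproduces $\bigcup_{j \in \Z} \Sph(\pqq_i, \pqq^j)$. Part (iii) is symmetric.

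The main obstacle is bookkeeping---specifically tracking the parity shift $[m:2]$ in part (ii), which reflects that for odd $m$
the antipodal disc $D_{i+m}^{j'}$ exists only at $j'$-parity opposite to that for which $D_i^{j'}$ exists, whereas for even $m$ the two parities coincide.
All geometric inputs (sphere--tetrahedron incidence, confinement of $\operatorname{int} D_i^j$ to $\operatorname{int}\Om_i^j$, and $\partial D_i^j = Q_i^j$)
are already available from the preceding lemmas, so the argument reduces to careful case analysis rather than any genuinely new geometric ingredient.
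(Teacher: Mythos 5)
Your proposal is correct and follows essentially the same route as the paper's proof: the ``$\supset$'' inclusions come from the circles lying in $M$ (equivalently the definition in \ref{T:lawson}) and from \eqref{ab}, while the ``$\subset$'' inclusions rest on \ref{Dsym2}.i together with $\partial D_{i'}^{j'}=Q_{i'}^{j'}$ and the fact that each sphere meets $\Sph^3$ only along the two relevant cones/faces, with the parity shift $[m:2]$ arising exactly as in the paper from the antipodal index $i+m$. The only difference is presentational: you carry out the sphere--tetrahedron incidence analysis tetrahedron by tetrahedron, whereas the paper compresses this into the hemisphere decomposition of \ref{L:obs}.iv and the equalities built into \eqref{ab}.
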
 

\begin{proof} 
That the circles are contained in the intersections in (i) and (iii) follows from the definition of $M$ in \ref{T:lawson}
and the reverse inclusions follow from \ref{Dsym2}.i  
completing the proof of (i) and (iii). 
By \ref{L:obs}.iv we have 
$ \Sigma_{i\frac\pi{m} \,} =  ( C^\perp \cone \pp_{i\frac\pi{m}\,}  ) \cup ( C^\perp \cone \pp_{i\frac\pi{m}+\pi\,}  )$  
and 
$\Sigma^{j\frac\pi{2}  \,} = ( C \cone \pp^{j\frac\pi{2}  \,} ) \cup ( C \cone \pp^{j\frac\pi{2}+\pi  \,} )$.  
By \ref{ab} and \ref{T:lawson} we have  
$M\cap ( C^\perp \cone \pp_{i\frac\pi{m}\,}  ) 
=
\bigcup_{j\in2\Z-i}  
\alpha_i^j $ 
and 
$M\cap ( C \cone \pp^{j\frac\pi{2}  \,} ) 
=
\bigcup_{i\in2\Z-j}  
\beta_i^j $ . 
Using \ref{qpoints} we complete the proof of (ii) and (iv). 
\end{proof}

\subsection*{Subdividing the Lawson surfaces with mutually orthogonal two-spheres} 
$\phantom{ab}$
\nopagebreak

\begin{definition}
\label{Mpm} 
For $M=M[C,m]$ as in \ref{T:lawson} 
we define $M^{\pm\pm}_{\pm\pm}:= M \cap \Omu^{\pm\pm}_{\pm\pm}$, 
where instead of $\pm$ we could also have $*$ 
(recall \ref{Omu+-}).  
For example 
$M_{+-}^{-*} := M \cap \Omu_{+-}^{-*}$.  
\qed 
\end{definition} 

\begin{lemma}[Description of $M^{++}_{**}$]  
\label{M++}
The following hold. 
\\ 
(i)  
$M^{\pm\pm}_{**}$ is homeomorphic to a closed disc and   
$M^{++}_{**} = \cup_{i=0}^{m-1} ( D^{0+}_{2i} \cup D^{1-}_{2i+1} )$.  
\\
(ii)  
$\partial M^{++}_{**} = ( \Sigma^0\cap M^{++}_{**} ) \cup ( \Sigma^{\pi/2} \cap M^{++}_{**} ) $ 
and is homeomorphic to a circle. 
\\
(iii)  
$\Sigma^0\cap M^{++}_{**} = \cup_{i=0}^{m-1}  \beta^{0}_{2i} $,  
and so consists of $m$ connected components, 
each homeomorphic to a closed interval.  
\\ 
(iv)  
$\Sigma^{\pi/2} \cap M^{++}_{**} = \cup_{i=0}^{m-1} \beta^{1}_{2i+1} $, 
and so consists of $m$ connected components, 
each homeomorphic to a closed interval.  
\\ 
(v)  
$\Sigma_0\cap M^{++}_{**}$ is homeomorphic to a closed interval and 
$\Sigma_0\cap M^{++}_{**} = \left\{ 
\begin{aligned}
& \alpha_0^{0+} \cup \alpha_m^{0+} \text{ if } m\in 2\Z,  
\\
& \alpha_0^{0+} \cup \alpha_m^{1-} \text{ if } m\in 2\Z+1.  
\end{aligned} 
\right. 
\qquad 
$ 
\\ 
(vi)  
$\Sigma_{\pi/2} \cap M^{++}_{**}$ is homeomorphic to a closed interval and 
\\ $\phantom{kk}$ \hfill 
$\Sigma_{\pi/2} \cap M^{++}_{**} = \left\{ 
\begin{aligned}
& \alpha_{m/2}^{0+}  \cup \alpha_{3m/2}^{0+}  && \text{ if } m\in 4\Z,  
\\
& \overline{\pqq^1\pqq_{\frac{m+1}2}} \cup  \overline{\pqq^1\pqq_{\frac{3m+1}2}} && \text{ if } m\in 2\Z+1,  
\\
& \alpha_{m/2}^{1-}  \cup \alpha_{3m/2}^{1-}  && \text{ if } m\in 4\Z+2.  
\end{aligned} 
\right. 
\qquad 
$ 
\end{lemma}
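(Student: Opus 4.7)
My plan is to combine the decomposition $\Omu^{++}_{**} = \bigcup_{i=0}^{2m-1}(\Omega^{0+}_i \cup \Omega^{1-}_i)$ of Lemma~\ref{OmuOm}(i) with the piecewise description of $M$ from Theorem~\ref{T:lawson}, the graphical sub-discs $D^{j\pm}_i$ from Lemma~\ref{Dsym}, and the curves $\alpha, \beta$ from \eqref{ab}; the topological conclusions of (i) and (ii) will come from a small Euler-characteristic count, and (iii)--(vi) will reduce to a direct calculation using Lemma~\ref{SigmaM} and the parities imposed by the decomposition.

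For item (i): Theorem~\ref{T:lawson} produces $D_i^j$ only for $i + j \in 2\Z$ and Lemma~\ref{Dsym}(ii) gives $D^{j\pm}_i = D_i^j \cap \Omega_i^{j\pm}$, so the decomposition of $\Omu^{++}_{**}$ singles out exactly the pieces $D^{0+}_{2i}$ and $D^{1-}_{2i+1}$ for $i = 0, \ldots, m-1$, yielding the stated formula. Each is a closed topological disc by Lemma~\ref{Dsym}(ii), and by Lemma~\ref{Dpm} its boundary consists of two geodesic edges $\overline{\pqq_\bullet \pqq^1}$ meeting at the common central vertex $\pqq^1$, together with one arc on $\Sigma^0$ or $\Sigma^{\pi/2}$ of the form $\beta^0_{2i}$ or $\beta^1_{2i+1}$. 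Consecutive pieces share exactly one of these geodesic edges, so the $2m$ discs fit together as a fan around $\pqq^1$ whose boundary is the closed cycle $\beta^0_0 \cup \beta^1_1 \cup \beta^0_2 \cup \cdots \cup \beta^1_{2m-1}$. Counting vertices ($\{\pqq^1\} \cup \{\pqq_j\}_{j=0}^{2m-1}$), edges ($2m$ radials plus $2m$ arcs $\beta$), and faces ($2m$) gives $\chi = (2m + 1) - 4m + 2m = 1$; together with connectedness (manifest from the fan) and the single boundary component just exhibited, the classification of compact surfaces with boundary forces $M^{++}_{**}$ to be a closed topological disc. The three remaining pieces $M^{\pm\pm}_{**}$ are then images of $M^{++}_{**}$ under the reflections in $\Sigma^0$ and $\Sigma^{\pi/2}$, both of which belong to $\Gsym^M$ by Lemma~\ref{Msym}(i).

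Items (ii)--(iv) are immediate consequences. Part (ii) follows from $\partial M^{++}_{**} = M \cap \partial\Omu^{++}_{**}$ together with \ref{partialOmu+}, and the single circle was already identified in (i). For (iii) and (iv), by \eqref{ab} the arcs $\beta^0_{2i}$ and $\beta^1_{2i+1}$ are respectively $D_{2i}^0 \cap \Sigma^0$ and $D_{2i+1}^1 \cap \Sigma^{\pi/2}$; they sit in the disjoint tetrahedra $\Omega_{2i}$ and $\Omega_{2i+1}$, are each a closed interval joining adjacent $C$-vertices by Lemma~\ref{Dpm}(ii), and hence give the $m$ claimed components in each case.

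For items (v) and (vi), when $m$ is even both $\Sigma_0$ and $\Sigma_{\pi/2}$ coincide with spheres of the form $\Sigma_{i\pi/m}$, so Lemma~\ref{SigmaM}(ii) describes $M \cap \Sigma_0$ and $M \cap \Sigma_{\pi/2}$ as unions of $\alpha$-curves; intersecting with the decomposition from (i) and tracking the parities of $[m:2]$ and $m/2 \bmod 2$ selects precisely the $\alpha^{0+}$ or $\alpha^{1-}$ pieces listed. For $m$ odd, $\Sigma_0$ is still of this form so (v) proceeds identically, but $\Sigma_{\pi/2}$ is \emph{not} a symmetry sphere of $M$ and is not covered by Lemma~\ref{SigmaM}(ii) --- this is the main obstacle. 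The proposed argument: $\Sigma_{\pi/2} = \{x^1 = 0\}$ contains all of $C^\perp$ and meets $C$ exactly in the antipodal pair $\pqq_{(m+1)/2}, \pqq_{(3m+1)/2}$ (the unique $C$-vertices with $\cos t_i = 0$), so for each $i$ the wedge $\Omega_i$ satisfies $\Omega_i \cap \Sigma_{\pi/2} = C^\perp$ unless $\{\pqq_i, \pqq_{i+1}\}$ contains one of these two vertices, in which case the intersection is the bounding face of $\Omega_i$ through that vertex. Lemma~\ref{Dsym2}(i) then implies $D_i^j \cap \Sigma_{\pi/2}$ reduces to those edges of $Q_i^j$ lying on $\Sigma_{\pi/2}$; after imposing $i + j \in 2\Z$ and confinement to $\Omega^{0+} \cup \Omega^{1-}$, these reduce to precisely the two geodesic arcs $\overline{\pqq^1 \pqq_{(m+1)/2}}$ and $\overline{\pqq^1 \pqq_{(3m+1)/2}}$, yielding (vi) in the odd case.
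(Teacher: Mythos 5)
Your proposal is correct, and it is essentially a fleshed-out version of the paper's proof, which for this lemma is just the one-line citation of \ref{OmuOm}, \ref{T:lawson}, \ref{ab}, \ref{Dpm}, and \ref{Mpm}: you use exactly these ingredients to write $M^{++}_{**}$ as the fan of half-discs $D^{0+}_{2i}$, $D^{1-}_{2i+1}$ glued along the radial geodesics $\overline{\pqq_k\pqq^1}$, with the $\beta$-arcs forming the boundary cycle. Two points of comparison are worth recording. First, for the disc claim you invoke an Euler-characteristic count plus the classification of surfaces with boundary; this is fine, and arguably cleaner than arguing directly that the fan of $2m$ triangles around the interior vertex $\pqq^1$ is a disc. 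Second, for item (vi) with $m$ odd you treat $\Sigma_{\pi/2}$ by a bespoke coordinate argument on the grounds that it is ``not covered'' by Lemma \ref{SigmaM}; in fact it is covered, since for odd $m$ one has $\pi/2=(2i-1)\tfrac{\pi}{2m}$ with $i=\tfrac{m+1}{2}$, so \ref{SigmaM}(i) gives $M\cap\Sigma_{\pi/2}=\bigcup_j \Sph(\pqq_{(m+1)/2},\pqq^j)$, and intersecting these two great circles with the wedge $\Omu^{++}_{**}$ immediately yields $\overline{\pqq^1\pqq_{(m+1)/2}}\cup\overline{\pqq^1\pqq_{(3m+1)/2}}$ --- a shortcut that avoids your hand computation, though your computation is also valid. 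One negligible imprecision in that computation: $Q_i^j\cap\Sigma_{\pi/2}$ is not only the edges lying on $\Sigma_{\pi/2}$ but also the isolated vertices $\pqq^j,\pqq^{j+1}\in C^\perp\subset\Sigma_{\pi/2}$; within $\Omu^{++}_{**}$ the only such point is $\pqq^1$, which is already an endpoint of the two arcs, so the stated conclusion is unaffected. (Similarly, calling the wedges $\Om_{2i}$ ``disjoint tetrahedra'' is loose --- they share $C^\perp$ --- but the $\beta$-arcs avoid $C^\perp$, so the disjointness you need does hold.)
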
 

\begin{proof}
All items follow easily from \ref{OmuOm}, \ref{T:lawson}, \ref{ab}, \ref{Dpm}, and \ref{Mpm}. 
\end{proof} 

\begin{lemma}[Description of $M^{++}_{++}$]  
\label{M++++}
The following hold. 
\\ 
(i)  
$M^{++}_{++} = D_{0+}^{0+} \cup 
\left\{ 
\begin{aligned}
& \cup_{i=1}^{\frac{m}{2}-1} D^{[i:2]\pm}_i \cup D_{\frac{m}2-}^{0+} && \qquad \text{ if } m\in 4\Z, \qquad   
\\
& \cup_{i=1}^{\frac{m-1}{2}} D^{[i:2]\pm}_i && \qquad \text{ if } m\in 2\Z+1, \qquad   
\\
& \cup_{i=1}^{\frac{m}{2}-1} D^{[i:2]\pm}_i \cup D_{\frac{m}{2}-}^{1-} && \qquad \text{ if } m\in 4\Z+2, \qquad   
\end{aligned} 
\right. 
$
\\
where the $\pm$ signs are $+$ for $i$ even and $-$ for $i$ odd. 
Therefore $M^{++}_{++}$ is homeomorphic to a closed disc.  
\\
(ii)  
$\partial M^{++}_{++}$ is homeomorphic to a circle. 
Moreover we can write  
$\partial M^{++}_{++} = 
\gamma_1\cup\gamma_2\cup\gamma_3$, where   
$\gamma_1:= \gamma_{1-} \cup \gamma_{1+}$,  
$\gamma_{1-}:= \Sigma^{0} \cap M^{++}_{++} $, 
$\gamma_{1+}:= \Sigma^{\pi/2} \cap M^{++}_{++} $, 
$\gamma_2:= \Sigma_0\cap M^{++}_{++}$, and   
$\gamma_3:= \Sigma_{\pi/2} \cap M^{++}_{++} $,  
and each of $\gamma_1, \gamma_2,\gamma_3$ is homeomorphic to a closed interval.  
\\
(iii)  
$\gamma_1 = \gamma_{1-} \cup \gamma_{1+} = 
\beta_{0+}^{0} \cup 
\left\{ 
\begin{aligned}
& \cup_{i=1}^{\frac{m}{2}-1} \beta^{[i:2]}_i \cup \beta_{\frac{m}2-}^{0} && \text{ if } m\in 4\Z,  
\\
& \cup_{i=1}^{\frac{m-1}{2}} \beta^{[i:2]}_i && \text{ if } m\in 2\Z+1,  
\\
& \cup_{i=1}^{\frac{m}{2}-1} \beta^{[i:2]}_i \cup \beta_{\frac{m}{2}-}^1 && \text{ if } m\in 4\Z+2,  
\end{aligned} 
\right. 
$
\\
(iv)  
$\gamma_{1-}  = \Sigma^0 \cap M^{++}_{++} = 
\overline{\pp_{0} \pp_{\pi/2} \pp^0}  \cap M^{++}_{++} = 
\beta_{0+}^{0} \cup 
\left\{ 
\begin{aligned}
& \cup_{i=1}^{\frac{m-4}{4}} \beta_{2i}^0 \cup \beta_{\frac{m}2-}^{0} && \text{ if } m\in 4\Z,  
\\
& \cup_{i=1}^{\frac{m-1}{4}} \beta_{2i}^0 && \text{ if } m\in 4\Z+1,  
\\
& \cup_{i=1}^{\frac{m-2}{4}} \beta_{2i}^0 && \text{ if } m\in 4\Z+2,  
\\ 
& \cup_{i=1}^{\frac{m-3}{4}} \beta_{2i}^0 && \text{ if } m\in 4\Z+3.  
\end{aligned} 
\right. 
$
\\ 
(v)  
$\gamma_{1+} = \Sigma^{\pi/2} \cap M^{++}_{++} = 
\overline{\pp_{\pi/2} \pp^{\pi/2} \pp_0} \cap M^{++}_{++} = 
\left\{ 
\begin{aligned}
& \cup_{i=1}^{m/4} \beta_{2i-1}^1 && \text{ if } m\in 4\Z,  
\\
& \cup_{i=1}^{\frac{m-1}{4}} \beta_{2i-1}^1 && \text{ if } m\in 4\Z+1,  
\\
& \cup_{i=1}^{\frac{m-2}{4}} \beta_{2i-1}^1 \cup \beta_{\frac{m}{2}-}^1 && \text{ if } m\in 4\Z+2,  
\\ 
& \cup_{i=1}^{\frac{m+1}{4}} \beta_{2i-1}^1 && \text{ if } m\in 4\Z+3.  
\end{aligned} 
\right. 
$
\\ 
(vi)  
$\gamma_2 = \Sigma_0 \cap M^{++}_{++} = 
\overline{\pp_{0} \pp^{\pi/2} \pp^0} \cap M^{++}_{++} = 
\alpha_0^{{{0+}}} $. 
\\ 
(vii)  
$\gamma_3 = \Sigma_{\pi/2} \cap M^{++}_{++} = 
\overline{\pp_{\pi/2} \pp^{\pi/2} \pp^0} \cap M^{++}_{++} = 
\left\{ 
\begin{aligned}
& \alpha_{m/2}^{0+}  && \text{ if } m\in 4\Z,  
\\
& \overline{\pqq^1\pqq_{\frac{m+1}2}}      =       \overline{\pp^{\pi/4} \pp_{\pi/2} }   && \text{ if } m\in 2\Z+1,  
\\
& \alpha_{m/2}^{1-}  && \text{ if } m\in 4\Z+2,  
\end{aligned} 
\right. 
$ 
\end{lemma}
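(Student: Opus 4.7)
The plan is to derive the entire lemma mechanically from \ref{OmuOm}(iii), Theorem \ref{T:lawson}, Lemmas \ref{Dsym}, \ref{Dpm}, and \ref{M++}, and the definitions in \eqref{ab}. The key structural input is that \ref{OmuOm}(iii) exhibits $\Omu^{++}_{++}$ as a linear chain of tetrahedra of the form $\Om^{0+}_i$ or $\Om^{1-}_i$, possibly truncated at one end by the bisecting sphere $\Sigma_{\pi/2}$, so $M \cap \Omu^{++}_{++}$ is a corresponding chain of Lawson-disc pieces glued along common faces.

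First I would prove (i) by applying \ref{OmuOm}(iii) and intersecting every block with $M$, recalling from the paragraph preceding \eqref{ab} that $M \cap \Om_{i\pm}^{j\pm}$ equals $D_{i\pm}^{j\pm}$ when $i+j \in 2\Z$ and is empty otherwise. Thus in each block $\Om^{0+}_i$ only the even-$i$ contribution survives and in each block $\Om^{1-}_i$ only the odd-$i$ one does, which is exactly the condition encoded by $D^{[i:2]\pm}_i$; the three cases in (i) then mirror the three cases of \ref{OmuOm}(iii). That each $D^{[i:2]\pm}_i$ is a closed topological disc is \ref{Dsym}(ii), and by \ref{Dpm}(ii) consecutive pieces in the chain meet along a single $\alpha$-arc $\alpha_i^{[i:2]}$, so the union is itself a closed topological disc.

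Next I would observe that, since $M$ is closed without boundary, $\partial M^{++}_{++} = M \cap \partial \Omu^{++}_{++}$, and by \eqref{partialOmu+} this is precisely the union of the four intersections $\gamma_{1-}, \gamma_{1+}, \gamma_2, \gamma_3$ named in (ii). Items (vi) and (vii) then follow by further restricting \ref{M++}(v) and (vi) to $\Omu^{++}_{++}$: in each case only the single $\alpha$-curve, or the single geodesic $\overline{\pp^{\pi/4}\pp_{\pi/2}}$ in the odd-$m$ case, lying in $\Omu^{++}_{++}$ survives. For (iii)--(v) I would scan the decomposition of (i) and replace each $D^{0+}_i$ (resp.\ $D^{1-}_i$) by its intersection with $\Sigma^0$ (resp.\ $\Sigma^{\pi/2}$), which is $\beta_i^0$ (resp.\ $\beta_i^1$) by \eqref{ab}; separating the resulting list according to the second superscript yields the formulas for $\gamma_{1-}$ in (iv) and $\gamma_{1+}$ in (v), and the explicit index ranges in each of the four residue classes come from the truncation pattern dictated by \ref{OmuOm}(iii). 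Finally, by \ref{Dpm}(ii) the endpoints of consecutive $\beta$-arcs on $\Sigma^0$ or $\Sigma^{\pi/2}$ share the vertex $\pqq_{i+1} \in C$, so $\gamma_{1-}$ and $\gamma_{1+}$ are each a single interval; checking that their endpoints match those of $\gamma_2$ and $\gamma_3$ at the appropriate $\pqq^j$ and $\pqq_i$ assembles $\partial M^{++}_{++}$ into a circle.

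The main obstacle, though purely combinatorial, is the casework on $m \bmod 4$. When $m$ is even the final tetrahedron is bisected by $\Sigma_{\pi/2}$, yielding a half-block $D^{0+}_{m/2-}$ or $D^{1-}_{m/2-}$ and a corresponding half-$\beta$ on $\gamma_{1-}$ or $\gamma_{1+}$, and one must track on which side of $\Sigma_{\pi/2}$ each endpoint lies; the split between $m \in 4\Z$ and $m \in 4\Z+2$ comes entirely from this. The odd-$m$ case is qualitatively different: $\Sigma_{\pi/2}$ does not bisect any tetrahedron but instead cuts the chain along a face containing the edge $\overline{\pqq^1\pqq_{(m+1)/2}} = \overline{\pp^{\pi/4}\pp_{\pi/2}}$. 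By \ref{Msym}(ii) this edge lies on the great circle $C_{\pi/2}^{\pi/4}$, which (precisely because $m$ is odd) is contained in $M$, and so it appears in $\Sigma_{\pi/2} \cap M^{++}_{++}$ as $\gamma_3$ rather than as an $\alpha$-curve. Verifying this degenerate identification, and checking that the geodesic fits seamlessly into the closed curve $\partial M^{++}_{++}$, is the only step that requires more than routine unpacking of the preceding lemmas.
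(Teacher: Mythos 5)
Your overall route is the same as the paper's (which simply cites \ref{OmuOm}, \ref{T:lawson}, \eqref{ab}, \ref{Dpm}, and \ref{Mpm}), and the decomposition in (i), the $m \bmod 4$ casework, and the treatment of the odd case (where $\gamma_3$ is the geodesic $\overline{\pqq^1\pqq_{\frac{m+1}2}} \subset C_{\pi/2}^{\pi/4} \subset M$ rather than an $\alpha$-curve) are all correct. However, there are two concrete errors in the gluing/connectivity bookkeeping, which is exactly the content the proof has to supply. First, consecutive pieces of the chain in (i) do \emph{not} meet along $\alpha$-arcs: $D_{0+}^{0+} \cap D_1^{1-} = \overline{\pqq_1\pqq^1}$, and more generally $D_i^{[i:2]\pm} \cap D_{i+1}^{[i+1:2]\mp} = \overline{\pqq_{i+1}\pqq^1}$, i.e.\ the pieces are glued along the geodesic edges through $\pqq^1$ lying on the circles of \ref{Msym}(ii) (common edges of the quadrilaterals $Q_i^{[i:2]}$). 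The curves $\alpha_i^{[i:2]\pm}$ of \eqref{ab} lie on the spheres $\Sigma_{i\pi/m}$, which bisect the pieces $D_i^{[i:2]\pm}$ into quarters; they appear in the interior of the chain pieces, or at the two ends of the chain as $\gamma_2$ and (for $m$ even) $\gamma_3$ --- not as the interfaces between consecutive pieces. The disc conclusion survives once the gluing arcs are identified correctly, but as written the step is wrong.

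Second, your assembly of $\partial M^{++}_{++}$ asserts that consecutive $\beta$-arcs on $\Sigma^0$ (or on $\Sigma^{\pi/2}$) share an endpoint $\pqq_{i+1}$, so that $\gamma_{1-}$ and $\gamma_{1+}$ are each a single interval. This is false and in fact contradicts items (iv)--(v) of the lemma you are proving: e.g.\ for $m \in 4\Z+1$ one has $\gamma_{1-} = \beta_{0+}^0 \cup \beta_2^0 \cup \cdots$, and by \ref{Dpm}(ii) $\partial\beta_{0+}^0 = \{\pqq_1, \QQ_0^0\}$ while $\partial\beta_2^0 = \{\pqq_2,\pqq_3\}$, so these arcs are disjoint and $\gamma_{1-}$ is disconnected. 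What is connected is $\gamma_1 = \gamma_{1-} \cup \gamma_{1+}$: the arcs of $\gamma_1$ \emph{alternate} between $\Sigma^0$ and $\Sigma^{\pi/2}$ ($\beta_{0+}^0$, then $\beta_1^1$, then $\beta_2^0$, then $\beta_3^1$, \dots), and it is consecutive arcs in this alternating chain that share the endpoints $\pqq_1, \pqq_2, \dots \in C$, by \ref{Dpm}(ii). The boundary circle is then $\gamma_2 = \alpha_0^{0+}$ (from $\pqq^1$ to $\QQ_0^0$), followed by the alternating chain $\gamma_1$ (from $\QQ_0^0$ to $\QQ_{m/2}^{0}$, $\QQ_{m/2}^{1}$, or $\pqq_{\frac{m+1}2}$ according to the case), closed up by $\gamma_3$ back to $\pqq^1$. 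With these two corrections your outline matches the intended proof; without them the key connectivity claims in (i) and (ii) are not established.
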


\begin{proof}
All items follow easily from \ref{OmuOm}, \ref{T:lawson}, \ref{ab}, \ref{Dpm}, and \ref{Mpm}. 
\end{proof}

\begin{lemma}[Description of $M^{++}_{+*}$]  
\label{M+++}
The following hold. 
\\ 
(i)  
$M^{++}_{+*} = D_{0+}^{0+} \cup 
\left\{ 
\begin{aligned}
& \cup_{i=1}^{m-1} D^{[i:2]\pm}_i \cup D_{m-}^{0+} && \qquad \text{ if } m\in 2\Z, \qquad   
\\
& \cup_{i=1}^{m-1} D^{[i:2]\pm}_i \cup D_{m-}^{1-} && \qquad \text{ if } m\in 2\Z+1, \qquad   
\end{aligned} 
\right. 
$
\\
where the $\pm$ signs are $+$ for $i$ even and $-$ for $i$ odd. 
Therefore $M^{++}_{+*}$ is homeomorphic to a closed disc.  
\\
(ii)  
$\partial M^{++}_{+*}$ is homeomorphic to a circle. 
Moreover we can write  
$\partial M^{++}_{+*} = 
\gamma_4\cup\gamma_5$, where   
$\gamma_4:= \gamma_{4-} \cup \gamma_{4+}$,  
$\gamma_{4-}:= \Sigma^{0} \cap M^{++}_{+*} $, 
$\gamma_{4+}:= \Sigma^{\pi/2} \cap M^{++}_{+*} $, 
$\gamma_5:= \Sigma_0\cap M^{++}_{+*}$, 
and each of $\gamma_4, \gamma_5$ is homeomorphic to a closed interval.  
\\
(iii)  
$\gamma_4 = \gamma_{4-} \cup \gamma_{4+} = 
\beta_{0+}^{0} \cup 
\left\{ 
\begin{aligned}
& \cup_{i=1}^{m-1} \beta^{[i:2]}_i \cup \beta_{m-}^{0} && \text{ if } m\in 2\Z,  
\\
& \cup_{i=1}^{m-1} \beta^{[i:2]}_i \cup \beta_{m-}^1 && \text{ if } m\in 2\Z+1,  
\end{aligned} 
\right. 
$
\\
(iv)  
$\gamma_{4-}  = \Sigma^0 \cap M^{++}_{+*} = 
\beta_{0+}^{0} \cup 
\left\{ 
\begin{aligned}
& \cup_{i=1}^{\frac{m-2}{2}} \beta_{2i}^0 \cup \beta_{{m}-}^{0} && \text{ if } m\in 2\Z,  
\\
& \cup_{i=1}^{\frac{m-1}{2}} \beta_{2i}^0 && \text{ if } m\in 2\Z+1.  
\end{aligned} 
\right. 
$
\\ 
(v)  
$\gamma_{4+} = \Sigma^{\pi/2} \cap M^{++}_{+*} = 
\left\{ 
\begin{aligned}
& \cup_{i=1}^{m/2} \beta_{2i-1}^1 && \text{ if } m\in 2\Z,  
\\
& \cup_{i=1}^{\frac{m-1}{2}} \beta_{2i-1}^1 \cup \beta_{m-}^1 && \text{ if } m\in 2\Z+1.  
\end{aligned} 
\right. 
$
\\ 
(vi)  
$\gamma_5 = \Sigma_0 \cap M^{++}_{+*} = 
\Sigma_0\cap M^{++}_{**} = \left\{ 
\begin{aligned}
& \alpha_0^{0+} \cup \alpha_m^{0+} \text{ if } m\in 2\Z,  
\\
& \alpha_0^{0+} \cup \alpha_m^{1-} \text{ if } m\in 2\Z+1.  
\end{aligned} 
\right. 
\qquad 
$ 
\\ 
(vii)  
$\Sigma_{\pi/2} \cap M^{++}_{+*} = 
\Sigma_{\pi/2} \cap M^{++}_{++} = 
\left\{ 
\begin{aligned}
& \alpha_{m/2}^{0+}  && \text{ if } m\in 4\Z,  
\\
& \overline{\pqq^1\pqq_{\frac{m+1}2}}      =       \overline{\pp^{\pi/4} \pp_{\pi/2} }   && \text{ if } m\in 2\Z+1,  
\\
& \alpha_{m/2}^{1-}  && \text{ if } m\in 4\Z+2,  
\end{aligned} 
\right. 
$ 
\end{lemma}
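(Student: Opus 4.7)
The plan is to follow the same template used in the proofs of \ref{M++} and \ref{M++++}, combining the decomposition of $\Omu^{++}_{+*}$ from \ref{OmuOm}(ii) with the parity selection $i+j \in 2\Z$ coming from the definition of $M$ in \ref{T:lawson}, and then reading off boundary data from \ref{ab} and \ref{Dpm}. Since $\Omu^{++}_{+*} = \Omu^{++}_{++} \cup \Omu^{++}_{+-}$ and since \ref{M++++}(i) already describes $M^{++}_{++}$, the essential new content of (i) is just combining that with the analogous description of $M^{++}_{+-} = \refl_{\Sigma_{\pi/2}}(M^{++}_{++})$; alternatively one can argue directly from \ref{OmuOm}(ii).

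For (i), I would intersect each of the tetrahedra $\Om_{0+}^{0+}$, $\Om_{0+}^{1-}$, $\Om_i^{0+}$, $\Om_i^{1-}$, $\Om_{m-}^{0+}$, $\Om_{m-}^{1-}$ (for $1 \le i \le m-1$) appearing in \ref{OmuOm}(ii) with $M$. By \ref{T:lawson} the piece $M \cap \Om_i^j$ is $D_i^j$ when $i+j$ is even and empty otherwise, and by \ref{Dsym}(ii) its intersection with $\Om_{i\pm}^{j\pm}$ gives the disc $D_{i\pm}^{j\pm}$; hence $\Om^{0+}_i$ contributes $D_i^{0+}$ precisely when $i$ is even and $\Om^{1-}_i$ contributes $D_i^{1-}$ precisely when $i$ is odd, which produces the indicated superscripts $[i:2]\pm$. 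The endpoint tetrahedra $\Om^{0+}_{m-}$ and $\Om^{1-}_{m-}$ contribute precisely one piece according as $m$ is even or odd, giving the two cases. Adjacent pieces meet in common $\alpha$- or $\beta$-arcs (via \ref{Dpm}), so the union is homeomorphic to a closed disc.

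For (ii)--(vi), from \ref{partialOmu+} the boundary $\partial \Omu^{++}_{+*}$ is the union of the relevant portions of $\Sigma^0$, $\Sigma^{\pi/2}$, and $\Sigma_0$ (the sphere $\Sigma_{\pi/2}$ lies in the interior of $\Omu^{++}_{+*}$). Intersecting with $M^{++}_{+*}$ and using \ref{SigmaM}(iv) (together with \ref{Dpm}.iii identifying $D_i^j \cap \Sigma^{j\pi/2}$ as $\beta_i^j$) yields the explicit $\beta$-descriptions of $\gamma_{4\pm}$ in (iv) and (v), with the case split reflecting whether the last surviving piece near $\pqq_m$ comes from $D_{m-}^{0+}$ or $D_{m-}^{1-}$. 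Similarly \ref{SigmaM}(ii) gives $\gamma_5 = \Sigma_0 \cap M^{++}_{+*}$ as the appropriate two $\alpha$-arcs; indeed this coincides with \ref{M++}(v). Concatenating $\gamma_{4-}, \gamma_{4+}, \gamma_5$ via the shared endpoints in $\Qv_i^j$ identified in \ref{Dpm}(ii) produces a single circle bounding the disc of (i). Finally, (vii) follows because $\Sigma_{\pi/2} \cap \Omu^{++}_{+*} = \Sigma_{\pi/2} \cap \Omu^{++}_{++}$ (since $\Sigma_{\pi/2}$ meets only the ``$++$'' piece of $\Omu^{++}_{+*}$ when one restricts the image of the relevant $\alpha$-arcs via \ref{Dpm}), so the answer is inherited verbatim from \ref{M++++}(vii).

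The only real obstacle is the combinatorial bookkeeping in the parity case split on $m$; the geometry is entirely routine given the lemmas already available. I would organize the argument by first establishing (i) cleanly and then deducing each boundary item from \ref{SigmaM} and \ref{Dpm} without revisiting the disc decomposition.
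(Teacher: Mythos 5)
Your proposal is correct and follows essentially the same route as the paper, whose proof simply reads all items off from \ref{OmuOm}, \ref{T:lawson}, \ref{ab}, \ref{Dpm}, and \ref{Mpm}; your additional appeals to \ref{M++}, \ref{M++++}, \ref{SigmaM}, and \ref{partialOmu+} are legitimate since those results precede this lemma. One small correction: the adjacent pieces in the chain of (i) are glued along the geodesic segments $\overline{\pqq_i\pqq^1}$ (edges of the quadrilaterals $Q_i^j$), not along $\alpha$- or $\beta$-arcs---the relevant $\beta$-arcs lie on $\gamma_4\subset\partial M^{++}_{+*}$ and the $\alpha$-arcs on $\gamma_5$ or on the bisecting spheres $\Sigma_{i\pi/m}$ interior to the individual pieces---but this naming slip does not affect the conclusion that the union is homeomorphic to a closed disc.
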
 

\begin{proof}
All items follow easily from \ref{OmuOm}, \ref{T:lawson}, \ref{ab}, \ref{Dpm}, and \ref{Mpm}. 
\end{proof}

\begin{lemma}[Symmetries of $M^{++}_{++}$ and $M^{++}_{+*}$]  
\label{symM++++}
$\pp_{\frac\pi2} = \pqq_{\frac{m+1}2 }$ and for $m$ even $\pp_{\frac\pi4} = \pqq_{\frac{m}4 + \frac12}$. 
Moreover the following hold. 
\\ 
(i)  
If $m\in 4\Z$ then $\refl_{\Sigma_{\pi/4}} = \refl_{\pqq_{\frac{m}4 + \frac12} , C^\perp }$ is a symmetry of  $M^{++}_{++}$ preserving the unit normal.   
\\ 
(ii)  
If $m\in 4\Z+2$ then $\refl_{ C_{\pi/4}^{\pi/4} } = \refl_{\pqq_{ \frac{m+2}4 } , \pqq^1} $ is a symmetry of  $M^{++}_{++}$ reversing the unit normal.   
\\ 
(iii)  
If $m\in 2\Z+1$ then $\refl_{ C_{\pi/2}^{\pi/4} } = \refl_{ \pqq_{ \frac{m+1}2 } , \pqq^1 } $ is a symmetry of $M^{++}_{+*}$ 
reversing the unit normal and exchanging $M^{++}_{++}$ with $M^{++}_{+-}$.   
\end{lemma}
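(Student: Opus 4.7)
The plan is to verify each claim by a direct computation in the coordinates of \ref{D:coordinates}, combined with the symmetry inventory \ref{Msym}. First, the two point identifications follow from \ref{qpoints}: $\pqq_{(m+1)/2}=\pp_{t_{(m+1)/2}}=\pp_{m\pi/(2m)}=\pp_{\pi/2}$ and, for $m$ even, $\pqq_{m/4+1/2}=\pp_{t_{m/4+1/2}}=\pp_{(m/2)\pi/(2m)}=\pp_{\pi/4}$; combined with $\pqq^1=\pp^{\pi/4}$, these give $\Sigma_{\pi/4}=\Sph(C^\perp,\pqq_{m/4+1/2})$, $C_{\pi/4}^{\pi/4}=\Sph(\pqq_{(m+2)/4},\pqq^1)$, and $C_{\pi/2}^{\pi/4}=\Sph(\pqq_{(m+1)/2},\pqq^1)$, so that the named reflections coincide as asserted.

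For each of (i)--(iii) the strategy is threefold: (a) match indices to those of \ref{Msym} to conclude the reflection is a symmetry of $M$ and to read off its effect on $\nu$; (b) compute the action of the reflection on the four vertices $\pp_0,\pp_{\pi/2},\pp^0,\pp^{\pi/2}$ in coordinates; (c) use this vertex-level action with the explicit description of $\Omu^{++}_{++}$ and $\Omu^{++}_{+*}$ to verify preservation or the stated exchange, and intersect with $M$.

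For (i), when $m\in 4\Z$ we have $\pi/4=i\pi/m$ for $i=m/4\in\Z$, so $\Sigma_{\pi/4}$ is among the symmetry spheres of \ref{Msym}(i), and its reflection preserves $\nu$; it fixes $C^\perp$ pointwise and swaps $\pp_0\leftrightarrow\pp_{\pi/2}$ (as $\pp_{\pi/4}$ bisects $\overline{\pp_0\pp_{\pi/2}}$), hence preserves $\Omu^{++}_{++}=\overline{\pp^0\pp^{\pi/2}\pp_0\pp_{\pi/2}}$. For (ii), $m\in 4\Z+2$ makes $i=(m+2)/4$, $j=1$ integers in \ref{Msym}(ii), so $\refl_{C_{\pi/4}^{\pi/4}}$ is a circle-of-symmetry reflection reversing $\nu$; a short coordinate calculation shows it fixes $\pp_{\pi/4},\pp^{\pi/4}$ while swapping $\pp_0\leftrightarrow\pp_{\pi/2}$ and $\pp^0\leftrightarrow\pp^{\pi/2}$, hence preserves $\Omu^{++}_{++}$. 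For (iii), $m\in 2\Z+1$ makes $i=(m+1)/2$, $j=1$ integers, and \ref{Msym}(ii) again supplies a $\nu$-reversing symmetry. The coordinate computation now gives $\refl_{C_{\pi/2}^{\pi/4}}\pp_{\pi/2}=\pp_{\pi/2}$, $\refl_{C_{\pi/2}^{\pi/4}}\pp_0=-\pp_0=\pp_\pi$, and $\refl_{C_{\pi/2}^{\pi/4}}\pp^0=\pp^{\pi/2}$, so $\Omu^{++}_{++}$ is sent to $\overline{\pp^{\pi/2}\pp^0\pp_\pi\pp_{\pi/2}}$; by \ref{OmuOm}(ii) this second tetrahedron equals $\refl_{\Sigma_{\pi/2}}\Omu^{++}_{++}=\Omu^{++}_{+-}$, so $\refl_{C_{\pi/2}^{\pi/4}}$ exchanges $\Omu^{++}_{++}$ with $\Omu^{++}_{+-}$ and preserves $\Omu^{++}_{+*}$; intersecting with $M$ yields the claimed exchange of $M^{++}_{++}$ with $M^{++}_{+-}$.

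I do not expect a genuine obstacle; the lemma is essentially bookkeeping once the parity hypotheses are translated into the integrality of $(m+2)/4$, $(m+1)/2$, and $m/4+1/2$. The only mildly delicate point is matching the half-integer subscripts of \ref{qpoints} with the $(2i-1)\pi/(2m)$ shift appearing in the labeling of the symmetry circles in \ref{Msym}(ii); once this bookkeeping is done, every remaining assertion reduces to the four-vertex coordinate check outlined above.
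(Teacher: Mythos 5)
Your proposal is correct and follows essentially the same route as the paper, which simply cites \ref{Omu:p}.iii, \ref{Msym}, \ref{Mpm}, and \ref{M++++}: you invoke \ref{Msym} for membership in $\Gsym^M$ and the behavior of $\nu$, and verify the action on $\Omu^{++}_{++}$ (and, for (iii), the identification $\refl_{\Sigma_{\pi/2}}\Omu^{++}_{++}=\Omu^{++}_{+-}$) before intersecting with $M$. The only cosmetic difference is that you redo the vertex-level coordinate checks that the paper delegates to \ref{Omu:p}.iii and \ref{OmuOm}.
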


\begin{proof}
All items follow easily from \ref{Omu:p}.iii,  \ref{Msym}, \ref{Mpm}, and  \ref{M++++}. 
\end{proof}

\section{Jacobi fields}

As is well known the linearized operator for the mean curvature on 
$M=M[C,m]$ (recall \ref{T:lawson}) 
is given by 
\begin{equation} 
\label{Lcal} 
\Lcal := \Delta + |A|^2 + 2 ,
\end{equation} 
where $|A|^2$ is the square of the length of the second fundamental form of the surface. 
We recall next the following standard definition. 

\begin{definition}[Jacobi fields on {{$M=M[C,m]$}}] 
\label{jacobiM} 
We call a function $J\in C^\infty(M)$ a \emph{Jacobi field on $M=M[C,m]$} if it satisfies $\Lcal J = 0$. 
\qed 
\end{definition} 

It is well known that Killing fields induce Jacobi fields as in the following definition. 

\begin{definition}[Jacobi fields $J_{C'}$] 
\label{jacobi} 
We adopt now for the rest of this article the same choice for the unit normal $\nu$ 
on the Lawson surface $M=M[C,m]$ as in \ref{Lnui}.  
Given then a great circle $C'$ in $\Sph^3$ and assuming an orientation on ${C'}^\perp$,  
we define the \emph{Jacobi field} $J^{{C'}^\perp}= J^{{C'}^\perp}[C,m] = J_{C'}= J_{C'}[C,m] \in C^\infty (\, M[C,m]\,)$ by 
$
J^{{C'}^\perp}= J_{C'} := K_{C'} \cdot \nu  
$ 
(recall \ref{l:D:rot}).  
\qed 
\end{definition} 

Note that multiplying a Jacobi field by $-1$ 
changes neither its nodal set nor any other significant properties, 
and so the orientation of ${C'}^\perp$ and direction of $\nu$ 
do not play a fundamental role. 

\begin{definition}[Exceptional and non-exceptional Jacobi fields] 
\label{exceptional} 
We call a Jacobi field on $M$ \emph{non-exceptional} if it is induced by a Killing field;  
otherwise we call it \emph{exceptional}. 
\qed 
\end{definition} 

We proceed to study some non-exceptional Jacobi fields which as we will see in \ref{basis-killing} form a basis. 
It is useful to introduce first the following notation. 

\begin{notation} 
\label{N:perp} 
We define $0_\perp:= \pi/2$ and $(\pi/2)_\perp:=0$. 
\qed 
\end{notation} 

\begin{lemma}[Symmetries of Jacobi fields]  
\label{Jsym} 
$\forall i,j\in\Z$ we have the following. 
\\
(i)  
$J_C$ is odd under $\refl_{\Sigma^{j\pi/2}} = \refl_{\, C , \pqq^{j+\frac12} } $ 
and even under $\refl_{\Sigma_{i\pi/m}} = \refl_{\, C^\perp, \pqq_{i+\frac12} } $ 
and 
$\refl_{ C_{ (2i- 1)\frac\pi{2m} }^{ (2j- 1)\frac\pi4  \,} } 
= \refl_{\, \pqq^{j} , \pqq_{i} } $.  
\\
(ii)  
$J_{C^\perp}$ is odd under $\refl_{\Sigma_{i\pi/m}} = \refl_{\, C^\perp, \pqq_{i+\frac12} } $ 
and even under $\refl_{\Sigma^{j\pi/2}} = \refl_{\, C , \pqq^{j+\frac12} } $ 
and 
$\refl_{ C_{ (2i- 1)\frac\pi{2m} }^{ (2j- 1)\frac\pi4  \,} } 
= \refl_{\, \pqq^{j} , \pqq_{i} } $.  
\\
(iii)  
If $m\in2\Z$ and $\phi,\phi'\in\{0,\pi/2\}$, 
then $J_{C_\phi^{\phi'} }$ is odd under $\refl_{\Sigma_\phi}$ and $\refl_{\Sigma^{\phi'}}$ 
and even under $\refl_{\Sigma_{\phi_\perp}}$ and $\refl_{\Sigma^{\phi'_\perp}}$.  
\\
(iv)  
If $m\in2\Z+1$, then the symmetries in (iii) hold except for the ones associated with $\refl_{\Sigma_{\pi/2}}$. 
\end{lemma}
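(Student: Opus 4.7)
The plan is to reduce all four items to a simple product-of-signs principle that combines the evenness/oddness of the relevant Killing field (recorded in Lemma \ref{Ksymm}) with the evenness/oddness of the unit normal $\nu$ under the corresponding isometry of $M$ (recorded in Lemma \ref{Msym}). First I would establish the following basic observation. Suppose $\refl\in O(4)$ is an isometry preserving $M$, and suppose a Killing field $K$ satisfies $\refl_*\circ K=\varepsilon_1\,K\circ\refl$ (so $\varepsilon_1=\pm 1$ according as $K$ is even or odd under $\refl$ in the sense of \ref{DKsymm}), while $\nu\circ\refl=\varepsilon_2\,\refl_*\circ\nu$ for some sign $\varepsilon_2$. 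Since $\refl_*$ preserves the Euclidean inner product on $\R^4$, we have
\begin{equation*}
(K\cdot\nu)\circ\refl=(\varepsilon_1\refl_*K)\cdot(\varepsilon_2\refl_*\nu)=\varepsilon_1\varepsilon_2\,(K\cdot\nu),
\end{equation*}
so that the Jacobi field $J_K=K\cdot\nu$ is even (odd) under $\refl$ according as $\varepsilon_1\varepsilon_2=+1$ ($-1$).

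With this principle in hand, (i) and (ii) are immediate. For item (i), the reflections $\refl_{\Sigma^{j\pi/2}}$ and $\refl_{\Sigma_{i\pi/m}}$ are symmetries of $M$ preserving $\nu$ by \ref{Msym}.i, while $\refl_{C_{(2i-1)\pi/(2m)}^{(2j-1)\pi/4}}$ is a symmetry of $M$ reversing $\nu$ by \ref{Msym}.ii. Combining each $\varepsilon_2$ with the parities of $K_C$ given in \ref{Ksymm}.i yields the asserted parities for $J_C$. Item (ii) is handled identically, using \ref{Ksymm}.ii in place of \ref{Ksymm}.i.

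For (iii) I would first check that when $m\in 2\Z$ each of the four reflections $\refl_{\Sigma_\phi},\refl_{\Sigma^{\phi'}},\refl_{\Sigma_{\phi_\perp}},\refl_{\Sigma^{\phi'_\perp}}$ (with $\phi,\phi'\in\{0,\pi/2\}$) is of the form $\refl_{\Sigma^{j\pi/2}}$ or $\refl_{\Sigma_{i\pi/m}}$ and hence is a symmetry of $M$ preserving $\nu$ by \ref{Msym}.i (the fact that $\Sigma_{\pi/2}=\Sigma_{(m/2)\pi/m}$ requires $m$ even, which is exactly the hypothesis). Since each such $\refl$ gives $\varepsilon_2=+1$, the parities of $J_{C_\phi^{\phi'}}$ equal the parities of $K_{C_\phi^{\phi'}}$, which are recorded in \ref{Ksymm}.iii. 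This yields the four claimed parities.

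Item (iv) then follows by the same computation, noting that the only reason the argument for (iii) breaks down when $m$ is odd is that $\Sigma_{\pi/2}$ ceases to belong to the list of spheres of symmetry of $M$ in \ref{Msym}.i, so any assertion referring to $\refl_{\Sigma_{\pi/2}}$ (namely the assertion for $\phi=\pi/2$ about $\refl_{\Sigma_\phi}$, and the assertion for $\phi=0$ about $\refl_{\Sigma_{\phi_\perp}}$) must be discarded. The remaining assertions are unaffected and are proved exactly as in (iii). There is no real obstacle here: the only mildly delicate point is bookkeeping the parity combinations and remembering that for odd $m$ the sphere $\Sigma_{\pi/2}$ is not a symmetry of $M$, so the corresponding statements are vacated rather than false.
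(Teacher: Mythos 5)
Your proposal is correct and follows essentially the same route as the paper, which proves the lemma by combining the parities of the Killing fields from Lemma \ref{Ksymm} with the behavior of $\nu$ under the symmetries recorded in Lemma \ref{Msym}, noting that $\refl_{\Sigma_{\pi/2}}$ preserves $M$ only for even $m$. Your explicit product-of-signs computation is just the spelled-out version of what the paper leaves implicit.
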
 

\begin{proof} 
All items follow from \ref{Ksymm} and \ref{Msym}. 
Note that $\refl_{\Sigma_{\pi/2}}$ preserves $M$ only when $m$ is even. 
\end{proof}

\begin{lemma}[Action of some symmetries on some Jacobi fields] 
\label{AsymJ}
The following hold. 
\\ 
(i)  
If $m\in 4\Z$, then 
$J_{C^0_0} \circ \refl_{\Sigma_{\pi/4}} = - J_{C^0_{\pi/2}}$ and 
$J_{C^{\pi/2}_{\pi/2}} \circ \refl_{\Sigma_{\pi/4}} = J_{C_0^{\pi/2}}$. 
\\ 
(ii)  
If $m\in 4\Z+2$, then 
$J_{C^0_0} \circ \refl_{ C_{\pi/4}^{\pi/4} } = J_{C^{\pi/2}_{\pi/2}}$ and  
$J_{C^0_{\pi/2}} \circ \refl_{ C_{\pi/4}^{\pi/4} } = - J_{C_0^{\pi/2}}$. 
\\ 
(iii)  
If $m\in 2\Z+1$, then 
$J_{C^0_0} \circ \refl_{ C_{\pi/2}^{\pi/4} } =  J_{C_0^{\pi/2}}$       
and 
$J_{C^{\pi/2}_{\pi/2}} \circ \refl_{ C_{\pi/2}^{\pi/4} } =  J_{C^0_{\pi/2}} $. 
\end{lemma}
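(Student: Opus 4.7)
The plan is to verify each of the six identities by a direct coordinate computation using the explicit formulas for the Killing fields in \eqref{E:killing} together with the transformation law of $\nu$ under the relevant reflection from \ref{Msym}. The underlying mechanism is uniform: for any isometry $\refl$ of $\Sph^3$, since $\refl_*$ preserves inner products,
\begin{equation*}
J_{C'} \circ \refl(p) = K_{C'}(\refl p) \cdot \nu(\refl p) = \refl_*^{-1}\bigl(K_{C'}(\refl p)\bigr) \cdot \refl_*^{-1}\bigl(\nu(\refl p)\bigr).
\end{equation*}
Here $\refl_*^{-1}(K_{C'} \circ \refl) = \pm K_{C''}$ for a suitable $C''$ (by the conjugation action on rotations), and $\refl_*^{-1}(\nu \circ \refl) = \pm \nu$ according to whether $\nu$ is preserved or reversed under $\refl$. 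Both signs can be read off.

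First I would realize each reflection as an explicit orthogonal transformation of $\R^4$ in the coordinates of \ref{D:coordinates}. Writing $e_i$ for the standard basis so that $\pp_0,\pp_{\pi/2},\pp^0,\pp^{\pi/2}$ correspond to $e_1,e_2,e_3,e_4$, and using $\pp_{\pi/4} = \tfrac{1}{\sqrt 2}(e_1+e_2)$ and $\pp^{\pi/4} = \tfrac{1}{\sqrt 2}(e_3+e_4)$ to read off the fixed $3$- or $2$-plane of each reflection, one finds
\begin{equation*}
\refl_{\Sigma_{\pi/4}} : (x^1,x^2,x^3,x^4) \mapsto (x^2,x^1,x^3,x^4), \qquad \refl_{C_{\pi/4}^{\pi/4}} : (x^1,x^2,x^3,x^4) \mapsto (x^2,x^1,x^4,x^3),
\end{equation*}
\begin{equation*}
\refl_{C_{\pi/2}^{\pi/4}} : (x^1,x^2,x^3,x^4) \mapsto (-x^1,x^2,x^4,x^3).
\end{equation*}

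Next I would record the sign by which $\nu$ transforms. By \ref{Msym}, $\refl_{\Sigma_{\pi/4}} = \refl_{\Sigma_{(m/4)\pi/m}}$ belongs to the $\nu$-preserving family precisely when $m/4 \in \Z$, which is the hypothesis of case (i). On the other hand, $\refl_{C_{\pi/4}^{\pi/4}}$ and $\refl_{C_{\pi/2}^{\pi/4}}$ occur as instances of \ref{Msym}(ii) with $(i,j) = ((m+2)/4, 1)$ for $m \in 4\Z+2$ and $(i,j) = ((m+1)/2, 1)$ for $m \in 2\Z+1$ respectively (the integrality of $i$ forces the indicated parity of $m$), so both reverse $\nu$.

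Finally I would substitute into \eqref{E:killing}. For instance in case (i),
\begin{equation*}
K_{C_0^0}(\refl_{\Sigma_{\pi/4}} x) = (\refl_{\Sigma_{\pi/4}} x)^4 e_2 - (\refl_{\Sigma_{\pi/4}} x)^2 e_4 = x^4 e_2 - x^1 e_4,
\end{equation*}
and applying $\refl_{\Sigma_{\pi/4}}^{-1}=\refl_{\Sigma_{\pi/4}}$ yields $x^4 e_1 - x^1 e_4 = -K_{C_{\pi/2}^0}(x)$. Since $\nu$ is preserved, I conclude $J_{C_0^0} \circ \refl_{\Sigma_{\pi/4}} = -J_{C^0_{\pi/2}}$. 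The second identity of (i) and both identities in (ii) and (iii) are obtained by six analogous two-line computations; in cases (ii) and (iii) the $\nu$-reversal contributes an extra $-1$ which cancels the sign from the conjugation action in exactly the right way to produce the signs claimed. The only real obstacle here is bookkeeping: tracking for each Killing field whether the conjugation sign is $+1$ or $-1$ and then combining it with the sign from the $\nu$-action. Both signs are unambiguously pinned down by the explicit matrices above and by \ref{Msym}, so no genuinely delicate argument is needed.
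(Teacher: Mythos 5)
Your proposal is correct and follows essentially the same route as the paper's (very terse) proof: identify each reflection explicitly, read off from the symmetry lemmas whether it preserves or reverses $\nu$, and compute the conjugated Killing fields from \eqref{E:killing}; all six sign computations check out. The only cosmetic difference is that you quote \ref{Msym} directly for the normal's behavior where the paper cites \ref{symM++++}, which itself rests on \ref{Msym}.
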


\begin{proof}
All items follow easily from \ref{symM++++} and the definitions, using in particular the orientation conventions specified in \ref{E:killing}. 
\end{proof}

\begin{lemma}[Gradient of Jacobi fields at $\pqq^1$]  
\label{L:grad} 
If $\phi,\phi'\in\{0,\pi/2\}$, 
then 
$J_{C_\phi^{\phi'} }(\pqq^1)=0$ 
and the gradient 
$\nabla_{\pqq^1}J_{C_\phi^{\phi'} }$ at $\pqq^1$ 
is nonzero and tangential to $\Sigma_{\phi_\perp}$. 
\end{lemma}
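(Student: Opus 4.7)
The plan is to exploit the fact, recorded in \ref{umb}, that $\pqq^1$ is an umbilic of $M$. Since $M$ is minimal, the second fundamental form vanishes at every umbilic; in particular $d\nu_{\pqq^1}=0$ on $T_{\pqq^1}M$. This will reduce both the vanishing assertion and the gradient computation to straightforward linear algebra using the explicit Killing-field formulas in \eqref{E:killing}.

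First I would compute $\nu(\pqq^1)$. By \ref{Lnui} (applied with $i=1$ and using $\nu(\pqq_1\pqq^1) \ni \pp^{-\pi/4}=\pqq^0$) we have $\nu(\pqq^1)=\pqq^0=\pp^{-\pi/4}$, whose coordinates in the basis \ref{D:coordinates} are $(0,0,1/\sqrt{2},-1/\sqrt{2})$; note $\pqq^1=\pp^{\pi/4}$ has coordinates $(0,0,1/\sqrt{2},1/\sqrt{2})$, so that $T_{\pqq^1}M=\Span(\pp_0,\pp_{\pi/2})$, the orthogonal complement of $\Span(\pqq^0,\pqq^1)=\Span(C^\perp)$ inside $\R^4$. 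For the vanishing $J_{C_\phi^{\phi'}}(\pqq^1)=0$ I would then, for each of the four choices $\phi,\phi'\in\{0,\pi/2\}$, read off $K_{C_\phi^{\phi'}}(\pqq^1)$ from \eqref{E:killing}; in every case the result lies in $\Span(\pp_0,\pp_{\pi/2})$, hence is perpendicular to $\nu(\pqq^1)\in\Span(C^\perp)$, giving $K_{C_\phi^{\phi'}}(\pqq^1)\cdot\nu(\pqq^1)=0$.

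For the gradient I would extend each Killing field $K=K_{C_\phi^{\phi'}}$ to a linear vector field $K(x)=Ax$ on $\R^4$ with $A$ skew-symmetric. For any $X\in T_{\pqq^1}M$, differentiating $J=K\cdot\nu$ along $X$ yields
\begin{equation*}
X(J) \;=\; (AX)\cdot\nu(\pqq^1) + K(\pqq^1)\cdot d\nu_{\pqq^1}(X) \;=\; -X\cdot K(\nu(\pqq^1)) + 0,
\end{equation*}
where the first term uses the skew-symmetry of $A$ and the second vanishes by the umbilic observation above. Consequently
\begin{equation*}
\nabla_{\pqq^1} J_{C_\phi^{\phi'}} \;=\; -\,\mathrm{proj}_{T_{\pqq^1}M}\,K_{C_\phi^{\phi'}}(\nu(\pqq^1)).
\end{equation*}
Evaluating $K_{C_\phi^{\phi'}}$ at $\nu(\pqq^1)=\pp^{-\pi/4}$ via \eqref{E:killing} in each of the four cases produces a nonzero scalar multiple of $\pp_{\pi/2}$ when $\phi=0$ and of $\pp_0$ when $\phi=\pi/2$. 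In particular the output already lies in $T_{\pqq^1}M$, so the projection is the identity on it, and the gradient is a nonzero multiple of $\pp_{\phi_\perp}$.

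It remains to verify tangentiality to $\Sigma_{\phi_\perp}$. Since $\Sigma_{\phi_\perp}=\Sph(C^\perp,\pp_{\phi_\perp})$ contains $\pqq^1\in C^\perp$, its tangent space at $\pqq^1$ is the hyperplane in $\Span(C^\perp,\pp_{\phi_\perp})$ orthogonal to $\pqq^1$, which contains $\pp_{\phi_\perp}$ because $\pp_{\phi_\perp}\perp C^\perp\supset\{\pqq^1\}$. So the gradient, being proportional to $\pp_{\phi_\perp}$, lies in $T_{\pqq^1}\Sigma_{\phi_\perp}$, completing the proof. There is no real obstacle here: the only subtle point is invoking umbilicity to kill $d\nu$, after which everything reduces to the four explicit evaluations already tabulated in \eqref{E:killing}.
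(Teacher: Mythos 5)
Your proposal is correct, and it rests on the same key input as the paper: Corollary \ref{umb}, i.e.\ the fact that $\pqq^1$ is an umbilic of the minimal surface $M$, so the second fundamental form (hence the relevant part of $d\nu$) vanishes there. The difference is in how this is exploited. The paper's proof is a two-line reduction: by the high-order contact of $M$ with $\Sigma^{\pi/4}$ at $\pqq^1$ one may replace $J_{C_\phi^{\phi'}}$ by the corresponding Jacobi field on the great sphere $\Sigma^{\pi/4}$, which is a first harmonic, and the conclusion is then read off ``without calculation by the symmetries'' (oddness/evenness under the reflections of Lemma \ref{Jsym} forces the gradient at the fixed point $\pqq^1$ into $T_{\pqq^1}\Sigma_{\phi_\perp}$, and nontriviality of the first harmonic gives nonvanishing). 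You instead stay on $M$, use umbilicity to kill the $K\cdot d\nu$ term, and arrive at the clean identity $\nabla_{\pqq^1}J_{C_\phi^{\phi'}}=-\,\mathrm{proj}_{T_{\pqq^1}M}\,K_{C_\phi^{\phi'}}\bigl(\nu(\pqq^1)\bigr)$, which you then evaluate in the four cases via \eqref{E:killing}; your computations of $\nu(\pqq^1)=\pqq^0$, of $T_{\pqq^1}M=\Span(\pp_0,\pp_{\pi/2})$, and of the four Killing-field values check out, and in each case the output is a nonzero multiple of $\pp_{\phi_\perp}\in T_{\pqq^1}\Sigma_{\phi_\perp}$. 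What your route buys is a self-contained, quantitative statement (explicit direction and size of the gradient, slightly more than the lemma asserts) with no appeal to the symmetry bookkeeping; what the paper's route buys is brevity and independence from coordinate computations. One small remark: strictly, umbilicity gives vanishing of the tangential part of $d\nu$ (the shape operator), while the components of $d\nu$ along $\nu$ and along the position vector vanish automatically because $\nu$ is a unit field tangent to $\Sph^3$; either observation suffices for your cancellation, since $K_{C_\phi^{\phi'}}(\pqq^1)$ is in any case orthogonal to both $\nu(\pqq^1)$ and $\pqq^1$.
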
 

\begin{proof} 
By \ref{umb} $M$ has high-order contact with $\Sigma^{\pi/4}$ at $\pqq^1$,  
so we can consider the corresponding Jacobi field on $\Sigma^{\pi/4}$ instead. 
That Jacobi field is clearly a first harmonic of $\Sigma^{\pi/4}$ 
and the result follows without calculation by the symmetries. 
\end{proof}

\begin{lemma}[Non-exceptional Jacobi fields] 
\label{basis-killing} 
$J_{C}$, 
$J_{C^\perp}$, 
and 
$J_{C_\phi^{\phi'} }$  
for 
$\phi,\phi'\in\{0,\pi/2\}$  
form a basis of the space of non-exceptional Jacobi fields on $M=M[C,m]$.  
\end{lemma}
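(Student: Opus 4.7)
The space of Killing fields on $\Sph^3$ is the $6$-dimensional Lie algebra $\mathfrak{so}(4)$, and the explicit formulas \eqref{E:killing} display the six Killing fields $K_C$, $K_{C^\perp}$, and $K_{C_\phi^{\phi'}}$ with $\phi,\phi'\in\{0,\pi/2\}$ as (up to sign) the infinitesimal rotations in the six coordinate $2$-planes of $\R^4$, hence as a basis of $\mathfrak{so}(4)$. Since $K\mapsto K\cdot\nu$ is linear from $\mathfrak{so}(4)$ to $C^\infty(M)$ and by \ref{exceptional} has image precisely the space of non-exceptional Jacobi fields, that space is spanned by $J_C$, $J_{C^\perp}$, and the four $J_{C_\phi^{\phi'}}$. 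It therefore suffices to prove that these six functions are linearly independent.

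The plan is to separate the six functions by joint symmetry signatures. I will use the commuting involutions $\refl_{\Sigma_0}$, $\refl_{\Sigma^0}$, and $\refl_{\Sigma^{\pi/2}}$, supplemented when $m$ is even by $\refl_{\Sigma_{\pi/2}}$; by \ref{Msym} each of these lies in $\Gsym^M$ and so acts by pullback on $C^\infty(M)$. The group they generate is a finite abelian $2$-group, under which $C^\infty(M)$ decomposes into a direct sum of joint $\pm1$-eigenspaces indexed by sign tuples, and by \ref{Jsym} each of our six Jacobi fields lies entirely in a single such eigenspace. The argument then reduces to the finite check that the six corresponding sign tuples are pairwise distinct.

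That check is routine: writing the sign tuple as $(\mathrm{sgn}_{\Sigma_0},\mathrm{sgn}_{\Sigma_{\pi/2}},\mathrm{sgn}_{\Sigma^0},\mathrm{sgn}_{\Sigma^{\pi/2}})$, in the $m$ even case one obtains the values $(E,E,O,O)$, $(O,O,E,E)$, $(O,E,O,E)$, $(O,E,E,O)$, $(E,O,O,E)$, $(E,O,E,O)$ for $J_C$, $J_{C^\perp}$, $J_{C_0^0}$, $J_{C_0^{\pi/2}}$, $J_{C_{\pi/2}^0}$, $J_{C_{\pi/2}^{\pi/2}}$ respectively; in the $m$ odd case, where the $\refl_{\Sigma_{\pi/2}}$ slot is unavailable, the corresponding triples are $(E,O,O)$, $(O,E,E)$, $(O,O,E)$, $(O,E,O)$, $(E,O,E)$, $(E,E,O)$. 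In both cases the six tuples are pairwise distinct, so the six Jacobi fields sit in pairwise distinct joint eigenspaces and are linearly independent, completing the proof.

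The only mild subtlety I anticipate is the $m$ odd case: dropping $\refl_{\Sigma_{\pi/2}}$ leaves only three discriminating parities to separate six functions, and one must verify (as above) that this is still enough. Everything else is bookkeeping from \eqref{E:killing}, \ref{Msym}, and \ref{Jsym}.
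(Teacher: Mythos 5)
Your reduction to linear independence and your parity bookkeeping are sound and in fact coincide with the paper's own first argument (symmetrizing and antisymmetrizing with respect to $\refl_{\Sigma^0}$, $\refl_{\Sigma^{\pi/2}}$, $\refl_{\Sigma_0}$, which already suffices for both parities of $m$), but the proof as you have written it has a genuine gap at the final step. Lying in pairwise distinct joint $\pm1$-eigenspaces only shows that in a vanishing linear combination $\sum c_i J_i=0$ each summand $c_i J_i$ must vanish separately; this yields $c_i=0$ only if you also know that $J_i\not\equiv0$ on $M$. That nonvanishing is not automatic: a Killing field can be everywhere tangent to a minimal surface and then induces the identically zero Jacobi field (this happens, e.g., for the rotational fields on the Clifford torus, the excluded case $m=2$, where the same formal parity table would not prevent $J_C$ and $J_{C^\perp}$ from being zero). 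So the statement ``distinct signatures, hence linearly independent'' is exactly where the real content of the lemma sits, and your proposal never supplies it.

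The paper closes this gap in two ways, either of which you could graft onto your argument: (a) pointwise nontriviality --- $J_C(\pqq^1)\ne0$ and $J_{C^\perp}(\pqq_1)\ne0$ because $\nu$ points along $C$ on $M\cap C$ and along $C^\perp$ on $M\cap C^\perp$, while for the four fields $J_{C_\phi^{\phi'}}$ one uses Lemma \ref{L:grad}, which gives $J_{C_\phi^{\phi'}}(\pqq^1)=0$ but $\nabla_{\pqq^1}J_{C_\phi^{\phi'}}\ne0$ (via the high-order contact of $M$ with $\Sigma^{\pi/4}$ at the umbilic $\pqq^1$); or (b) injectivity of $K\mapsto K\cdot\nu$ outright: a Killing field in the kernel would be everywhere tangent to $M$ and generate a one-parameter family of symmetries, contradicting the discreteness of $\Gsym^M$ established in Lemma \ref{Msym}. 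With either addition your argument is complete; without one of them it does not prove the lemma.
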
 

\begin{proof} 
Since the space of Killing fields has dimension six, 
it is enough to prove that the Jacobi fields under consideration are linearly independent. 
By symmetrizing and antisymmetrizing with respect to 
$\refl_{\Sigma^0}$, $\refl_{\Sigma^{\pi/2}}$, and (for the last four) $\refl_{\Sigma_0}$, 
we can kill all of them by \ref{Jsym} except for a single Jacobi field arbitrarily chosen in advance. 
This reduces the proof to proving that each of the six Jacobi fields does not vanish identically. 
Clearly $J_C(\pqq^1)\ne0$, $J_{C^\perp}(\pqq_1)\ne0$, and for the rest we consider the gradient 
at $\pqq^1$ and we appeal to \ref{L:grad}. This completes the proof. 

An alternative proof is that 
the map $K \mapsto K \cdot \nu$ from the space of Killing fields
to the space of Jacobi fields is injective: 
if not, 
there would exist a nontrivial Killing field everywhere tangential to $M$,
meaning $M$ would have a one-parameter family of symmetries.
By Lemma \ref{Msym}, however, $\Gsym^M$ is discrete, completing the proof. 
\end{proof}

\begin{lemma}[Some Jacobi fields on geodesic segments] 
\label{rays} 
We have the following. 
\\
(i)  
For $i\in (2\Z+1) \cap [1,(m+1)/2]$ 
and 
$i\in (2\Z) \cap [(m+1)/2, m ]$  
we have 
$J_{C_0^0} \ge0$ on 
$\overline{ \pqq_i \pqq^1 } \subset M^{++}_{+*}$.  
\\
(ii)  
For $i\in (2\Z) \cap [1,(m+1)/2]$ 
and 
$i\in (2\Z+1) \cap [(m+1)/2, m ]$  
we have 
$J_{C_0^{\pi/2}} \ge0$ on 
$\overline{ \pqq_i \pqq^1 } \subset M^{++}_{+*}$.  
\\
(iii)  
For $i\in (2\Z) \cap [1,(m+1)/2]$ 
we have 
$J_{C^0_{\pi/2}} \le0$ on 
$\overline{ \pqq_i \pqq^1 } \subset M^{++}_{++}$  
and 
\\ 
for $i\in (2\Z+1) \cap [(m+1)/2, m ]$  
we have 
$J_{C^0_{\pi/2}} \ge0$ on 
$\overline{ \pqq_i \pqq^1 } \subset M^{++}_{+-}$.  
\\
(iv)  
For $i\in (2\Z+1) \cap [1,(m+1)/2]$ 
we have 
$J_{C^{\pi/2}_{\pi/2}} \ge0$ on 
$\overline{ \pqq_i \pqq^1 } \subset M^{++}_{++}$  
and 
\\ 
for $i\in (2\Z) \cap [(m+1)/2, m ]$  
we have 
$J_{C^{\pi/2}_{\pi/2}} \le0$ on 
$\overline{ \pqq_i \pqq^1 } \subset M^{++}_{+-}$.  
\end{lemma}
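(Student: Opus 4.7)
Proposal: I would parameterize the arc by $x(s) = \cos s \cdot \pqq_i + \sin s \cdot \pqq^1$ for $s \in [0,\pi/2]$. By Corollary \ref{Lnui} the unit normal $\nu$ along this arc takes values in the great circle arc from $\pp_{\tilde{\alpha}_i} := \pqq_{i - (-1)^i m/2}$ (equivalently $\tilde{\alpha}_i = t_i + (-1)^{i+1}\pi/2$) to $\pp^{-\pi/4} = \pqq^0$. Since $\nu \perp x$ and $\nu \perp \dot{x}$, in fact $\nu(x(s))$ is confined to the fixed $2$-plane $\Span(\pp_{\tilde{\alpha}_i},\pp^{-\pi/4}) = \Span(\pqq_i,\pqq^1)^\perp$, so we may write
$$ \nu(x(s)) = \cos\theta(s) \cdot \pp_{\tilde{\alpha}_i} + \sin\theta(s) \cdot \pp^{-\pi/4}, $$
for a continuous $\theta$ with $\theta(0) = 0$ and $\theta(\pi/2) = \pi/2$ on the appropriate lift.

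The crucial intermediate claim is $\theta(s) \in [0,\pi/2]$ throughout. A direct calculation from \eqref{E:killing} gives
$$ J_{C^\perp}(x(s)) = (-1)^{i+1} \cos s \cdot \cos\theta(s), \qquad J_C(x(s)) = -\sin s \cdot \sin\theta(s). $$
Now the arc lies on the boundary of a quarter-disc $D_{i\pm}^{j\pm} \subset M$ contained both in a half-disc graphical with respect to $K_{C^\perp}$ (by Lemma \ref{Dsym2}(ii)) and in one graphical with respect to $K_C$ (by Lemma \ref{Dsym2}(iii)). Graphicality together with strong graphicality in the interior forces each of $J_{C^\perp}$ and $J_C$ to have a single sign on the relevant half-disc, and the sign extends continuously to the arc. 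Checking endpoints, $J_{C^\perp}(\pqq_i) = (-1)^{i+1}$ and $J_C(\pqq^1) = -1$, which pin down $\cos\theta(s) \ge 0$ and $\sin\theta(s) \ge 0$, respectively, for all $s \in [0,\pi/2]$.

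With $\theta \in [0,\pi/2]$ secured, I would compute each of the four Jacobi fields $J_{C_\phi^{\phi'}}$ along the arc directly from \eqref{E:killing}. Each expression takes the form
$$ \tfrac{1}{\sqrt{2}} \bigl( \pm \sin s \cdot \sin t_i \cdot \cos\theta(s) + \cos s \cdot \cos t_i \cdot \sin\theta(s) \bigr), $$
or the analogous one with $\sin t_i$ and $\cos t_i$ swapped between the two terms, with the overall signs dictated by $(-1)^{i+1}$ and the choice of $(\phi,\phi')$. The index ranges in each item of the lemma are arranged so that, given the parity of $i$, the factors $\sin t_i$ and $\cos t_i$ have the desired signs: for $i \in [1,(m+1)/2]$ one has $t_i \in (0,\pi/2]$, so $\sin t_i \ge 0$ and $\cos t_i \ge 0$; for $i \in [(m+1)/2,m]$ one has $t_i \in [\pi/2,\pi)$, so $\sin t_i \ge 0$ and $\cos t_i \le 0$. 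Combined with $\cos\theta, \sin\theta \ge 0$, each expression reduces in each of the eight subcases to a sum of two terms of common sign, and the stated inequality follows.

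The main obstacle is the intermediate claim $\theta(s) \in [0,\pi/2]$, without which trigonometric cancellations in the four Jacobi field expressions would leave signs indeterminate. Once this is obtained from the graphicality lemmas, the rest is a routine bookkeeping exercise parametrized by the parity of $i$ and the range containing it.
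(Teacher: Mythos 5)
Your proposal is correct and follows essentially the same route as the paper's proof: Corollary \ref{Lnui} to confine $\nu$ along $\overline{\pqq_i\pqq^1}$ to the quarter-circle between $\nu(\pqq_i)$ and $\nu(\pqq^1)$, the explicit Killing field formulas \eqref{E:killing}, and sign bookkeeping in coordinates governed by the parity of $i$ and the range of $t_i$. Note only that your ``crucial intermediate claim'' $\theta(s)\in[0,\pi/2]$ is already the literal content of \ref{Lnui}, since $\nu\left(\overline{\pqq_i\pqq^1}\right)$ equals the minimizing geodesic segment $\overline{\pqq_{i-(-1)^i\frac{m}{2}}\,\pqq^0}$, so your re-derivation of it via the graphicality statements of \ref{Dsym2} is correct but redundant.
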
 

\begin{proof} 
By \ref{Omu:K} and \ref{Mpm}  
  \begin{equation}
    \begin{aligned}
      &\left.K_{C_0^0}\right|_{M_{+\pm}^{++}}
        \mbox{ is a convex combination of }
        \pp_{\pi/2} \mbox{ and } -\pp^{\pi/2}, \\
      &\left.K_{C_0^{\pi/2}}\right|_{M_{+\pm}^{++}}
        \mbox{ is a convex combination of }
        -\pp_{\pi/2} \mbox{ and } \pp^0, \\
      &\left.K_{C_{\pi/2}^0}\right|_{M_{+\pm}^{++}}
        \mbox{ is a convex combination of }
        -\pp_0 \mbox{ and } \pm \pp^{\pi/2}, \mbox{ and} \\
      &\left.K_{C_{\pi/2}^{\pi/2}}\right|_{M_{+\pm}^{++}}
        \mbox{ is a convex combination of }
        -\pp_0 \mbox{ and } \pm \pp^0.
    \end{aligned}
  \end{equation}

Meanwhile, according to Lemma \ref{Lnui},
at each point on $\overline{\pqq_i\pqq^1}$ the unit normal $\nu$
is a convex combination of
$\nu(\pqq_i)=\pp_{\frac{2i-1}{2m}\pi+(-1)^{i+1}\frac{\pi}{2}}$
and $\nu(\pqq^1)=\pp^{-\pi/4}$.
Note that
  \begin{equation}
      \nu(\pqq_i) \in
      \begin{cases}
        \overline{\pp_{\pi/2}\pp_\pi} &\mbox{for } i \in (2\Z+1) \cap [1,(m+1)/2] \\
        \overline{\pp_{-\pi/2}\pp_0} &\mbox{for } i \in (2\Z) \cap [1,(m+1)/2] \\
        \overline{\pp_\pi \pp_{-\pi/2}} &\mbox{for } i \in (2\Z+1) \cap [(m+1)/2,m] \\
        \overline{\pp_0 \pp_{\pi/2}} &\mbox{for } i \in (2\Z) \cap [(m+1)/2,m],
      \end{cases}
  \end{equation}
so on $M^{++}_{+*}$ 
  \begin{equation}
    \pp_0 \cdot \nu(\pqq_i) \in
      \begin{cases}
        [0,1] &\mbox{for } i \in 2\Z \\
        [-1,0] &\mbox{for } i \in 2\Z+1
      \end{cases}
  \end{equation}
and
  \begin{equation}
    \pp_{\pi/2} \cdot \nu(\pqq_i) \in
      \begin{cases}
        [0,1] &\mbox{for } i \in \left((2\Z+1) \cap [1,(m+1)/2]\right) \cup \left((2\Z) \cap [(m+1)/2,m]\right) \\
        [-1,0] &\mbox{for } i \in \left((2\Z) \cap [1,(m+1)/2]\right) \cup \left((2\Z+1) \cap [(m+1)/2,m]\right).
      \end{cases}
  \end{equation}

On the other hand, obviously
  \begin{equation}
    \pp^0 \cdot \nu(\pqq^1) \in [0,1]
    \quad \mbox{and} \quad
    \pp^{\pi/2} \cdot \nu(\pqq^1) \in [-1,0],
  \end{equation}
while of course
  \begin{equation}
    \pp^0 \cdot \nu(\pqq_i)=\pp^{\pi/2} \cdot \nu(\pqq_i)=\pp_0 \cdot \nu(\pqq^1)=\pp_{\pi/2} \cdot \nu(\pqq^1)=0.
  \end{equation}
All items now follow from the convexity and the signs of the inner products recorded above.
\end{proof}

We define now a kind of discrete derivative $\Dcal$ for functions on $M$ by appropriately adapting to the current 
situation the discrete derivative defined in \cite{alm20}*{(8.13), page 319}: 

\begin{definition}[$\Tr$ and the discrete derivative $\Dcal$] 
\label{D:TD} 
We define an isometry $\Tr:\Sph^3\to\Sph^3$ by $\Tr:= \refl_{\pqq_1,\pqq^1} \circ \refl_{\Sigma_0}$ and 
a linear map $\Dcal:C^\infty(M)\to C^\infty(M)$ by $\Dcal f := \frac1{2 \sin(\pi/m)} ( f\circ \Tr - f \circ \Tr^{-1} ) $  
$\forall f\in C^\infty(M)$. 
\qed 
\end{definition} 

\begin{lemma}[Elementary properties of $\Tr$ and $\Dcal$] 
\label{L:T} 
$\Dcal$ as in \ref{D:TD} is well defined and  
$\Tr$ preserves $C$, $C^\perp$, and $M=M[C,m]$,  
and on $M$ satisfies $\Tr^{-1}_*\circ \nu \circ \Tr = -\nu$. 
Moreover $\Tr =\rot^C_{\pi/m} \circ \refl_{\Sigma^{\pi/4}}$ and so $\Tr$ rotates $C$ along itself by angle $\pi/m$ and reflects $C^\perp$ to itself 
while fixing $\pqq^1=\pp^{\pi/4}$ and $\pqq^3=-\pp^{\pi/4}$.  
\end{lemma}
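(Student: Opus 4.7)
The plan is to verify each assertion by decomposing $\Tr$ in two useful ways and then reading off consequences. First I would observe that $\refl_{\Sigma_0}$ is the reflection $\refl_{\Sigma_{i\pi/m}}$ with $i=0$, which appears in the list of Lemma \ref{Msym}.i, while $\refl_{\pqq_1,\pqq^1}=\refl_{C_{\pi/(2m)}^{\pi/4}}$ appears in Lemma \ref{Msym}.ii (taking $i=j=1$); hence both are symmetries of $M$, so $\Tr=\refl_{\pqq_1,\pqq^1}\circ\refl_{\Sigma_0}\in\Gsym^M$. This proves $\Tr$ preserves $M$ and in particular $\Dcal$ is a well-defined endomorphism of $C^\infty(M)$. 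That the two reflections also preserve $C$ and $C^\perp$ setwise is immediate from the explicit description of their fixed sets and $C,C^\perp$ in coordinates.

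Next I would derive the alternative expression $\Tr=\rot^C_{\pi/m}\circ\refl_{\Sigma^{\pi/4}}$. By Lemma \ref{L:obs}.vi we have $C_{\pi/(2m)}^{\pi/4}=\Sigma_{\pi/(2m)}\cap\Sigma^{\pi/4}$ with orthogonal intersection, and a direct check (easiest using Definition \ref{D:coordinates}) shows that the composition of reflections through two orthogonally intersecting $3$-planes in $\R^4$ equals the reflection through their common $2$-plane; therefore $\refl_{\pqq_1,\pqq^1}=\refl_{\Sigma_{\pi/(2m)}}\circ\refl_{\Sigma^{\pi/4}}$. Again by Lemma \ref{L:obs}.vi, $\Sigma^{\pi/4}$ and $\Sigma_0$ meet orthogonally, so these two reflections commute; combining,
\begin{equation*}
\Tr = \refl_{\Sigma_{\pi/(2m)}}\circ\refl_{\Sigma^{\pi/4}}\circ\refl_{\Sigma_0} = \bigl(\refl_{\Sigma_{\pi/(2m)}}\circ\refl_{\Sigma_0}\bigr)\circ\refl_{\Sigma^{\pi/4}}.
\end{equation*}
The hyperplanes $\Sigma_0$ and $\Sigma_{\pi/(2m)}$ both contain $C^\perp$ and meet along it at dihedral angle $\pi/(2m)$ (Lemma \ref{L:obs}.viii), so their composed reflection is a rotation about $C^\perp$ by angle $\pi/m$, which by Definition \ref{l:D:rot} equals $\rot_{C^\perp}^{\pi/m}=\rot^C_{\pi/m}$; the orientation is unambiguously fixed by tracking $\Sigma_0\mapsto\Sigma_{\pi/m}$.

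The remaining claims follow by inspecting each factor. Since $C\subset\Sigma^{\pi/4}$, the factor $\refl_{\Sigma^{\pi/4}}$ fixes $C$ pointwise and reflects $C^\perp$ across its two orthogonal intersection points with $\Sigma^{\pi/4}$, namely $\pqq^1=\pp^{\pi/4}$ and $\pqq^3=-\pp^{\pi/4}$ (Lemma \ref{L:obs}.v); meanwhile $\rot^C_{\pi/m}$ fixes $C^\perp$ pointwise and rotates $C$ along itself by angle $\pi/m$. Composing yields exactly the advertised action of $\Tr$ on $C$ and $C^\perp$. For the normal identity, $\refl_{\Sigma_0}$ preserves $\nu$ (even, by Lemma \ref{Msym}.i) and $\refl_{\pqq_1,\pqq^1}$ reverses $\nu$ (odd, by Lemma \ref{Msym}.ii), so a short computation gives $\nu\circ\Tr=-\Tr_*\circ\nu$, equivalently $\Tr^{-1}_*\circ\nu\circ\Tr=-\nu$. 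No step is genuinely difficult; the only subtlety is keeping track of the correct order of reflections and the direction of the rotation, and both are pinned down by the setwise action on the family $\{\Sigma_\phi\}$.
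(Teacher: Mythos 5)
Your proposal is correct, and its first half coincides with the paper's: the paper likewise obtains that $\Tr\in\Gsym^M$ (hence preserves $C$, $C^\perp$, $M$, and $\Dcal$ is well defined) and the sign rule for $\nu$ simply by citing Lemma \ref{Msym}, which is exactly your use of \ref{Msym}.i for $\refl_{\Sigma_0}$ and \ref{Msym}.ii for $\refl_{\pqq_1,\pqq^1}=\refl_{C_{\pi/(2m)}^{\pi/4}}$; your spelled-out computation $\nu\circ\Tr=-\Tr_*\circ\nu$ just makes explicit what the paper leaves implicit. Where you genuinely diverge is the identity $\Tr=\rot^C_{\pi/m}\circ\refl_{\Sigma^{\pi/4}}$: the paper verifies it by the coordinate check that $\Tr$ sends $\pp_0,\pp_{\pi/2},\pp^0,\pp^{\pi/2}$ to $\pp_{\pi/m},\pp_{\pi/2+\pi/m},\pp^{\pi/2},\pp^0$ (which determines the orthogonal map, and whose matrix reappears anyway in the proof of Lemma \ref{L:D}), whereas you argue synthetically, factoring $\refl_{\pqq_1,\pqq^1}=\refl_{\Sigma_{\pi/(2m)}}\circ\refl_{\Sigma^{\pi/4}}$ via the orthogonal intersection $C_{\pi/(2m)}^{\pi/4}=\Sigma_{\pi/(2m)}\cap\Sigma^{\pi/4}$, commuting $\refl_{\Sigma^{\pi/4}}$ past $\refl_{\Sigma_0}$, and identifying $\refl_{\Sigma_{\pi/(2m)}}\circ\refl_{\Sigma_0}$ with $\rot^C_{\pi/m}$ from the dihedral angle $\pi/(2m)$ along $C^\perp$. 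Both are sound; the paper's check is shorter and yields the explicit formula needed later, while yours is coordinate-free and forces the orientation bookkeeping into the open. On that last point, your pinning of the sign by tracking $\Sigma_0\mapsto\Sigma_{\pi/m}$ does suffice, because the rotation angle is already known to be $\pm\pi/m$ in magnitude and $\Sigma_{\pi/m}\neq\Sigma_{-\pi/m}$ for $m\ge3$; tracking the point $\pp_0\mapsto\pp_{\pi/m}$ would make this determination slightly more immediate.
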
 

\begin{proof} 
The first statement about $\Tr$ follows from \ref{Msym}.  
It follows then that $\Dcal$ is well defined.  
Using the definitions it is easy to check that $\Tr$ maps $\pp_0, \pp_{\pi/2}, \pp^0, \pp^{\pi/2}$ to 
$\pp_{\pi/m} , \pp_{\pi/2 + \pi/m}, \pp^{\pi/2}, \pp^0$  
respectively. 
This implies the last statement and completes the proof. 
\end{proof} 

\begin{lemma}[Discrete derivatives of some Jacobi fields] 
\label{L:D} 
The following hold. 
\\ 
(i)  $\Dcal J_{C_0^0} = \, J_{C_{\pi/2}^{\pi/2} }$ and $\Dcal J_{C_{\pi/2}^{\pi/2} } = \, - J_{C_0^0} $. 
\\ 
(ii)  $\Dcal J_{C_0^{\pi/2} } = \, - J_{C_{\pi/2}^{0} }$ and $\Dcal J_{C_{\pi/2}^0 } = \, J_{C_{0}^{\pi/2} }$. 
\end{lemma}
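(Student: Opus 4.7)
The plan is to reduce each of the four identities to a direct computation in the ambient $\R^4$ coordinates of Definition \ref{D:coordinates}, using the explicit formulas \eqref{E:killing} and the key transformation property $\nu\circ\Tr=-\Tr_*\nu$ on $M$ provided by Lemma \ref{L:T}. Since $\Tr$ is a linear isometry of $\R^4$, this property yields
\begin{equation*}
(J_{C'}\circ\Tr)(p)=K_{C'}(\Tr p)\cdot\nu(\Tr p)=-K_{C'}(\Tr p)\cdot\Tr_*\nu(p)=-(\Tr^*K_{C'})(p)\cdot\nu(p)
\end{equation*}
for any great circle $C'$, where $(\Tr^*K_{C'})(p):=\Tr_*^{-1}K_{C'}(\Tr p)$, and analogously for $\Tr^{-1}$. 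Subtracting gives
\begin{equation*}
2\sin(\pi/m)\,\Dcal J_{C'}=\bigl((\Tr^{-1})^*K_{C'}-\Tr^*K_{C'}\bigr)\cdot\nu,
\end{equation*}
so it suffices to show that the Killing field in parentheses equals $2\sin(\pi/m)$ times the $K_{C''}$ appearing on the right-hand side of the desired identity.

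Next I will make $\Tr,\Tr^{-1}$ fully explicit via Lemma \ref{L:T}: since $\Tr=\rot^C_{\pi/m}\circ\refl_{\Sigma^{\pi/4}}$, it rotates the $(\pp_0,\pp_{\pi/2})$-plane through angle $\pi/m$ and swaps $\pp^0\leftrightarrow\pp^{\pi/2}$, with $\Tr^{-1}$ given by negating $\pi/m$. For each of the four Killing fields $K_{C_0^0},K_{C_{\pi/2}^{\pi/2}},K_{C_0^{\pi/2}},K_{C_{\pi/2}^0}$ I substitute $\Tr p$ (respectively $\Tr^{-1}p$) into the coordinate expression from \eqref{E:killing} and apply the inverse linear map $\Tr_*^{-1}$ (respectively $\Tr_*$) to the resulting vector in $\R^4$. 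In the resulting difference all $\cos(\pi/m)$ terms cancel, leaving $2\sin(\pi/m)$ times one of the basic Killing fields of \eqref{E:killing}. For instance, for $K_{C_0^0}(x)=x^4\pp_{\pi/2}-x^2\pp^{\pi/2}$ the computation yields
\begin{equation*}
\bigl((\Tr^{-1})^*-\Tr^*\bigr)K_{C_0^0}=2\sin(\pi/m)\bigl(x^1\pp^0-x^3\pp_0\bigr)=2\sin(\pi/m)K_{C_{\pi/2}^{\pi/2}},
\end{equation*}
which, combined with the reduction of the previous paragraph, gives the first identity in (i); the remaining three identities follow by the identical recipe applied to $K_{C_{\pi/2}^{\pi/2}},K_{C_0^{\pi/2}},K_{C_{\pi/2}^0}$ in turn.

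No conceptual obstacle arises. The only care needed is to carefully distinguish evaluating a Killing field at a transformed point (which substitutes new coordinates into \eqref{E:killing}) from push-forward of a vector under $\Tr_*^{\pm 1}$ (which must be expanded via $\Tr_*\pp_0=\cos(\pi/m)\pp_0+\sin(\pi/m)\pp_{\pi/2}$, $\Tr_*\pp_{\pi/2}=-\sin(\pi/m)\pp_0+\cos(\pi/m)\pp_{\pi/2}$, $\Tr_*\pp^0=\pp^{\pi/2}$, $\Tr_*\pp^{\pi/2}=\pp^0$), and to correctly track the overall sign arising from $\nu\circ\Tr=-\Tr_*\nu$.
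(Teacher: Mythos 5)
Your proposal is correct and is essentially the paper's own proof: the reduction $2\sin(\pi/m)\,\Dcal J_{C'}=\bigl((\Tr^{-1})^*K_{C'}-\Tr^*K_{C'}\bigr)\cdot\nu$ obtained from $\Tr^{-1}_*\circ\nu\circ\Tr=-\nu$ is exactly the paper's formula \eqref{E:Tr}, and the subsequent explicit coordinate computation of $\Tr_*^{-1}\circ K\circ\Tr$ (with the $\sin\to-\sin$ observation for $\Tr^{-1}$) is the same calculation, which your sample case for $K_{C_0^0}$ carries out correctly.
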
 

\begin{proof} 
Note that 
if $J=K\cdot\nu$ is a Jacobi field induced by a Killing field $K$, 
then 
$J\circ\Tr= ( K\circ\Tr ) \cdot ( \nu\circ\Tr ) = ( \Tr^{-1}_*\circ K \circ\Tr ) \cdot ( \Tr^{-1}_*\circ \nu \circ \Tr ) 
= - ( \Tr^{-1}_*\circ K \circ\Tr ) \cdot \nu$ 
and similarly  
$J\circ\Tr^{-1} = - ( \Tr_*\circ K \circ\Tr^{-1} ) \cdot \nu$, 
so we have   
\begin{equation} 
\label{E:Tr} 
\Dcal J = \textstyle{ \frac1{2 \sin(\pi/m)} } \, ( - \Tr^{-1}_*\circ K \circ\Tr + \Tr_*\circ K \circ\Tr^{-1} ) \cdot \nu. 
\end{equation} 
By \ref{L:T} we have 
\begin{equation} 
\begin{aligned} 
\Tr ( \, y^1 \pp_0 + y^2 \pp_{\pi/2} + y^3 \pp^0 + y^4 \pp^{\pi/2} \, ) \, =& \, (cy^1-sy^2) \pp_0 + (sy^1+cy^2)\pp_{\pi/2} + y^4\pp^0 + y^3 \pp^{\pi/2}, 
\\ 
\Tr^{-1} ( \, y^1 \pp_0 + y^2 \pp_{\pi/2} + y^3 \pp^0 + y^4 \pp^{\pi/2} \, ) \, =& \, (cy^1+sy^2) \pp_0 + (-sy^1+cy^2)\pp_{\pi/2} + y^4\pp^0 + y^3 \pp^{\pi/2}, 
\end{aligned} 
\end{equation} 
where in this proof we simplify the notation by taking $c:=\cos\frac\pi{m}$ and $s:=\sin\frac\pi{m}$.  
It is easy to calculate then by referring to \ref{E:killing} that 
\begin{equation*} 
\begin{aligned} 
\Tr_*^{-1} \circ K_{C_0^0} \circ \Tr \, ( \, x^1 \pp_0 + x^2 \pp_{\pi/2} + x^3 \pp^0 + x^4 \pp^{\pi/2} \, ) \, =& 
\, \phantom{-} s x^3 \pp_0 + cx^3 \pp_{\pi/2} - (sx^1+cx^2) \pp^0,  
\\ 
\Tr_*^{-1} \circ K_{C_{\pi/2}^{\pi/2} } \circ \Tr \, ( \, x^1 \pp_0 + x^2 \pp_{\pi/2} + x^3 \pp^0 + x^4 \pp^{\pi/2} \, ) \, =& 
\, -c x^4 \pp_0 + sx^4 \pp_{\pi/2} + (cx^1-sx^2) \pp^{\pi/2},  
\\ 
\Tr_*^{-1} \circ K_{C_{0}^{\pi/2} } \circ \Tr \, ( \, x^1 \pp_0 + x^2 \pp_{\pi/2} + x^3 \pp^0 + x^4 \pp^{\pi/2} \, ) \, =& 
\, -s x^4 \pp_0 - cx^4 \pp_{\pi/2} + (sx^1+cx^2) \pp^{\pi/2},  
\\ 
\Tr_*^{-1} \circ K_{C^{0}_{\pi/2} } \circ \Tr \, ( \, x^1 \pp_0 + x^2 \pp_{\pi/2} + x^3 \pp^0 + x^4 \pp^{\pi/2} \, ) \, =& 
\, -c x^3 \pp_0 + sx^3 \pp_{\pi/2} + (cx^1-sx^2) \pp^{0},  
\end{aligned} 
\end{equation*} 
If we exchange $\Tr$ and $\Tr^{-1}$ in the left hand sides we obtain the same expressions but with ``$s$'' replaced by ``$-s$''. 
Subtracting, applying \ref{E:Tr}, and referring to \ref{E:killing} again, we conclude the proof. 
\end{proof}

\section{Eigenfunctions on the Lawson surfaces} 

In this section we study the index and nullity of the linear operator $\Lcal$ on $M=M[C,m]$ defined in \ref{Lcal}. 
$\Lcal$ is the only operator we consider in this section and so we often omit it in order to simplify the notation, 
especially in the notation of \ref{D:mixed}. 
We start by defining 
\begin{equation}
\label{Vpm} 
V^{\pm\pm}:= \{u\in C_{pw}^\infty(M)  \, : \, u\circ \refl_{\Sigma^0}=\pm u \text{ and } u\circ\refl_{\Sigma^{\pi/2}}=\pm u\,\}, 
\end{equation} 
where the $\pm$ signs are taken correspondingly, the first one referring to $\refl_{\Sigma^0}$ and the second one to $\refl_{\Sigma^{\pi/2}}$.
We clearly have 
\begin{equation} 
\label{V++}
V = V^{++} \oplusu V^{+-} \oplusu V^{-+} \oplusu V^{--}, 
\end{equation}  
where we use $\oplusu$ to mean ``direct sum'' not only in the sense of linear spaces, 
but also to mean that the summands are invariant under $\Lcal$, 
and therefore the same decomposition holds for the corresponding eigenspaces. 

\begin{prop} 
\label{P:V--} 
We have the following (recall \ref{D:eigen} and \ref{D:eigen-eq}).
\\ (i) 
$J_C\in V^{--}$ and  
$V^{--}\sim C_{pw}^\infty[M^{++}_{**}; \partial M^{++}_{**} ,\emptyset ]$.  
\\ (ii) 
$\ind(V^{--}) = 0$ 
and   
$\nul(V^{--}) = 1$.  
\end{prop}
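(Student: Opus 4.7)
The plan is as follows. For part (i), $J_C\in V^{--}$ follows directly from Lemma \ref{Jsym}(i) applied with $j=0$ and $j=1$. For the equivalence, I would take $\Fcal:V^{--}\to C_{pw}^\infty[M^{++}_{**};\partial M^{++}_{**},\emptyset]$ to be restriction. By Lemma \ref{M++}(ii), $\partial M^{++}_{**}=(\Sigma^0\cap M^{++}_{**})\cup(\Sigma^{\pi/2}\cap M^{++}_{**})$, and oddness of $u\in V^{--}$ under the two commuting reflections $\refl_{\Sigma^0}$ and $\refl_{\Sigma^{\pi/2}}$ forces $u$ to vanish there, so $\Fcal$ lands in the asserted Dirichlet space. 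Its inverse extends a given function to $M$ via the $\Z_2\times\Z_2$-action with the required signs; the boundary vanishing guarantees continuity at the fixed loci. As the two reflections are isometries commuting with $\Lcal$, the map $\Fcal$ preserves the eigenvalue structure.

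For part (ii), my strategy is to exhibit $J_C$---which lies in $V^{--}$ and satisfies $\Lcal J_C=0$---as the first Dirichlet eigenfunction of $\Lcal$ on $M^{++}_{**}$ (viewed through $\Fcal$). Since $M^{++}_{**}$ is a closed disc (hence connected) by Lemma \ref{M++}(i), the standard simplicity of the first Dirichlet eigenvalue of a Schr\"odinger operator on a connected domain and the fact that only its eigenfunction is one-signed on the interior will then immediately give $\ind(V^{--})=0$ and $\nul(V^{--})=1$. Thus it suffices to show $J_C$ is nowhere zero on the interior of $M^{++}_{**}$.

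To establish this, observe by Lemma \ref{M++}(i) that $M^{++}_{**}$ decomposes as $\cup_{i=0}^{m-1}(D^{0+}_{2i}\cup D^{1-}_{2i+1})$, with adjacent pieces glued along the geodesic segments $\overline{\pqq_k\pqq^1}$. On the interior of each piece $D_i^{j\pm}$, Lemma \ref{Dsym2}(iii) asserts strong graphicality with respect to $K_C$, so $J_C=K_C\cdot\nu\neq 0$ there. To handle the gluing segments---which lie in the interior of $M^{++}_{**}$ except for the endpoint $\pqq_k\in C\subset\partial M^{++}_{**}$---I would parameterize $x=\cos s\,\pqq_k+\sin s\,\pqq^1$ for $s\in[0,\pi/2]$. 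Then $x^3=x^4=(\sin s)/\sqrt{2}$, so \eqref{E:killing} gives $K_C(x)=(\sin s/\sqrt{2})(\pp^{\pi/2}-\pp^0)$, while Lemma \ref{Lnui} places $\nu(x)$ on the geodesic arc from $\nu(\pqq_k)\in C$ (orthogonal to both $\pp^0$ and $\pp^{\pi/2}$) to $\nu(\pqq^1)=\pqq^0=\pp^{-\pi/4}$; writing $\nu(x)=\cos s'\,\nu(\pqq_k)+\sin s'\,\pqq^0$ then yields $J_C(x)=-\sin s\sin s'$, which is strictly negative for $s\in(0,\pi/2]$. By continuity and connectedness of the interior of each piece, $J_C<0$ throughout the interior of $M^{++}_{**}$.

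The main obstacle will be this sign computation on the internal gluing segments: the algebra is brief but delicately depends on the orientation of $\nu$ fixed in Lemma \ref{Lnui} and on the coordinate formulas \eqref{E:killing}. Everything else reduces to symmetry-based restriction/extension and classical Dirichlet eigenvalue theory.
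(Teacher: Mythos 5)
Your proposal follows essentially the same route as the paper: part (i) is the paper's argument verbatim (restriction/extension via the odd symmetries, using \ref{M++} and \ref{Jsym}), and for part (ii) you identify $J_C$ as a one-signed ground state of the pure Dirichlet problem on $M^{++}_{**}$, using the graphicality of the half-discs from \ref{Dsym2} and concluding by simplicity of the first eigenvalue (the paper cites Courant's theorem \ref{Ctheorem} for exactly this). The only real difference is how the sign is controlled across the pieces: the paper appeals to the even symmetries of $J_C$ in \ref{Jsym} together with nontriviality from \ref{basis-killing}, whereas you compute $J_C$ explicitly on the gluing segments $\overline{\pqq_k\pqq^1}$ from \eqref{E:killing} and \ref{Lnui}. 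That computation is correct as far as it goes: $K_C(x)=-\sin s\,\pqq^0$ and, by \ref{Lnui}, $\nu(x)=\cos s'\,\nu(\pqq_k)+\sin s'\,\pqq^0$ with $s'\in[0,\pi/2]$, hence $J_C(x)=-\sin s\,\sin s'$.

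There is, however, one overclaimed step: the assertion that $J_C$ is \emph{strictly} negative on the segments for $s\in(0,\pi/2]$, and hence nowhere zero on the interior of $M^{++}_{**}$. Lemma \ref{Lnui} only gives $\nu(x)\cdot\pqq^0=\sin s'\ge 0$; nothing you have quoted excludes $\sin s'=0$ at an interior point of a segment (i.e. $\nu(x)=\nu(\pqq_k)$ there), and strict positivity of $\sin s'$ on the open segment is in effect only available a posteriori, once $J_C$ is known to be a first eigenfunction (strong maximum principle). This does not sink your argument, because the weaker information you do establish suffices: $J_C\le 0$ on every gluing segment, $J_C(\pqq^1)=-1\ne 0$, and $J_C\ne 0$ (hence of constant sign) on the interior of each piece by strong graphicality; since every piece $D^{0+}_{2i}$, $D^{1-}_{2i+1}$ has $\pqq^1$ on its boundary, continuity forces $J_C<0$ on each piece interior, so $J_C|_{M^{++}_{**}}$ is one-signed and not identically zero, which is all that the simplicity/orthogonality argument (or the paper's appeal to \ref{Ctheorem}) needs. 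I suggest you restate the goal as ``one-signed and nontrivial'' rather than ``nowhere zero,'' and on the segments claim only $J_C\le 0$ together with $J_C(\pqq^1)=-1$.
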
 

\begin{proof} 
(i) follows from the definitions, 
\ref{M++}, and \ref{Jsym}, where the linear isomorphism is given by restriction to $M^{++}_{**}$ in one direction 
and its inverse by extension by appropriate reflections. 
For (ii) recall first 
that Lemma \ref{Jsym} implies that 
$J_{C}$ is nonnegative on $M^{++}_{**}$ by \ref{Dsym2}.iii and the symmetries. 
$J_{C}$ is nontrivial by \ref{basis-killing} and therefore, 
as a consequence of Courant's nodal theorem \ref{Ctheorem},  
there are no other eigenfuctions in $V^{--}$ of the same or lower eigenvalue as the eigenvalue of $J_{C}$, 
which is zero. 
The result follows. 
\end{proof} 

To study 
$V^{++}$ now we define 
\begin{equation}
\label{V++pm} 
\begin{aligned} 
V^{++}_{\pm} :=& \{u\in V^{++} \, : \, \forall i\in\Z \,\, \,\, \, \, u\circ \refl_{\Sigma_{i\pi/m}} = \pm u \,\}, 
\\
V^{++}_{\pm\pm} :=& \{u\in V^{++}_{\pm} \, : \, \forall i,j\in\Z \,\, \,\, \, u\circ \refl_{\, \pqq^{j} , \pqq_{i} } = \pm u\,\}, 
\end{aligned} 
\end{equation} 
where in the second equation the $\pm$ signs are taken correspondingly. 
Note that 
\begin{equation} 
\label{V+++o}
V^{++}_{+} = V^{++}_{++} \oplusu V^{++}_{+-}  
\qquad \text{and} \qquad 
V^{++}_{-} = V^{++}_{-+} \oplusu V^{++}_{--} .  
\end{equation}  
On the other hand $V^{++}$ is not the direct sum of 
$V^{++}_{+}$ 
and 
$V^{++}_{-}$.     

\begin{lemma} 
\label{V++-} 
The following hold (recall \ref{Dsym}.ii and \ref{ab}). 
\\ (i) 
$J_{C^\perp}\in V_{-+}^{++}$ and 
$ 
V^{++}_{-} \sim 
C_{pw}^\infty[ D_{0+}^{0+}\cup D_{1-}^{1-} ; \alpha_1^{1-} \cup \alpha_0^{0+} , \beta_{0+}^0\cup \beta_{1-}^1 ] 
$. 
\\ (ii) 
$\lambda_1(V^{++}_{-})=0<\lambda_2(V^{++}_{-})$. 
\end{lemma}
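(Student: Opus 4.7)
For part (i), membership $J_{C^\perp}\in V^{++}_{-+}$ is read off \ref{Jsym}(ii): $J_{C^\perp}$ is even under every $\refl_{\Sigma^{j\pi/2}}$ (so lies in $V^{++}$), odd under every $\refl_{\Sigma_{i\pi/m}}$ (hence in $V^{++}_{-}$), and even under every $\refl_{\pqq^j,\pqq_i}$ (accounting for the second $+$). For the equivalence my plan is to identify $D_{0+}^{0+}\cup D_{1-}^{1-}$ as a fundamental domain for the subgroup $\Hgrp\leq \Gsym^M$ generated by $\refl_{\Sigma^0}$, $\refl_{\Sigma^{\pi/2}}$, and all $\refl_{\Sigma_{i\pi/m}}$ with $i\in\Z$. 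Starting from the $V^{++}$ fundamental domain $M^{++}_{**}$ given by \ref{M++}(i) and then cutting along the $\Sigma_{i\pi/m}$'s, the remaining tiles $D_{i\pm}^{j\pm}$ come in pairs interchanged by $\refl_{\pqq_1,\pqq^1}=\refl_{C_{\pi/(2m)}^{\pi/4}}\in\Gsym^M$ (from \ref{Msym}(ii)), which does not lie in $\Hgrp$; a direct computation using \ref{D:coordinates} shows that this reflection fixes $\pqq_1,\pqq^1$ while swapping $\pqq_0\leftrightarrow\pqq_2$ and $\pqq^0\leftrightarrow\pqq^2$, so that $\Om_0^0\leftrightarrow\Om_1^1$ and consequently $D_{0+}^{0+}\leftrightarrow D_{1-}^{1-}$ across the common edge $\overline{\pqq_1\pqq^1}$. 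Defining $\Fcal$ by restriction, oddness under $\refl_{\Sigma_0}$ and $\refl_{\Sigma_{\pi/m}}$ translates into Dirichlet data on $\alpha_0^{0+}\cup\alpha_1^{1-}$, evenness under $\refl_{\Sigma^0}$ and $\refl_{\Sigma^{\pi/2}}$ into Neumann data on $\beta_{0+}^0\cup\beta_{1-}^1$, while $\overline{\pqq_1\pqq^1}$ is interior to the domain and carries no condition. The inverse of $\Fcal$ is extension by iterated action of $\Hgrp$, and the boundary data ensure piecewise smoothness across every slicing sphere.

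For part (ii), \ref{basis-killing} gives $J_{C^\perp}\not\equiv 0$ while $\Lcal J_{C^\perp}=0$, so $0$ is an eigenvalue in $V^{++}_-$. It suffices to show $J_{C^\perp}$ does not change sign on the fundamental domain $D_{0+}^{0+}\cup D_{1-}^{1-}$. By \ref{Dsym2}(ii) both $D_{0+}^0$ and $D_{1-}^1$ are strongly graphical in their interiors with respect to $K_{C^\perp}$, so $J_{C^\perp}=K_{C^\perp}\cdot\nu$ has a constant sign on each interior, in particular on the interiors of $D_{0+}^{0+}\subset D_{0+}^0$ and $D_{1-}^{1-}\subset D_{1-}^1$. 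These two signs agree because $\refl_{\pqq_1,\pqq^1}$ exchanges the two pieces (as in (i)) and preserves $J_{C^\perp}$ by \ref{Jsym}(ii). Continuity of $J_{C^\perp}$ across the common edge $\overline{\pqq_1\pqq^1}$ then produces a non-sign-changing eigenfunction on the connected fundamental domain; Courant's nodal domain theorem forces $\lambda_1(V^{++}_-)=0$, and standard simplicity of the lowest eigenvalue for a Schr\"odinger operator on a connected domain with mixed Dirichlet--Neumann data finishes $\lambda_2(V^{++}_-)>0$.

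The main obstacle I anticipate lies in the bookkeeping of (i): confirming that $\Hgrp$ has precisely two orbits on $\{D_{i\pm}^{j\pm}:i+j\in 2\Z\}$ represented by $D_{0+}^{0+}$ and $D_{1-}^{1-}$, and that the extension defined by $\Fcal^{-1}$ is genuinely piecewise smooth without imposing any hidden condition on $\overline{\pqq_1\pqq^1}$. The sign-matching input to (ii) is more delicate than appears at first sight, depending both on the particular parity recorded in \ref{Jsym}(ii) and on identifying $\refl_{\pqq_1,\pqq^1}$ as the specific symmetry that glues the two tiles of the fundamental domain; once these are in place, the Courant and simplicity step is standard.
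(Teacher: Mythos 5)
Your proposal is correct and follows essentially the same route as the paper: membership via \ref{Jsym}.ii, the eigenvalue equivalence by restriction to the two-tile fundamental domain $D_{0+}^{0+}\cup D_{1-}^{1-}$ with extension by reflections, nonnegativity of $J_{C^\perp}$ from the graphical property \ref{Dsym2}.ii transferred to the second tile by the even symmetry $\refl_{\pqq_1,\pqq^1}$ (this is precisely what the paper's citation of \ref{Jsym}.ii encodes), nontriviality from \ref{basis-killing}, and then Courant's theorem \ref{Ctheorem}, whose statement in the paper already contains the strict inequality $\lambda_1<\lambda_2$ that you invoke as ``standard simplicity.'' The only difference is that you spell out the fundamental-domain bookkeeping and the tile-swapping reflection explicitly, which the paper leaves implicit.
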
 

\begin{proof} 
$J_{C^\perp}\in V_{-+}^{++}$ follows from \ref{Jsym}.ii and the definitions. 
Recall now 
that 
$D_{0+}^{0+}\cup D_{1-}^{1-}$ is homeomorphic to a closed disc and its boundary is $\beta_{0+}^0\cup \beta_{1-}^1\cup \alpha_1^{1-} \cup \alpha_0^{0+}$. 
We clearly have (i) then, 
where the linear isomorphism is given by restriction in one direction and its inverse by extending using reflections. 
On $D_{0+}^{0+}$ $J_{C^\perp}$ is nonnegative by \ref{Dsym2}.ii and nontrivial by \ref{basis-killing}.  
By \ref{Jsym}.ii it is then nonnegative on $D_{1-}^{1-}$ as well. 
As a consequence of 
Courant's nodal theorem \ref{Ctheorem} 
$J_{C^\perp}$ corresponds then to the lowest eigenvalue and the proof is complete. 
\end{proof}

\begin{lemma} 
\label{V+++} 
The following hold. 
\\ (i) 
$V^{++}_{+-} \sim C_{pw}^\infty [ D_{0+}^{0+}; \overline{\pqq^1\pqq_1} , \beta_{0+}^0\cup \alpha_0^{0+} ] \sim  
\{\, u\in C_{pw}^\infty[ D_0^0 ; \partial D_0^0 , \emptyset ] \, : \, 
u \circ \refl_{\Sigma_0} = u \circ \refl_{\Sigma^0} = u \, \}$.  
\\ (ii) 
$\lambda_1(V^{++}_{+-})>0$ and 
$\lambda_1(V^{++}_{+})<0<\lambda_2(V^{++}_{+})$. 
\end{lemma}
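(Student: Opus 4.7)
Plan. For part (i), I would trace through the defining symmetries of $V^{++}_{+-}$: a function $u$ in this space is even under each of the sphere reflections $\refl_{\Sigma^0}$, $\refl_{\Sigma^{\pi/2}}$, and $\refl_{\Sigma_{i\pi/m}}$, and odd under each circle reflection $\refl_{\pqq^j,\pqq_i}$. By Lemma \ref{Msym}(ii) each such circle reflection fixes pointwise a great circle lying in $M$, and since $\partial D_0^0=Q_0^0$ is a union of four arcs of these circles, the oddness forces $u=0$ on $Q_0^0$. Restricting to $D_0^0$ thus yields a function with Dirichlet boundary data that is even under the two sphere reflections $\refl_{\Sigma_0}$ and $\refl_{\Sigma^0}$ that preserve $D_0^0$; the inverse map is extension by the sphere and circle reflections, establishing the second equivalence. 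Restricting once more to the fundamental domain $D_{0+}^{0+}$ for $\refl_{\Sigma_0}, \refl_{\Sigma^0}$ acting on $D_0^0$, the evenness becomes Neumann on the mirror boundaries $\alpha_0^{0+}\cup\beta_{0+}^0$ while Dirichlet is retained on the inherited arc $\overline{\pqq^1\pqq_1}$, yielding the first equivalence.

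For part (ii) I would handle the three inequalities separately. The bound $\lambda_1(V^{++}_+)<0$ follows by testing the Rayleigh quotient on the constant function $1\in V^{++}_{++}\subset V^{++}_+$: since $\Lcal 1=|A|^2+2$ is pointwise positive, the quotient evaluates to $-\int_M(|A|^2+2)/|M|<0$, so $\lambda_1(V^{++}_+)<0$.

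For $\lambda_1(V^{++}_{+-})>0$ I would apply a Montiel-Ros integration-by-parts using the Jacobi field $J:=J_{C_{\pi/2}^{\pi/2}}=K^{C_0^0}\cdot\nu$, which satisfies $\Lcal J=0$ and, by Lemma \ref{Dsym}(i), is strictly positive on the interior of $D_0^0$. An eigenfunction with eigenvalue $\lambda\le 0$ corresponds via (i) to a positive Dirichlet eigenfunction $\psi$ on $D_0^0$, and Green's identity gives $-\lambda\int_{D_0^0}J\psi=\int_{\partial D_0^0}J\,\partial_n\psi$. By Hopf's boundary point lemma $\partial_n\psi<0$ on $\partial D_0^0$, so provided $J>0$ on an open subset of $\partial D_0^0$ the right-hand side is strictly negative while the left is nonnegative, a contradiction. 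To verify $J>0$ on the interior of $\overline{\pqq_1\pqq^1}$ I would parametrize the edge and compute $J$ directly from Corollary \ref{Lnui} (which identifies $\nu$ along the arc as a convex combination of $\pp_{\pi/2+\pi/(2m)}$ and $\pp^{-\pi/4}$) together with \ref{E:killing}, obtaining a manifestly positive expression in the interior.

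Finally, $\lambda_2(V^{++}_+)>0$ reduces, via $V^{++}_+=V^{++}_{++}\oplusu V^{++}_{+-}$ and the preceding step, to $\lambda_2(V^{++}_{++})>0$. This is the main obstacle. Lemma \ref{Jsym} shows that none of the six Killing-induced Jacobi fields lies in $V^{++}_{++}$, so the direct comparison used in Proposition \ref{P:V--} and Lemma \ref{V++-} is unavailable. My plan would be to suppose for contradiction that an eigenfunction $\phi\in V^{++}_{++}$ with eigenvalue $\le 0$ exists distinct from the positive first eigenfunction, apply Courant's nodal theorem to produce a nodal curve splitting $D_{0+}^{0+}$ into two pieces, and then run the Montiel-Ros integration-by-parts on each piece using $J$ as a positive supersolution, exploiting the fact that $J$ already satisfies Neumann on the mirror edges $\alpha_0^{0+}\cup\beta_{0+}^0$ by its evenness under $\refl_{\Sigma_0}$ and $\refl_{\Sigma^0}$. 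The delicate point where I expect the most care is the boundary contribution along $\overline{\pqq^1\pqq_1}$: the reflection $\refl_{\pqq^1,\pqq_1}$ does not preserve $C_{\pi/2}^{\pi/2}$, so $J$ satisfies no clean symmetry condition along that edge, and controlling the sign of $\partial_n J$ there (for instance via an explicit computation using Corollary \ref{Lnui} and \ref{E:killing}, or by replacing $J$ with an appropriate symmetric combination with $J\circ\refl_{\pqq^1,\pqq_1}$) is the hardest step.
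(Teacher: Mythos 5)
Your part (i) and the first two inequalities of part (ii) are essentially the paper's argument: restriction/extension by the reflections for (i), the constant test function for $\lambda_1(V^{++}_{+})<0$, and a Green's-identity comparison with $J_{C_{\pi/2}^{\pi/2}}$ for $\lambda_1(V^{++}_{+-})>0$. (On that last point the paper is slightly more economical: it gets $\lambda_1(V^{++}_{+-})\ge 0$ from weak stability of the minimizing disc and then only needs that $J_{C_{\pi/2}^{\pi/2}}$ has one sign on $D_0^0$ (\ref{Dsym}.i) and does not vanish identically on $Q_0^0$ (\ref{L:grad}); your explicit verification that $J>0$ on the interior of $\overline{\pqq_1\pqq^1}$ via \ref{Lnui} and \eqref{E:killing} is workable but not needed.)

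The genuine gap is the step you yourself flag: $\lambda_2(V^{++}_{++})>0$. Your plan — run the Montiel--Ros integration by parts with $J=J_{C_{\pi/2}^{\pi/2}}$ on each nodal piece — breaks down precisely at the boundary term $-\int_{\overline{\pqq^1\pqq_1}}\phi\,\eta J$: since $\phi\in V^{++}_{++}$ satisfies Neumann (not Dirichlet) on that edge while $J$ has no symmetry there, this term has no controllable sign, and it is not clear that symmetrizing $J$ with $J\circ\refl_{\pqq_1,\pqq^1}$ preserves positivity. The paper avoids $J$ entirely at this stage. It assumes $\lambda_2(V^{++}_{++})\le 0$, applies Courant's theorem \ref{Ctheorem} together with \cite{Cheng}*{Theorem 2.5} to produce a separating nodal curve in $D_{0+}^{0+}$ whose endpoints (if any) lie on $\partial D_{0+}^{0+}$; since $\partial D_{0+}^{0+}$ has exactly three sides $\beta_{0+}^0$, $\alpha_0^{0+}$, $\overline{\pqq^1\pqq_1}$, such a curve must miss the interior of at least one side $\gamma$, and the nodal domain not meeting $\gamma$ gives, by domain monotonicity, $\lambda_1[D_{0+}^{0+};\gamma,\partial D_{0+}^{0+}\setminus\gamma]<0$. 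Each of the three possible choices of $\gamma$ is then ruled out by an eigenvalue bound already in hand: $\gamma=\overline{\pqq^1\pqq_1}$ contradicts $\lambda_1(V^{++}_{+-})>0$ (just proved), $\gamma=\alpha_0^{0+}$ contradicts \ref{V++-}, and $\gamma=\beta_{0+}^0$ contradicts \ref{P:V--}. This ``missed side plus comparison with previously established mixed boundary problems'' device is the missing idea; without it (or a genuine substitute for the sign of $\eta J$ on $\overline{\pqq^1\pqq_1}$) your proof of $\lambda_2(V^{++}_{+})>0$ is incomplete.
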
 

\begin{proof} 
(i) follows easily by the symmetries with linear isomorphisms being restrictions in one direction and inverses given by extensions using 
even or odd reflections appropriately. 
By \ref{V+++o} to prove (ii) it is enough to prove 
\begin{equation} 
\label{l3} 
\lambda_1(V^{++}_{+-})>0, \qquad 
\lambda_1(V^{++}_{++})<0, \qquad 
\lambda_2(V^{++}_{++})>0. 
\end{equation} 

Let $\phi_1$ be an eigenfunction corresponding to the eigenvalue $\lambda_1(V^{++}_{+-})$. 
Since $D_0^0$ is a minimizing disc, it is weakly stable, and so $\lambda_1(V^{++}_{+-})\ge0$. 
The first inequality in \ref{l3} will follow if we prove $\lambda_1(V^{++}_{+-})\ne0$. 
By Courant's nodal theorem \ref{Ctheorem} $\phi_1$ cannot change sign on $D_0^0$ and so by the maximum principle 
(without loss of generality) $\phi_1>0$ on the interior of $D_0^0$ and $\eta \phi_1<0$ on 
$\partial D_0^0\setminus \Qv_0^0= Q_0^0\setminus \Qv_0^0$, 
where $\eta$ is the outward unit conormal derivative of $D_0^0$ at $\partial D_0^0\setminus \Qv_0^0$.  
By applying Green's second identity to $\phi_1$ and $J_{C_{\pi/2}^{\pi/2}}$ we conclude 
$$
\int_{D_0^0} ( J_{C_{\pi/2}^{\pi/2}} \Lcal\phi_1 - \phi_1 \Lcal J_{C_{\pi/2}^{\pi/2}} ) = 
\int_{\partial D_0^0} ( J_{C_{\pi/2}^{\pi/2}} \eta \phi_1 - \phi_1 \eta J_{C_{\pi/2}^{\pi/2}} ) . 
$$ 
If $\lambda_1(V^{++}_{+-})=0$, then the left hand side vanishes. 
Since $\phi_1$ satisfies the Dirichlet condition on $\partial D_0^0$,  
we conclude 
$$ 
\int_{\partial D_0^0}  J_{C_{\pi/2}^{\pi/2}} \,\, \eta \phi_1 \, = \, 0. 
$$ 
By \ref{Dsym}.i $J_{C_{\pi/2}^{\pi/2}}$ does not change sign on $D_0^0$. 
Since $\eta \phi_1<0$ on $\partial D_0^0\setminus \Qv_0^0$, we conclude that $J_{C_{\pi/2}^{\pi/2}}=0$ on $\partial D_0^0$. 
This contradicts \ref{L:grad} (alternatively it implies odd symmetries which together with \ref{Jsym} 
contradict the nontriviality of $J_{C_{\pi/2}^{\pi/2}}$) and therefore we conclude the first inequality in \ref{l3}.  
The positivity of the zeroth-order term of $\Lcal$ 
implies the second inequality in \ref{l3}. 

Suppose now that $\lambda_2(V^{++}_{++})\le0$ and let $\phi_2$ be a corresponding eigenfunction. 
Then $\phi_2$ satisfies Neumann conditions on $\partial D_{0+}^{0+}$
and moreover by Courant's nodal theorem \ref{Ctheorem} will have two nodal domains on $D_{0+}^{0+}$.
It follows from \cite{Cheng}*{Theorem 2.5} that the nodal set $\phi_2^{-1}(\{0\})$
contains (at least) a piecewise $C^2$ embedded circle or segment
whose endpoints (if it has any) lie on $\partial D_{0+}^{0+}$
but which is otherwise disjoint from $\partial D_{0+}^{0+}$.
In particular this nodal curve separates $D_{0+}^{0+}$
into two components and misses the interior of at least one of the three sides---$\beta_{0+}^0$,
$\alpha_0^{0+}$, and $\overline{\pqq^1\pqq_1}$---of $\partial D_{0+}^{0+}$.
We call the missed side $\gamma$.
By domain monotonicity we conclude that 
$\lambda_1[D_{0+}^{0+}; \gamma, \partial D_{0+}^{0+} \setminus \gamma \, ] <0 $.  
If $\gamma= \overline{\pqq^1\pqq_1}$, this would contradict the first inequality in \ref{l3}, which already has been proved. 
If $\gamma= \alpha_0^{0+}$, this would contradict \ref{V++-}. 
Finally if $\gamma= \beta_{0+}^0$, this would contradict \ref{P:V--}, and the proof is complete. 
\end{proof}

\begin{prop} 
\label{P:V++} 
We have $\ind(V^{++}) = 2m- 1$ 
and   
$\nul(V^{++}) = 1$.  
\end{prop}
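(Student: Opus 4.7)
The plan is to decompose $V^{++}$ under the action of the dihedral group $D_m$ of order $2m$ generated by the rotation $\rot^C_{2\pi/m}$ and the reflection $\refl_{\Sigma_0}$, both of which lie in $\Gsym^M$ and preserve $V^{++}$ (since each commutes with $\refl_{\Sigma^0}$ and with $\refl_{\Sigma^{\pi/2}}$). Lemmas \ref{V++-} and \ref{V+++} have already handled the two ``extremal'' isotypic components, supplying $\ind(V^{++}_+)=1$, $\nul(V^{++}_+)=0$, $\ind(V^{++}_-)=0$, and $\nul(V^{++}_-)=1$, the last realized by $J_{C^\perp}$. The rest of the proof will show that the other $D_m$-isotypic components collectively contribute $2m-2$ additional negative eigenvalues and no additional nullity.

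To isolate these other components I will first Fourier-decompose with respect to the cyclic subgroup. For each $k \in \Z/m\Z$ set $V^{++,k} := \{u \in V^{++} \,:\, u \circ \rot^C_{2\pi/m} = e^{2\pi i k/m} u\}$, giving an $\Lcal$-invariant orthogonal decomposition $V^{++} = \bigoplus_k V^{++,k}$. Because $\refl_{\Sigma_0}$ conjugates $\rot^C_{2\pi/m}$ to its inverse, it exchanges $V^{++,k}$ with $V^{++,m-k}$. Thus for each $1 \leq k < m/2$ the pair $V^{++,k} + V^{++,m-k}$ is a real $\Lcal$-invariant $D_m$-isotypic component of $2$-dimensional irreducible type, while when $m$ is even the self-conjugate mode $k=m/2$ decomposes further under $\refl_{\Sigma_0}$ into two $1$-dimensional isotypic components. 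Each such non-trivial component can then be identified, by eigenvalue-equivalence in the sense of \ref{D:eigen-eq}, with a boundary-value problem on a fundamental tetrahedral piece such as $D_0^{0+}$ equipped with mixed boundary conditions across $\Sigma_0$ and its rotates that twist by the relevant character of $D_m$.

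The analytic heart of the argument is the claim that each $2$-dimensional isotypic component contributes exactly $4$ to $\ind$, and each of the two extra $1$-dimensional components (present only for $m$ even) contributes exactly $1$. For the lower bounds one constructs test functions from the Killing Jacobi fields $J_{C_\phi^{\phi'}}$ with $\phi, \phi' \in \{0, \pi/2\}$, which by Lemma \ref{L:D} are interrelated by the discrete derivative $\Dcal$ and the rotation $\Tr$; forming suitable $D_m$-averaged combinations produces elements of each designated $V^{++,k}$ on which the Rayleigh quotient is negative. For the matching upper bounds one applies Courant's nodal theorem inside each isotypic component: any eigenfunction in $V^{++,k}$ transforms by $e^{2\pi i k/m}$ under $\rot^C_{2\pi/m}$ and therefore must have at least $2m/\gcd(k,m)$ nodal domains on $M^{++}_{**}$, which caps the count of negative eigenvalues available in that component.

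Summing the contributions gives
$$
\ind(V^{++}) \, = \, 1 + 0 + 4 \left\lfloor \tfrac{m-1}{2} \right\rfloor + 2\,[m \in 2\Z] \, = \, 2m - 1,
$$
while $\nul(V^{++})=1$ follows because $J_{C^\perp}$ is the unique Killing Jacobi field in $V^{++}$ (by Lemma \ref{basis-killing}), and the nodal counts from the preceding step rule out exceptional zero eigenfunctions in every non-trivial isotypic component. The principal obstacle is the coordination of the test-function construction with the twisted boundary conditions: since the Killing Jacobi fields $J_{C_\phi^{\phi'}}$ individually lie in $V^{+-} \oplusu V^{-+}$ rather than in $V^{++}$, their combinations under $\Tr$ and $\Dcal$ must be arranged very carefully so that they land in each specific $V^{++,k}$ with exactly the correct $D_m$-character, and the Courant upper bound must be controlled uniformly in $k$ despite the mode-dependent twisted boundary conditions.
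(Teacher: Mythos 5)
Your plan hinges on two per-component counts---each $2$-dimensional $D_m$-isotypic component of $V^{++}$ contributes exactly $4$ to the index and nothing to the nullity, and each extra $1$-dimensional component (for $m$ even) contributes exactly $1$---but these counts are precisely the content of the proposition and your proposal asserts rather than proves them; moreover both tools you offer for them fail. For the lower bound: since $\rot^C_{2\pi/m}$ and $\refl_{\Sigma_0}$ act only in the $(x^1,x^2)$-coordinates of \ref{D:coordinates}, they commute with $\refl_{\Sigma^0}$ and $\refl_{\Sigma^{\pi/2}}$, so every translate of $J_{C_\phi^{\phi'}}$ ($\phi,\phi'\in\{0,\pi/2\}$) under your dihedral group has the same parities as $J_{C_\phi^{\phi'}}$ itself and, by \ref{Jsym}.iii--iv, stays in $V^{+-}\oplusu V^{-+}$. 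Hence no linear combination of such translates has a nonzero component in $V^{++}$, and the ``$D_m$-averaged combinations landing in $V^{++,k}$'' do not exist. Even setting the parity obstruction aside, a genuine Jacobi field satisfies $\Lcal J=0$ on the closed surface and so has Rayleigh quotient zero, not negative; producing $2m-2$ negative directions needs a different mechanism (in the paper it is the strictly negative Neumann ground state on each of the $2m$ congruent pieces, supplied by \ref{V+++} and fed into \ref{P:eigen}).

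For the upper bound, Courant's theorem \ref{Ctheorem} bounds the number of nodal domains of the $n$-th eigenfunction of a fixed boundary-value problem on a connected domain; it does not cap the number of negative eigenvalues inside a character subspace, and eigenfunctions in a twisted mode $V^{++,k}$ are complex (or come in two-dimensional real families), so the claim ``at least $2m/\gcd(k,m)$ nodal domains'' is neither justified nor, as stated, a bound on the index of that mode. To make the mode-by-mode route rigorous you would have to reduce each twisted mode to a problem on a fundamental piece with $k$-dependent quasi-periodic matching conditions and prove analogues of \ref{V++-} and \ref{V+++} for every $k$; none of that is available. The paper's proof avoids all of this: $V^{++}$ is eigenvalue equivalent to $C^\infty_{pw}[M^{++}_{**};\emptyset,\partial M^{++}_{**}]$, and cutting $M^{++}_{**}$ along its intersections with the spheres $\Sigma_{i\pi/m}$ yields $2m$ congruent pieces, each a copy of $D_{0+}^{0+}\cup D_{1-}^{1-}$; Montiel--Ros \ref{P:eigen}.i with the Dirichlet-on-cuts spectrum $\lambda_1=0<\lambda_2$ from \ref{V++-} gives $\ind(V^{++})\ge 2m-1$, \ref{P:eigen}.ii with the all-Neumann spectrum $\lambda_1<0<\lambda_2$ from \ref{V+++} gives $\#_{\le0}(V^{++})\le 2m$, and $J_{C^\perp}\in V^{++}$ forces nullity at least $1$, pinning down $\ind=2m-1$ and $\nul=1$. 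As it stands, your argument has genuine gaps at both the lower and the upper bound.
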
 

\begin{proof} 
By considering $M^{++}_{**}$ and subdividing along the curves of intersection with $\Sigma_{i\pi/m}$ 
and imposing the Dirichlet or the Neumann conditions appropriately, 
the result follows from \ref{P:eigen} by using \ref{V++-} and \ref{V+++}.  
\end{proof} 

To study 
$V^{+-}$ and $V^{-+}$ now we define 
\begin{equation}
\label{V+-pm} 
V^{+-}_{\pm} := \{u\in V^{+-} \, : \, u\circ \refl_{\Sigma_{0}} = \pm u \,\}, 
\qquad 
V^{-+}_{\pm} := \{u\in V^{-+} \, : \, u\circ \refl_{\Sigma_{0}} = \pm u \,\}. 
\end{equation} 
Note that 
\begin{equation} 
\label{V+-+o}
V^{+-} = V^{+-}_{+} \oplusu V^{+-}_{-},  
\qquad  
V^{-+} = V^{-+}_{+} \oplusu V^{-+}_{-}.  
\end{equation}

\begin{lemma}[The sign of some Jacobi fields]   
\label{MsJ} 
The following hold. 
\\ (i) 
If $m\ge3$, then 
$J_{C^0_0}\ge0$ and $J_{C^{\pi/2}_0}\ge0$ on $M^{++}_{+*}$.  
\\ (ii) 
If in addition $m$ is even, then  
$J_{C^{\pi/2}_{\pi/2}}\ge0$ and $J_{C_{\pi/2}^0}\le0$ 
on $M^{++}_{++}$.  
\end{lemma}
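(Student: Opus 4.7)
My plan is to prove each of the four sign assertions in \ref{MsJ} by establishing that the relevant fundamental piece of $M$ is graphical---and strongly graphical in its interior---with respect to the corresponding Killing field, in the sense of Definition \ref{graphical}. Granting such graphicality, the function $J_{C'}=K_{C'}\cdot\nu$ has a definite sign on the region by the no-zeros property (the interior being strongly graphical), and the sign is then fixed by evaluation at a single convenient point via \ref{Lnui}, in consistency with the sign information provided by \ref{rays}. Specifically, for (i) I plan to show that $M^{++}_{+*}$ is graphical with respect to both $K_{C_0^0}$ and $K_{C_0^{\pi/2}}$, while for (ii)---where the hypothesis that $m$ is even ensures $\refl_{\Sigma_{\pi/2}}\in\Gsym^M$ via \ref{Msym}(i)---I plan to show that $M^{++}_{++}$ is graphical with respect to both $K_{C_{\pi/2}^0}$ and $K_{C_{\pi/2}^{\pi/2}}$. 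Lemma \ref{Omu:K} supplies the geometric picture: each of these Killing fields foliates its ambient region $\Omu^{++}_{+*}$ or $\Omu^{++}_{++}$ by orbital arcs entering through one face and exiting through another, so the notion of a ``graphical surface'' there is natural.

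To verify the graphicality I would apply an Alexandrov-style moving-plane argument in the spirit of \cite{schoen1983} and the proof of \ref{Dsym2}. For $K_{C_0^0}$ on $M^{++}_{+*}$, the natural family is the one-parameter pencil $\{\Sigma(\theta)\}_{\theta\in[0,\pi/2]}$ of great 2-spheres containing the axis $C_0^0$, which interpolates between $\Sigma_0=\Sigma(0)$ and $\Sigma^0=\Sigma(\pi/2)$ (both symmetry spheres of $M$ by \ref{Msym}(i)). Sliding $\theta$ continuously and tracking $\refl_{\Sigma(\theta)}M$, the strong maximum principle \cite{schoen1983}*{Lemma 1} together with analytic continuation implies that any interior tangential contact between $\refl_{\Sigma(\theta)}M$ and $M$ at a noncritical $\theta\in(0,\pi/2)$ would force $\refl_{\Sigma(\theta)}$ to be a symmetry of $M$, contradicting \ref{Msym}. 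Ruling out interior contact for $\theta$ in the open interval gives the desired graphicality, and hence the sign of $J_{C_0^0}$ on $M^{++}_{+*}$. The remaining three Killing fields are handled analogously, using pencils of 2-spheres through their respective axes.

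\textbf{The main obstacle} is the careful handling of boundary contact in this Alexandrov reflection: the fundamental regions $M^{++}_{+*}$ and $M^{++}_{++}$ have boundary pieces lying on several symmetry 2-spheres, and the reflecting family $\{\Sigma(\theta)\}$ sweeps across these symmetry boundaries, so one must carefully distinguish permitted tangential contact on the symmetry boundary from interior contact that is excluded by the maximum principle. A possibly cleaner alternative is to run the argument piecewise on each subdisc $D_{i\pm}^{j\pm}\subset M^{++}_{+*}$ (or $M^{++}_{++}$), exploiting the graphicality statements already established for individual Lawson subdiscs in Lemmas \ref{Dsym} and \ref{Dsym2}, combining them with the vanishing of the Jacobi fields on the symmetry-sphere boundaries furnished by \ref{Jsym}, and patching across these vanishing loci.
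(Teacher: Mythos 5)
Your overall strategy---deduce the sign of each $J_{C'}$ from graphicality of the whole fundamental piece with respect to the corresponding Killing field---is not the paper's route, and as sketched the graphicality step, which carries all the weight, does not go through. The moving-sphere argument is mis-specified: $\refl_{\Sigma(\theta)}M$ and $M$ intersect transversally for every $\theta$ (for instance along $M\cap\Sigma(\theta)$), and such intersections are neither excluded by the maximum principle nor do they force $\refl_{\Sigma(\theta)}$ to be a symmetry; what Alexandrov reflection actually requires is an ordering between the reflected swept part and the unswept part, maintained from a legitimate starting position, with one-sided tangential contact arising only at the first failure. Both ends of your sweep are problematic here. At the start (a pencil sphere that is also a symmetry sphere, e.g.\ $\Sigma^0$ for $J_{C_0^0}$), Lemma \ref{Jsym} makes $J_{C_0^0}$ vanish identically along $M\cap\Sigma^0$, i.e.\ every orbit of $K_{C_0^0}$ is tangent to $M$ along the entire starting curve, so even establishing the initial strict ordering requires control of the nodal structure of $J_{C_0^0}$ near that curve---essentially the statement being proved. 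Worse, the fundamental pieces have boundary components on spheres transversal to the pencil (for $M^{++}_{+*}$ and the axis $C_0^0$, the piece $\gamma_{4+}\subset\Sigma^{\pi/2}$ of \ref{M+++}; for the fields in (ii) the pieces of $\partial M^{++}_{++}$ on $\Sigma_0$ and $\Sigma^0$), and first contact can occur at reflected points of these boundary curves, where neither the interior maximum principle nor the Hopf boundary lemma gives a contradiction. You name this as the main obstacle, but the proposal does not resolve it.

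The fallback ``patching'' does not close the gap either: Lemmas \ref{Dsym} and \ref{Dsym2} give graphicality of the subdiscs only with respect to $K^{C_{i\pi/m}^{j\pi/2}}$, $K_{C^\perp}$ and $K_C$, not with respect to $K_{C_0^0}$, $K_{C_0^{\pi/2}}$, $K_{C_{\pi/2}^0}$, $K_{C_{\pi/2}^{\pi/2}}$; graphicality of pieces does not in any case imply graphicality of their union (an orbit can cross two different subdiscs once each); and \ref{Jsym} provides vanishing of $J_{C_0^0}$ only on $M\cap(\Sigma_0\cup\Sigma^0)$, which lies on $\partial M^{++}_{+*}$, so there are no vanishing loci along the interior subdividing curves $M\cap\Sigma_{i\pi/m}$ with $i\neq 0$ or along the diagonal geodesics to ``patch across''. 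Controlling the sign of these Jacobi fields on the subdiscs far from the relevant axis is precisely the hard content of \ref{MsJ}, and your outline has no mechanism for propagating the sign there. The paper proceeds entirely differently: it first gets the sign on the geodesic segments $\overline{\pqq_i\pqq^1}$ from the convexity computation of \ref{rays}, propagates it in $i$ by an induction using the discrete derivative of \ref{D:TD} and \ref{L:D}, and then converts boundary sign into sign on each subdisc via domain monotonicity and the already-established eigenvalue facts \ref{P:V--}, \ref{V++-}, \ref{V+++} together with \ref{P:eigen}. A purely geometric graphicality proof of the kind you envision may or may not be possible, but it would need substantially new arguments not contained in the proposal.
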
 
 
\begin{proof} 
\emph{Step 1: 
We prove that 
$\forall i\in \Z\cap [1,(m+1)/2]$  
we have $J_{C^0_0}\ge0$ on $\overline{ \pqq_i \pqq^1 } $---equivalently 
$J_{C^0_0}\ge0$ on all geodesic segments contained in $M^{++}_{++}$. } 

If $i$ is odd or $i=\frac{m+1}2$, 
we already know this by \ref{rays}.i. 
We can assume then that $m\ge4$ because for $m=3$ step 1 is proved. 
By \ref{rays}.iv and \ref{L:D}.i we have for 
$i\in (2\Z+1) \cap [1,(m+1)/2]$ that 
$J_{C^0_0} \circ \Tr \ge  J_{C^0_0} \circ \Tr^{-1} $ on 
$\overline{ \pqq_{i} \pqq^1 } $. 
Since $\Tr^{\pm 1}(\overline{\pqq_i\pqq^1})=\overline{\pqq_{i\pm 1}\pqq^1}$ by \ref{L:T}, 
this means that $J_{C^0_0}$ on $\overline{ \pqq_{i} \pqq^1 } \subset M^{++}_{++}$ is increasing with increasing even $i$. 
Arguing inductively on even $i$ it is enough to prove then that 
$J_{C^0_0}\ge0$ on 
$\overline{ \pqq_{2} \pqq^1 } $. 

Taking $i=1$ in the inequality in the previous paragraph we establish that 
$J_{C^0_0} \circ \Tr \ge  J_{C^0_0} \circ \Tr^{-1} $ on 
$\overline{ \pqq_{1} \pqq^1 } $.  
By \ref{D:TD} we have 
$\Tr^{-1} = \refl_{\Sigma_0}$ on ${\overline{\pqq_1\pqq^1}}$,  
so by \ref{Jsym}.iii-iv we know that  
$J_{C^0_0} \circ \Tr^{-1} = - J_{C^0_0} $ on 
$\overline{ \pqq_{1} \pqq^1 } $.  
Combining we obtain 
$J_{C^0_0} \circ \Tr + J_{C^0_0} \ge 0 $ on $\overline{ \pqq_{1} \pqq^1 } $.  
We consider now the symmetrization $\varphi:= J_{C^0_0} \circ \refl_{\Sigma_{\pi/m} } + J_{C^0_0}$ of $J_{C^0_0}$ on $D^{1-}_1 \subset M^{++}_{++}$.  
Recall that 
$\partial D^{1-}_1 = \overline{ \pqq_{1} \pqq^1 } \cup \overline{ \pqq_{2} \pqq^1 } \cup \beta_1^1$. 
Since by \ref{L:T} $\Tr= \refl_{\Sigma_{\pi/m}}$ on $\overline{ \pqq_{1} \pqq^1 } $,   
by the last inequality above we have $\varphi\ge0$ on 
$\overline{ \pqq_{1} \pqq^1 } \cup \overline{ \pqq_{2} \pqq^1 } \subset \partial D^{1-}_1 $.  
By \ref{Jsym}.iii-iv $\varphi$ satisfies the Neumann condition on the remaining boundary $\beta_1^1$.
If we assume that $\varphi$ attains negative values on $D^{1-}_1$, then by domain monotonicity,  
and since $\Lcal \varphi =0$, 
we obtain a contradiction to $\lambda_1(V_{+-}^{++})>0$ in \ref{V+++}.ii by using \ref{V+++}.i. 
Hence $\varphi\ge0$ on $D^{1-}_1$ and since $\varphi=2 J_{C^0_0}$ on $ \alpha^{1-}_1 = D^{1-}_1 \cap \Sigma_{\pi/m} $ (by \ref{ab}),  
we conclude that $ J_{C^0_0} \ge 0$  on $ \alpha^{1-}_1 \subset D^{1-}_1 $.

We consider now the domain $\Phi := D^{1-}_{1+} \cup D^{0+}_2$. 
Clearly $\Phi$ is homeomorphic to a disc and has $\partial \Phi = \beta^1_{1+} \cup \beta^0_2 \cup  \overline{ \pqq_{3} \pqq^1 } \cup \alpha^{1-}_1$ 
and  $\overline{ \pqq_{2} \pqq^1 } \subset \Phi$. 
We postpone the case $m=4$ for later and we assume that $m\ge5$ so that 
$ \overline{ \pqq_{3} \pqq^1 } \subset \Phi \subset  M^{++}_{++}$.  
We already know then 
(by the preceding paragraphs and \ref{Jsym}) 
that  
$ J_{C^0_0} \ge 0$  on $ \overline{ \pqq_{3} \pqq^1 } \cup \alpha^{1-}_1$ and satisfies the Dirichlet condition on $\beta^0_2$ and the Neumann condition on $\beta^1_{1+}$.   
In order to apply now \ref{P:eigen} 
we subdivide $\Phi$ along 
$\overline{ \pqq_{2} \pqq^1 }$ into  
$D^{1-}_{1+}$ and $D^{0+}_2$. 
We clearly have  
$C_{pw}^\infty[\Lcal, D^{1-}_{1+} ; \alpha^{1-}_1 , \overline{ \pqq_{2} \pqq^1 } \cup  \beta^1_{1+} ] \sim V^{++}_{-+} $  
and 
$\lambda_1[\Lcal,  D^{0+}_2   ; \beta^0_2 \cup  \overline{ \pqq_{3} \pqq^1 } , \overline{ \pqq_{2} \pqq^1 } ]  
> 
\lambda_1[\Lcal,  D^{0+}_2   ; \beta^0_2 , \overline{ \pqq_{2} \pqq^1 } \cup  \overline{ \pqq_{3} \pqq^1 } ] \ge \lambda_1(V^{--}) $.  
By referring to \ref{V++-} and \ref{P:V--} we conclude that 
$\#_{<0} [\Lcal, D^{1-}_{1+} ; \alpha^{1-}_1 , \overline{ \pqq_{2} \pqq^1 } \cup  \beta^1_{1+} ] =0$ 
and 
$\#_{\le0} [\Lcal,  D^{0+}_2   ; \beta^0_2 \cup  \overline{ \pqq_{3} \pqq^1 } , \overline{ \pqq_{2} \pqq^1 } ] =0$. 
By \ref{P:eigen} then we conclude that 
$\lambda_1[\Lcal, \Phi; \beta^0_2 \cup  \overline{ \pqq_{3} \pqq^1 } \cup \alpha^{1-}_1, \beta^1_{1+} ] >0 $, 
which by domain monotonicity would contradict an assumption that $J_{C^0_0}$ takes negative values on $\Phi$.    
We conclude that 
$ J_{C^0_0} \ge 0$  on $\overline{ \pqq_{2} \pqq^1 } \subset \Phi$ 
which completes step 1 under the assumption $m\ge5$. 

We consider now the case $m=4$. 
Note that 
$D^{1-}_{1+} \cup D^{0+}_{2-}$ is homeomorphic to a disc and can be subdivided into 
$D^{1-}_{1+}$ and $D^{0+}_{2-}$ by $\overline{ \pqq_2 \pqq^1 }$. 
Recall that 
$\partial D^{1-}_{1+} = \alpha^{1-}_1 \cup \beta_{1+}^1 \cup \overline{ \pqq_2 \pqq^1 } $ and 
$\partial D^{0+}_{2-} = \alpha^{0+}_2 \cup \beta_{2-}^0 \cup \overline{ \pqq_2 \pqq^1 } $.   
We have $\lambda_1[ D^{1-}_{1+} ; \alpha^{1-}_1 , \beta_{1+}^1 \cup \overline{ \pqq_2 \pqq^1 } ] = 0 $ by \ref{V++-} and 
$\lambda_1[ D^{0+}_{2-} ; \beta_{2-}^0 , \alpha^{0+}_2 \cup \overline{ \pqq_2 \pqq^1 } ] =0$ by \ref{P:V--}. 
Applying \ref{P:eigen} we conclude that 
$\lambda_1[ D^{1-}_{1+} \cup D^{0+}_{2-} ; \alpha^{1-}_1 \cup \beta_{2-}^0 , \beta_{1+}^1 \cup \alpha^{0+}_2 ] \ge 0 $.  
Since for $m=4$ we have $\alpha_2^{0+} \subset \Sigma_{\pi/2}$,
it follows by \ref{Jsym} that 
$ J_{C^0_0} $ satisfies the same boundary conditions except on $\alpha_1^{1-}$, 
where we proved above that it is $\ge0$.  
Moreover 
$ J_{C^0_0} $ cannot vanish identically on $\alpha_1^{1-}$ by \ref{L:grad}. 
By domain monotonicity we obtain a contradiction on the assumption that 
$ J_{C^0_0} $ is not nonnegative on 
$D^{1-}_{1+} \cup D^{0+}_{2-}$. 
We conclude 
$ J_{C^0_0} \ge 0$  on $\overline{ \pqq_{2} \pqq^1 } \subset D^{1-}_{1+} \cup D^{0+}_{2-}$ and step 1 is complete in all cases. 

\emph{Step 2: 
We prove that 
$\forall i\in \Z\cap [1,(m+1)/2]$  
we have $J_{C^{\pi/2}_0}\ge0$ on $\overline{ \pqq_i \pqq^1 } $---equivalently 
$J_{C^{\pi/2}_0}\ge0$ on all geodesic segments contained in $M^{++}_{++}$. } 

Unlike the case of 
$J_{C^0_0}$ we now know this by \ref{rays}.ii when $i$ is even. 
Using the discrete derivative as before, 
we have 
by \ref{rays}.iii and \ref{L:D}.ii 
for 
$i\in (2\Z) \cap [1,(m+1)/2]$ that 
$J_{C^{\pi/2}_0} \circ \Tr \ge J_{C^{\pi/2}_0} \circ \Tr^{-1} $ on 
$\overline{ \pqq_{i} \pqq^1 } $.  
Arguing inductively on odd $i$, 
it is enough to prove then that 
$J_{C^{\pi/2}_0}\ge0$ on 
$\overline{ \pqq_{1} \pqq^1 } $. 
For this we consider the domain $\Phi' := D^{0+}_{0+} \cup D^{1-}_1$. 
Clearly $\Phi'$ is isometric to $\Phi$ in the previous step (in fact $\Phi=\mathsf{T}(\Phi')$) and 
has $\partial \Phi' = \beta^0_{0+} \cup \beta^1_1 \cup  \overline{ \pqq_{2} \pqq^1 } \cup \alpha^{0+}_0$ 
and  $\overline{ \pqq_{1} \pqq^1 } \subset \Phi'$. 
Similarly to the previous step, 
we already know that  
$J_{C^{\pi/2}_0}\ge0$ on $\overline{ \pqq_{2} \pqq^1 }$ and satisfies the Dirichlet condition on $\beta^1_1 \cup \alpha^{0+}_0$  and the Neumann condition on $\beta^0_{0+}$.   
Arguing then as in the previous step, 
we conclude that $J_{C^{\pi/2}_0}\ge0$ on $\Phi'$ and hence on 
$\overline{ \pqq_{1} \pqq^1 } \subset \Phi'$, 
which completes step 2. 

\emph{Step 3: 
We prove that $J_{C^0_0}\ge0$ and $J_{C^{\pi/2}_0}\ge0$ on $M^{++}_{++}$. } 

Recall from \ref{M++++} that $M^{++}_{++}$ can be subdivided along the geodesic segments it contains 
into $D^{[i:2]\pm}_i$'s,  
$D_{0+}^{0+}$, and  $D_{\frac{m}2-}^{0+}$ (for $m\in 4\Z$) or 
$D_{\frac{m}{2}-}^{1-}$ (for $m\in 4\Z+2$).  
We already know that on the geodesic segments in the boundaries of these regions
we have $J_{C_0^0},J_{C_0^{\pi/2}} \geq 0$,
while on the rest of each boundary $J_{C_0^0}$ and $J_{C_0^{\pi/2}}$
satisfy Dirichlet or Neumann conditions.
We also know, using \ref{P:V--} and 
\ref{V+++},
that if we impose the Dirichlet condition on each geodesic segment
and leave the remaining boundary conditions unchanged,
then the corresponding lowest eigenvalue on each region
(obtained by subdividing along the geodesic segments)
is strictly positive.
If we assume then that 
$J_{C^0_0}$ or $J_{C^{\pi/2}_0}$ attains negative values, 
we will have a contradiction by domain monotonicity. 
This completes step 3. 

\emph{Step 4: 
We complete the proof of the lemma.} 

For $m$ even (i) follows from step 3 and the even symmetry with respect to $\refl_{\Sigma_{\pi/2} }$, 
as asserted in \ref{Jsym}.iii,  
which exchanges $M^{++}_{++}$ with $M^{++}_{+-}$.   
For $m$ odd (i) follows from step 3 and 
by using \ref{symM++++}.iii and the identity 
$J_{C^0_0} \circ \refl_{ C_{\pi/2}^{\pi/4} } = J_{C_0^{\pi/2}}$       
from \ref{AsymJ}.iii.  
Finally (ii) follows from step 3 and (for $m\in 4\Z$) 
\ref{symM++++}.i and \ref{AsymJ}.i 
or (for $m\in 4\Z+2$)  
\ref{symM++++}.ii and \ref{AsymJ}.ii. 
\end{proof}

\begin{prop} 
\label{P:V+--} 
We have the following (recall \ref{M+++}). 
\\ (i) 
$J_{C_0^0}\in V_{-}^{-+}$ 
and  
$V_{-}^{-+} \sim C_{pw}^\infty[M^{++}_{+*}; \gamma_{4-} \cup \gamma_5 , \gamma_{4+} ]$.  
\\ (ii) 
$J_{C_{0}^{\pi/2}}\in V_{-}^{+-}$ 
and  
$V_{-}^{+-} \sim C_{pw}^\infty[M^{++}_{+*}; \gamma_{4+} \cup \gamma_5 , \gamma_{4-} ]$.  
\\ (iii) 
$\ind(V_{-}^{-+}  ) = \ind( V_{-}^{+-} )   = 0$ 
and   
$\nul(V_{-}^{-+}  ) =  \nul( V_{-}^{+-} )   = 1$.  
\end{prop}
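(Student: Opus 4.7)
\medskip

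The plan is to handle the two parts (i) and (ii) in parallel, using essentially symmetric arguments, with the key input being Lemma \ref{MsJ} (which has already been established). The overall strategy is to reduce each symmetry-restricted space on $M$ to a mixed Dirichlet--Neumann problem on the fundamental domain $M^{++}_{+*}$, exhibit one of the Killing Jacobi fields as a nonnegative eigenfunction, and invoke Courant's nodal theorem.

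First, to obtain the containments $J_{C_0^0}\in V_-^{-+}$ and $J_{C_0^{\pi/2}}\in V_-^{+-}$, I would simply read off the symmetry types from Lemma \ref{Jsym}(iii)--(iv): for $\phi,\phi'\in\{0,\pi/2\}$ the Jacobi field $J_{C_\phi^{\phi'}}$ is odd under $\refl_{\Sigma_\phi}$ and $\refl_{\Sigma^{\phi'}}$ and even under $\refl_{\Sigma_{\phi_\perp}}$ and $\refl_{\Sigma^{\phi'_\perp}}$, giving precisely the parities that define membership in $V_-^{-+}$ (respectively $V_-^{+-}$). The only subtlety, when $m$ is odd, is that $\refl_{\Sigma_{\pi/2}}$ is not a symmetry of $M$, but the definitions of $V^{\pm\pm}$ and $V_-^{\pm\pm}$ only involve $\refl_{\Sigma^0}$, $\refl_{\Sigma^{\pi/2}}$, and $\refl_{\Sigma_0}$, so this does not matter.

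For the eigenvalue equivalences I would use restriction to $M^{++}_{+*}=M\cap\Omu^{++}_{+*}$ as the linear isomorphism, with inverse given by iterated reflection across $\Sigma^0$, $\Sigma^{\pi/2}$, and $\Sigma_0$ as dictated by the chosen parities. Odd parity under a reflection forces the Dirichlet condition on the corresponding intersection of $M$ with the mirror sphere, while even parity forces the Neumann condition. Using Lemma \ref{M+++}, which identifies $\partial M^{++}_{+*}=\gamma_4\cup\gamma_5$ with $\gamma_{4-}\subset\Sigma^0$, $\gamma_{4+}\subset\Sigma^{\pi/2}$, and $\gamma_5\subset\Sigma_0$, this yields the eigenvalue equivalences asserted in (i) and (ii); the operator $\Lcal$ commutes with the reflections, so the extension map indeed intertwines the Schr\"odinger operators on the two sides.

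For (iii), which is the main point, the key input is Lemma \ref{MsJ}(i): $J_{C_0^0}\ge 0$ and $J_{C_0^{\pi/2}}\ge 0$ on $M^{++}_{+*}$. Under the equivalence of (i), $J_{C_0^0}$ becomes a (piecewise-)smooth nonnegative eigenfunction of $\Lcal$ with eigenvalue $0$ on $M^{++}_{+*}$ satisfying the prescribed boundary conditions, and it is nontrivial by Lemma \ref{basis-killing}. By Courant's nodal theorem (invoked via the earlier use in Propositions \ref{P:V--} and Lemma \ref{V++-}) a first eigenfunction cannot change sign and the lowest eigenvalue is simple, so $J_{C_0^0}$ must correspond to $\lambda_1(V_-^{-+})=0$ and $\lambda_2(V_-^{-+})>0$; hence $\ind(V_-^{-+})=0$ and $\nul(V_-^{-+})=1$. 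The identical argument applied to $J_{C_0^{\pi/2}}$ on $M^{++}_{+*}$ via (ii) gives $\ind(V_-^{+-})=0$ and $\nul(V_-^{+-})=1$.

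I expect no real obstacle beyond bookkeeping once Lemma \ref{MsJ} is in hand; the only point requiring a bit of care is verifying the Dirichlet/Neumann matching on each of $\gamma_{4\pm}$ and $\gamma_5$ from the parities defining $V_-^{\mp\pm}$, which is routine but must be done consistently on both sides to ensure that the extension by reflection produces a function in $C_{pw}^\infty(M)$ of the correct symmetry class.
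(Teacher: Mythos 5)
Your proposal is correct and follows essentially the same route as the paper: parts (i) and (ii) come from the parities in Lemma \ref{Jsym} together with restriction to $M^{++}_{+*}$ and extension by reflection, and part (iii) is exactly the paper's argument combining the nonnegativity of $J_{C_0^0}$ and $J_{C_0^{\pi/2}}$ from Lemma \ref{MsJ}.i with Courant's nodal theorem \ref{Ctheorem} to identify them as first eigenfunctions with simple eigenvalue $0$. Your symmetry and boundary-condition bookkeeping (including the odd-$m$ caveat about $\refl_{\Sigma_{\pi/2}}$) checks out.
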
 

\begin{proof} 
(i) and (ii) follow easily from the definitions and the symmetries in \ref{Jsym},  
with the linear isomorphisms between the spaces given by restriction to $M^{++}_{+*}$ 
and their inverses by extending using the appropriate reflections. 
(iii) follows then from \ref{MsJ}.i and \ref{Ctheorem}. 
\end{proof} 

We proceed to study now 
$V_{+}^{-+}$ and $V_{+}^{+-}$.  
One would like to decompose these spaces further, 
but unfortunately it is clear how to do this only when $m$ is even. 
If $m$ is even, we define 
\begin{equation}
\label{Vpm4} 
V^{\circ\circ}_{\circ\pm} := \{u\in V^{\circ\circ}_{\circ} \, : \, u\circ \refl_{\Sigma_{\pi/2} } = \pm u\,\} \quad \text{ for $m$ even}, 
\end{equation} 
where the upper circles can be $+-$ or $-+$ (on both sides) and the lower circle $+$ or $-$ (on both sides). 
We have then for $m$ even that 
\begin{equation} 
\label{V+-+oo}
V^{+-}_{\pm} = V^{+-}_{\pm-} \oplusu V^{+-}_{\pm+},  
\qquad  
V^{-+}_{\pm} = V^{-+}_{\pm-} \oplusu V^{-+}_{\pm+}.  
\end{equation}  
Although we will not use the following lemma, we state it for completeness of exposition---compare also with \ref{AsymJ}.

\begin{lemma}[Some eigenvalue equivalences] 
\label{symV}
The following hold. 
\\ 
(i)  
If $m\in 4\Z$, then 
$V_{-+}^{-+}\sim V^{-+}_{+-}$ 
and 
$V_{-+}^{+-}\sim V^{+-}_{+-}$. 
\\ 
(ii)  
If $m\in 4\Z+2$, then 
$V_{-+}^{-+}\sim V^{+-}_{+-}$, 
and 
$V_{-+}^{+-}\sim V^{-+}_{+-}$. 
\\ 
(iii)  
If $m\in 2\Z+1$, then 
$V^{-+}_{-}\sim V^{+-}_{-}$ 
and 
$V^{-+}_{+}\sim V^{+-}_{+}$. 
\end{lemma}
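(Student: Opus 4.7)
My plan is to realize each eigenvalue equivalence as a pullback by an explicit involutive symmetry of $M$ drawn from Lemma \ref{symM++++}. Specifically, for $m\in 4\Z$ I take $\refl_{\Sigma_{\pi/4}}$, for $m\in 4\Z+2$ I take $\refl_{C_{\pi/4}^{\pi/4}}$, and for $m\in 2\Z+1$ I take $\refl_{C_{\pi/2}^{\pi/4}}$. In each case the map $\Fcal(u):=u\circ\refl$ is a linear involution, and because the Schr\"odinger operator $\Lcal=\Delta+|A|^2+2$ is intrinsic to $(M,g|_M)$---in particular $|A|^2$ is independent of the choice of unit normal---pullback by any isometry of $M$ commutes with $\Lcal$, so $\Fcal$ automatically preserves eigenfunctions together with their eigenvalues. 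It remains only to verify that $\Fcal$ permutes the sign conditions of \ref{V++pm}, \ref{V+-pm}, and \ref{Vpm4} precisely as required.

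For the verification I will compute the coordinate expressions of the three reflections directly from \eqref{reflV}, decomposing $\R^4$ into the sum of the appropriate subspace and its orthogonal complement. The result is
\begin{align*}
\refl_{\Sigma_{\pi/4}}(x^1,x^2,x^3,x^4)&=(x^2,x^1,x^3,x^4),\\
\refl_{C_{\pi/4}^{\pi/4}}(x^1,x^2,x^3,x^4)&=(x^2,x^1,x^4,x^3),\\
\refl_{C_{\pi/2}^{\pi/4}}(x^1,x^2,x^3,x^4)&=(-x^1,x^2,x^4,x^3).
\end{align*}
Combined with the identifications $\Sigma_0=\{x^2=0\}$, $\Sigma_{\pi/2}=\{x^1=0\}$, $\Sigma^0=\{x^4=0\}$, and $\Sigma^{\pi/2}=\{x^3=0\}$ read off from \eqref{hemispheres} and Definition \ref{D:coordinates}, this shows that $\refl_{\Sigma_{\pi/4}}$ fixes $\Sigma^0$ and $\Sigma^{\pi/2}$ and exchanges $\Sigma_0\leftrightarrow\Sigma_{\pi/2}$; that $\refl_{C_{\pi/4}^{\pi/4}}$ exchanges both $\Sigma^0\leftrightarrow\Sigma^{\pi/2}$ and $\Sigma_0\leftrightarrow\Sigma_{\pi/2}$; and that $\refl_{C_{\pi/2}^{\pi/4}}$ preserves $\Sigma_0$ (and $\Sigma_{\pi/2}$) while exchanging $\Sigma^0\leftrightarrow\Sigma^{\pi/2}$.

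To translate these permutations of the four spheres into permutations of sign conditions, I will use the elementary identity $\refl\circ\refl_{\Sigma}=\refl_{\refl(\Sigma)}\circ\refl$, valid for any $\refl\in O(4)$ and any great sphere $\Sigma$. From it I conclude immediately that pullback by $\refl_{\Sigma_{\pi/4}}$ preserves the upper sign pair and swaps the lower sign pair, which gives (i); pullback by $\refl_{C_{\pi/4}^{\pi/4}}$ swaps both the upper and the lower sign pairs, which gives (ii); and pullback by $\refl_{C_{\pi/2}^{\pi/4}}$ swaps the upper sign pair and preserves the lone lower sign, which gives (iii). Membership of each reflection in $\Gsym^M$ for the relevant parity class of $m$ is already recorded in Lemma \ref{symM++++} and follows ultimately from Lemma \ref{Msym}; so apart from the routine coordinate bookkeeping there is no substantial obstacle to surmount.
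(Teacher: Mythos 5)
Your proof is correct and follows the same route as the paper, whose proof of this lemma is precisely to invoke the symmetries $\refl_{\Sigma_{\pi/4}}$, $\refl_{C_{\pi/4}^{\pi/4}}$, $\refl_{C_{\pi/2}^{\pi/4}}$ of Lemma \ref{symM++++} together with the definitions \eqref{Vpm}, \eqref{V+-pm}, \eqref{Vpm4}; you have merely written out the coordinate bookkeeping (which checks out) that the paper leaves implicit.
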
 

\begin{proof} 
All items follow easily from \ref{symM++++} and the definitions. 
\end{proof} 
 
\begin{prop} 
\label{P:V+-+} 
We have the following (recall \ref{M+++} and \ref{M++++}). 
\\ (i) 
$J_{C_{\pi/2}^{0}}\in V_{+}^{-+}$ 
and  
$V_{+}^{-+} \sim C_{pw}^\infty[M^{++}_{+*}; \gamma_{4-} , \gamma_{4+} \cup \gamma_5 ]$.  
Moreover if $m$ is even, we have 
$J_{C_{\pi/2}^{0}}\in V_{+-}^{-+}$ 
and  
$V_{+-}^{-+}\sim C_{pw}^{\infty}[ \, M^{++}_{++} ;  \gamma_{1-}\cup\gamma_3 , \gamma_{1+}\cup \gamma_2 \, ]$.  
\\ (ii) 
$J_{C_{\pi/2}^{\pi/2}}\in V_{+}^{+-}$ 
and  
$V_{+}^{+-} \sim C_{pw}^\infty[M^{++}_{+*}; \gamma_{4+} , \gamma_{4-} \cup \gamma_5 ]$.  
Moreover if $m$ is even, we have 
$J_{C_{\pi/2}^{\pi/2}}\in V_{+-}^{+-}$ 
and  
$V_{+-}^{+-}\sim C_{pw}^{\infty}[ \, M^{++}_{++} ;  \gamma_{1+}\cup\gamma_3 , \gamma_{1-}\cup \gamma_2 \, ]$.  
\\ (iii) 
$\ind(V_{+}^{-+}  ) = \ind( V_{+}^{+-} )   = 1$ 
and   
$\nul(V_{+}^{-+}  ) =  \nul( V_{+}^{+-} )   = 1$.  
\end{prop}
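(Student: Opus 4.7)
I address (i) and (ii) first; they reduce to bookkeeping with symmetries. The containment $J_{C_{\pi/2}^{0}}\in V_+^{-+}$ (and its refinement to $V_{+-}^{-+}$ when $m$ is even) is read off from \ref{Jsym}(iii)--(iv): $J_{C_{\pi/2}^0}$ is odd under $\refl_{\Sigma^0}$, even under $\refl_{\Sigma^{\pi/2}}$ and $\refl_{\Sigma_0}$, and, for $m$ even, odd under $\refl_{\Sigma_{\pi/2}}$. The eigenvalue equivalences are then obtained by restricting to the fundamental domain $M^{++}_{+*}$ (respectively $M^{++}_{++}$ in the even case) and reading each parity as a Dirichlet or Neumann condition along the corresponding boundary piece via \ref{M+++} (respectively \ref{M++++}); the inverse isomorphisms extend by the indicated reflections. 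The $V_+^{+-}$ statements in (ii) are entirely analogous, with $J_{C_{\pi/2}^{\pi/2}}$ replacing $J_{C_{\pi/2}^{0}}$.

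For (iii) I focus on $V_+^{-+}$; the $V_+^{+-}$ statement is symmetric and, for $m$ odd, additionally related by \ref{symV}(iii). Consider first $m$ even, and use the decomposition $V_+^{-+}=V_{+-}^{-+}\oplusu V_{++}^{-+}$ from \ref{V+-+oo}. By \ref{MsJ}(ii) the Jacobi field $J_{C_{\pi/2}^0}\in V_{+-}^{-+}$ is non-positive on the connected disc $M^{++}_{++}$ (connectedness from \ref{M++++}(i)), and it is nontrivial by \ref{basis-killing}. The simplicity of the lowest mixed eigenvalue combined with Courant's nodal theorem then identifies $J_{C_{\pi/2}^0}$ as the unique first eigenfunction of $V_{+-}^{-+}$, yielding $\ind(V_{+-}^{-+})=0$ and $\nul(V_{+-}^{-+})=1$.

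The central difficulty is to establish $\ind(V_{++}^{-+})=1$ and $\nul(V_{++}^{-+})=0$, since none of the six non-exceptional Jacobi fields of \ref{basis-killing} lies in $V_{++}^{-+}$, depriving us of an immediate test function. My plan is to mimic the strategy used for \ref{P:V++}: subdivide $M^{++}_{++}$ along the interior geodesic segments of \ref{M++++}(i) into pieces such as $D_{0+}^{0+}$ and the $D_i^{[i:2]\pm}$, impose appropriate mixed conditions on the cuts, and combine the eigenvalue counts of the smaller pieces via the eigenvalue-counting proposition employed in \ref{P:V++}. The inputs are the sign information from \ref{MsJ}(ii) on the restrictions of $J_{C_{\pi/2}^{\pi/2}}$ and $J_{C_{\pi/2}^0}$ to $M^{++}_{++}$, together with the counts already established in \ref{P:V--}, \ref{V++-}, and \ref{V+++}, which enter through Dirichlet-on-cuts comparisons and are expected to pin the index of $V_{++}^{-+}$ to exactly $1$ and its nullity to $0$.

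Finally, for $m$ odd the decomposition of $V_+^{-+}$ by $\refl_{\Sigma_{\pi/2}}$ is unavailable, so I work directly on $M^{++}_{+*}$. The symmetry $\refl_{C_{\pi/2}^{\pi/4}}\in\Gsym^{M^{++}_{+*}}$ of \ref{symM++++}(iii) exchanges $M^{++}_{++}$ with $M^{++}_{+-}$ and, combined with \ref{MsJ}(i) and the same subdivision strategy as above, should permit the same eigenvalue count. In all cases the main obstacle is the $V_{++}^{-+}$ piece (and its $m$-odd analog), where the absence of a distinguished Jacobi field forces the eigenvalue count to be obtained indirectly via the subdivision argument and the spectra of the previously treated subspaces.
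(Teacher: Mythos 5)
Your items (i) and (ii), and the treatment of the piece $V^{-+}_{+-}$ for $m$ even (identifying $J_{C_{\pi/2}^{0}}$ as a signed first eigenfunction via \ref{MsJ}.ii and Courant), coincide with the paper. The genuine gap is the core of (iii): the count on the complementary piece $V^{-+}_{++}$ for $m$ even, and the entire case $m$ odd, are left as a plan ("expected to pin the index to exactly $1$"), and the plan as described does not work. First, you supply no mechanism producing the negative eigenvalue in $V^{-+}_{++}$: since, as you note, no non-exceptional Jacobi field lies there, you need either an explicit test function with negative Rayleigh quotient or the paper's observation that relaxing the Dirichlet condition on $\gamma_3$ to Neumann strictly lowers the first eigenvalue, giving $\lambda_1(V^{-+}_{++})<\lambda_1(V^{-+}_{+-})=0$; a subdivision in the style of \ref{P:V++} cannot yield this lower bound, because with Dirichlet imposed on the cuts every resulting piece is strictly stable (by \ref{P:V--}, \ref{V++-}, \ref{V+++}), so \ref{P:eigen}(i) is vacuous. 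Second, the upper bound $\lambda_2(V^{-+}_{++})>0$ also does not follow from \ref{P:eigen} with Neumann on the cuts: in the subdivision of $M^{++}_{++}$ along the geodesic segments, a piece such as $D_1^{1-}$ inherits Neumann conditions on its entire boundary ($\gamma_{1+}$ lies on $\Sigma^{\pi/2}$, which is a Neumann side for $V^{-+}$), and since the zeroth-order term $|A|^2+2$ of $\Lcal$ is positive its first Neumann eigenvalue is negative, so the resulting bound on $\#_{\le 0}$ is too weak. The paper instead argues via Cheng's structure theorem and Courant's theorem that a putative second eigenfunction would have a nodal curve missing one of $\gamma_1,\gamma_2,\gamma_3$, and then uses domain monotonicity against $\lambda_1(V^{--})=\lambda_1(V^{-+}_{-+})=\lambda_1(V^{-+}_{+-})=0$; some argument of this kind (or an equivalent substitute) is indispensable and is absent from your proposal.

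For $m$ odd the gap is larger still. The reflection $\refl_{C_{\pi/2}^{\pi/4}}$ only exchanges $V^{-+}$ with $V^{+-}$ (as in \ref{symV}.iii and \ref{AsymJ}.iii); it does not decompose $V_{+}^{-+}$, so there is no analogue of the splitting \ref{V+-+oo} and "the same subdivision strategy" has no obvious meaning. The paper's argument here is substantially different: it cuts $M^{++}_{+*}$ into $D_{0+}^{0+}$, $D_{1-}^{1-}$, and the remainder $M'$, exploits the extra symmetry $\refl_{\Sigma_{\frac{\pi}{2}+\frac{\pi}{2m}}}$ of $M'$ to split an auxiliary space $W=W_+\oplusu W_-$, controls $\lambda_1(W_-)$ and $\lambda_2(W_+)$ by repositioning with $\Tr$ and doubling and comparing with \ref{P:V+--} and \ref{P:V--}, applies \ref{P:eigen} to the three-piece decomposition to get $\#_{\le0}(V_{+}^{-+})\le 2$, and only then concludes by noting (via \ref{L:grad}) that $J_{C_{\pi/2}^{0}}$ changes sign on $M^{++}_{+*}$, hence is a second, not first, eigenfunction. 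None of these steps is present in your sketch, so as written the proposal establishes (i), (ii), and half of the even case of (iii), but not the proposition.
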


\begin{proof} 
As in the proof of \ref{P:V+--},  
(i) and (ii) follow easily from the definitions and the symmetries in \ref{Jsym},  
with the linear isomorphisms between the spaces given by restriction to $M^{++}_{+*}$ 
and their inverses by extending using the appropriate reflections. 
To prove (iii) now we provide different arguments depending on whether $m$ is even or odd, 
the even case being easier because of the extra symmetry we can employ.

We assume first that $m$ is even. 
By (i), (ii), \ref{MsJ}.ii, and \ref{Ctheorem} we conclude that $\lambda_1( V^{+-}_{+-} ) = \lambda_1( V^{-+}_{+-} ) =0 $,  
$\lambda_2( V^{+-}_{+-} ) >0$, and $\lambda_2( V^{-+}_{+-} ) >0 $.  
Replacing the Dirichlet condition with the Neumann condition reduces the eigenvalues and therefore 
$\lambda_1 ( V^{+-}_{++} ) <  \lambda_1 ( V^{+-}_{+-} ) = 0$   
and 
$\lambda_1 ( V^{-+}_{++} ) <  \lambda_1 ( V^{-+}_{+-} ) = 0$.    
By Courant's nodal theorem \ref{Ctheorem} and arguing as in the proof of \ref{V+++} using 
\cite{Cheng}*{Theorem 2.5}, we conclude that 
the eigenfunction corresponding to 
$\lambda_2 ( V^{+-}_{++} ) $
must contain a separating nodal curve in $M^{++}_{++}$  
which does not intersect at least one of $\gamma_1$, $\gamma_2$, or $\gamma_3$ defined as in \ref{M++++}.ii. 
There is a nodal domain then in $M^{++}_{++}$ which does not intersect 
at least one of $\gamma_1$, $\gamma_2$, or $\gamma_3$.  
If it does not intersect $\gamma_1$, by extending to $M^{++}_{++}$ 
and by using domain monotonicity we conclude that 
$\lambda_2(V^{+-}_{++}) > \lambda_1(V^{--})  $.  
If it does not intersect $\gamma_2$, using domain monotonicity we conclude that 
$\lambda_2(V^{+-}_{++}) > \lambda_1(V^{+-}_{-+})$.  
If it does not intersect $\gamma_3$, using domain monotonicity we conclude that 
$\lambda_2(V^{+-}_{++}) > \lambda_1(V^{+-}_{+-})$.  
Since 
$\lambda_1(V^{--}) =0  $,  
$\lambda_1(V^{+-}_{-+})=0 $,  
and 
$\lambda_1(V^{+-}_{+-})=0 $  
by \ref{P:V--}, \ref{P:V+--}, and the above, 
we conclude that 
$0 < \lambda_2 ( V^{+-}_{++} ) $. 
Arguing similarly we conclude that 
$0 < \lambda_2 ( V^{-+}_{++} ) $.
The above together with the decompositions (by \ref{V+-+oo})  
\begin{equation*} 
V^{+-}_{+} = V^{+-}_{+-} \oplusu V^{+-}_{++},  
\qquad  
V^{-+}_{+} = V^{-+}_{+-} \oplusu V^{-+}_{++}  
\end{equation*}  
imply (iii) in the case that $m$ is even. 

Suppose now that $m$ is odd. 
Recall \ref{M+++}. 
By ``cutting through'' $\overline{ \pqq_1\pqq^1 } $ and $\alpha^{1-}_1$ 
we obtain the decomposition 
$M^{++}_{+*} = D_{0+}^{0+} \cup D_{1-}^{1-} \cup M'$, 
where 
$$ 
M':= D_{1+}^{1-}\cup \cup_{i=2}^{m-1} D^{[i:2]\pm}_i \cup D_{m-}^{1-},    
$$ 
with the signs as in \ref{M+++}.i. 
By \ref{Dsym}.ii $D_{0+}^{0+}$, $D_{1-}^{1-}$, and $M'$ are each homeomorphic to a disc and 
we have 
$\partial D_{0+}^{0+} = \beta^0_{0+} \cup \overline{ \pqq_1\pqq^1 } \cup \alpha^{0+}_0$, 
$\partial D_{1-}^{1-} = \beta^1_{1-} \cup \overline{ \pqq_1\pqq^1 } \cup \alpha^{1-}_1$, 
and 
$\partial M' =  \gammatilde_4\cup \alpha^{1-}_1 \cup \alpha_m^{1-}$, 
where  
$\gammatilde_4:= \gammatilde_{4-} \cup \gammatilde_{4+}$,  
$\gammatilde_{4-} =  \gamma_{4-} \cap M' = \gamma_{4-} \setminus \beta^0_{0+} $,  
and 
$\gammatilde_{4+} = \gamma_{4+} \cap M' = \gamma_{4+} \setminus \beta^1_{1-} $.  
The advantage of $M'$ over $M^{++}_{+*}$ is that $M'$ has an extra symmetry, 
$\refl_{ \pqq_{\frac{m}2+1}, C^\perp} = \refl_{\Sigma_{\frac\pi2 + \frac\pi{2m} } }$, 
which preserves each of $\gammatilde_{4-}$ and $\gammatilde_{4+}$. 
To exploit this we define 
$$ 
W:= C_{pw}^\infty [M'; \gammatilde_{4-}, \gammatilde_{4+} \cup \alpha^{1-}_1 \cup \alpha_m^{1-} ], 
\qquad 
W_\pm:= \{u\in W : u=\pm u \circ \refl_{\Sigma_{\frac\pi2 + \frac\pi{2m} } } \} . 
$$ 
We clearly have then the decomposition $W=W_+\oplusu W_-$.  
We claim now that 
\begin{equation} 
\label{claim} 
\lambda_2(W) = 
\lambda_2[M'; \gammatilde_{4-}, \gammatilde_{4+} \cup \alpha^{1-}_1 \cup \alpha_m^{1-} ] 
>0. 
\end{equation} 

To prove the claim it is enough to prove that $\lambda_1(W_-) >0$ and $\lambda_2(W_+)>0$. 
By ``cutting through'' with 
${\Sigma_{\frac\pi2 + \frac\pi{2m} } }$   
we have the decomposition 
$M'=M'_+\cup\refl_{\Sigma_{\frac\pi2 + \frac\pi{2m} } } M'_+ $, 
where 
$$ 
M'_+:= D^{[\frac{m+1}2:2]\pm}_{\frac{m+1}2+} \cup \cup_{i=\frac{m+3}2}^{m-1} D^{[i:2]\pm}_i \cup D_{m-}^{1-}.   
$$ 
We have then 
$M'_+\cap\refl_{\Sigma_{\frac\pi2 + \frac\pi{2m} } } M'_+ = M'\cap \Sigma_{\frac\pi2 + \frac\pi{2m} } = \alpha^{[\frac{m+1}2:2]\pm}_{\frac{m+1}2}$, 
$\gammatilde_{4\pm} \cap M'_+ = \gamma_{4\pm} \cap M'_+ $,  
$$
\begin{aligned} 
W_-\sim & \, C_{pw}^\infty \left[ M'_+ ; (\gamma_{4-}\cap M'_+ ) \cup \alpha^{[\frac{m+1}2:2]\pm}_{\frac{m+1}2}   , ( \gamma_{4+} \cap M'_+ )  \cup \alpha_m^{1-} \right], 
\\ 
W_+\sim & \, C_{pw}^\infty \left[ M'_+ ; \gamma_{4-}\cap M'_+  ,  \alpha^{[\frac{m+1}2:2]\pm}_{\frac{m+1}2}   \cup ( \gamma_{4+} \cap M'_+ )  \cup \alpha_m^{1-} \right]. 
\end{aligned} 
$$
Next we reposition $M'_+$ by using $\Tr^{-\frac{m+1}2}$ (recall \ref{L:T}) to obtain 
$$ 
M'' := \Tr^{-\frac{m+1}2}  M'_+ = D_{0+}^{0+}\cup \cup_{i=1}^{\frac{m-3}2} D^{[i:2]\pm}_i \cup D_{\frac{m-1}2-}^{[\frac{m-1}2:2]\pm},   
$$   
and we use 
$\refl_{ \pqq_{m/2} , C^\perp \, }  = \refl_{\Sigma_{\frac\pi2 - \frac\pi{2m} } }$ to ``double'' $M''$, 
producing 
$$ 
M''':= 
M'' \cup (\refl_{ \pqq_{m/2} , C^\perp \, } M'' ) 
= D_{0+}^{0+}\cup \cup_{i=1}^{m-2} D^{[i:2]\pm}_i \cup D_{(m-1)-}^{0+}, 
$$ 
where $\alpha^{[\frac{m+1}2:2]\pm}_{\frac{m+1}2}$, 
which was used to subdivide $M'$, 
has been moved and ``doubled'' to $\alpha^{0+}_0 \cup \alpha_{m-1}^{0+} \subset \partial M'''$. 

We have then that a first eigenfunction in $W_-$ (corresponding to $\lambda_1(W_-)$) 
corresponds to an eigenfunction 
in $C_{pw}^\infty [M'''; \alpha^{0+}_0 \cup \alpha_{m-1}^{0+} \cup (\gamma_{4\pm}\cap M''') , (\gamma_{4\mp}\cap M''') \, ]$  
which moreover is even under reflection with respect to  
$\refl_{ \pqq_{m/2} , C^\perp \, }  = \refl_{\Sigma_{\frac\pi2 - \frac\pi{2m} } }$, 
and where the $\pm$ and $\mp$ signs are opposite and depend on whether $m\in 4\Z+1$ or $m\in 4\Z+3$.  
Either way by \ref{P:V+--} and by domain monotonicity 
(since $M'''\subsetneq M^{++}_{+*}$)  
we conclude 
that $\lambda_1(W_-) >0$.  

We have also $W_+\sim C_{pw}^\infty [ M''; (\gamma_{4\pm}\cap M'') , \alpha^{0+}_0 \cup \alpha_{\frac{m-1}2}^{[\frac{m-1}2:2]\pm}  \cup (\gamma_{4\mp}\cap M'') \, ]$.  
Suppose $\varphi$ is an eigenfunction corresponding to 
$\lambda_2(W_+)$.   
By Courant's nodal theorem \ref{Ctheorem} and arguing as in the proof of \ref{V+++} using 
\cite{Cheng}*{Theorem 2.5}, we conclude that 
there is a 
separating nodal curve $\gamma$ which has to avoid at least one of
$\gamma_4\cap M''$, $\alpha^{0+}_0 $, or $ \alpha_{\frac{m-1}2}^{[\frac{m-1}2:2]\pm} $.  
In the first case by domain monotonicity we conclude that 
$$ 
\lambda_2(W_+) > 
\lambda_1 [ M''; (\gamma_{4}\cap M'') , \alpha^{0+}_0 \cup \alpha_{\frac{m-1}2}^{[\frac{m-1}2:2]\pm}  \, ] =0, 
$$ 
where the last equality follows from \ref{P:V--}. 
In the second case we again use domain monotonicity, 
but the comparison is with $\lambda_1(W_-) $, 
which we proved positive above. 
In the third case we reposition $M''$ and we argue as for the second case. 
This completes the proof that 
$\lambda_2(W_+) > 0$ and hence of our claim 
\ref{claim}. 

Clearly by \ref{P:V--} we have 
\begin{equation}
\label{E:D1}
\lambda_1 [ D_{0+}^{0+} ; \beta^0_{0+} , \overline{ \pqq_1\pqq^1 } \cup \alpha^{0+}_0 ] = 0. 
\end{equation} 
We consider now an eigenfunction corresponding to 
$\lambda_2 [ D_{1-}^{1-} ; \emptyset,  \beta^1_{1-} \cup \overline{ \pqq_1\pqq^1 } \cup \alpha^{1-}_1] $. 
By Courant's nodal theorem \ref{Ctheorem} and arguing as in the proof of \ref{V+++} using 
\cite{Cheng}*{Theorem 2.5} again, we conclude that 
there is a separating nodal curve which avoids at least one of 
$\beta^1_{1-}$, $\overline{ \pqq_1\pqq^1 }$,  or $\alpha^{1-}_1$. 
We can use domain monotonicity then to assert that 
$\lambda_2 [ D_{1-}^{1-} ; \emptyset,  \beta^1_{1-} \cup \overline{ \pqq_1\pqq^1 } \cup \alpha^{1-}_1] $ 
is $>$ one of 
$\lambda_1 [ D_{1-}^{1-} ; \beta^1_{1-} , \overline{ \pqq_1\pqq^1 } \cup \alpha^{1-}_1] $, 
$\lambda_1 [ D_{1-}^{1-} ; \overline{ \pqq_1\pqq^1 } , \beta^1_{1-} \cup \alpha^{1-}_1] $, 
or 
$\lambda_1 [ D_{1-}^{1-} ; \alpha^{1-}_1 , \beta^1_{1-} \cup \overline{ \pqq_1\pqq^1 } ] $.  
By appealing to \ref{P:V--}, \ref{V+++}, or \ref{V++-} correspondingly we conclude that 
\begin{equation}
\label{E:D2}
\lambda_2 [ D_{1-}^{1-} ; \emptyset,  \beta^1_{1-} \cup \overline{ \pqq_1\pqq^1 } \cup \alpha^{1-}_1] >0.  
\end{equation} 

We can apply \ref{P:eigen} now to the decomposition 
$M^{++}_{+*} = D_{0+}^{0+} \cup D_{1-}^{1-} \cup M'$ to conclude 
by referring to \ref{claim}, \ref{E:D1}, and \ref{E:D2}, that 
$\#_{\le0}(V_{+}^{-+}) = \#_{\le0}[M^{++}_{+*}; \gamma_{4-} , \gamma_{4+} \cup \gamma_5 ] \le2$.  
Since 
$J_{C_{\pi/2}^{0}}\in V_{+}^{-+}$ changes sign on $M^{++}_{+*}$ by \ref{L:grad}, 
it cannot be a first eigenfunction. 
Since it has eigenvalue $0$, we conclude by the last inequality that
it is a second eigenfunction, which completes the proof of (iii) for $V_+^{-+}$. 
The proof for $V_{+}^{+-}$ is similar. 
\end{proof} 

The main theorem follows. 
Recall that $\xi_{g,1}$ in the notation of \cite{Lawson} denotes the genus-$g$ Lawson surface 
which can be viewed as a desingularization of two orthogonal great two-spheres in the round three-sphere $\Sph^3$.   

\begin{theorem} 
\label{Mtheorem} 
If $g\in\N$ and $g\ge2$, then the index of $\xi_{g,1}$ is $2g+3$.  
Moreover $\xi_{g,1}$ has nullity $6$ and no exceptional Jacobi fields. 
\end{theorem}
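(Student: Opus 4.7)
The plan is to assemble Theorem \ref{Mtheorem} from the four symmetry-block propositions already established, namely \ref{P:V--}, \ref{P:V++}, \ref{P:V+--}, and \ref{P:V+-+}. Recall that $\xi_{g,1}=M[C,m]$ with $m=g+1\ge3$, and that the decomposition \eqref{V++} together with \eqref{V+-+o} writes
\begin{equation*}
V \;=\; V^{--} \,\oplusu\, V^{++} \,\oplusu\, V^{+-}_{+} \,\oplusu\, V^{+-}_{-} \,\oplusu\, V^{-+}_{+} \,\oplusu\, V^{-+}_{-},
\end{equation*}
where each summand is $\Lcal$-invariant. Consequently the index and nullity of $\Lcal$ on $M$ are the sums of the corresponding quantities on these six subspaces.

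First I would add up the indices: by \ref{P:V--}, \ref{P:V++}, \ref{P:V+--}, and \ref{P:V+-+} these are $0,\ 2m-1,\ 0,\ 1,\ 0,\ 1$ respectively, giving a total of $2m+1 = 2(g+1)+1 = 2g+3$. Next I would add the nullities, which by the same propositions are $1,\ 1,\ 1,\ 1,\ 1,\ 1$, giving $6$.

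Finally, to conclude that there are no exceptional Jacobi fields I would invoke Lemma \ref{basis-killing}: the six non-exceptional Jacobi fields $J_C$, $J_{C^\perp}$, $J_{C_\phi^{\phi'}}$ with $\phi,\phi'\in\{0,\pi/2\}$ are linearly independent elements of the kernel of $\Lcal$. Since the nullity equals $6$, these must already span the entire null space of $\Lcal$, so every Jacobi field is induced by a Killing field, i.e.\ non-exceptional in the sense of Definition \ref{exceptional}.

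There is essentially no obstacle at this stage: all the hard analytic work (Alexandrov moving-plane arguments, Courant's nodal theorem together with \cite{Cheng}*{Theorem 2.5}, the sign analysis of $J_{C_\phi^{\phi'}}$ on the fundamental pieces $M^{++}_{++}$, $M^{++}_{+*}$, and the bookkeeping of boundary-condition equivalences via \ref{D:mixed} and \ref{P:eigen}) has already been carried out in the propositions cited. The theorem is therefore simply the arithmetic sum of the six block contributions, together with the observation that the dimension of the Killing-field Jacobi fields saturates the nullity.
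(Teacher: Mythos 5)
Your proposal is correct and is essentially identical to the paper's own proof, which likewise assembles Theorem \ref{Mtheorem} by summing the index and nullity contributions from the six $\Lcal$-invariant blocks given by \eqref{V++} and \eqref{V+-+o} via Propositions \ref{P:V--}, \ref{P:V++}, \ref{P:V+--}, \ref{P:V+-+}, with Lemma \ref{basis-killing} accounting for the absence of exceptional Jacobi fields. (Only a trivial bookkeeping slip: in your list the index values $0$ and $1$ for the subscript-$\pm$ blocks are transposed, since $\ind(V^{\pm\mp}_{+})=1$ and $\ind(V^{\pm\mp}_{-})=0$; the totals $2g+3$ and $6$ are unaffected.)
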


\begin{proof} 
Recall that $m=g+1$.  
Combining then \ref{V++}, \ref{V+-+o}, 
Propositions \ref{P:V--}, \ref{P:V++}, \ref{P:V+--} and \ref{P:V+-+}, 
we conclude the proof. 
\end{proof} 

\begin{remark}[Alternative proof for high genus]  
\label{R2} 
The Lawson surfaces of high genus can be constructed by gluing and then one obtains a detailed knowledge of their geometry.  
The gluing construction is a straightforward desingularization construction for $\Sigma^{\pi/4}\cup \Sigma^{-\pi/4}= \cup_{j=1}^4 (\pqq^j\cone C)$, 
that is two orthogonal great two-spheres, 
in the fashion of those constructions in \cite{kapouleas:wiygul} which are for two orthogonally intersecting Clifford tori. 
The surfaces constructed are modeled in the vicinity of $C$ after the classical Scherk surface \cite{Scherk} desingularizing two orthogonal planes in $\R^3$  
and 
given in appropriate Cartesian coordinates 
by the equation $\sinh x^1 \sinh x^2 = \sin x^3$. 
For each large $m$ we can impose on the construction all the symmetries of $M=M[C,m]$.  
By the uniqueness then in \ref{T:lawson} we can infer that the surface constructed is actually $M=M[C,m]=\xi_{m-1,1}$.  
By the control the construction provides we can conclude then that for large $m$ (equivalently large genus) 
the region of the Lawson surface $M=\xi_{m-1,1}$ in the vicinity of $C$ can be approximated by an appropriately scaled singly periodic Scherk surface,   
which has been transplanted to $\Sph^3$ so that its axis covers $C$. 
The rest of the Lawson surface approximates $\cup_{j=1}^4 (\pqq^j\cone C)$ (that is the two great two-spheres being desingularized) 
with a small neighborhood of $C$ removed. 
This information can be used to simplify the proofs of many intermediate results on which the proof of the main theorem is based, 
thus avoiding the need for many of the arguments we have used in this article. 
\qed 
\end{remark} 

Note now that there is a smooth family of singly periodic Scherk surfaces which can be parametrized by the angle $\theta\in(0,\pi)$ between 
two adjacent asymptotic half planes. 
The Scherk surface in \ref{R2} corresponds then to $\theta=\pi/2$. 
Since we can then prescribe $\theta$, we say that the Scherk surfaces can \emph{``flap their wings''}. 
In \cite{alm20}*{Section 4.2} a heuristic argument was provided indicating that this is not the case for the Lawson surfaces 
and further questions motivated by this were asked.  
The non-existence of exceptional Jacobi fields as in \ref{Mtheorem} provides partial answers to some of those questions. 
In particular it implies that each $\xi_{g,1}$ is isolated as in the following corollary. 
Isolatedness can be proved by adapting the proof of \cite{KMP}*{Proposition 3.1} or by a more direct argument suggested to us by R. Schoen as follows.

\begin{cor}[No flapping and isolatedness] 
\label{C:flapping} 
$\xi_{g,1}$ as in \ref{Mtheorem} 
cannot ``flap its wings'' at the linearized level and moreover it is isolated in the sense that there is an $\epsilon>0$ such that 
any minimal surface 
within a $C^1$ $\epsilon$-neighborhood of $\xi_{g,1}$  
is congruent to $\xi_{g,1}$.  
\end{cor}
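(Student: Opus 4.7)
The non-flapping claim follows at once from Theorem~\ref{Mtheorem}. A one-parameter family of minimal deformations of $\xi_{g,1}$ not arising from ambient isometries would, infinitesimally, produce a Jacobi field not of the form $K\cdot\nu$ for any Killing field $K$---an exceptional Jacobi field in the sense of \ref{exceptional}. Since $\nul(\xi_{g,1})=6$ matches the dimension of the space of Killing-induced Jacobi fields by Lemma~\ref{basis-killing}, Theorem~\ref{Mtheorem} excludes such a deformation. I therefore focus on the isolatedness claim.

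For isolatedness I plan to follow the direct approach of Schoen alluded to after the corollary statement. Set $M=\xi_{g,1}$ and let $\widetilde M$ be a minimal surface within a small $C^1$-neighborhood of $M$. By elliptic regularity for the minimal surface equation, $C^1$-closeness upgrades to $C^{2,\alpha}$-closeness, so after shrinking the neighborhood I may express $\widetilde M$ as a normal graph over $M$ with height function $u\in C^{2,\alpha}(M)$ of small norm. A gauge-fixing step then produces $g\in O(4)$ close to the identity and a function $u'\in C^{2,\alpha}(M)$ small and $L^{2}(M)$-orthogonal to $\ker\Lcal$ such that $g^{-1}(\widetilde M)$ coincides with the normal graph over $M$ with height $u'$. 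The existence of such $g$ and $u'$ will be produced by an inverse function theorem argument applied to the map $(g,v)\mapsto g(\mathrm{graph}_{M}(v))$ sending $O(4)\times\bigl((\ker\Lcal)^{\perp}\cap C^{2,\alpha}(M)\bigr)$ into an appropriate space of surfaces. Its differential at $(\mathrm{id},0)$ is surjective because moving $M$ by Killing fields yields exactly $\ker\Lcal$ by Lemma~\ref{basis-killing} and Theorem~\ref{Mtheorem}, while the $O(4)$-orbit is a smooth $6$-dimensional submanifold since the stabilizer $\Gsym^{M}$ is discrete by Lemma~\ref{Msym}.

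Once the gauge is fixed, the minimal surface equation for the normal graph with height $u'$ expands as $\Lcal u' + Q(u')=0$, where the nonlinear remainder $Q(u')$ vanishes quadratically in $u'$ together with its first and second derivatives. Since $u'\perp\ker\Lcal$ and $\Lcal$ is self-adjoint Fredholm of index zero, there is a bounded right inverse $\Lcal^{-1}\colon C^{0,\alpha}(M)\cap(\ker\Lcal)^{\perp}\to C^{2,\alpha}(M)\cap(\ker\Lcal)^{\perp}$, which gives
\[
 \|u'\|_{C^{2,\alpha}}\le C\,\|Q(u')\|_{C^{0,\alpha}}\le C'\,\|u'\|_{C^{2,\alpha}}^{2}.
\]
Choosing the original $C^1$-neighborhood of $M$ small enough that $C'\|u'\|_{C^{2,\alpha}}<1$ forces $u'=0$, whence $\widetilde M=g(M)$ is congruent to $M$.

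The main obstacle I anticipate is the gauge-fixing step. One must set up Banach-space machinery compatible with the $O(4)$-action on $\Sph^{3}$, verify the quantitative openness afforded by the inverse function theorem uniformly for $\widetilde M$ in a whole $C^1$-neighborhood of $M$ (not just at $M$ itself), and ensure that the initial normal-graph representation $\widetilde M=\mathrm{graph}_{M}(u)$ is valid by exploiting uniform tubular neighborhood bounds for $M$. Once these technicalities are dispatched, the contraction estimate of the preceding paragraph closes the argument cleanly, and together with the first paragraph completes the proof of the corollary.
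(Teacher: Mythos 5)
Your argument is correct, and the non-flapping part coincides with the paper's. For isolatedness, however, you take a genuinely different (though related) route. The paper argues by contradiction and compactness: assuming a sequence $\{M_n\}$ of minimal surfaces $C^1$-converging to $M=\xi_{g,1}$ with none congruent to $M$, it writes each $M_n$ as a normal graph of $u_n$, rotates so that $u_n$ is $L^2$-orthogonal to the nonexceptional Jacobi fields only \emph{to first order} in $\norm{u_n}_{C^0}$, and then uses elliptic regularity to extract a $C^2$-limit of $u_n/\norm{u_n}_{C^1}$, which is a nontrivial Jacobi field orthogonal to the Killing-induced ones---contradicting Theorem \ref{Mtheorem}. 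Your proposal instead runs the effective, inverse-function-theorem version: gauge-fix by the $O(4)$-action to put the nearby surface in a slice $L^2$-orthogonal to $\ker\Lcal$ (exactly, not just to first order), invert $\Lcal$ on $(\ker\Lcal)^\perp$, and close with the quadratic estimate $\norm{u'}_{C^{2,\alpha}}\le C'\norm{u'}_{C^{2,\alpha}}^2$. This is essentially the alternative the paper attributes to adapting \cite{KMP}*{Proposition 3.1} rather than the ``more direct argument'' of Schoen that it actually writes out. Both hinge on the same inputs (nullity $6$, Lemma \ref{basis-killing}, discreteness of $\Gsym^M$ from Lemma \ref{Msym}, and the absence of exceptional Jacobi fields). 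What each buys: the paper's compactness argument avoids the slice/gauge-fixing machinery entirely---the approximate orthogonality is trivially arranged by a small rotation, and no right inverse or quadratic expansion of the graph equation is needed---at the cost of being purely qualitative; your contraction argument is heavier to set up (you correctly flag the gauge-fixing as the main technical burden, and its linearization is surjective precisely because $\ker\Lcal$ is spanned by the Killing-induced fields), but it yields a quantitative radius of isolation and a cleaner statement that every nearby minimal surface is $g(M)$ for an explicit $g\in O(4)$ close to the identity.
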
 

\begin{proof} 
By ``no flapping at the linearized level'' we mean that there are no Jacobi fields 
which are infinitesimal deformations consistent with varying the angle of intersection of the 
two spheres of which the surfaces can be viewed as desingularizations. 
Since there are no exceptional Jacobi fields by Theorem \ref{Mtheorem}, 
the result follows. 

To prove now isolatedness 
suppose $M=\xi_{g,1}$ is not isolated modulo congruence.
Then there exists a sequence $\{M_n\}$
of embedded 
minimal surfaces none of which is congruent to $M$
but which $C^1$-converge to $M$.
Each $M_n$ is then the graph (via the exponential map in the normal direction)
of some function $u_n$ on $M$, so that $\{u_n\}$ converges to $0$ in $C^1(M)$.
By appropriately rotating each $M_n$ we may assume that eventually $u_n$
is $L^2(M)$-orthogonal to the space of nonexceptional Jacobi fields,
at least to first order in $\norm{u_n}_{C^0}$.
Since each $M_n$ is minimal, by elliptic regularity
the sequence $\left\{u_n/\norm{u_n}_{C^1}\right\}$  
is bounded in $C^{2,\alpha}(M)$, so has a subsequence converging in $C^2(M)$,
thereby producing a nontrivial exceptional Jacobi field, a contradiction.
\end{proof} 

\appendix

\section{Eigenvalues and subdivisions} 
\label{A:eigen}
\nopagebreak
  
In this appendix following \cite{Ros} we state two bounds on the number of eigenvalues on a domain 
in terms of the number of eigenvalues on appropriate subdivisions of the domain. 
More precisely suppose that we are given $\Lcalu$, $\Uu$, $\gu$, and $\partial \Uu= \partial_D\Uu \cup \partial_N\Uu$ as in \ref{D:mixed}. 
We assume further that by removing a finite union of smooth embedded one-dimensional submanifolds, 
$\gammau\subset \Uu$, 
we subdivide $\Uu$ into $n\in\N$ connected components 
whose (compact) closures we denote by $\Uu_i$ for $i=1,\dots,n$. 
We define $\partial_D \Uu_i:= \partial \Uu_i \cap \partial_D\Uu$,  
$\partial_N \Uu_i:= \partial \Uu_i \cap \partial_N\Uu$,  
and 
$\gammau_i:= \partial \Uu_i \cap \gammau$.  
Clearly then we have the decomposition 
$$ 
\partial \Uu_i = \gammau_i \cup \partial_D \Uu_i \cup \partial_N \Uu_i.  
$$
We have then the following. 

\begin{prop}[Montiel-Ros \cite{Ros}]  
\label{P:eigen} 
Assuming the above and in the notation of \ref{D:mixed} we have the following $\forall\lambda\in\R$. 
\\
(i)  
$\#_{<\lambda}[\Lcalu,\Uu;\partial_D\Uu,\partial_N\Uu]
\ge 
\#_{<\lambda}[\Lcalu,\Uu_1;\gammau_1\cup \partial_D\Uu_1,\partial_N\Uu_1] 
+
\sum_{i=2}^n
\#_{\le\lambda}[\Lcalu,\Uu_i;\gammau_i\cup \partial_D\Uu_i,\partial_N\Uu_i]$. 
\\
(ii)  
$\#_{\le\lambda}[\Lcalu,\Uu;\partial_D\Uu,\partial_N\Uu]
\le 
\#_{\le\lambda}[\Lcalu,\Uu_1;\partial_D\Uu_1,\gammau_1\cup \partial_N\Uu_1] 
+
\sum_{i=2}^n
\#_{<\lambda}[\Lcalu,\Uu_i;\partial_D\Uu_i,\gammau_i\cup \partial_N\Uu_i]$. 
\end{prop}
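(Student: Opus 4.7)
The plan is to prove both inequalities by combining the variational characterization of eigenvalues with unique continuation for solutions of $\Lcalu \psi = -\lambda\psi$. The naive Courant--Hilbert comparison (extension by zero of Dirichlet-on-$\gammau$ eigenfunctions for (i), restriction of $\Uu$-eigenfunctions for (ii)) only produces the weaker symmetric versions, with $\#_{<\lambda}$ throughout the sum in (i) and $\#_{\leq\lambda}$ throughout in (ii); the asymmetric split, in which the strict inequality is used only on the distinguished subdomain $\Uu_1$, requires an additional rigidity step supplied by unique continuation on the connected compact surface $\Uu$.

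For part (i), set $L := \#_{<\lambda}[\Uu;\partial_D\Uu,\partial_N\Uu]$ and let $E_L$ be the span of the corresponding first $L$ eigenfunctions of $\Lcalu$ on $\Uu$. On $\Uu_1$ I would choose an orthonormal system of the first $k_1 := \#_{<\lambda}[\Uu_1;\gammau_1\cup\partial_D\Uu_1,\partial_N\Uu_1]$ eigenfunctions, and on each $\Uu_i$ with $i\geq 2$ the first $k_i := \#_{\leq\lambda}[\Uu_i;\gammau_i\cup\partial_D\Uu_i,\partial_N\Uu_i]$ eigenfunctions. Each of these vanishes on $\gammau_i$, so its extension by zero across $\gammau$ lies in $H^1 \cap C_{pw}^\infty[\Uu;\partial_D\Uu,\partial_N\Uu]$; the span $W$ of these extensions has dimension $K' := k_1 + \sum_{i\geq 2}k_i$, carries a natural $L^2$-orthonormal basis, and on it the Rayleigh form diagonalizes as $Q(\sum c_{ij}\phi_j^{(i)}) = \sum c_{ij}^2 \mu_j^{(i)}$ by one integration by parts using the mixed boundary conditions. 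The desired inequality $K' \leq L$ then follows as soon as the $L^2$-orthogonal projection $P: W \to E_L$ is shown to be injective.

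To prove $\ker P = 0$, suppose $\psi\in W$ is nonzero and $L^2$-orthogonal to $E_L$. The spectral expansion of $\psi$ in the eigenbasis of the $\Uu$-problem then involves only modes with eigenvalue $\geq\lambda$, so $R(\psi) := Q(\psi)/\|\psi\|^2 \geq \lambda$. Meanwhile, expanding $\psi$ in the $W$-basis gives $R(\psi) \leq \lambda$, with equality forcing the $\Uu_1$-coefficients to vanish and only the $\mu_j^{(i)}=\lambda$ modes on each $\Uu_i$, $i\geq 2$, to contribute. Together these yield $R(\psi) = \lambda$, and spectral rigidity shows that $\psi$ must lie in the finite-dimensional smooth $\lambda$-eigenspace of $\Lcalu$ on $\Uu$, so $\psi$ is a genuine smooth $\lambda$-eigenfunction satisfying the prescribed boundary conditions on $\partial\Uu$; on the other hand it vanishes identically on the nonempty open subset $\Uu_1$. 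Aronszajn's unique continuation theorem on the connected $\Uu$ then forces $\psi \equiv 0$, a contradiction.

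Part (ii) is the dual statement and is handled by the same scheme with Dirichlet and Neumann swapped on the cuts and strict/non-strict interchanged. Set $M := \#_{\leq\lambda}[\Uu;\partial_D\Uu,\partial_N\Uu]$, let $E_M$ be the span of the first $M$ eigenfunctions on $\Uu$, and on each $\Uu_i$ let $V_i$ be the span of the first $\#_{\leq\lambda}$ (for $i=1$) or $\#_{<\lambda}$ (for $i\geq 2$) eigenfunctions of the Neumann-on-$\gammau_i$ problem. The map $T: E_M \to \bigoplus_i V_i$ sending $\phi$ to $(P_{V_i}(\phi|_{\Uu_i}))_i$ is injective by exactly the dual Rayleigh-quotient plus unique continuation analysis: any $\phi \in \ker T$ has $\phi|_{\Uu_i}$ supported in the Neumann basis on the high-eigenvalue part, forcing $R(\phi) \geq \lambda$, which combined with $\phi \in E_M$ gives $R(\phi) = \lambda$, $\phi|_{\Uu_1}\equiv 0$, and a smooth $\lambda$-eigenfunction of $\Uu$ to which unique continuation is then applied. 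The main obstacle in both parts is the rigidity step: one must show that, in the borderline case $R(\psi) = \lambda$ on a kernel element, $\psi$ is actually a smooth $\Uu$-eigenfunction vanishing on an open piece, since only this upgrade---beyond the naive form comparison---justifies the asymmetric split in the statement.
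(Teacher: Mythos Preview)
Your proposal is correct and follows essentially the same approach as the paper: both parts are proved by exhibiting an injective linear map (extension-by-zero then $L^2$-projection for (i), restriction then $L^2$-projection for (ii)) between finite-dimensional eigenspaces, with injectivity established via a Rayleigh-quotient squeeze whose equality case forces vanishing on $\Uu_1$ and then $\psi\equiv 0$ by Aronszajn unique continuation. The paper's write-up is slightly more compressed but the structure, the role of the distinguished piece $\Uu_1$, and the use of unique continuation to upgrade the naive symmetric bound to the asymmetric one are identical to what you outline.
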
 

\begin{proof} 
Parts (i) and (ii) generalize Lemma 12 and Lemma 13 respectively of \cite{Ros},
whose proofs carry over here with only minor modification.
Nevertheless we sketch a proof for ease of reference.
First we introduce some general notation:
for  $\Lcalu$, $\Uu$, $g$, and $\partial \Uu= \partial_D\Uu \cup \partial_N\Uu$ as in \ref{D:mixed}
and $\lambda \in \R$
we will write $E_\lambda[\underline{U};\partial_D\underline{U},\partial_N\underline{U}]$
for the $\lambda$-eigenspace of $\Lcalu$ on $\underline{U}$
with Dirichlet condition on $\partial_D \underline{U}$ and Neumann condition on $\partial_N\underline{U}$.
We will understand $E_\lambda[\Uu;\partial_D\Uu,\partial_N\Uu]=\{0\}$
when $\lambda$ is not an eigenvalue.
Now we make the same assumptions on $\Uu$ and its boundary as above
and fix $\lambda \in \R$.

To prove (i) we define the $n$ spaces of test functions
  \begin{equation*}
  \begin{aligned}
    V_1
    &:=
    \{
      u \in C^\infty_{pw}[\Uu] \, : \,
      u|_{\Uu_1} \in \bigoplus_{\lambda'<\lambda} E_{\lambda'}[\Uu_1;\gammau_1 \cup \partial_D\Uu_1,\partial_N\Uu_1]
      \mbox{ and } u|_{\Uu_j}=0 \mbox{ if } j \neq 1
    \} \mbox{ and} \\
    V_i
    &:=
    \{
      u \in C^\infty_{pw}[\Uu] \, : \,
      u|_{\Uu_i} \in \bigoplus_{\lambda' \leq \lambda} E_{\lambda'}[\Uu_i;\gammau_i \cup \partial_D\Uu_i,\partial_N\Uu_i]
      \mbox{ and } u|_{\Uu_j}=0 \mbox{ if } j \neq i
    \} \mbox{ for } 2 \leq i \leq n.
  \end{aligned}
  \end{equation*}
Clearly for any $u_i \in V_i$ and $u_j \in V_j$ with $i \neq j$
we have $\nabla u_i \perp_{L^2(\Uu)} \nabla u_j$
and $u_i \perp_{L^2(\Uu)} hu_j$ for any $h \in C^\infty_{pw}(\underline{U})$.
Define also 
  \begin{equation*}
    V_{<\lambda}
    :=
    \bigoplus_{\lambda'<\lambda} E_{\lambda'}[\Uu;\partial_D\Uu,\partial_N\Uu].
  \end{equation*}
We claim that the $L^2(\Uu)$-orthogonal projection
$\bigoplus_{i=1}^n V_i \to V_{<\lambda}$ is injective,
which will establish (i).
To check the injectivity suppose $u \in \bigoplus_{i=1}^n V_i$, so that
$u=\sum_{i=1}^n u_i$
for some $u_1 \in V_1$, $u_2 \in V_2$, \ldots, $u_n \in V_n$.
Then
  \begin{equation*}
      \langle \nabla u, \nabla u \rangle_{L^2} - \langle u,fu\rangle
      =
      \sum_{i=1}^n \left(\langle \nabla u_i, \nabla u_i \rangle_{L^2} - \langle u_i, fu_i \rangle_{L^2}\right)
      \leq
      \lambda \langle u, u \rangle_{L^2},
  \end{equation*}
but the additional assumption $u \perp_{L^2} V_{<\lambda}$
forces the equality case,
which then implies
$u|_{\underline{U}_1}=0$ and $\Lcalu u=-\lambda u$ everywhere.
We conclude by the unique-continuation principle \cite{Aronszajn} that in fact $u=0$.

To prove 
now (ii) we define the $2+n$ vector spaces
  \begin{equation*}
  \begin{aligned}
    &W:=\prod_{i=1}^n C^\infty_{pw}[\Uu_i], \quad
    V_{\leq \lambda}
  	:=
  	\bigoplus_{\lambda' \leq \lambda} E_{\lambda'}[\Uu,\partial_D\Uu,\partial_N\Uu],
  	\quad
  	W_1
  	:=
  	\bigoplus_{\lambda' \leq \lambda} E_{\lambda'}[\Uu_1,\partial_D\Uu_1,\gammau_1 \cup \partial_N\Uu_1], \\
    &\mbox{and for } 2 \leq i \leq n \qquad
    W_i
    :=
    \bigoplus_{\lambda'<\lambda} E_{\lambda'}[\Uu_i,\partial_D\Uu_i,\gammau_i \cup \partial_N\Uu_i].
  \end{aligned}
  \end{equation*}
Clearly $\prod_{i=1}^n W_i$ is a subspace of $W$
and the map
$\iota: u \in V_{\leq \lambda} \mapsto \left(u|_{\Uu_1},\ldots,u|_{\Uu_n}\right) \in W$
is injective.
Endowing $W$ with the obvious $L^2$ inner product and writing
$\pi: W \to \prod_{i=1}^n W_i$
for the corresponding projection onto $\prod_{i=1}^n W_i$,
we claim that $\pi \circ \iota$ is injective,
implying (ii).
To check the injectivity suppose $u \in V_{\leq \lambda}$.
Then
 \begin{equation*}
   \sum_{i=1}^n \langle \nabla u|_{\underline{U}_i}, \nabla u|_{\underline{U}_i} \rangle_{L^2}
      -\sum_{i=1}^n \langle fu|_{\underline{U}_i},u|_{\underline{U}_i} \rangle_{L^2}
    =
    \langle \nabla u, \nabla u \rangle_{L^2} - \langle u, fu \rangle_{L^2}
    \leq \lambda \langle u, u \rangle_{L^2}.
  \end{equation*}
On the other hand, if $\iota(u) \perp \prod_{i=1}^n W_i$,
then for each $i$
  \begin{equation*}
    \langle \nabla u|_{\underline{U}_i}, \nabla u|_{\underline{U}_i} \rangle_{L^2}
      -\langle fu|_{\underline{U}_i},u|_{\underline{U}_i} \rangle_{L^2}
    \geq
    \lambda \langle u|_{\underline{U}_i}, u|_{\underline{U}_i} \rangle_{L^2},
  \end{equation*}
with strict inequality when $n=1$, unless $u|_{\Uu_1}=0$.
Since $\sum_{i=1}^n \langle u|_{\underline{U}_i}, u|_{\underline{U}_i} \rangle_{L^2}=\langle u,u\rangle_{L^2}$,
we conclude that if $u \in \ker \pi \circ \iota$, then $u$ is a solution to $\underline{\mathcal{L}}u=-\lambda u$ 
and vanishes identically on $\underline{U}_1$,
forcing $u=0$ by unique continuation.
\end{proof}

\section{The Courant nodal theorem} 
\label{B:courant}
\nopagebreak
  
In this appendix we recall Courant's nodal theorem in the form we use it. 
Suppose that we are given $\Lcalu$, $\Uu$, $\gu$, and $\partial \Uu= \partial_D\Uu \cup \partial_N\Uu$ as in \ref{D:mixed}. 
Suppose moreover $\Uu$ is connected. 
We define the number of nodal domains of an eigenfunction $u$ of $\Lcalu$ to be the number of connected components of 
$\Uu\setminus u^{-1}(0)$. 
We have then the following, 
where for ease of reference we include in the theorem its corollary on the simplicity of the first eigenvalue. 

\begin{theorem}[Courant's nodal theorem \cite{courant}]  
\label{Ctheorem} 
Given $\Lcalu$, $\Uu$, $g$, and $\partial \Uu= \partial_D\Uu \cup \partial_N\Uu$ as above, 
let $N_n$ for each $n\in\N$ 
be the number of nodal domains of an eigenfunction corresponding to the $n^{th}$ eigenvalue 
$\lambda_n[\Lcalu,\Uu;\partial_D\Uu,\partial_N\Uu]$   
in the notation of \ref{D:mixed}.  
We have then for $n=1$: $N_1=1$ and 
$\lambda_1[\Lcalu,\Uu;\partial_D\Uu,\partial_N\Uu]  <  
\lambda_2[\Lcalu,\Uu;\partial_D\Uu,\partial_N\Uu] $;   
and for $n>1$: $2\le N_n \le n$. 
\end{theorem}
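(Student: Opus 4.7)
The plan is to split Theorem \ref{Ctheorem} into three pieces: (a) $N_1 = 1$, (b) simplicity $\lambda_1 < \lambda_2$, and (c) the two-sided bound $2 \le N_n \le n$ for $n > 1$. The first two items are consequences of the strong maximum principle and the Hopf lemma applied along $\partial_N \Uu$; the upper bound $N_n \le n$ is the classical Courant argument, which carries over to our mixed Dirichlet--Neumann setting essentially unchanged.

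To handle (a) and (b) together, I will take a first eigenfunction $\phi_1$ and note that $|\phi_1|$ lies in the same admissible class $C_{pw}^\infty[\Uu;\partial_D\Uu,\partial_N\Uu]$ (after the standard $H^1$-closure) and realizes the same Rayleigh quotient, so it is also a $\lambda_1$-eigenfunction. Elliptic regularity makes $|\phi_1|$ smooth, and the strong maximum principle---together with the Hopf lemma along $\partial_N \Uu$---then forces $|\phi_1| > 0$ throughout $\Uu \setminus \partial_D \Uu$. In particular $\phi_1$ has constant sign on $\Uu$ and so $N_1 = 1$. Were $\lambda_1 = \lambda_2$, the $\lambda_1$-eigenspace would have dimension at least two, and by the preceding argument every nonzero element could be taken strictly signed; but two strictly signed functions cannot be mutually $L^2$-orthogonal, a contradiction.

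For the upper bound in (c), fix an eigenfunction $\phi$ for $\lambda_n$ and let $\underline{U}_1, \ldots, \underline{U}_k$ be its nodal domains. Set $\psi_i := \phi \, \chi_{\underline{U}_i}$; each $\psi_i$ lies in the admissible form domain because $\phi$ vanishes along the interior nodal arcs (so there is no $H^1$-jump across them), the Dirichlet data on $\partial_D \Uu \cap \partial \underline{U}_i$ is inherited from $\phi$, and the Neumann data on $\partial_N \Uu \cap \partial \underline{U}_i$ contributes nothing to Green's identity. A direct integration by parts on $\underline{U}_i$ then gives
\[
\int_\Uu \bigl( |\nabla \psi_i|^2 - f \psi_i^2 \bigr) = \lambda_n \int_\Uu \psi_i^2 .
\]
If $k \ge n+1$, a dimension count produces a nonzero linear combination $\psi := \sum_{i=1}^k c_i \psi_i$ that is simultaneously $L^2$-orthogonal to the first $n-1$ eigenspaces of $\Lcalu$ and vanishes identically on some $\underline{U}_{i_0}$ (impose $c_{i_0} = 0$, leaving $k-1 \ge n$ free coefficients subject to $n-1$ linear constraints). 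Since the $\psi_i$ have pairwise disjoint supports, $\psi$ still realizes the Rayleigh quotient $\lambda_n$; the min-max characterization of $\lambda_n$ then forces $\psi$ itself to be a $\lambda_n$-eigenfunction of $\Lcalu$. But $\psi \equiv 0$ on the open set $\underline{U}_{i_0}$, contradicting Aronszajn's unique continuation principle for the elliptic equation satisfied by $\psi$.

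Finally, the lower bound $N_n \ge 2$ for $n > 1$ is immediate: any $\lambda_n$-eigenfunction is $L^2$-orthogonal to the strictly signed $\phi_1$, so it must change sign and therefore have at least two nodal domains. The main obstacle will be the Courant step in (c), specifically verifying cleanly that each cutoff $\psi_i$ belongs to the mixed-boundary form domain and that the boundary terms in Green's identity really do vanish; once this bookkeeping is in place, the dimension count, the min-max identification of $\psi$ as an eigenfunction, and the appeal to unique continuation are routine.
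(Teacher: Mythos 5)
Your proof is essentially correct, but note that the paper does not prove Theorem \ref{Ctheorem} at all: Appendix B simply recalls Courant's theorem in the mixed Dirichlet--Neumann form and cites \cite{courant}, so there is no in-paper argument to compare against. What you wrote is the standard variational proof, and it goes through in this setting: the $|\phi_1|$ trick with the strong maximum principle (and Hopf lemma on $\partial_N\Uu$) gives $N_1=1$ and simplicity; the Courant dimension count with the cutoffs $\psi_i=\phi\,\chi_{\Uu_i}$, the equality case of the min--max characterization, and Aronszajn unique continuation gives $N_n\le n$; and orthogonality to the signed first eigenfunction gives $N_n\ge2$. This is in fact the same toolkit the paper uses in its proof of Proposition \ref{P:eigen} (Rayleigh-quotient equality cases plus \cite{Aronszajn}), so your route is entirely in the spirit of the paper, and it crucially uses the connectedness of $\Uu$, which the paper assumes. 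Two small points to tighten: the orthogonality constraints should be to the span of the first $n-1$ eigenfunctions counted with multiplicity (so exactly $n-1$ linear conditions, not ``the first $n-1$ eigenspaces,'' which could impose more); and the form-domain membership of $\psi_i$, which you rightly flag, is the standard zero-trace extension lemma (interior smoothness of $\phi$ plus vanishing on the nodal boundary), with corner points of the piecewise-smooth boundary harmless because they are of measure zero and the Hopf-lemma step is only needed at smooth points of $\partial_N\Uu$.
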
 

\def\baselinestretch{.9}

\bibliographystyle{amsplain}
\bibliography{paper}
\end{document}